\documentclass[a4paper, abstract=true, headings=small]{scrartcl}

\usepackage[utf8]{inputenc}
\usepackage[T1]{fontenc}
\usepackage{amsmath}
\usepackage{amssymb}
\usepackage{amsthm}
\usepackage{mathrsfs}
\usepackage{dsfont}
\usepackage{array}
\usepackage{graphicx}
\usepackage{tensor}
\usepackage[dvipsnames]{xcolor}
\usepackage{verbatim}
\usepackage{enumitem}
\usepackage{etoolbox} 
\usepackage{constants}

\usepackage{color}
\definecolor{purple}{cmyk}{0.75,0.90,0,0}
\definecolor{DB}{rgb}{0.07,0.0,0.5}
\definecolor{DG}{rgb}{0.0,0.37,0.07}%
\definecolor{DR}{rgb}{0.37,0,0.07}

\usepackage[english]{babel}
\usepackage[hyperindex=true,colorlinks=true, urlcolor={DR}, linkcolor={DR}, menucolor={DR}, citecolor={DR}, anchorcolor={DR},linktoc=all]{hyperref}
\PassOptionsToPackage{unicode}{hyperref}
\PassOptionsToPackage{naturalnames}{hyperref}

\usepackage[abbrev,backrefs]{amsrefs}
\usepackage[theoremfont, osf]{newpxtext}
\usepackage[slantedGreek]{newpxmath}

\renewcommand{\d}{\mathrm d} 
 
\newcommand{\Z}{\varmathbb{Z}} 
 
\newcommand{\R}{\varmathbb{R}} 
\newcommand{\N}{\varmathbb{N}}

\newcommand{\Sp}{\varmathbb{S}}

\newcommand{\B}{\varmathbb{B}}
\newcommand{\Ha}{\mathcal{H}} 
\newcommand{\Mc}{\mathcal{M}}
\newcommand{\Nc}{\mathcal{N}}
\newcommand{\Cc}{\mathcal{C}}
\newcommand{\Ec}{\mathcal{E}}

\newcommand{\Kc}{\mathcal{K}}

\newcommand{\Bc}{\mathcal{B}}
\newcommand{\Uc}{\mathcal{U}}
\newcommand{\Tc}{\mathcal{T}}
\newcommand{\Rc}{\mathcal{R}}

\newcommand{\Sc}{\mathcal{S}}

\newcommand{\Yc}{\mathcal{Y}}
\newcommand{\ulrho}{\underline{\rho}}
\newcommand{\olrho}{\overline{\rho}}
\newcommand{\ulmu}{\underline{\mu}}
\newcommand{\olmu}{\overline{\mu}}
\newcommand{\ulr}{\underline{r}}
\newcommand{\olr}{\overline{r}}
\newcommand{\op}{\mathrm{op}}
\newcommand{\sm}{\mathrm{sm}}
\newcommand{\thck}{\mathrm{th}}
\newcommand{\ext}{\mathrm{ex}}
\newcommand{\sh}{\mathrm{sh}}

\newcommand{\VMO}{\textnormal{VMO}}
\DeclareMathOperator{\id}{id}

\DeclareMathOperator{\Supp}{Supp}
\DeclareMathOperator{\supp}{supp}

\DeclareMathOperator{\dist}{dist} 
\DeclareMathOperator{\card}{card}

\DeclareMathOperator{\diam}{diam}
\DeclareMathOperator{\jac}{jac}
\DeclareMathOperator{\Dist}{Dist}
\DeclareMathOperator{\Int}{Int}

\AtEndEnvironment{exemple}{\quad\null\hfill\qedsymbol} 

\numberwithin{equation}{section}
\counterwithin{figure}{section}

\theoremstyle{plain}
\newtheorem{thm}{Theorem}[section]
\newtheorem{definition}[thm]{Definition}
\newtheorem*{definition*}{Definition}
\newtheorem{lemme}[thm]{Lemma}
\newtheorem*{lemme*}{Lemma}
\newtheorem{prop}[thm]{Proposition}
\newtheorem*{prop*}{Proposition}
\newtheorem*{thm*}{Theorem}

\newtheorem*{cor*}{Corollary}

\newtheorem*{affirmation*}{Affirmation}
\theoremstyle{definition}

\newtheorem*{exo*}{Exercise}

\newtheorem*{exemple*}{Example}

\newtheorem*{problem*}{Problem}
\theoremstyle{remark}

\newtheorem*{rmq*}{Remark}

\title{A complete answer to the strong density problem in Sobolev spaces with values into compact manifolds}
\author{Antoine Detaille}
\date{}

\begin{document}

\maketitle

\begin{abstract}
	We consider the problem of strong density of smooth maps in the Sobolev space \( W^{s,p}(Q^{m};\Nc) \), where \( 0 < s < +\infty \), \( 1 \leq p < +\infty \), \( Q^{m} \) is the unit cube in \( \R^{m} \), and \( \Nc \) is a smooth compact connected Riemannian manifold without boundary.
	Our main result fully answers the strong density problem in the whole range \( 0 < s < +\infty \): the space \( \Cc^{\infty}(\overline{Q}^{m};\Nc) \) is dense in \( W^{s,p}(Q^{m};\Nc) \) if and only if \( \pi_{[sp]}(\Nc) = \{0\} \).
	This completes the results of Bethuel (\( s=1 \)), Brezis and Mironescu (\( 0 < s < 1 \)), and Bousquet, Ponce, and Van Schaftingen (\( s = 2 \), \( 3 \), \dots).
	We also consider the case of more general domains \( \Omega \), in the setting studied by Hang and Lin when \( s = 1 \).
\end{abstract}

\section{Introduction}
\label{sect:intro}

We address here the question of the density of smooth maps in Sobolev spaces \( W^{s,p}(\Omega;\Nc) \) of maps \emph{with values into a compact manifold \( \Nc \)}.
Here and in the sequel, \( 1 \leq p < +\infty \) and \( 0 < s < +\infty \).
Recall the following well-known fundamental result in the theory of classical \emph{real-valued} Sobolev spaces: 
if \( \Omega \subset \R^{m} \) is a sufficiently smooth open set, then \( \Cc^{\infty}(\overline{\Omega};\R) \) is dense in \( W^{s,p}(\Omega;\R) \).
The reader may consult, for instance,~\cite{brezis_functional_analysis} or~\cite{willem} for a proof in the case where \( \Omega \) is a smooth domain,
or~\cite{adams_sobolev_spaces} in the case where \( \Omega \) satisfies the weaker \emph{segment condition}.
Here, 
\[
	\Cc^{\infty}(\overline{\Omega}) 
	= \{u_{\vert\Omega}\mathpunct{:} u \in \Cc^{\infty}(\R^{m})\}.
\]

More difficult is the analogue question of the density of smooth maps in Sobolev spaces when the target \( \Nc \) is a manifold.
In what follows, we let \( \Nc \) be a smooth compact connected Riemannian manifold without boundary, isometrically embedded in \( \R^{\nu} \).
The latter assumption is not restrictive, since we may always find such an embedding provided that we choose \( \nu \in \N \) sufficiently large; see~\cite{Nash54} and~\cite{Nash56}.
The natural analogue question is whether \( \Cc^{\infty}(\overline{\Omega};\Nc) \) is dense in \( W^{s,p}(\Omega;\Nc) \).
Here, the space \( W^{s,p}(\Omega;\Nc) \) is the set of all maps \( u \in W^{s,p}(\Omega;\R^{\nu}) \) such that \( u(x) \in \Nc \) for almost every \( x \in \Omega \).
Due to the presence of the manifold constraint, \( W^{s,p}(\Omega;\Nc) \) is in general not a vector space, but it is nevertheless a metric space endowed with the distance defined by 
\[
    d_{W^{s,p}(\Omega)}(u,v) = \lVert u-v \rVert_{W^{s,p}(\Omega)}.
\]
The space \( \Cc^{\infty}(\overline{\Omega};\Nc) \) is defined analogously as the set of all \( \Cc^{\infty}(\overline{\Omega};\R^{\nu}) \) maps taking their values into \( \Nc \).

We note that the usual technique for proving density of smooth maps, relying on regularization by convolution, is not applicable in this context, since in general it does not preserve the constraint that the maps take their values into \( \Nc \).
In the range \( sp \geq m \), however, density always holds.
Indeed, in this range, Sobolev maps are continuous, or belong to the set \( \VMO \) of functions with \emph{vanishing mean oscillation}.
One may therefore proceed as in the classical case, \emph{via} regularization and nearest point projection onto \( \Nc \); see~\cite{SU_boundary_regularity} and~\cite{BN_degree_BMO_I}.

The case \( sp < m \) is way more delicate.
Schoen and Uhlenbeck~\cite{SU_boundary_regularity} were the first to observe that density may fail in this range, due to the presence of topological obstructions.
More precisely, they showed that the map \( u \colon \B^{3} \to \Sp^{2} \) defined by \( u(x) = \frac{x}{\lvert x \rvert} \) may not be approximated by smooth functions in \( W^{1,2}(\B^{3};\Sp^{2}) \).
This was subsequently generalized by Bethuel and Zheng~\cite{BZ_density}*{Theorem~2} and finally by Escobedo~\cite{escobedo_some_remarks}, leading to the conclusion that \( \Cc^{\infty}(\overline{\Omega};\Nc) \)
is never dense in \( W^{s,p}(\Omega;\Nc) \) when \( \pi_{[sp]}(\Nc) \neq \{0\} \).
Here, \( \pi_{\ell}(\Nc) \) is the \( \ell \)-th homotopy group of \( \Nc \), and \( [sp] \) denotes the integer part of \( sp \).
For further use, we note that the condition \( \pi_{[sp]}(\Nc) = \{0\} \) means that every continuous map \( f \colon \Sp^{[sp]} \to \Nc \) may be extended to a continuous map \( g \colon \B^{[sp]+1} \to \Nc \).

A natural question is whether the condition \( \pi_{[sp]}(\Nc) = \{0\} \) is also sufficient for the density of \( \Cc^{\infty}(\overline{\Omega};\Nc) \) in \( W^{s,p}(\Omega;\Nc) \).
A remarkable result of Bethuel~\cite{bethuel_approximation} asserts that, when \( s = 1 \), \( 1 \leq p < m\), and \( \Omega \) is a cube, the condition \( \pi_{[sp]}(\Nc) \neq \{0\} \) is \emph{the only obstruction} to strong density of \( \Cc^{\infty}(\overline{\Omega};\Nc) \) in \( W^{1,p}(\Omega;\Nc) \).
Bethuel's result has been extended to other values of \( s \) and \( p \), but not all (see below).
Our first main result provides a \emph{complete} generalization of Bethuel's result (covering all values of \( s \) and \( p \)).

\begin{thm}
    \label{thm:density_smooth_functions}
    If \( sp < m \) and \( \pi_{[sp]}(\Nc) = \{0\} \), then \( \Cc^{\infty}(\overline{Q}^{m};\Nc) \) is dense in \( W^{s,p}(Q^{m};\Nc) \).
\end{thm}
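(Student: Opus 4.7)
The plan is to follow the two-stage scheme initiated by Bethuel for $s = 1$ and developed further by Bousquet, Ponce, and Van Schaftingen in the integer-$s$ case: \textbf{(a)} show that every $u \in W^{s,p}(Q^{m};\Nc)$ can be strongly approximated by maps which are smooth outside an $(m-[sp]-1)$-dimensional skeleton of a cubication; \textbf{(b)} under the hypothesis $\pi_{[sp]}(\Nc) = \{0\}$, remove these singularities to produce genuinely smooth approximations. The topological assumption enters only in step (b); step (a) is a purely analytic density statement valid for any compact target.

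For step (a), I would fix $\eta > 0$ and a cubication of $Q^{m}$ of mesh $\eta$, and then compose the four classical operations of the BPVS machinery:
\begin{enumerate}[label=(\roman*)]
\item \emph{Opening}: on a thin neighborhood of the $[sp]$-skeleton, flatten $u$ in the directions transverse to each face by a careful averaging, so that the resulting map depends only on the coordinates parallel to each face;
\item \emph{Adaptive smoothing}: convolve with a mollifier whose radius depends smoothly on the position and vanishes on the $[sp]$-skeleton, producing smoothness away from it without destroying the opening;
\item \emph{Thickening}: radially extend the opened-and-smoothed map from the $[sp]$-skeleton to a neighborhood of the dual $(m-[sp]-1)$-dimensional skeleton;
\item \emph{Projection onto $\Nc$}: since the output is $L^{\infty}$-close to $\Nc$ (the opening only averages $\Nc$-valued data, and smoothing has a VMO-type control), compose with the nearest-point projection $\Pi$ onto $\Nc$.
\end{enumerate}
The resulting $u_{\eta}$ is smooth outside an $(m-[sp]-1)$-dimensional set and, as $\eta \to 0$, should converge to $u$ in $W^{s,p}(Q^{m};\Nc)$.

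Step (b) is the topological stage. Each $u_{\eta}$ is smooth outside a union of lower-dimensional affine simplices; near each $([sp]+1)$-cube of the primary cubication, its restriction to the boundary sphere defines a continuous map $\Sp^{[sp]} \to \Nc$, which under the hypothesis $\pi_{[sp]}(\Nc) = \{0\}$ extends continuously, and then smoothly since on each such low-dimensional cube the Sobolev regularity puts us in the regime $sp \geq \dim$ where continuous maps are automatically dense. Gluing these local extensions modifies $u_{\eta}$ only on an arbitrarily small neighborhood of the singular skeleton, so the modification costs an arbitrarily small amount of $W^{s,p}$-norm.

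The main obstacle, and the substantive novelty with respect to the previously treated values of $s$, lies in step (a) and stems from the non-locality of the $W^{s,p}$ Gagliardo seminorm when $s$ is not an integer. Opening, thickening, and adaptive smoothing are local geometric surgeries whose effect on classical Sobolev norms is controlled by pointwise chain-rule formulas; for fractional $W^{s,p}$, a modification near the $[sp]$-skeleton still interacts with distant portions of $u$ through the double integral $\iint |u(x)-u(y)|^{p} / |x-y|^{m+sp} \, \d x \, \d y$. The bulk of the proof must therefore consist in establishing stable fractional Gagliardo-type estimates of the form $[u \circ \Phi]_{W^{s,p}} \leq C [u]_{W^{s,p}}$ for the opening and thickening maps $\Phi$ (which have controlled but singular Jacobians), together with a sharp quantification of the loss introduced by the adaptive smoothing at non-integer order. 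This is where I expect the main effort to concentrate, the topological step (b) and the overall scheme being essentially parallel to the integer case.
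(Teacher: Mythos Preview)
Your outline of step~(a) matches the paper's strategy closely, and you correctly identify the fractional Gagliardo estimates for opening, thickening, and adaptive smoothing as the analytic core. (The paper also relies essentially on Bethuel's good/bad cube decomposition, which you do not mention explicitly, but this is arguably implicit in ``adaptive smoothing''.)

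Step~(b), however, has a genuine gap. You write that the topological extension ``modifies $u_{\eta}$ only on an arbitrarily small neighborhood of the singular skeleton, so the modification costs an arbitrarily small amount of $W^{s,p}$-norm.'' This inference is false: the continuous (and then smooth) extension provided by the hypothesis $\pi_{[sp]}(\Nc)=\{0\}$ comes with \emph{no} a priori bound on its energy. Even though the region $T^{\ell^{\ast}}+Q^{m}_{\delta}$ has small measure, the extension $u^{\mathrm{ext}}_{\delta}$ may have arbitrarily large derivatives there, so $\lVert u^{\mathrm{ext}}_{\delta}-u_{\eta}\rVert_{W^{s,p}}$ need not be small as $\delta\to 0$. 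The paper addresses this explicitly (see the discussion in Section~\ref{sect:sketch_proof} and the whole of Section~\ref{sect:shrinking}): one needs an additional \emph{shrinking} construction, a refined scaling argument that compresses the topological extension from a $\delta$-neighborhood into a much smaller $\tau\delta$-neighborhood, with $\tau$ chosen (depending on the extension) small enough that the rescaled energy contribution is dominated by the energy of the original $u_{\eta}$ on the transition annulus. Proving the requisite fractional estimates for this shrinking map is a substantial part of the paper, comparable in difficulty to the thickening estimates you flag in step~(a). Without it, step~(b) does not go through.
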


Here, \( Q^{m} \) denotes the unit cube in \( \R^{m} \).
The case of more general domains is more involved since the topology of the domain also comes into play, as it was first noticed by Hang and Lin~\cite{HL_topology_of_sobolev_mappings_II}.
We investigate this question in Section~\ref{sect:density_smooth_maps}, establishing counterparts of Theorem~\ref{thm:density_smooth_functions} when the domain is a smooth bounded open set, 
or even a smooth compact manifold of dimension \( m \); see Theorems~\ref{thm:density_smooth_maps_smooth_domain} and~\ref{thm:density_smooth_maps_manifold} below.
    
When \( \pi_{[sp]}(\Nc) \neq \{0\} \), density fails, and a natural question in this context is whether one can find a suitable substitute for the class \( \Cc^{\infty}(\overline{Q}^{m};\Nc) \).
This is indeed the case provided that we replace smooth functions on \( \overline{\Omega} \) by functions that are smooth on \( \overline{\Omega} \) except on some singular set whose dimension depends on \( [sp] \).
This direction of research also originates in Bethuel's paper~\cite{bethuel_approximation}.
(For subsequent results, see below.)
    
We define the class \( \Rc_{i}(\Omega;\Nc) \) as the set of maps \( u \colon \overline{\Omega} \to \Nc \) which are smooth on \( \overline{\Omega} \setminus T \), where \( T \) is a finite union of \( i \)-dimensional planes, and such that for every \( j \in \N_{\ast} \) and \( x \in \Omega \setminus T \),
\[
    \lvert D^{j}u(x) \rvert \leq C\frac{1}{\dist(x,T)^{j}}
\]
for some constant \( C > 0 \) depending on \( u \) and \( j \).
We establish the density of the class \( \Rc_{m-[sp]-1} \) in the full range \( 0 < s < +\infty \) when \( \Omega = Q^{m} \).

\begin{thm}
\label{thm:density_class_R}
    If \( sp < m \), then \( \Rc_{m-[sp]-1}(Q^{m};\Nc) \) is dense in \( W^{s,p}(Q^{m};\Nc) \).
\end{thm}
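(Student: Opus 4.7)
The plan is to adapt the cubication-plus-opening-and-extension strategy introduced by Bethuel and refined in the works of Brezis--Mironescu and Bousquet--Ponce--Van Schaftingen cited in the introduction, carrying it out in a way that covers all values of $s$ and $p$ simultaneously. Given $u \in W^{s,p}(Q^{m};\Nc)$ and $\epsilon > 0$, I want to produce $v \in \Rc_{m-[sp]-1}(Q^{m};\Nc)$ with $\lVert u - v \rVert_{W^{s,p}} < \epsilon$. To start, I fix a cubication $\Kc^{m}_{\eta}$ of $Q^{m}$ at some small scale $\eta > 0$, with $[sp]$-skeleton $T^{[sp]}_{\eta}$ and dual $(m-[sp]-1)$-skeleton $T^{\ast}_{\eta}$; by a Fubini-type argument applied to a generic translate of the grid, the restriction of $u$ to the $[sp]$-skeleton will be controlled in the appropriate Sobolev norm, a crucial prerequisite for the subsequent construction.

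Next, I perform the opening procedure: I replace $u$ by a map $u_{1}$ which, in a neighborhood of $T^{[sp]}_{\eta}$, depends only on the coordinates along the faces of $T^{[sp]}_{\eta}$. This is achieved via the smooth shift/averaging device of Bousquet--Ponce--Van Schaftingen, suitably extended to the fractional setting, and the resulting $u_{1}$ still takes values into $\Nc$ almost everywhere while satisfying $\lVert u - u_{1} \rVert_{W^{s,p}} < \epsilon/3$ for $\eta$ small. Then, on each top-dimensional cube of $\Kc^{m}_{\eta}$, I extend $u_{1}$ from the boundary to the interior by $0$-homogeneous extension from the center of the dual cube, iterated face by face across all codimensions, producing a map $u_{2}$ which is smooth outside $T^{\ast}_{\eta}$. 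The $0$-homogeneity immediately yields the bound $\lvert D^{j} u_{2}(x) \rvert \leq C\, \dist(x, T^{\ast}_{\eta})^{-j}$, and a scaling argument, using that $u_{1}$ is already constant transverse to $T^{[sp]}_{\eta}$ and that $sp < m$, gives $\lVert u_{1} - u_{2} \rVert_{W^{s,p}} < \epsilon/3$.

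Although $u_{2}$ is already smooth away from $T^{\ast}_{\eta}$ if the opening and extension steps are performed with smooth cutoffs, in the general case a final Whitney-type mollification with mollification scale proportional to $\dist(x, T^{\ast}_{\eta})$, followed by nearest-point projection onto $\Nc$ (legitimate since the mollified map lies in a tubular neighborhood of $\Nc$ on each compact subset of $Q^{m} \setminus T^{\ast}_{\eta}$), produces the desired map $v \in \Rc_{m-[sp]-1}(Q^{m};\Nc)$ with $\lVert u_{2} - v \rVert_{W^{s,p}} < \epsilon/3$. The derivative bound is preserved because Whitney mollification with linearly vanishing scale introduces only a constant multiplicative factor in the estimate $\lvert D^{j} v(x) \rvert \leq C \dist(x, T^{\ast}_{\eta})^{-j}$.

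I expect the main obstacle to lie in the opening step for the fractional and higher-integer regimes. In these settings, the Gagliardo seminorm or the $L^{p}$-norms of higher derivatives do not interact trivially with the transverse averaging: one must commute the opening with finite differences, or with high-order derivatives, and the resulting error terms involve norms of $u$ in a thin tube around the skeleton that must be absorbed by the Fubini choice of the cubication. A secondary difficulty, which was a central point in the integer case of Bousquet--Ponce--Van Schaftingen, is ensuring that the homogeneous extension produces compatible data across adjacent top-dimensional cubes while keeping the $W^{s,p}$ estimates scale-invariant, so that summing the local errors over the $O(\eta^{-m})$ cubes does not blow up; this is precisely where the dimensional hypothesis $sp < m$ enters crucially.
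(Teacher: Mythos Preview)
Your outline misses the central mechanism of the proof and, as written, the claim $\lVert u_1 - u_2 \rVert_{W^{s,p}} < \epsilon/3$ is not justified and is in general false. The map $u_2$ you build by $0$-homogeneous extension on \emph{every} cube depends only on the values of $u_1$ near the $[sp]$-skeleton; on the interior of each cube there is no reason for $u_2$ to resemble $u$, and the scaling argument you invoke bounds the energy of $u_2$ by that of $u_1$ but does not make the \emph{difference} small. Summed over the $\eta^{-m}$ cubes this error is of the order of the full energy of $u$, not $o(1)$. The paper (following Bethuel) resolves this via the good/bad cube decomposition: a cube is \emph{bad} when its rescaled energy exceeds a threshold tied to the tubular-neighborhood radius of $\Nc$. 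Opening and thickening are performed only on the bad cubes and their neighbors $U^m_\eta$; on the good cubes one applies adaptative smoothing (convolution with parameter of order $\eta$) so that Poincar\'e--Wirtinger keeps the smoothed map within the tubular neighborhood of $\Nc$ and nearest-point projection applies directly. The convergence $u_\eta \to u$ then comes from the fact that the bad set has measure $o(\eta^{sp})$, so all the correction terms (which scale like $\eta^{i-j}$ times an energy on $U^m_\eta + Q^m_{2\rho\eta}$) vanish in the limit. Without this good/bad dichotomy your scheme has no mechanism for convergence.

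A second genuine gap is your reliance on iterated $0$-homogeneous extension. The paper points out explicitly that for $s > 1 + 1/p$ this fails: at a common face of two adjacent cubes the extensions agree as functions but their derivatives do not, so the glued map is not in $W^{s,p}$. (Brezis--Mironescu themselves showed their homogeneous-extension method already breaks down at $s=1$.) The cure is \emph{thickening}: one builds a smooth map $\Phi^{\mathrm{th}}$ from $U^m_\eta \setminus T^{\ell^*}$ into a \emph{neighborhood} $U^{[sp]}_\eta + Q^m_\delta$ of the skeleton (not onto the skeleton itself), with controlled derivatives $|D^j\Phi^{\mathrm{th}}(x)| \lesssim \eta\,\dist(x,T^{\ell^*})^{-j}$ and Jacobian bounded below; composition with a smooth map then preserves $W^{s,p}$ for any $s$. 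This also forces the order of operations: smoothing must come \emph{before} thickening (so that one composes with a genuinely smooth map), with the convolution radius chosen adaptatively---tiny on bad cubes near the skeleton (where the opening provides the needed VMO control) and of order $\eta$ on good cubes. Your Whitney mollification placed at the end would face the same distance-to-$\Nc$ issue you need the good/bad analysis to control.
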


We mention that, in some sense, the class \( \Rc_{m-[sp]-1}(Q^{m};\Nc) \) is the best dense class in \( W^{s,p}(Q^{m};\Nc) \) one can hope for.
More precisely, the singular set cannot be taken of smaller dimension: the class \( \Rc_{i}(Q^{m};\Nc) \) with \( i < m-[sp]-1 \) is never dense in \( W^{s,p}(Q^{m};\Nc) \) if \( \pi_{[sp]}(\Nc) \neq \{0\} \); see the discussion in~\cite{BPVS_density_higher_order}.

In addition to its own importance, Theorem~\ref{thm:density_class_R} is crucial in establishing Theorem~\ref{thm:density_smooth_functions}.
In Section~\ref{sect:density_class_R}, we explain how to deal with more general domains.
We show that Theorem~\ref{thm:density_class_R} has a valid counterpart on bounded domains \( \Omega \) that merely satisfy the segment condition or when the domain is instead a smooth compact manifold of dimension \( m \); see Theorems~\ref{thm:density_class_R_segment_condition} and~\ref{thm:density_class_R_manifold} below.

Theorems~\ref{thm:density_smooth_functions} and~\ref{thm:density_class_R} where known for some values of \( s \) and \( p \).
As mentioned above, the case \( s = 1 \) was established by Bethuel in his seminal paper~\cite{bethuel_approximation}.
Progress was then made by Brezis and Mironescu~\cite{BM_density_in_Wsp} and by Bousquet, Ponce, and Van Schaftingen~\cite{BPVS_density_higher_order}.
Using an \emph{ad hoc} method based on \emph{homogeneous extension}, Brezis and Mironescu were able to completely solve the case \( 0 < s < 1 \).
On the other hand, Bousquet, Ponce, and Van Schaftingen introduced several important tools that are tailored to higher order Sobolev spaces, which allowed them to give a full answer to the strong density problem in the case \( s = 2 \), \( 3 \), \( \dots \)
Their approach incorporates and adapts major concepts from Bethuel's proof for \( s = 1 \) (among which the \emph{method of good and bad cubes}, which lies at the core of the proof) and from Brezis and Li~\cite{BL_topology_sobolev_spaces}.
It turns out that this approach in~\cite{BPVS_density_higher_order} extends to noninteger values of \( s \).
This is the main contribution of our paper.
Other special cases were obtained by Bethuel and Zheng~\cite{BZ_density}, Escobedo~\cite{escobedo_some_remarks}, Haj\l asz~\cite{hajlasz}, Bethuel~\cite{bethuel_approximation_trace}, Rivière~\cite{Riviere_dense_subsets}, Bousquet~\cite{bousquet_topological_singularities}, Mucci~\cite{mucci_strong_density_results}, and Bousquet, Ponce, and Van Schaftingen~\cites{BPVS_density_simply_connected,BPVS_fractional_density_simply_connected}.
However, the case where \( s > 1 \) is not an integer and \( \Nc \) is a general manifold is not covered by these contributions and is the main novelty of Theorem~\ref{thm:density_smooth_functions}.


The method of homogeneous extension used in~\cite{BM_density_in_Wsp} to settle the case where \( 0 < s < 1 \) was shown by the authors themselves not to work when \( s = 1 \); see~\cite{BM_density_in_Wsp}*{Lemma~4.9}.
On the contrary, as we explained above, the approach in~\cite{BPVS_density_higher_order} can be adapted to handle noninteger values of \( s \).
It is our goal here to explain in detail this adapted construction, introducing the modifications and new ideas that are required to make it suitable for the fractional order setting.
This does not only prove the density of smooth maps in the remaining case where \( s > 1 \) is not an integer, but it also provides a unified proof covering the full range \( 0 < s < +\infty \),
including the case \( 0 < s < 1 \) originally treated \emph{via} a different approach.

This paper is organized as follows.
In Sections~\ref{sect:opening} to~\ref{sect:thickening}, we develop the tools that we need to prove Theorem~\ref{thm:density_class_R}, following the approach in~\cite{BPVS_density_higher_order} and extending the auxiliary results to the noninteger case. 
With these tools at hand, we proceed in Section~\ref{sect:density_class_R} with the proof of Theorem~\ref{thm:density_class_R}.
For the sake of simplicity, we first deal with the model case \( \Omega = Q^{m} \), before explaining how to handle more general domains.
In Section~\ref{sect:density_smooth_maps}, we present the proof of Theorem~\ref{thm:density_smooth_functions} and the counterpart of Theorem~\ref{thm:density_smooth_functions} in general domains.
The proofs rely on an additional tool presented in Section~\ref{sect:shrinking}.

Before delving into technicalities, we start by presenting in Section~\ref{sect:sketch_proof} an overview of the proof of Theorems~\ref{thm:density_class_R} and~\ref{thm:density_smooth_functions} and the main tools that it relies on.
Section~\ref{sect:sketch_proof} also gathers the main useful definitions and basic auxiliary results used throughout the paper.

\subsection*{Acknowledgements}

I am deeply grateful to Petru Mironescu and Augusto Ponce for introducing me to this beautiful topic, for their constant support and many helpful suggestions to improve the exposition.
I especially thank Petru Mironescu for long discussions concerning the paper, and Augusto Ponce for sharing and discussing with me the preprint~\cite{BPVS_screening}.

\section{Definitions and sketch of the proof}
\label{sect:sketch_proof}

From now on, we write \( s = k + \sigma \) with \( k \in \N \) and \( \sigma \in [0,1) \).
We recall that the Sobolev space \( W^{k,p}(\Omega) \) is the set of all \( u \in L^{p}(\Omega) \) such that for every \( j \in \{1,\dots,k\} \),
the weak derivative \( D^{j}u \) belongs to \( L^{p}(\Omega) \).
This space is endowed with the norm defined by
\[
    \lVert u \rVert_{W^{k,p}(\Omega)} = \lVert u \rVert_{L^{p}(\Omega)} + \sum_{j=1}^{k} \lVert D^{j}u \rVert_{L^{p}(\Omega)}.
\]
When \( \sigma \in (0,1) \), the fractional Sobolev space \( W^{\sigma,p}(\Omega) \) is the set of all measurable maps \( u \colon \Omega \to \R \) such that \( \lvert u \rvert_{W^{\sigma,p}(\Omega)} < +\infty \),
where the \emph{Gagliardo seminorm} \( \lvert \cdot \rvert_{W^{\sigma,p}(\Omega)} \) is defined by 
\[
    \lvert u \rvert_{W^{\sigma,p}(\Omega)}
    =
    \biggl(\int_{\Omega}\int_{\Omega} \frac{\lvert u(x)-u(y) \rvert^{p}}{\lvert x-y \rvert^{m+\sigma p}}\,\d x\d y\biggr)^{\frac{1}{p}}.
\]
It is endowed with the norm 
\[
    \lVert u \rVert_{W^{\sigma,p}(\Omega)} = \lVert u \rVert_{L^{p}(\Omega)} + \lvert u \rvert_{W^{\sigma,p}(\Omega)}.
\]
When \( \sigma \in (0,1) \) and \( k \geq 1 \), the Sobolev space \( W^{s,p}(\Omega) \) is the set of all \( u \in W^{k,p}(\Omega) \) such that \( D^{k}u \in W^{\sigma,p}(\Omega) \), endowed with the norm
\[
    \lVert u \rVert_{W^{s,p}(\Omega)} = \lVert u \rVert_{W^{k,p}(\Omega)} + \lvert D^{k}u \rvert_{W^{\sigma,p}(\Omega)}.
\]
When working specifically with the Gagliardo seminorm, we shall often consider implicitly that \( \sigma \neq 0 \).
We also mention that here, we consider \( L^{p}(\Omega) \) maps as measurable functions \( u \colon \Omega \to \R \) (and not classes of functions), i.e., we do not identify two maps that are almost everywhere equal.
As we will see, this will be of importance in the course of Section~\ref{sect:opening}.

Throughout the paper, we make intensive use of decompositions of domains into suitable families of cubes.
For this purpose, we introduce some notation.
Given \( \eta > 0 \) and \( a \in \R^{m} \), we denote by \( Q^{m}_{\eta}(a) \) the cube of center \( a \) and radius \( \eta \) in \( \R^{m} \), the radius of a cube being half of the length of its edges.
When \( a = 0 \), we abbreviate \( Q^{m}_{\eta}(0) = Q^{m}_{\eta} \).
We also abbreviate \( Q^{m}_{1} = Q^{m} \).

A \emph{cubication} \( \Kc^{m}_{\eta} \) of radius \( \eta > 0 \) is any subset of \( Q^{m}_{\eta} + 2\eta\Z^{m} \).
Given \( \ell \in \{0,\dots,m\} \), the \emph{\( \ell \)-skeleton} of \( \Kc^{m}_{\eta} \) is the set \( \Kc^{\ell}_{\eta} \) of all faces of dimension \( \ell \) of all cubes in \( \Kc^{m}_{\eta} \).
A \emph{subskeleton} of dimension \( \ell \) of \( \Kc^{m}_{\eta} \) is any subset of \( \Kc^{\ell}_{\eta} \).
Given a skeleton \( \Sc^{\ell} \), we denote by \( S^{\ell} \) the union of all elements of \( \Sc^{\ell} \), that is, 
\[
    S^{\ell} = \bigcup_{\sigma^{\ell} \in \Sc^{\ell}} \sigma^{\ell}.
\]

Given a skeleton \( \Sc^{\ell} \), the \emph{dual skeleton} of \( \Sc^{\ell} \) is the skeleton \( \Tc^{\ell^{\ast}} \) of dimension \( \ell^{\ast} = m-\ell-1 \) 
consisting in all cubes of the form \( \sigma^{\ell^{\ast}} + a - x \), where \( \sigma^{\ell^{\ast}} \in \Sc^{\ell^{\ast}} \), 
\( a \) is the center and \( x \) a vertex of a cube of \( \Sc^{m} \) with \( x \in \sigma^{\ell^{\ast}} \).
The dimension \( \ell^{\ast} \) is the largest possible so that \( S^{\ell} \cap T^{\ell^{\ast}} = \varnothing \).
Here, 
\[
	T^{\ell^{\ast}} = \bigcup_{\sigma^{\ell^{\ast}} \in \Tc^{\ell^{\ast}}} \sigma^{\ell^{\ast}}.
\]

For further use, we note that \( S^{\ell^{\ast}} \) is a homotopy retract of \( S^{m} \setminus T^{\ell^{\ast}} \); see e.g.~\cite{white_infima_energy_functionals}*{Section~1} or~\cite{BPVS_density_simply_connected}*{Lemma~2.3}.

Given a map \( \upPhi \colon \R^{m} \to \R^{m} \), the \emph{geometric support of \( \upPhi \)} is defined by
\[	
\Supp\upPhi = \overline{\{x \in \R^{m} \mathpunct{:} \upPhi(x) \neq x\}}.
\]
This should not be confused with the \emph{analytic support} of a map \( \varphi \colon \R^{m} \to \R \), defined by
\[
\supp\varphi = \overline{\{x \in \R^{m} \mathpunct{:} \varphi(x) \neq 0\}}.
\]

We now present the sketch of the proof of Theorem~\ref{thm:density_class_R}.
We also include graphical illustrations of the various constructions involved in the proof, with \( m = 2 \) and \( [sp] = 1 \).
As we explained in the introduction, we follow the approach of Bousquet, Ponce, and Van Schaftingen~\cite{BPVS_density_higher_order},
and we provide the necessary tools and ideas to adapt their method to the fractional setting.
Let \( u \in W^{s,p}(Q^{m};\Nc) \).
For the sake of simplicity, we assume that \( u \) is defined in a neighborhood of \( \overline{Q}^{m} \).
The starting point is Bethuel's concept of \emph{good cubes} and \emph{bad cubes} that we now present.
Let \( \Kc^{m}_{\eta} \) be a cubication of \( Q^{m} \), that is, \( K^{m}_{\eta} = Q^{m} \).
Here, \( \eta > 0 \) is such that \( 1/\eta \in \N_{\ast} \).
(Actually, for technical reasons, we will need to work on a cubication of a slightly larger cube than \( Q^{m} \), but for this informal exposition, let us stick to a cubication of \( Q^{m} \) for the sake of simplicity.)
We fix \( 0 < \rho < \frac{1}{2} \) and define the family \( \Ec^{m}_{\eta} \) of all \emph{bad cubes} as the set of cubes \( \sigma^{m} \in \Kc^{m}_{\eta} \) such that 
\begin{equation}
\label{eq:def_bad_cubes_sketch}
    \frac{1}{\eta^{m-sp}}\lVert Du \rVert_{L^{sp}(\sigma^{m}+Q^{m}_{2\rho\eta})}^{sp} > c
    \quad 
    \text{if \( s \geq 1 \), or}
    \quad 
    \frac{1}{\eta^{m-sp}}\lvert u \rvert_{W^{s,p}(\sigma^{m}+Q^{m}_{2\rho\eta})}^{p} > c
    \quad 
    \text{if \( 0 < s < 1 \),}
\end{equation}
where \( c > 0 \) is a small parameter to be determined later on.
The remaining cubes are the \emph{good cubes}.
From now on, we shall assume that we are in the case \( s \geq 1 \), in order to avoid having to distinguish two cases.
(The case \( 0 < s < 1 \) is similar.)
The condition defining the good cubes ensures that \( u \) \emph{does not oscillate too much} on such cubes.
On the contrary, one cannot control the behavior of \( u \) on bad cubes, but we can show that there are not too many of them.
Indeed, each bad cube contributes with at least \( c\eta^{\frac{m}{sp}-1} \) to the energy of \( u \), which limits the number of such cubes.

On Figure~\ref{fig:good_and_bad_cubes}, one finds a possible decomposition of \( Q^{2} \) in \( 16 \) cubes, which corresponds to \( \eta = \frac{1}{4} \).
Here, the three cubes in red are bad cubes, while green cubes are good cubes.
For technical reasons that will become clear later on, it is useful to work on a set slightly larger than the union of bad cubes.
We therefore let \( \Uc^{m}_{\eta} \) be the set of all cubes in \( \Kc^{m}_{\eta} \) that intersect some bad cube in \( \Ec^{m}_{\eta} \).
This fact is ignored in our graphical illustrations, which are drawn as if \( \Uc^{m}_{\eta} = \Kc^{m}_{\eta} \).
This allows us to keep readable pictures with large cubes.
Nevertheless, the reader should keep in mind that all constructions explained below are actually performed not only on the red cubes, but also on all green cubes adjacent to them, and that decompositions could possibly consist in many small cubes.

\begin{figure}[ht]
	\centering
	\includegraphics[page=16]{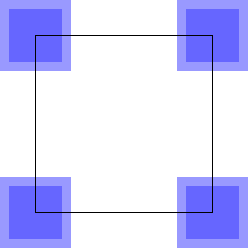}
	\caption{Good and bad cubes}
	\label{fig:good_and_bad_cubes}
\end{figure}

We now turn to the construction of the maps in \( u \) in the class \( \Rc_{m-[sp]-1} \) approximating \( u \).
The first tool is the \emph{opening}, which is explained in Section~\ref{sect:opening}.
This technique originates in the work of Brezis and Li~\cite{BL_topology_sobolev_spaces} about the topology of Sobolev spaces of maps between manifolds.
We \emph{open} the map \( u \) in order to obtain a map \( u^{\op}_{\eta} \) which, on a neighborhood of the \( [sp] \)-skeleton \( U^{[sp]}_{\eta} \),
is constant on the \( (m-[sp]) \)-dimensional cubes orthogonal to cubes in \( \Uc^{[sp]}_{\eta} \).
Therefore, on this neighborhood, the map \( u^{\op}_{\eta} \) behaves locally as a function of \( [sp] \)-variables.
But since \( sp \geq [sp] \), this means that, on this region, \( u^{\op}_{\eta} \) is actually a \( \VMO \) function.
The map \( u^{\op}_{\eta} \) is obtained by modifying \( u \) on a slightly larger neighborhood of \( U^{[sp]}_{\eta} \), and the construction does 
not increase too much the energy of \( u \) on this neighborhood.

On Figure~\ref{fig:opening_around_bad_cubes}, one finds an illustration of the opening procedure when \( [sp] = 1 \).
The map \( u \) is opened on the blue region, where it therefore satisfies \( \VMO \) estimates.

\begin{figure}[ht]
	\centering
	\includegraphics[page=17]{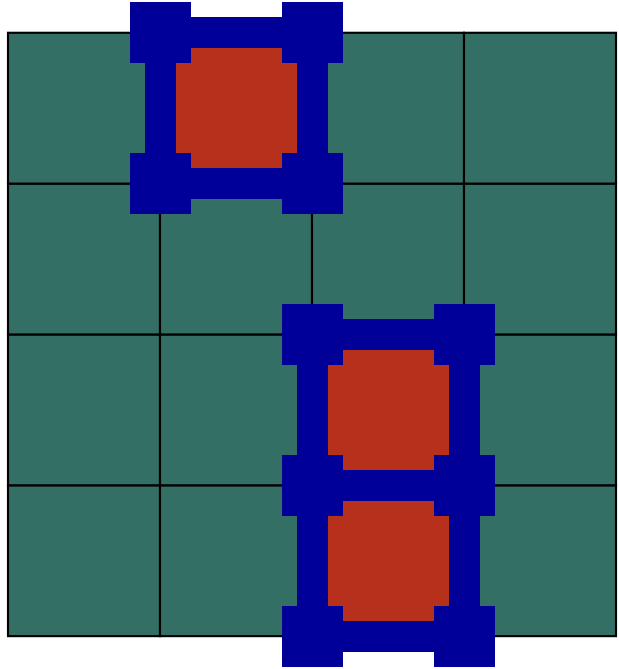}
	\caption{Opening around the \( 1 \)-skeleton of bad cubes}
	\label{fig:opening_around_bad_cubes}
\end{figure}

The next step is to smoothen the map \( u^{\op}_{\eta} \).
Given a mollifier \( \varphi \in \Cc^{\infty}_{c}(B^{m}_{1}) \) and \( r > 0 \), the usual convolution product is defined as 
\[
    \varphi_{r} \ast u(x) = \int_{B^{m}_{1}}\varphi(z)u(x+rz)\,\d z.
\]
Here we rely on the method of \emph{adaptive smoothing}, whose principle is to allow the convolution parameter to depend on the point where the convolution is evaluated.
This technique was made popular by the work of Schoen and Uhlenbeck~\cite{SU_regularity_theory_harmonic_maps}, where it was used in the study of the regularity of harmonic maps with values into a manifold.

More precisely, given \( \psi \in \Cc^{\infty}(Q^{m}) \), we let 
\[
    \varphi_{\psi} \ast u(x) = \int_{B^{m}_{1}} \varphi(z)u(x+\psi(x)z)\,\d z.
\]
To pursue the proof, we choose a suitable map \( \psi_{\eta} \in \Cc^{\infty}(B^{m}_{1}) \), whose construction depends on \( \eta \) and will be explained later on,
and we define \( u^{\sm}_{\eta} = \varphi_{\psi_{\eta}} \ast u^{\op}_{\eta} \).

This convolution procedure guarantees that the resulting map \( u^{\sm}_{\eta} \) is smooth, 
but has the drawback that it need no longer take its values into \( \Nc \), since the convolution product is in general not compatible with non convex constraints.
We therefore need to estimate the distance between \( u^{\sm}_{\eta} \) and \( \Nc \).
By straightforward computations, we write 
\[
    \lvert u^{\sm}_{\eta}(x) - u^{\op}_{\eta}(z) \rvert
    \leq
    \Cl{cst:fixed_sketch1}\fint_{Q^{m}_{\psi_{\eta}(x)}} \lvert u^{\op}_{\eta}(y)-u^{\op}_{\eta}(z) \rvert \,\d y.
\]
Averaging over all \( z \in Q^{m}_{\psi(x)}(x) \), since \( u^{\op}_{\eta}(z) \in \Nc \), we deduce that 
\[
    \dist{(u^{\sm}_{\eta}(x),\Nc)}
    \leq 
    \Cr{cst:fixed_sketch1}\fint_{Q^{m}_{\psi_{\eta}(x)}}\fint_{Q^{m}_{\psi_{\eta}(x)}} \lvert u^{\op}_{\eta}(y)-u^{\op}_{\eta}(z) \rvert \,\d y\d z.
\]
Here we see the usefulness of the opening construction performed at the previous step: since \( u^{\op}_{\eta} \) is a \( \VMO \) function 
close to \( U^{[sp]}_{\eta} \), the right-hand side of the above estimate may be made arbitrarily small in this region provided that we choose \( \psi_{\eta}(x) \) sufficiently small.
On the good cubes, we pursue the estimate by invoking the Poincaré--Wirtinger inequality to write 
\begin{equation}
\begin{aligned}
\label{eq:first_Poincare_Wirtinger}
    \dist{(u^{\sm}_{\eta}(x),\Nc)}
    &\leq 
    \C \frac{1}{\psi_{\eta}(x)^{\frac{m}{sp}-1}}\lVert Du^{\op}_{\eta} \rVert_{L^{sp}(Q^{m}_{\psi_{\eta}(x)}(x))} \\
    &\leq 
    \C \frac{1}{\psi_{\eta}(x)^{\frac{m}{sp}-1}}\lVert Du \rVert_{L^{sp}(Q^{m}_{\psi_{\eta}(x)}(x))}.
\end{aligned}
\end{equation}
If we choose \( \psi_{\eta}(x) \) of order \( \eta \), then on the right-hand side of~\eqref{eq:first_Poincare_Wirtinger}, we find precisely the energy of \( u \) which is controlled on the good cubes.
Therefore, choosing suitably the constant \( c > 0 \) in~\eqref{eq:def_bad_cubes_sketch}, on the good cubes, \( u^{\sm}_{\eta} \) will be \( \delta \)-close to \( \Nc \), for some given arbitrarily small number \( \delta > 0 \).
To summarize, we are invited to choose the convolution parameter very small on bad cubes, near the \( [sp] \)-skeleton, and of order \( \eta \) on good cubes.
Between those two regimes, we need a transition region in order to allow \( \psi_{\eta} \) to change of magnitude, which is precisely the reason to introduce both families \( \Uc^{m}_{\eta} \) and \( \Ec^{m}_{\eta} \) instead of working directly on bad cubes.
The precise way to perform this construction is explained in Section~\ref{sect:adaptive_smoothing}, and gathering the estimates on good and bad cubes, we conclude that \( u^{\sm}_{\eta} \) is close to \( \Nc \) on the good cubes, and on the part of bad cubes close to the \( [sp] \)-skeleton.

It therefore remains to deal with the part of bad cubes far from the \( [sp] \)-skeleton, where we have no control on the distance between \( u^{\sm}_{\eta} \) and \( \Nc \) (which corresponds to the red region in Figure~\ref{fig:opening_around_bad_cubes}).
This is the purpose of the last tool we need, which is called \emph{thickening}.
The method is inspired from the use of homogeneous extension by Bethuel in the case \( s = 1 \).
We illustrate the idea when \( s = 1 \) and \( m-1 < p < m \).
Given a map \( v \in \Cc^{\infty}(\overline{Q}^{m}) \), we define \( w \) on \( Q^{m} \) by 
\[
    w(x) = v\Bigl(\frac{x}{\lvert x \rvert_{\infty}}\Bigr).
\]
Here we recall that \( \lvert \cdot \rvert_{\infty} \) stands for the \( \infty \)-norm in \( \R^{m} \), defined for \( x = (x_{1},\dots,x_{m}) \in \R^{m} \) by \( \displaystyle \lvert x \rvert_{\infty} = \max_{1 \leq i \leq m} \lvert x_{i} \rvert \).
Using radial integration, we see that \( w \in W^{1,p}(Q^{m}) \) and 
\[
    \lVert Dw \rVert_{L^{p}(Q^{m})}^{p}
    \leq
    \C\lVert Dv \rVert_{L^{p}(\partial Q^{m})}^{p}\int_{0}^{1} r^{m-p-1}\,\d r
    \leq 
    \C\lVert Dv \rVert_{L^{p}(\partial Q^{m})}^{p}.
\]
Here we use the assumption \( p < m \).
Hence, \( w \) is a \( W^{1,p}(Q^{m}) \) map that depends only on the values of \( v \) on \( \partial Q^{m} \).
We may iterate this construction on faces by downward induction on the dimension to construct a map which only depends on the values of \( v \) 
on the \( [p] \)-skeleton of \( Q^{m} \).

Two major difficulties arise when we try to adapt this construction to general Sobolev maps and spaces.
First, it requires to work with slices of Sobolev maps on sets of zero measure.
But more importantly, gluing such constructions on two cubes sharing a common face is a delicate matter.
This is already the case when \( s = 1 \) if \( p < m-1 \) at the interface between a good and a bad cube, since the resulting maps do not coincide on the whole common face, 
and gets worse when \( s > 1+\frac{1}{p} \) as the derivatives do not match at the interface.
We bypass this difficulty by working with a more involved version of homogeneous extension, the \emph{thickening} procedure.

Let \( \Tc^{[sp]^{\ast}}_{\eta} \) denote the dual skeleton of \( \Uc^{[sp]}_{\eta} \).
The homogeneous extension, in the form presented above, associates with a map \( v \colon U^{[sp]}_{\eta} \to \R^{\nu} \) a map \( w \colon U^{m}_{\eta} \setminus T^{[sp]^{\ast}}_{\eta} \to \R^{\nu} \), and this map is, in general, discontinuous on \( T^{[sp]^{\ast}}_{\eta} \).
The map \( w \) may be written as \( w = v \circ \upPhi^{\mathrm{he}} \), where \( \upPhi^{\mathrm{he}} \colon U^{m}_{\eta} \setminus T^{[sp]^{\ast}}_{\eta} \to U^{[sp]}_{\eta} \) is a Lipschitz map.
Instead, the thickening procedure associates with a map \( v \colon U^{[sp]}_{\eta}+Q^{m}_{\delta} \to \R^{\nu} \) (for some \( \delta > 0 \) sufficiently small) a map \( w \colon U^{m}_{\eta} \setminus T^{[sp]^{\ast}}_{\eta} \to \R^{\nu} \), which, again, is in general singular on the set \( T^{[sp]^{\ast}}_{\eta} \).
The map \( w \) is obtained from \( v \) as \( w = v \circ \upPhi^{\thck} \), where \( \upPhi^{\thck} \colon U^{m}_{\eta} \setminus T^{[sp]^{\ast}}_{\eta} \to U^{[sp]}_{\eta}+Q^{m}_{\delta} \) is a \emph{smooth} map.
Working with the neighborhood \( U^{[sp]}_{\eta}+Q^{m}_{\delta} \) instead of the skeleton \( U^{[sp]}_{\eta} \) is the key idea to avoid working with slices of Sobolev maps, and more importantly, to be able to choose \( \upPhi^{\thck} \) smooth, which, in turn, is crucial to ensure that composition with \( \upPhi^{\thck} \) preserves higher order Sobolev regularity.

The detailed construction, devised in~\cite{BPVS_density_higher_order}*{Section~4}, is explained in Section~\ref{sect:thickening}, and we apply it to modify the map \( u^{\sm}_{\eta} \) on \( U^{m}_{\eta} \) 
to a map \( u^{\thck}_{\eta} \) whose values on \( U^{m}_{\eta} \) only depend on the values of \( u^{\sm}_{\eta} \) near \( U^{[sp]}_{\eta} \),
while not increasing too much the energy of the map on \( U^{m}_{\eta} \).
Therefore, the map \( u^{\thck}_{\eta} \) is close to \( \Nc \) on the whole \( Q^{m} \setminus T^{[sp]^{\ast}}_{\eta} \), 
which makes possible to project it back onto \( \Nc \) relying on the nearest point projection \( \upPi \).
Since the map \( u^{\sm}_{\eta} \) is smooth, the map \( u^{\thck}_{\eta} \) is smooth on \( Q^{m} \setminus T^{[sp]^{\ast}}_{\eta} \), and we will show that 
the singularities created on \( T^{[sp]^{\ast}}_{\eta} \) by the thickening are sufficiently mild so that \( u^{\thck}_{\eta} \) belongs to the class \( \Rc_{m-[sp]-1}(Q^{m};\R^{\nu}) \).

One finds an illustration of the thickening procedure on Figure~\ref{fig:thickening_around_centers_of_bad_cubes}.
The values of \( u \) on the dark blue region are propagated into the light blue region.
This process creates point singularities on the centers of bad cubes, which are represented by the intersection of all the black lines.

\begin{figure}[ht]
	\centering
	\includegraphics[page=18]{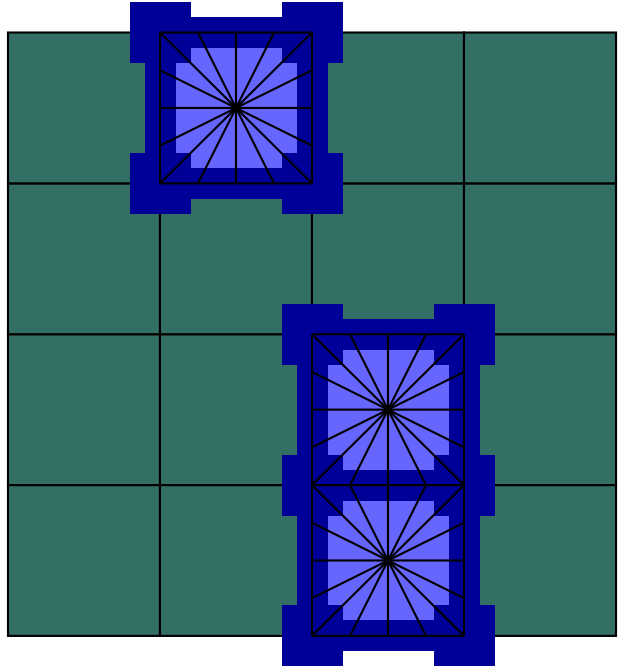}
	\caption{Thickening around the centers of bad cubes}
	\label{fig:thickening_around_centers_of_bad_cubes}
\end{figure}

The maps \( u_{\eta} = \upPi \circ u^{\thck}_{\eta} \) therefore belong to \( \Rc_{m-[sp]-1}(Q^{m};\Nc) \), and they are actually the approximations of \( u \) that we were looking for.
The only step that is required to obtain this conclusion is to show the convergence \( u_{\eta} \to u \) in \( W^{s,p} \) as \( \eta \to 0 \).
This is done in Section~\ref{sect:density_class_R}, and amounts to a careful combination of the estimates obtained at each step of the construction.
Except for the adaptive smoothing, all the modifications performed on \( u \) are localized in a neighborhood of \( U^{m}_{\eta} \).
The main ingredient to reach the conclusion \( u_{\eta} \to u \) is therefore the fact that there are not too many bad cubes, and that actually the measure of the union of all bad cubes decays at a sufficiently high rate.

The density of the class \( \Rc \) being established, we may then move to the density of smooth maps under the assumption \( \pi_{[sp]}(\Nc)=\{0\} \).
For this, it suffices to show that maps \( u_{\eta} \) of the class \( \Rc_{m-[sp]-1}(Q^{m},\Nc) \) as constructed in the first part of the proof above may be approximated by smooth maps.
Under the assumption \( \pi_{[sp]}(\Nc)=\{0\} \), for any given arbitrarily small number \( \delta > 0 \), one may find a smooth map \( u^{\ext}_{\delta} \) such that \( u^{\ext}_{\delta} \) coincides with \( u_{\eta} \) everywhere on \( Q^{m} \) except on \( T^{[sp]^{\ast}}_{\eta}+Q^{m}_{\delta} \).
This is explained in Section~\ref{sect:density_smooth_maps}, in connection with the notion of \emph{extension property} introduced by Hang and Lin~\cite{HL_topology_of_sobolev_mappings_II}.

The map \( u^{\ext}_{\delta} \) allows us to remove the singularities of \( u_{\eta} \), but this topological construction does not allow to conclude that \( u^{\ext}_{\delta} \) is close to \( u_{\eta} \) with respect to the \( W^{s,p} \) distance, since \( u^{\ext}_{\delta} \) could have arbitrarily large energy on the set \( T^{[sp]^{\ast}}_{\eta}+Q^{m}_{\delta} \) where it differs from \( u_{\eta} \).
To overcome this issue, we use a scaling argument to obtain a better extension.
Again, we illustrate the method on the model case where \( s = 1 \) and \( m-1 < p < m \).
Assume that \( v \in W^{1,p}(Q^{m}) \) and that \( w \in \Cc^{\infty}(\overline{Q}^{m}) \) coincides with \( v \) on \( Q^{m} \setminus Q^{m}_{\delta} \), where \( 0 < \delta < \frac{1}{2} \).
Given \( 0 < \tau < 1 \), we define \( w_{\tau} \) on \( Q^{m} \) by 
\[
    w_{\tau}(x) = 
    \begin{cases}
        w(x) & \text{if \( x \in Q^{m} \setminus B^{m}_{2\delta} \),} \\
        w\bigl(\frac{x}{\tau}\bigr) & \text{if \( x \in B^{m}_{\tau\delta} \),} \\
        w\Bigl(\frac{x}{\lvert x \rvert}\bigl(\frac{1}{2-\tau}(\lvert x \rvert-\tau\delta)+\delta\bigr)\Bigr) & \text{if \( x \in B^{m}_{2\delta} \setminus B^{m}_{\tau\delta} \).}
    \end{cases}
\]
This corresponds to shrinking \( w \) from \( B^{m}_{\delta} \) to \( B^{m}_{\tau\delta} \) while keeping it unchanged on \( Q^{m} \setminus B^{m}_{2\delta} \),
filling the transition region by linear interpolation.
By a change of variable, we estimate 
\begin{align*}
    \lVert Dw_{\tau} \rVert_{L^{p}(Q^{m})}^{p}
    &= 
    \lVert Dw_{\tau} \rVert_{L^{p}(B^{m}_{\tau\delta})}^{p} + \lVert Dw_{\tau} \rVert_{L^{p}(Q^{m}\setminus B^{m}_{\tau\delta})}^{p} \\
    &\leq 
    \Cl{cst:fixed_sketch2}\tau^{m-p}\lVert Dw \rVert_{L^{p}(B^{m}_{\delta})}^{p} + \Cl{cst:fixed_sketch3}\lVert Dw \rVert_{L^{p}(Q^{m}\setminus B^{m}_{\delta})}^{p}.
\end{align*}
Since \( v = w \) on \( Q^{m} \setminus Q^{m}_{\delta} \), we deduce that 
\[
    \lVert Dw_{\tau} \rVert_{L^{p}(Q^{m})}^{p}
    \leq 
    \Cr{cst:fixed_sketch2}\tau^{m-p}\lVert Dw \rVert_{L^{p}(B^{m}_{\delta})}^{p} + \Cr{cst:fixed_sketch3}\lVert Dv \rVert_{L^{p}(Q^{m}\setminus B^{m}_{\delta})}^{p}.
\]
Choosing \( \tau \) sufficiently small -- depending on \( \delta \) and on \( w \) -- we may therefore make so that 
\[
    \lVert Dw_{\tau} \rVert_{L^{p}(Q^{m})}^{p}
    \leq 
    \C\lVert Dv \rVert_{L^{p}(Q^{m})}^{p}.
\]
In Section~\ref{sect:shrinking}, we explain the technique of \emph{shrinking}, which is actually a more involved version of this scaling argument, devised in~\cite{BPVS_density_higher_order}*{Section~8} to handle lower order skeletons and higher order regularity.

An illustration of this idea is available on Figure~\ref{fig:shrinking_around_centers_of_bad_cubes}.
The point singularities in Figure~\ref{fig:thickening_around_centers_of_bad_cubes} have been patched with a topological extension, which has been shrinked into the small region in gray to obtain a map with controlled energy.

\begin{figure}[ht]
	\centering
	\includegraphics[page=19]{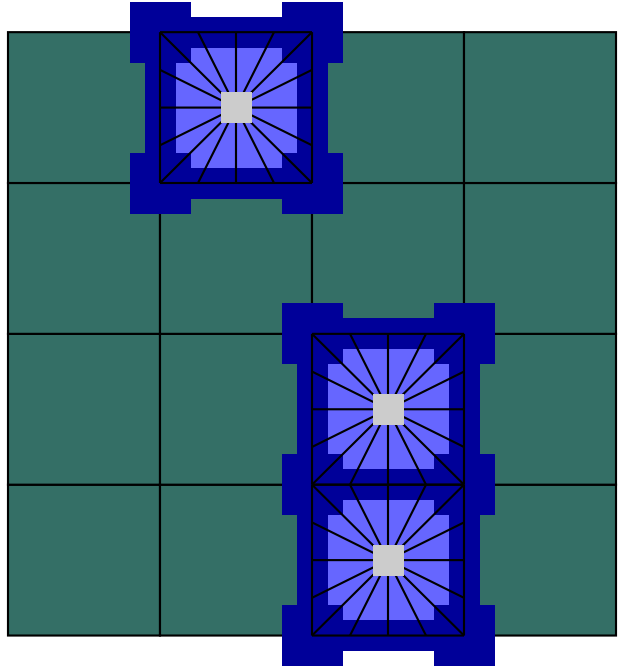}
	\caption{Shrinking around the centers of bad cubes}
	\label{fig:shrinking_around_centers_of_bad_cubes}
\end{figure}

This allows to proceed with the proof of Theorem~\ref{thm:density_smooth_functions} in Section~\ref{sect:density_smooth_maps}.
The strategy is exactly the same as in the model example above: we start with the smooth extension \( u^{\ext}_{\delta} \) provided by topological arguments, 
we shrink it to a map \( u^{\sh}_{\delta,\tau} \), and we use carefully the estimates available for shrinking to choose the parameter \( \tau > 0 \) 
in order to obtain a better extension with control of the energy.
As \( \delta \to 0 \), this provides an approximation of \( u_{\eta} \) by smooth maps with values into \( \Nc \), which is enough to prove Theorem~\ref{thm:density_smooth_functions}
since we already obtained the density of the class \( \Rc \).

\resetconstant

After this sketch of our proofs, we move to the detailed construction of the different tools that were described above.
The proofs being rather long and technical, we hope that this informal presentation will help the reader to identify and keep in mind the purpose and the intuition behind 
each construction when studying the details of the reasoning.

We end this section with two lemmas that will be used repeatedly in the sequel.
Most of our constructions on cubications will be built blockwise: we start from a building block defined on a cube, and we glue copies of this block on each cube of the skeleton to obtain a map defined on the whole skeleton.
When establishing Sobolev estimates for such constructions, integer order estimates on the skeleton are readily obtained from corresponding estimates on each cube by additivity of the integral.
On the contrary, the Gagliardo seminorm is not additive due to its nonlocal nature.
We bypass this obstruction by relying on the lemmas below.

\begin{lemme}
\label{lemma:fractional_additivity}
    Let \( \delta > 0 \) and let \( \Omega = \bigcup_{i \in I} \Omega_{i} \), 
    where \( I \) is finite or countable and \( \Omega_{i} \subset \R^{m} \) for every \( i \in I \).
    Set \( \Omega_{i,\delta} = \{ x \in \Omega\mathpunct{:} \dist(x,\Omega_{i}) < \delta\} \).
    For every \( u \colon \Omega \to \R \) measurable, one has
    \[
        \lvert u \rvert_{W^{\sigma,p}(\Omega)}^{p}
        \leq 
        \sum_{i \in I} \lvert u \rvert_{W^{\sigma,p}(\Omega_{i,\delta})}^{p}
            + C\delta^{-\sigma p}\lVert u \rVert_{L^{p}(\Omega)}^{p}
    \]
    for some constant \( C > 0 \) depending on \( m \), \( \sigma \), and \( p \).
\end{lemme}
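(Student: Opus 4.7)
The plan is to split the double integral defining the Gagliardo seminorm according to whether the pair of points is at distance less than $\delta$ or at least $\delta$, and treat each regime separately. More precisely, writing
\[
    \lvert u \rvert_{W^{\sigma,p}(\Omega)}^{p} = I_{\text{near}} + I_{\text{far}},
\]
where $I_{\text{near}}$ is the integral over $\{(x,y) \in \Omega \times \Omega \mathpunct{:} \lvert x-y \rvert < \delta\}$ and $I_{\text{far}}$ is the integral over the complementary set in $\Omega \times \Omega$. The whole point is that the short-range contribution is controlled locally and thus cooperates with the covering, while the long-range contribution is controlled by the $L^p$ norm at the cost of a factor $\delta^{-\sigma p}$.

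For the long-range part, I would use the elementary inequality $\lvert u(x)-u(y) \rvert^{p} \leq 2^{p-1}(\lvert u(x) \rvert^{p}+\lvert u(y) \rvert^{p})$ and the computation
\[
    \int_{\{y \in \R^{m} \mathpunct{:} \lvert x-y \rvert \geq \delta\}} \frac{\d y}{\lvert x-y \rvert^{m+\sigma p}} = \frac{C_{m}}{\sigma p}\,\delta^{-\sigma p},
\]
which yields, after a symmetric application of Fubini,
\[
    I_{\text{far}} \leq C\,\delta^{-\sigma p} \lVert u \rVert_{L^{p}(\Omega)}^{p}.
\]

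For the short-range part, since $\Omega = \bigcup_{i \in I} \Omega_{i}$, one has $\mathds{1}_{\Omega}(x) \leq \sum_{i \in I} \mathds{1}_{\Omega_{i}}(x)$, so
\[
    I_{\text{near}} \leq \sum_{i \in I} \int_{\Omega_{i}}\int_{\Omega \cap \{\lvert x-y \rvert < \delta\}} \frac{\lvert u(x)-u(y) \rvert^{p}}{\lvert x-y \rvert^{m+\sigma p}} \,\d y\d x.
\]
The key geometric observation is then that if $x \in \Omega_{i}$ and $y \in \Omega$ with $\lvert x-y \rvert < \delta$, then $\dist(y,\Omega_{i}) \leq \lvert x-y \rvert < \delta$, so $y \in \Omega_{i,\delta}$; and trivially $x \in \Omega_{i} \subset \Omega_{i,\delta}$. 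Hence each term in the sum is at most $\lvert u \rvert_{W^{\sigma,p}(\Omega_{i,\delta})}^{p}$, and combining the two estimates produces the announced inequality.

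I do not anticipate any real obstacle here: the only point requiring minimal care is the cover inequality $\mathds{1}_{\Omega} \leq \sum_{i} \mathds{1}_{\Omega_{i}}$ (valid whenever the $\Omega_{i}$ cover $\Omega$, whether or not they overlap), together with the inclusion $\Omega_{i} \subset \Omega_{i,\delta}$ used in the final estimate. The constant $C$ depends only on $m$, $\sigma$, and $p$ through the elementary tail integral above.
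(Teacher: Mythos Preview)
Your proof is correct and follows essentially the same approach as the paper: split the double integral into the regimes $\lvert x-y\rvert<\delta$ and $\lvert x-y\rvert\geq\delta$, control the far part by the $L^{p}$ norm via the tail integral, and for the near part use that if $x\in\Omega_{i}$ and $\lvert x-y\rvert<\delta$ then both $x$ and $y$ lie in $\Omega_{i,\delta}$. The only cosmetic difference is that the paper phrases the near-part argument as a set inclusion $\Omega\times\Omega\subset\{\lvert x-y\rvert\geq\delta\}\cup\bigcup_{i}\Omega_{i,\delta}\times\Omega_{i,\delta}$ rather than via the indicator inequality $\mathds{1}_{\Omega}\leq\sum_{i}\mathds{1}_{\Omega_{i}}$.
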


This lemma acts as a replacement for the additivity for the Gagliardo seminorm.
Similar kind of results were already present in the work of Bourdaud concerning the continuity of the composition operator on Sobolev or Besov spaces; see e.g.~\cite{bourdaud93} and the references therein.
The price to pay to have a decomposition of the Gagliardo seminorm is that we need some margin of security between the different parts of the domain on which we split the energy,
and that an additional term involving the \( L^{p} \) norm of the map under consideration shows up, which deteriorates as the margin of security shrinks.
In the sequel, Lemma~\ref{lemma:fractional_additivity} will often be employed by taking the \( \Omega_{i} \) to be rectangles, 
which therefore suggests to have at our disposal estimates on rectangles slightly larger than the \( \Omega_{i} \).
Here we use the term \emph{rectangle} to denote any product of \( m \) intervals with non-empty interior.
We reserve the word \emph{cube} for the case where all the intervals have the same length.

\begin{proof}
    Let \( x \), \( y \in \Omega \).
    By assumption, either \( x \), \( y \in \Omega_{i,\delta} \) for some \( i \in I \), or \( \lvert x-y \rvert \geq \delta \).
    Otherwise stated,
    \[
        \Omega \times \Omega
        \subset 
        \{(x,y) \in \Omega \times \Omega \mathpunct{:} \lvert x-y \rvert \geq \delta\} \cup \bigcup_{i \in I} \Omega_{i,\delta} \times \Omega_{i,\delta}.
    \]
    Therefore,
    \[
        \lvert u \rvert_{W^{\sigma,p}(\Omega)}^{p}
        \leq
        \sum_{i \in I} \lvert u \rvert_{W^{\sigma,p}(\Omega_{i,\delta})}^{p}
        + \int_{\{(x,y) \in \Omega \times \Omega\mathpunct{:} \lvert x-y \rvert \geq \delta\}} \frac{\lvert u(x)-u(y) \rvert^{p}}{\lvert x-y \rvert^{m+\sigma p}}\,\d x\d y.
    \]
    We estimate
    \begin{multline*}
        \int_{\{(x,y) \in \Omega \times \Omega\mathpunct{:} \lvert x-y \rvert \geq \delta\}} \frac{\lvert u(x)-u(y) \rvert^{p}}{\lvert x-y \rvert^{m+\sigma p}}\,\d x\d y \\
        \leq
        2^{p}\int_{\Omega}\lvert u(x) \rvert^{p}\biggl(\int_{\R^{m} \setminus B^{m}_{\delta}(x)} \frac{1}{\lvert x-y \rvert^{m+\sigma p}}\,\d y\biggr)\,\d x 
        \leq 
        \C \delta^{-\sigma p}\int_{\Omega} \lvert u(x) \rvert^{p}\,\d x.
    \end{multline*}
    This completes the proof of the lemma.
    \resetconstant
\end{proof}

It is also possible to obtain a replacement for the additivity for the Gagliardo seminorm without a term involving the \( L^{p} \) norm of the map under consideration.
The price to pay is that such an estimate only applies on finite decompositions, hence not covering the case where an infinite number of sets is involved.
If \( Q \subset \R^{m} \) is a rectangle, then \( \lambda Q \) is the rectangle having the same center as \( Q \) and sidelengths multiplied by \( \lambda \).

\begin{lemme}
\label{lemma:finite_fractional_additivity}
    Let \( 0 < \lambda < 1 \) and \( Q \subset \R^{m} \) be a rectangle.
    For every \( \Omega \subset \R^{m} \) such that \( Q \setminus \lambda Q \subset \Omega \) and every \( u \colon \Omega \to \R \) measurable, we have 
    \[
        \lvert u \rvert_{W^{\sigma,p}(\Omega)}
        \leq 
        C\Bigl(\lvert u \rvert_{W^{\sigma,p}(\Omega \cap Q)} + \lvert u \rvert_{W^{\sigma,p}(\Omega \setminus \lambda Q)}\Bigl)
    \]
    for some constant \( C > 0 \) depending on \( m \), \( \sigma \), \( p \), \( \lambda \), and the ratio between the largest and the smallest side of \( Q \).
\end{lemme}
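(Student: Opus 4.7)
The idea is to decompose the double integral defining $\lvert u \rvert_{W^{\sigma,p}(\Omega)}^{p}$ using the natural cover $\Omega = \Omega_{1} \cup \Omega_{2}$ with $\Omega_{1} = \Omega \cap Q$ and $\Omega_{2} = \Omega \setminus \lambda Q$; the inclusion $Q \setminus \lambda Q \subset \Omega$ ensures that these two sets cover $\Omega$, with $\Omega_{1} \setminus \Omega_{2} = \Omega \cap \lambda Q$ and $\Omega_{2} \setminus \Omega_{1} = \Omega \setminus Q$. Consequently,
\[
\Omega \times \Omega \subset (\Omega_{1} \times \Omega_{1}) \cup (\Omega_{2} \times \Omega_{2}) \cup \bigl((\Omega \cap \lambda Q) \times (\Omega \setminus Q)\bigr) \cup \bigl((\Omega \setminus Q) \times (\Omega \cap \lambda Q)\bigr).
\]
The contributions of the first two pieces to the Gagliardo integral are bounded directly by $\lvert u \rvert_{W^{\sigma,p}(\Omega_{1})}^{p}$ and $\lvert u \rvert_{W^{\sigma,p}(\Omega_{2})}^{p}$, so by symmetry everything reduces to estimating
\[
J := \int_{\Omega \cap \lambda Q} \int_{\Omega \setminus Q} \frac{\lvert u(x) - u(y) \rvert^{p}}{\lvert x - y \rvert^{m + \sigma p}} \, \d x \, \d y.
\]

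To handle $J$, I interpose a third point $z$ in the shell $A := Q \setminus \lambda Q$, which by hypothesis lies in $\Omega$ and satisfies $(x, z) \in \Omega_{1} \times \Omega_{1}$ and $(z, y) \in \Omega_{2} \times \Omega_{2}$. The $p$-th power triangle inequality $\lvert u(x) - u(y) \rvert^{p} \leq 2^{p-1}(\lvert u(x) - u(z) \rvert^{p} + \lvert u(z) - u(y) \rvert^{p})$, averaged over $z \in A$, reduces $J$ to two summands that I want to control by $\lvert u \rvert_{W^{\sigma,p}(\Omega_{1})}^{p}$ and $\lvert u \rvert_{W^{\sigma,p}(\Omega_{2})}^{p}$. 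Both estimates rest on the positivity of $\dist(\lambda Q, \R^{m} \setminus Q)$, which yields a uniform lower bound for $\lvert x - y \rvert$. For the first summand, integrating $y$ over $\R^{m} \setminus Q$ against $\lvert x - y \rvert^{-(m+\sigma p)}$ gives a constant multiple of $\dist(\lambda Q, \R^{m} \setminus Q)^{-\sigma p}$, and the bound $\lvert x - z \rvert \leq \diam(Q)$ converts the remaining $\lvert u(x) - u(z) \rvert^{p}$ into a Gagliardo density over $(\Omega \cap Q)^{2}$. For the second summand, the triangle estimate $\lvert z - y \rvert \leq \diam(Q) + \lvert x - y \rvert \leq C \lvert x - y \rvert$ (with $C$ depending on $\lambda$ and the aspect ratio of $Q$) lets me replace $\lvert x - y \rvert^{-(m+\sigma p)}$ by a multiple of $\lvert z - y \rvert^{-(m+\sigma p)}$, after which the $x$-integration over the finite-measure set $\Omega \cap \lambda Q$ recovers a Gagliardo density over $\Omega_{2}^{2}$.

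The main technical subtlety sits in the second summand: the $y$-integration extends over the possibly unbounded set $\Omega \setminus Q$, so the naive bound $\lvert x - y \rvert^{-(m+\sigma p)} \leq \dist(\lambda Q, \R^{m} \setminus Q)^{-(m+\sigma p)}$ would only control $J$ by the $L^{p}$ norm of $u$, which would destroy the lemma. The trick is instead to \emph{transfer} the weight from $\lvert x - y \rvert$ onto $\lvert z - y \rvert$, which is precisely the kernel appearing in $\lvert u \rvert_{W^{\sigma,p}(\Omega_{2})}^{p}$; the comparison $\lvert z - y \rvert / \lvert x - y \rvert \leq C$ is guaranteed by the positive gap $\dist(\lambda Q, \R^{m} \setminus Q)$. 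Putting both bounds together yields $J \leq C(\lvert u \rvert_{W^{\sigma,p}(\Omega_{1})}^{p} + \lvert u \rvert_{W^{\sigma,p}(\Omega_{2})}^{p})$; taking $p$-th roots completes the proof. A quick scaling check confirms that the resulting constant is scale-invariant and depends only on $m$, $\sigma$, $p$, $\lambda$, and the ratio between the largest and smallest sides of $Q$.
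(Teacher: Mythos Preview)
Your proposal is correct and follows essentially the same approach as the paper's own proof: the same decomposition of $\Omega \times \Omega$, the same averaging over a third point $z \in Q \setminus \lambda Q$, and the same two-step estimate (integrate out $y$ for the first summand, transfer the weight from $\lvert x-y\rvert$ to $\lvert z-y\rvert$ for the second). The only cosmetic difference is that the paper obtains $\lvert z-y\rvert \leq C\lvert x-y\rvert$ via a foliation of $\R^{m}\setminus Q$ by the boundaries $\partial(rQ)$, whereas your simpler triangle inequality $\lvert z-y\rvert \leq \diam(Q) + \lvert x-y\rvert$ combined with the gap lower bound $\lvert x-y\rvert \geq \dist(\lambda Q,\R^{m}\setminus Q)$ achieves the same conclusion.
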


Lemma~\ref{lemma:finite_fractional_additivity} is inspired from~\cite{MVS_uniform_boundedness_principles}*{Lemma~2.2}, and we follow their proof.
At the core of the argument lies a very classical averaging argument, which was already present in the proof of Besov's lemma; see e.g.~\cite{adams_sobolev_spaces}*{Proof of Lemma~7.44}.
A similar idea is also used in the proof of Morrey's embedding.
This type of argument will be used in multiple occasions in this paper.

We note that the constant \( C \) necessarily diverges to \( +\infty \) as \( \lambda \to 1 \).
Moreover, one cannot deduce an improved version of Lemma~\ref{lemma:fractional_additivity} without the \( L^{p} \) norm by applying Lemma~\ref{lemma:finite_fractional_additivity} inductively, 
since the constant \( C \) is actually larger than \( 1 \).
Hence, we may iterate the lemma to obtain an estimate for a decomposition into a finite number of sets, but the constant depends on the number of sets.

\begin{proof}
    We start by writing 
    \[
        \lvert u \rvert_{W^{\sigma,p}(\Omega)}^{p}
        \leq 
        \lvert u \rvert_{W^{\sigma,p}(\Omega \cap Q)}^{p} + \lvert u \rvert_{W^{\sigma,p}(\Omega \setminus \lambda Q)}^{p}
        + 2\int_{\Omega \cap \lambda Q}\int_{\Omega \setminus Q} \frac{\lvert u(x)-u(y) \rvert^{p}}{\lvert x-y \rvert^{m+\sigma p}}\,\d y\d x.
    \]
    Now we use the average estimate 
    \begin{multline}
    \label{eq:average_estimate_finite_subadditivity}
        \int_{\Omega \cap \lambda Q}\int_{\Omega \setminus Q} \frac{\lvert u(x)-u(y) \rvert^{p}}{\lvert x-y \rvert^{m+\sigma p}}\,\d y\d x \\
        \leq 
        2^{p-1}\biggl(\int_{\Omega \cap \lambda Q}\int_{\Omega \setminus  Q}\fint_{Q \setminus \lambda Q} \frac{\lvert u(x)-u(z) \rvert^{p}}{\lvert x-y \rvert^{m+\sigma p}}\,\d z\d y\d x \\
            + \int_{\Omega \cap \lambda Q}\int_{\Omega \setminus Q}\fint_{Q \setminus \lambda Q} \frac{\lvert u(z)-u(y) \rvert^{p}}{\lvert x-y \rvert^{m+\sigma p}}\,\d z\d y\d x\biggr).
    \end{multline}
    Let \( c > 0 \) be the length of the smallest side of \( Q \).
    Since \( \lvert x-y \rvert \geq c(1-\lambda) \) whenever \( x \in \lambda Q \) and \( y \in \Omega \setminus Q \), first integrating with respect to \( y \) in the first term on the right-hand side of~\eqref{eq:average_estimate_finite_subadditivity}, we find 
    \begin{multline*}
        \int_{\Omega \cap \lambda Q}\int_{\Omega \setminus  Q}\fint_{Q \setminus \lambda Q} \frac{\lvert u(x)-u(z) \rvert^{p}}{\lvert x-y \rvert^{m+\sigma p}}\,\d z\d y\d x \\
        \leq 
        \C\frac{1}{c^{\sigma p}(1-\lambda)^{\sigma p}}\int_{\Omega \cap \lambda Q}\fint_{Q \setminus \lambda Q} \lvert u(x)-u(z) \rvert^{p}\,\d z\d x.
    \end{multline*}
    Now we observe that \( \lvert Q \setminus \lambda Q \rvert \geq (1-\lambda^{m})c^{m} \) and that \( \lvert x-z \rvert \leq \Cl{cst:ratio_lengths_Q} c \) for \( x \in \lambda Q \) and \( z \in Q \setminus \lambda Q \).
    Here \( \Cr{cst:ratio_lengths_Q} \) depends on the ratio between the largest and the smallest side of \( Q \).
    This allows us to conclude that 
    \begin{multline*}
        \int_{\Omega \cap \lambda Q}\int_{\Omega \setminus  Q}\fint_{Q \setminus \lambda Q} \frac{\lvert u(x)-u(z) \rvert^{p}}{\lvert x-y \rvert^{m+\sigma p}}\,\d z\d y\d x \\
        \leq 
        \C\frac{1}{(1-\lambda)^{\sigma p}(1-\lambda^{m})}\int_{\Omega \cap \lambda Q}\int_{Q \setminus \lambda Q} \frac{\lvert u(x)-u(z) \rvert^{p}}{\lvert x-z \rvert^{m+\sigma p}}\,\d z\d x
        \leq 
        \C\lvert u \rvert_{W^{\sigma,p}(\Omega \cap Q)}^{p}.
    \end{multline*}

    For the second term in the right-hand side of~\eqref{eq:average_estimate_finite_subadditivity}, we start by noting that if \( x \in \lambda Q \) and \( y \in \partial (rQ) \) for some \( r \geq 1 \), then \( \lvert x-y \rvert \geq c(r-\lambda) \).
    On the other hand, if \( y \in \partial (rQ) \) and \( z \in Q \setminus \lambda Q \), then 
    \[
    	\lvert y-z \rvert 
    	\leq 
    	\Cl{cst:ratio_lengths_Q_bis}c(r+1)
    	=
    	\Cr{cst:ratio_lengths_Q_bis}c\frac{r+1}{r-\lambda}(r-\lambda)
    	\leq 
    	\Cl{cst:yz_vs_xy} c(r-\lambda),
    \]
    where \( \Cr{cst:ratio_lengths_Q_bis} \) depends on the ratio between the largest and the smallest side of \( Q \).
    Therefore, for any \( x \in \lambda Q \), \( y \in \Omega \setminus Q \), and \( z \in Q \setminus \lambda Q \), we have \( \lvert y-z \rvert \leq \Cr{cst:yz_vs_xy}\lvert x-y \rvert \).
    Hence, we obtain
    \begin{multline*}
        \int_{\Omega \cap \lambda Q}\int_{\Omega \setminus Q}\fint_{Q \setminus \lambda Q} \frac{\lvert u(z)-u(y) \rvert^{p}}{\lvert x-y \rvert^{m+\sigma p}}\,\d z\d y\d x \\
        \leq 
        \C\frac{\lambda^{m}}{1-\lambda^{m}}\int_{\Omega \setminus Q}\int_{Q \setminus \lambda Q} \frac{\lvert u(z)-u(y) \rvert^{p}}{\lvert y-z\rvert^{m+\sigma p}}\,\d z\d y
        \leq 
        \C\lvert u \rvert_{W^{\sigma,p}(\Omega \setminus \lambda Q)}^{p}.
    \end{multline*}
    Gathering the estimates for both terms in the right-hand side of~\eqref{eq:average_estimate_finite_subadditivity} yields the conclusion.
\resetconstant
\end{proof}

\section{Opening}
\label{sect:opening}

This section is devoted to the opening procedure.
We follow the approach of Bousquet, Ponce, and Van Schaftingen~\cite{BPVS_density_higher_order}*{Section~2}, who adapted to higher order regularity a construction of Brezis and Li~\cite{BL_topology_sobolev_spaces}.
The main result of this section is the following fractional counterpart of~\cite{BPVS_density_higher_order}*{Proposition~2.1}, which contains the opening construction.
Recall that we write \( s = k + \sigma \), with \( k \in \N \) and \( \sigma \in [0,1) \).
Note carefully that the map \( \upPhi \) constructed below \emph{depends on} the map \( u \in W^{s,p} \) it is composed with.

\begin{prop}
\label{prop:main_opening}
    Let \( \Omega \subset \R^{m} \) be open, \( \ell \in \{0,\dots,m-1\} \), \( \eta > 0 \), \( 0 < \rho < \frac{1}{2} \), and \( \Uc^{\ell} \) be a subskeleton of \( \R^{m} \) of radius \( \eta \) such that
    \( U^{\ell} + Q^{m}_{2\rho\eta} \subset \Omega \).
    For every \( u \in W^{s,p}(\Omega;\R^{\nu}) \), there exists a smooth map \( \upPhi \colon \R^{m} \to \R^{m} \) such that
    \begin{enumerate}[label=(\roman*)]
        \item\label{item:first_geometric_main_opening} for every \( d \in \{0,\dots,\ell\} \) and for every \( \sigma^{d} \in \Uc^{d} \), \( \upPhi \) is constant on the \( (m-d) \)-dimensional cubes of radius \( \rho\eta \) which are orthogonal to \( \sigma^{d} \);
        \item\label{item:second_geometric_main_opening} \( \Supp \upPhi \subset U^{\ell} + Q^{m}_{2\rho\eta} \) and \( \upPhi(U^{\ell} + Q^{m}_{2\rho\eta}) \subset U^{\ell} + Q^{m}_{2\rho\eta} \);
        \item\label{item:first_estimates_main_opening} \( u \circ \upPhi \in W^{s,p}(\Omega;\R^{\nu}) \), and moreover, for every \( \omega \subset \Omega \) such that \( U^{\ell}+Q^{m}_{2\rho\eta} \subset \omega \),
            the following estimates hold:
            \begin{enumerate}[label=(\alph*)]
            	\item\label{item:first_estimate_main_opening_frac_low} if \( 0 < s < 1 \), then
	            	\[
		            	\eta^{s}\lvert u\circ\upPhi \rvert_{W^{s,p}(\omega)} 
		            	\leq 
		            	C\Bigl(\eta^{s}\lvert u \rvert_{W^{s,p}(\omega)} + \lVert u \rVert_{L^{p}(\omega)} \Bigr);
	            	\]
                \item\label{item:first_estimate_main_opening_integer} if \( s \geq 1 \), then for every \( j \in \{1,\dots,k\} \),
                    \[
                        \eta^{j}\lVert D^{j}(u\circ\upPhi) \rVert_{L^{p}(\omega)} \leq C\sum_{i=1}^{j}\eta^{i}\lVert D^{i}u \rVert_{L^{p}(\omega)};
                    \]
                \item\label{item:first_estimate_main_opening_frac_high} if \( s \geq 1 \) and \( \sigma \neq 0 \), then for every \( j \in \{1,\dots,k\} \),
                    \[
                        \eta^{j+\sigma}\lvert D^{j}(u\circ\upPhi) \rvert_{W^{\sigma,p}(\omega)} \leq C\sum_{i=1}^{j}\Bigl(\eta^{i}\lVert D^{i}u \rVert_{L^{p}(\omega)} 
                        + \eta^{i+\sigma}\lvert D^{i}u \rvert_{W^{\sigma,p}(\omega)}\Bigr);
                    \]
                \item\label{item:first_estimate_main_opening_all} for every \( 0 < s < +\infty \),
                    \[
                        \lVert u \circ \upPhi \rVert_{L^{p}(\omega)} \leq C \lVert u \rVert_{L^{p}(\omega)};
                    \]
            \end{enumerate}
        \item\label{item:second_estimates_main_opening} for every \( \omega \subset \Omega \) such that \( U^{\ell}+Q^{m}_{2\rho\eta} \subset \omega \), the following estimates hold:
            \begin{enumerate}[label=(\alph*)]
            	\item\label{item:second_estimate_main_opening_frac_low} if \( 0 < s < 1 \), then
	            	\[
	            	\eta^{s}\lvert u\circ\upPhi - u \rvert_{W^{s,p}(\omega)} 
	            	\leq 
	            	C\Bigl(\eta^{s}\lvert u \rvert_{W^{s,p}(U^{\ell} + Q^{m}_{2\rho\eta})} + \lVert u \rVert_{L^{p}(U^{\ell} + Q^{m}_{2\rho\eta})} \Bigr);
	            	\]
                \item\label{item:second_estimate_main_opening_integer} if \( s \geq 1 \), then for every \( j \in \{1,\dots,k\} \),
                    \[
                        \eta^{j}\lVert D^{j}(u\circ\upPhi)-D^{j}u \rVert_{L^{p}(\omega)} \leq C\sum_{i=1}^{j}\eta^{i}\lVert D^{i}u \rVert_{L^{p}(U^{\ell} + Q^{m}_{2\rho\eta})};
                    \]
                \item\label{item:second_estimate_main_opening_frac_high} if \( s \geq 1 \) and \( \sigma \neq 0 \), then for every \( j \in \{1,\dots,k\} \),
                    \begin{multline*}
                        \eta^{j+\sigma}\lvert D^{j}(u\circ\upPhi)-D^{j}u \rvert_{W^{\sigma,p}(\omega)} \leq C\sum_{i=1}^{j}\Bigl(\eta^{i}\lVert D^{i}u \rVert_{L^{p}(U^{\ell} + Q^{m}_{2\rho\eta})} \\
                        + \eta^{i+\sigma}\lvert D^{i}u \rvert_{W^{\sigma,p}(U^{\ell} + Q^{m}_{2\rho\eta})}\Bigr);
                    \end{multline*}
                \item\label{item:second_estimate_main_opening_all} for every \( 0 < s < +\infty \),
                    \[
                        \lVert u \circ \upPhi - u \rVert_{L^{p}(\omega)} \leq C\lVert u \rVert_{L^{p}(U^{\ell}+Q^{m}_{2\rho\eta})};
                    \]
            \end{enumerate}
    \end{enumerate}
    for some constant \( C > 0 \) depending on \( m \), \( s \), \( p \), and \( \rho \).
\end{prop}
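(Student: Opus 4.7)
The plan is to construct $\upPhi$ by induction on the dimension $d$ of the skeleton, processing $\Uc^0, \Uc^1, \ldots, \Uc^{\ell}$ in that order.  At each step $d$, I compose the previously constructed map with a local opening $\Psi_{\sigma^{d}}$ supported in a slight neighborhood of $\sigma^d$, for each $\sigma^d \in \Uc^d$.  Because the cubes of the same dimension are pairwise essentially disjoint after slight shrinking, the composition of all $\Psi_{\sigma^d}$ is well defined, and its geometric support lies in $U^d + Q^m_{2\rho\eta}$.  A key compatibility observation is that each $\Psi_{\sigma^d}$, when properly designed, preserves the invariance along orthogonal cubes that was achieved at earlier steps for faces of $\sigma^d$ of dimension strictly less than $d$; thus property~\ref{item:first_geometric_main_opening} accumulates across the induction, and property~\ref{item:second_geometric_main_opening} follows from the localization of the modifications.

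For the single-cube opening, after an affine change of coordinates I may assume $\sigma^d = [-\eta,\eta]^d \times \{0\}^{m-d}$.  I introduce a parameter $a \in Q^{m-d}_{\tau\eta}$, with $\tau \in (0,\rho)$ fixed small, and set $\Psi_{a}(x', x'') = (x', \psi_{a}(x''))$, where $\psi_{a}\colon \R^{m-d} \to \R^{m-d}$ is smooth, equal to the constant $a$ on $\{\lvert x''\rvert_\infty \leq \rho\eta\}$, equal to the identity on $\{\lvert x''\rvert_\infty \geq 2\rho\eta\}$, and satisfies $\lvert D^j \psi_a \rvert \leq C\eta^{-(j-1)}$ for every $j \geq 1$.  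On the inner region $\{\lvert x''\rvert_\infty \leq \rho\eta\}$ the map $u \circ \Psi_{a}$ is independent of $x''$, which is exactly the desired invariance along orthogonal cubes.  The parameter $a$ will be selected, depending on $u$, by a pigeonhole argument from an averaged estimate.

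To obtain the integer-order estimates in~\ref{item:first_estimate_main_opening_integer} and~\ref{item:second_estimate_main_opening_integer}, I integrate $\lVert D^j(u \circ \Psi_{a}) \rVert_{L^p(\omega)}^p$ with respect to $a$ over $Q^{m-d}_{\tau\eta}$.  The Faà di Bruno formula yields the pointwise bound $\lvert D^j(u \circ \Psi_{a})(x) \rvert \leq C \sum_{i=1}^{j} \eta^{i-j}\lvert D^i u\rvert(\Psi_{a}(x))$.  On the outer region, $\Psi_{a}$ is a uniformly bi-Lipschitz diffeomorphism, and the substitution $y = \Psi_{a}(x)$ produces a bound by $\lVert D^i u \rVert_{L^p}$ on a slightly larger set.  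On the inner region, $\Psi_{a}(x', x'') = (x', a)$; integrating first in $a$ via Fubini and substituting $a \mapsto y$ produces $\lVert D^i u \rVert_{L^p}^p$ over a neighborhood of $\sigma^d$, the $(2\rho\eta)^{m-d}$ factor coming from the $x''$-integration exactly compensating the $(\tau\eta)^{-(m-d)}$ averaging factor up to a constant.  A standard pigeonhole argument then selects $a^{\ast}$ for which the corresponding pointwise estimate holds, and the $L^p$ estimate~\ref{item:first_estimate_main_opening_all} follows by the same recipe without derivatives.

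The main obstacle is the fractional estimate, which requires this averaging strategy to be pushed through the double Gagliardo integral
\[
	\int_{\omega}\int_{\omega} \frac{\lvert D^k(u\circ\Psi_{a})(x) - D^k(u\circ\Psi_{a})(y) \rvert^{p}}{\lvert x-y \rvert^{m+\sigma p}}\,\d x\,\d y,
\]
since the Gagliardo seminorm admits no chain rule.  I split $\omega \times \omega$ into inner-inner, outer-outer, and mixed pieces using Lemma~\ref{lemma:finite_fractional_additivity}, the margin between them being of order $\eta$.  On the outer-outer piece, $\Psi_{a}$ is bi-Lipschitz (uniformly in $a$) and the change of variable directly bounds the contribution by the Gagliardo seminorm of $D^i u$ for $i \leq k$.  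On the inner-inner piece, the integrand depends only on the $d$ variables $x'$ and on $a$; integrating in $a$ and substituting $a \mapsto y''$ reinterprets this as a piece of the Gagliardo seminorm of $D^i u$ on a neighborhood of $\sigma^d$.  The mixed pieces enjoy a uniform lower bound $\lvert x-y \rvert \gtrsim \eta$, so the kernel is bounded and the contribution is absorbed into an $L^p$ norm, which produces precisely the $\lVert u \rVert_{L^p}$ terms that appear in~\ref{item:first_estimate_main_opening_frac_low} and~\ref{item:first_estimate_main_opening_frac_high}.  The difference estimates~\ref{item:second_estimates_main_opening} follow from the same analysis applied to $u \circ \Psi_{a} - u$, which vanishes outside the modification region so all norms restrict automatically to $U^{\ell} + Q^{m}_{2\rho\eta}$.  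A final pigeonhole argument on the average of the sum of all contributions, over the cubic parameter space of all chosen translation vectors $(a_{\sigma^d})_{\sigma^d \in \Uc^d, \, d \leq \ell}$, furnishes a single choice making all estimates hold simultaneously, and the resulting $\upPhi$ is smooth by construction since each $\psi_{a}$ is.
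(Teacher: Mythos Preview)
Your overall architecture --- induction on the dimension $d$, a translation parameter $a$ selected by averaging plus pigeonhole, and gluing via the subadditivity lemmas --- matches the paper. But the fractional estimate as you sketch it has a real gap.

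First, on the inner--inner piece you write that ``the integrand depends only on the $d$ variables $x'$ and on $a$''. That is true for the numerator $\lvert D^{i}u(x',a)-D^{i}u(y',a)\rvert^{p}$, but the Gagliardo kernel $\lvert x-y\rvert^{-m-\sigma p}$ still depends on $x''$ and $y''$. Integrating these out does \emph{not} produce the full $m$-dimensional kernel; at best it gives a $d$-dimensional kernel $\lvert x'-y'\rvert^{-d-\sigma p}$ times a factor $\sim\eta^{m-d}$, and then the substitution ``$a\mapsto y''$'' you invoke cannot recover the missing cross-differences $\lvert u(x',a)-u(y',b)\rvert$ needed for the genuine $W^{\sigma,p}$ seminorm on the neighborhood. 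Second, your outer--outer argument requires $\Psi_{a}$ to be bi-Lipschitz so that a change of variable works in the double integral; but in the transition annulus $\{\rho\eta\leq\lvert x''\rvert_{\infty}\leq 2\rho\eta\}$ the map $\psi_{a}$ collapses, so $\Psi_{a}$ is only Lipschitz there, not injective. Your three-region split therefore leaves the transition region unaccounted for (it cannot go with ``inner'' since $\Psi_{a}$ is not constant there, and it cannot go with ``outer'' since $\Psi_{a}$ is not bi-Lipschitz there). Third, your explicit block $\Psi_{a}(x',x'')=(x',\psi_{a}(x''))$ has support equal to an infinite slab in the $x'$-direction, not a neighborhood of $\sigma^{d}$; you need an additional cutoff in $x'$, which interacts with all the estimates.

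The paper avoids all three issues by not attempting a direct change of variable in the Gagliardo double integral. Instead it introduces a one-variable detector $w(x)=\int_{\Omega}\lvert u(x)-u(y)\rvert^{p}\lvert x-y\rvert^{-m-\sigma p}\,\d y$ (Lemma~\ref{lemma:Fuglede_fractional}) and proves that for any Lipschitz $\gamma$ one has $\lvert u\circ\gamma\rvert_{W^{\sigma,p}}^{p}\leq C\lvert\gamma\rvert_{\Cc^{0,1}}^{\sigma p}\int w\circ\gamma$; only the Lipschitz constant of $\gamma$ enters, so the transition region causes no trouble. The averaging over $a$ is then performed on the \emph{single} integral $\int w\circ\Phi_{a}$ (Lemma~\ref{lemma:averaging_argument_FF}), which behaves exactly like the $L^{p}$ case. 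The building block itself (Proposition~\ref{prop:block_opening}) uses four nested rectangles $Q_{1}\subset Q_{2}\subset Q_{3}\subset Q_{4}$ with an explicit cutoff in both $x'$ and $x''$, yielding compact geometric support, and the estimates are on $Q_{3}$ in terms of the energy on $Q_{4}$ --- precisely the margin needed to glue via Lemma~\ref{lemma:fractional_additivity}. Your inductive selection of parameters is also done step by step in the paper (choose $a$ for each $\sigma^{d}$ at level $d$, then pass to $u^{d+1}$), rather than jointly over the full product parameter space.
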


Recall that \( \Supp\upPhi \) denotes the geometric support of \( \upPhi \), defined as
\[
	\Supp\upPhi = \overline{\{x \in \R^{m}\mathpunct{:} \upPhi(x) \neq x\}}.
\]

Crucial to the proof of Theorem~\ref{thm:density_class_R} are the estimates in~\ref{item:first_estimates_main_opening} with \( \omega = U^{\ell}+Q^{m}_{2\rho\eta} \).
They imply that the opening procedure does not increase too much the energy of the map \( u \) where it is modified.
Proposition~\ref{prop:main_opening} will be used in the proof of Theorem~\ref{thm:density_class_R} in order to prove that a map can be opened by paying the price of an arbitrarily small increase of the norm.

The map \( \upPhi \) will be constructed blockwise: for every \( d \in \{0,\dots,\ell\} \) and every \( \sigma^{d} \in \Uc^{d} \), we construct an opening map \( \upPhi_{\sigma^{d}} \) around the face \( \sigma^{d} \), and then we suitably combine those maps together to yield the desired map \( \upPhi \).
The construction of the building block \( \upPhi_{\sigma_{d}} \) is performed in Proposition~\ref{prop:block_opening} below.
Before giving a precise statement, we first introduce, for the convenience of the reader, some additional notation.

The construction of the map \( \upPhi \) provided by Proposition~\ref{prop:block_opening} involves four parameters \( 0 < \ulrho < \ulr < \olr < \olrho < 1 \).
These parameters being fixed, we introduce the rectangles
\begin{equation}
\label{eq:def_rectangles_opening}
    \begin{split}
    Q_{1} = Q_{1,\eta} = Q^{d}_{(1-\olrho)\eta} \times Q^{m-d}_{\ulrho\eta},
    \quad
    &Q_{2} = Q_{2,\eta} = Q^{d}_{(1-\olr)\eta} \times Q^{m-d}_{\ulr\eta}, \\
    \quad
    Q_{3} = Q_{3,\eta} = Q^{d}_{(1-\ulr)\eta} \times Q^{m-d}_{\olr\eta}
    \quad
    \text{and}&
    \quad
    Q_{4} = Q_{4,\eta} = Q^{d}_{(1-\ulrho)\eta} \times Q^{m-d}_{\olrho\eta}.
    \end{split}
\end{equation}
The rectangle \( Q_{1} \) is the place where the opening construction is actually performed: the map \( \upPhi \) only depends on the first \( d \) variables on \( Q_{1} \).
The rectangle \( Q_{2} \) contains the support of the map \( \upPhi \), that is, \( \upPhi \) coincides with the identity outside of \( Q_{2} \).
The region between \( Q_{1} \) and the exterior of \( Q_{2} \) serves as a transition region between both regimes.

From now on we shall keep using the notation \( Q_{1} \), \dots, \( Q_{4} \) for the sake of conciseness and because it makes more apparent the inclusion relations between the four rectangles:
observe that \( Q_{1} \subset Q_{2} \subset Q_{3} \subset Q_{4} \).
The dependence with respect to the parameters \( \ulrho \), \( \ulr \), \( \olr \), \( \olrho \), and \( \eta \) will be implicit.

\begin{prop}
\label{prop:block_opening}
    Let \( d \in \{0,\dots,m-1\} \), \( \eta > 0 \), and \( 0 < \ulrho < \ulr < \olr < \olrho < 1 \).
    For every \( u \in W^{s,p}(Q_{4};\R^{\nu}) \), 
    there exists a smooth map \( \upPhi \colon Q_{4} \to Q_{4} \) such that
    \begin{enumerate}[label=(\roman*)]
        \item\label{item:form_block_opening} \( \upPhi(x',x'') = (x', \zeta(x)) \) for every \( x = (x',x'') \in Q_{4} \), 
            where \( \zeta \colon Q_{4} \to Q^{m-d}_{\olrho\eta} \) is smooth;
        \item\label{item:block_opening_constant} for every \( x' \in Q^{d}_{(1-\olrho)\eta} \)\,, \( \upPhi \) is constant on \( \{x'\} \times Q^{m-d}_{\ulrho\eta} \);
        \item\label{item:support_block_opening} \( \Supp \upPhi \subset Q_{2} \) and \( \upPhi(Q_{2}) \subset Q_{2} \);
        \item\label{item:estimates_block_opening} \( u \circ \upPhi \in W^{s,p}(Q_{3};\R^{\nu}) \), and moreover, the following estimates hold:
        	\begin{enumerate}
        		\item\label{item:estimate_block_opening_0s1} if \( 0 < s < 1 \), then
        			\[
	        			\lvert u\circ\upPhi \rvert_{W^{s,p}(Q_{3})} 
	        			\leq 
	        			C\lvert u \rvert_{W^{s,p}(Q_{4})};
        			\]
        		\item\label{item:estimate_block_opening_sge1_integer} if \( s \geq 1 \), then for every \( j \in \{1,\dots,k\} \),
        			\[
	        			\eta^{j}\lVert D^{j}(u\circ\upPhi) \rVert_{L^{p}(Q_{3})} 
	        			\leq 
	        			C\sum_{i=1}^{j}\eta^{i}\lVert D^{i}u \rVert_{L^{p}(Q_{4})};
        			\]
        		\item\label{item:estimate_block_opening_sge1_frac} if \( s \geq 1 \) and \( \sigma \neq 0 \), then for every \( j \in \{1,\dots,k\} \),
        			\[
	        			\eta^{j+\sigma}\lvert D^{j}(u\circ\upPhi) \rvert_{W^{\sigma,p}(Q_{3})} 
	        			\leq 
	        			C\sum_{i=1}^{j}\Bigl(\eta^{i}\lVert D^{i}u \rVert_{L^{p}(Q_{4})} 
	        			+ \eta^{i+\sigma}\lvert D^{i}u \rvert_{W^{\sigma,p}(Q_{4})} \Bigr);
        			\]
        		\item\label{item:estimate_block_opening_all} for every \( 0 < s < +\infty \),
        			\[
	        			\lVert u \circ \upPhi\rVert_{L^{p}(Q_{3})} \leq C\lVert u \rVert_{L^{p}(Q_{4})};
        			\]
        	\end{enumerate}
    \end{enumerate}
    for some constant \( C > 0 \) depending on \( m \), \( s \), \( p \), \( \ulrho \), \( \ulr \), \( \olr \), and \( \olrho \).
\end{prop}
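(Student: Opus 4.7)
The plan is to construct $\upPhi$ via an averaging argument over translations of a fixed building block, and then derive each Sobolev estimate by applying Faà di Bruno's formula followed by changes of variables. Fix smooth cutoffs $\alpha \colon \R^{d} \to [0,1]$ with $\alpha \equiv 1$ on $Q^{d}_{(1-\olrho)\eta}$ and $\supp\alpha \subset Q^{d}_{(1-\olr)\eta}$, and a smooth map $\phi \colon \R^{m-d} \to \R^{m-d}$ with $\phi \equiv 0$ on a slightly enlarged copy of $Q^{m-d}_{\ulrho\eta}$, $\phi = \id$ outside $Q^{m-d}_{\ulr\eta}$, $\phi(Q^{m-d}_{\ulr\eta}) \subset Q^{m-d}_{\ulr\eta}$, and $\lvert D^{j}\phi\rvert \leq C\eta^{1-j}$. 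For a translation parameter $a$ ranging over a small averaging cube $A$ of radius $\sim(\ulr-\ulrho)\eta/2$, set
\[
    \upPhi_{a}(x',x'') = \bigl(x',\,(1-\alpha(x'))x'' + \alpha(x')(\phi(x''-a)+a)\bigr).
\]
Direct inspection shows that $\upPhi_{a}$ fixes the $x'$-coordinate, equals the identity outside $Q_{2}$, maps $Q_{2}$ into itself, and is constant in $x''$ on $\{x'\}\times Q^{m-d}_{\ulrho\eta}$ whenever $\alpha(x') = 1$; this yields~\ref{item:form_block_opening}, \ref{item:block_opening_constant}, \ref{item:support_block_opening}. Outside the collapsing region $\{\alpha(x')=1,\ x''-a\in Q^{m-d}_{\ulrho\eta}\}$, the differential $\partial_{x''}\upPhi_{a}$ is uniformly invertible, so $\upPhi_{a}$ is bi-Lipschitz there with constants independent of $a$.

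Each estimate in~\ref{item:estimates_block_opening} is obtained by integrating the left-hand side over $a \in A$, exchanging the order of integration via Fubini, and concluding by pigeonhole. For the $L^{p}$ bound~\ref{item:estimate_block_opening_all}, on the bi-Lipschitz region change of variables gives the result directly, while on the collapsing region one uses $\upPhi_{a}(x',x'') = (x',a)$ and the identity $\fint_{A}\lvert u(x',a)\rvert^{p}\,\d a \leq C\eta^{-(m-d)}\lVert u(x',\cdot)\rVert_{L^{p}(Q^{m-d}_{\olrho\eta})}^{p}$ after swapping the integrations in $x''$ and $a$. The integer-order estimate~\ref{item:estimate_block_opening_sge1_integer} is reduced by Faà di Bruno to controlling $\lVert (D^{i}u)\circ\upPhi_{a}\rVert_{L^{p}}$ for $i \leq j$, weighted by polynomials in $D^{\leq j}\upPhi_{a}$ of size $\eta^{-(j-i)}$; the $L^{p}$ scheme above then applies term by term.

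The main obstacle is the fractional estimate~\ref{item:estimate_block_opening_sge1_frac} (which also yields~\ref{item:estimate_block_opening_0s1} as the case $j=0$). Expanding $D^{j}(u \circ \upPhi_{a})$ via Faà di Bruno, one splits each resulting Gagliardo difference as
\[
    [(D^{i}u)(\upPhi_{a}(x))-(D^{i}u)(\upPhi_{a}(y))]\,P_{i}(x) + (D^{i}u)(\upPhi_{a}(y))\,[P_{i}(x)-P_{i}(y)].
\]
The second kind of term is controlled by the pointwise bound $\lvert P_{i}(x)-P_{i}(y)\rvert \leq C\eta^{-i}\min(1,\eta^{-1}\lvert x-y\rvert)$, and splitting the Gagliardo integral at $\lvert x-y\rvert \sim \eta$ produces the $\eta^{i}\lVert D^{i}u\rVert_{L^{p}}$ summand of the right-hand side. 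The first kind of term requires estimating $\lvert (D^{i}u)\circ\upPhi_{a}\rvert_{W^{\sigma,p}}$: on the bi-Lipschitz region this follows by a change of variables in the double integral, while on the collapsing region $(D^{i}u)\circ\upPhi_{a}(x',x'')=(D^{i}u)(x',a)$ is independent of $x''$, so the inner $x''$-$y''$ integration in the Gagliardo kernel can be performed explicitly and, after averaging over $a \in A$ via Fubini, is bounded by $\lvert D^{i}u\rvert_{W^{\sigma,p}(Q_{4})} + \eta^{-\sigma}\lVert D^{i}u\rVert_{L^{p}(Q_{4})}$, the $L^{p}$ correction arising from the slicing in the spirit of Lemma~\ref{lemma:fractional_additivity}. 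The gap between $Q_{3}$ and $Q_{4}$ is precisely what allows all these Gagliardo double integrals to remain inside $Q_{4}$. A pigeonhole choice of $a \in A$ then produces a single $\upPhi = \upPhi_{a}$ realizing all estimates, and the $\eta^{j}$ and $\eta^{j+\sigma}$ factors are tracked by rescaling $Q_{4}$ to a unit cube.
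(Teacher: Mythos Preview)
Your overall architecture --- build a one-parameter family $\upPhi_{a}$, average over $a$, and select a good $a$ by pigeonhole --- is exactly the paper's strategy, and your model map is of the conjugation-by-translation form $\upPhi_{a}(x)=\Psi(x-(0,a))+(0,a)$ needed for the Federer--Fleming averaging (Lemma~\ref{lemma:averaging_argument_FF}). The gap is in how you justify the estimates once the family is built.

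The assertion that $\partial_{x''}\upPhi_{a}$ is uniformly invertible outside the collapsing region is false. Since $\phi$ is smooth and identically zero on a cube, all its derivatives vanish on the boundary of that cube; hence $D\phi(x''-a)$, and with it $\partial_{x''}\upPhi_{a}$, degenerates continuously as $x''$ leaves the collapsing region (and the same happens in the $\alpha$-transition zone, where $(1-\alpha)I+\alpha D\phi$ can have arbitrarily small eigenvalues when $\alpha$ is close to $1$ and $D\phi$ is close to $0$). There is no smooth $\phi$ with $\phi\equiv 0$ on an inner cube, $\phi=\id$ outside a larger cube, and $D\phi$ uniformly invertible off the inner cube. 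Consequently your ``bi-Lipschitz change of variables in the double integral'' for $\lvert (D^{i}u)\circ\upPhi_{a}\rvert_{W^{\sigma,p}}$ is not available on the full complement of the collapsing region, and the clean dichotomy ``bi-Lipschitz region vs.\ collapsing region'' does not cover $Q_{3}$. The cross terms in the Gagliardo double integral between these regions are likewise not handled, and Lemma~\ref{lemma:fractional_additivity} cannot absorb them since the two regions are not separated by a positive distance.

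The paper circumvents this entirely: it never claims any bi-Lipschitz behaviour. Instead it uses Lemma~\ref{lemma:Fuglede_fractional}, which shows that for any \emph{Lipschitz} $\gamma$ one has $\lvert v\circ\gamma\rvert_{W^{\sigma,p}}\le C\lvert\gamma\rvert_{\Cc^{0,1}}^{\sigma}\bigl(\int w\circ\gamma\bigr)^{1/p}$ with the explicit detector $w(x)=\int\lvert v(x)-v(y)\rvert^{p}\lvert x-y\rvert^{-m-\sigma p}\,\d y$; the averaging argument of Lemma~\ref{lemma:averaging_argument_FF} then bounds $\int_{Q_{3}}w\circ\upPhi_{a}$ for a good $a$ by $C\int_{Q_{4}}w=C\lvert v\rvert_{W^{\sigma,p}(Q_{4})}^{p}$. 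This requires only the uniform Lipschitz bound $\lvert D\upPhi_{a}\rvert\le C$, which your map does satisfy. If you replace your bi-Lipschitz/collapsing dichotomy by this Fuglede-type estimate, the rest of your outline (Fa\`a di Bruno expansion, the $\min(1,\eta^{-1}\lvert x-y\rvert)$ splitting for the $P_{i}$-differences, and the integer-order reductions) goes through and matches the paper's proof.
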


We comment on the domains involved in the estimates of item~\ref{item:estimates_block_opening} in Proposition~\ref{prop:block_opening} above.
We need estimates on the rectangle \( Q_{3} \) instead of the smaller rectangle \( Q_{2} \) containing the support of \( \upPhi \), in order to have enough room to apply Lemmas~\ref{lemma:fractional_additivity} and~\ref{lemma:finite_fractional_additivity} as substitutes for the additivity of the integral when proving the fractional estimates in Proposition~\ref{prop:main_opening}.
Moreover, we only control the energy on \( Q_{3} \) by the energy on the larger rectangle \( Q_{4} \) due to the averaging process involved in the proof of Proposition~\ref{prop:block_opening}, as we will see later on.

Taking Proposition~\ref{prop:block_opening} as granted for the moment, we proceed with the proof of Proposition~\ref{prop:main_opening}.
Before providing a detailed rigorous proof, we sketch the argument.

We first open the map \( u \) around each vertex of \( \Uc^{0} \) by applying Proposition~\ref{prop:block_opening} with \( d = 0 \) and using parameters \( \olrho = 2\rho \) and \( \ulrho = \rho_{0} < 2\rho \).
This produces a map \( u^{0} \) which is constant on cubes of radius \( \rho_{0}\eta \) around each vertex of \( \Uc^{0} \).
We next open the map \( u^{0} \) around each edge of \( \Uc^{1} \) using Proposition~\ref{prop:block_opening} with \( d = 1 \), \( \olrho = \rho_{0} \), and \( \ulrho = \rho_{1} < \rho_{0} \).
One may see that the geometric supports of the building blocks around each face do not overlap, so that we may glue them together to obtain a well-defined map on the whole \( \Omega \).
This construction yields a map \( u^{1} \) which is constant on all \( (m-1) \)-cubes of radius \( \rho_{1}\eta \) which are orthogonal to the edges of \( \Uc^{1} \), provided that they lie at distance at least \( \rho_{0}\eta \) from the endpoints of the edges.
But the map \( u^{1} \) is constructed from the map \( u^{0} \) which was constant on the cubes of radius \( \rho_{0}\eta \) centered at the vertices of \( \Uc^{0} \).
Hence we conclude that the map \( u^{1} \) is constant on all \( (m-1) \)-cubes of radius \( \rho_{1}\eta \) which are orthogonal to the edges of \( \Uc^{1} \).
We then pursue this construction by induction until we reach the desired dimension, which yields a map \( \upPhi \) as in Proposition~\ref{prop:main_opening}.

An illustration of this construction on one cube for \( m = 2 \) and \( \ell = 1 \) is presented in Figure~\ref{fig:opening}.
On the left part of the figure, one sees the result of opening around vertices.
The map \( u \) becomes constant on the dark blue squares, and is left unchanged on the white region, the light blue region serving as a transition.
The central part of the figure shows the opening step around edges.
The map \( u \) becomes constant on the segments orthogonal to the edges of \( \Uc^{1} \) that are sufficiently far from the vertices, some of which being represented in black.
The regions involved in the construction at the previous step, when opening around the vertices, are depicted in light colors, to show how all the regions are located relatively to each other.
One sees that the opening regions around vertices and edges connect perfectly.
The right part of the figure shows the combination of both steps.
The map \( u \) becomes constant on all segments orthogonal to the edges of \( \Uc^{1} \).

\begin{figure}[ht]
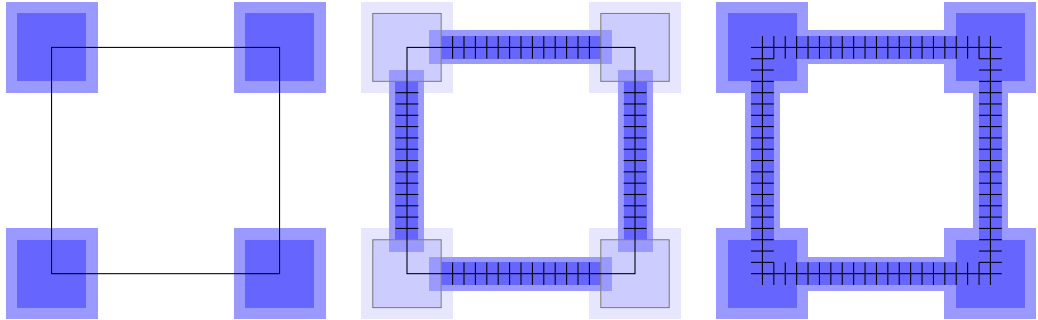

	\centering
	~
	\hfill
	\includegraphics[page=1]{figures_strong_density.pdf}
	\hfill
	\includegraphics[page=2]{figures_strong_density.pdf}
	\hfill
	\includegraphics[page=3]{figures_strong_density.pdf}
	\hfill
	~
	\caption{Opening for \( m = 2 \) and \( \ell = 1 \)}
	\label{fig:opening}
\end{figure}

The construction sketched above is strongly inspired by~\cite{BPVS_density_higher_order}*{Proposition~2.1}, but nevertheless significantly different from~\cite{BPVS_density_higher_order}*{Proposition~2.1}.
Indeed, in our approach, at each step of the iterative process, the sets on which we apply opening around each face (the building blocks of our global construction) do not overlap. 
Hence, gathering the constructions made around each face yields a globally well-defined map on the whole \( \Omega \), regardless of the map we started with at the beginning of the step.
For instance, the map \( u^{1} \) described in the above sketch is well-defined on the whole \( \Omega \), regardless of the form of the map \( u^{0} \).
On the other hand, the construction in~\cite{BPVS_density_higher_order} relies on the fact that, at the \( d \)-th step of the iterative process, we work with a map that has already been opened around the \( i \)-faces for \( i < d \). 
Indeed the constructions made at step \( d \) are not compatible near the lower dimensional faces, where they overlap.
Our approach simplifies the proof of Sobolev estimates, especially in the fractional case where one needs some margin of security to apply Lemmas~\ref{lemma:fractional_additivity} and~\ref{lemma:finite_fractional_additivity}, but also for the case of integer order estimates (already treated in~\cite{BPVS_density_higher_order}).

\begin{proof}[Proof of Proposition~\ref{prop:main_opening}]
    As announced, we construct a family of maps \( (\upPhi_{d})_{0 \leq d \leq \ell} \) by induction.
    For the convenience of notation we set \( \upPhi_{-1} = \id \).
    Assuming that the maps \( \upPhi_{-1} \), \dots, \( \upPhi_{d-1} \) have already been constructed, we set \( u^{d} = u \circ \upPhi_{-1} \circ \cdots \circ \upPhi_{d-1} \).
    Let \( (\rho_{d})_{0 \leq d \leq \ell} \), \( (\ulr_{d})_{0 \leq d \leq \ell} \), and \( (\olr_{d})_{0 \leq d \leq \ell} \) be decreasing sequences such that
    \[
        \rho = \rho_{\ell} < \ulr_{\ell} < \olr_{\ell} < \rho_{\ell-1} < \cdots < \rho_{d} < \ulr_{d} < \olr_{d} < \rho_{d-1} < \cdots <  \rho_{0} < \ulr_{0} < \olr_{0} < 2\rho.
    \]
    For every \( d \in \{0,\dots,\ell\} \) and every \( \sigma^{d} \in \Uc^{d} \), there is an isometry \( T_{\sigma^{d}} \) of \( \R^{m} \) mapping \( Q^{d}_{\eta} \times \{0\}^{m-\ell} \) onto \( \sigma^{d} \).
    \emph{Via} this isometry, we apply Proposition~\ref{prop:block_opening} to \( u^{d} \) around \( \sigma^{d} \) 
    with parameters \( \ulrho = \rho_{d} \), \( \ulr = \ulr_{d} \), \( \olr = \olr_{d} \) and \( \olrho = \rho_{d-1} \) -- with the convention that \( \rho_{-1} = 2\rho \) --
    in order to obtain a map \( \upPhi^{\sigma^{d}} \colon T_{\sigma^{d}}(Q_{4}) \to T_{\sigma^{d}}(Q_{4}) \)
    such that, for every \( x' \in \sigma^{d} \) with \( \dist{(x',\partial\sigma^{d})} > \rho_{d-1} \), \( \upPhi^{\sigma^{d}} \) is constant on the cube orthogonal to \( \sigma^{d} \) of radius \( \rho_{d}\eta \)
    passing through \( x' \).
    We then define \( \upPhi_{d} \colon \R^{m} \to \R^{m} \) by
    \[
        \upPhi_{d}(x) = 
        \begin{cases}
            \upPhi^{\sigma^{d}}(x) & \text{if \( x \in T_{\sigma^{d}}(Q_{4}) \),} \\
            x & \text{otherwise.}
        \end{cases}
    \]
    This map is well-defined since \( \Supp\upPhi^{\sigma^{d}} \subset T_{\sigma^{d}}(Q_{2}) \) and 
    \( T_{\sigma^{d}_{1}}(Q_{2}) \cap T_{\sigma^{d}_{2}}(Q_{2}) = \varnothing \) if \( \sigma^{d}_{1} \neq \sigma^{d}_{2} \).
    Finally, we set \( \upPhi = \upPhi_{0} \circ \cdots \circ \upPhi_{\ell} \).

    By induction and using the definition of the maps \( \upPhi^{\sigma^{d}} \) provided by Proposition~\ref{prop:block_opening}, we observe that \( \upPhi \) satisfies properties~\ref{item:first_geometric_main_opening} and~\ref{item:second_geometric_main_opening}.
    We now turn to properties~\ref{item:first_estimates_main_opening} and~\ref{item:second_estimates_main_opening}.
    Let \( U^{\ell}+Q^{m}_{2\rho\eta} \subset \omega \subset \Omega \).
    We notice that it suffices to prove property~\ref{item:first_estimates_main_opening} with \( \upPhi \) replaced by \( \upPhi_{d} \) and \( u \) replaced by \( u^{d} \), as one may then conclude by induction.

    We start with the estimates for integer order derivatives.
    Let \( j \in \{1,\dots,k\} \) and \( d \in \{0,\dots,\ell\} \).
    By the additivity of the integral, we have
    \[
        \lVert D^{j}(u^{d}\circ\upPhi_{d}) \rVert_{L^{p}(\omega)}^{p}
        \leq
        \sum_{\sigma^{d} \in \Uc^{d}} \lVert D^{j}(u^{d}\circ\upPhi_{d}) \rVert_{L^{p}(T_{\sigma^{d}}(Q_{3}))}^{p} 
            + \lVert D^{j}(u^{d}\circ\upPhi_{d}) \rVert_{L^{p}\bigl(\omega\setminus\bigcup\limits_{\sigma^{d}\in\Uc^{d}}T_{\sigma^{d}}(Q_{3})\bigr)}^{p}.
    \]
    Since \( \Supp\upPhi^{\sigma^{d}} \subset T_{\sigma^{d}}(Q_{2}) \subset T_{\sigma^{d}}(Q_{3}) \), we find
    \[
        \lVert D^{j}(u^{d}\circ\upPhi_{d}) \rVert_{L^{p}\bigl(\omega\setminus\bigcup\limits_{\sigma^{d}\in\Uc^{d}}T_{\sigma^{d}}(Q_{3})\bigr)}
        =
        \lVert D^{j}u^{i} \rVert_{L^{p}\bigl(\omega\setminus\bigcup\limits_{\sigma^{d}\in\Uc^{d}}T_{\sigma^{d}}(Q_{3})\bigr)},
    \]
    while the estimate given by Proposition~\ref{prop:block_opening} yields
    \[
        \eta^{j}\lVert D^{j}(u^{d}\circ\upPhi_{d}) \rVert_{L^{p}(T_{\sigma^{d}}(Q_{3}))}
        =
        \eta^{j}\lVert D^{j}(u^{d}\circ\upPhi^{\sigma^{d}}) \rVert_{L^{p}(T_{\sigma^{d}}(Q_{3}))} 
        \leq
        \C\sum_{i=1}^{j}\eta^{i}\lVert D^{i}u^{d} \rVert_{L^{p}(T_{\sigma^{d}}(Q_{4}))}.
    \]
    Combining both above estimates and using the fact that the number of overlaps between one given set of the form \( T_{\sigma^{d}}(Q_{4}) \) and all the other such sets is bounded
    from above by a number depending only on \( m \), we deduce that
    \[
        \eta^{j}\lVert D^{j}(u^{d}\circ\upPhi_{d}) \rVert_{L^{p}(\omega)} \leq \C\sum_{i=1}^{j}\eta^{i}\lVert D^{i}u^{d} \rVert_{L^{p}(\omega)}
        \quad
        \text{for every \( j \in \{1,\dots,k\} \)}.
    \]
    Since \( \Supp\upPhi \subset U^{\ell}+Q^{m}_{2\rho\eta} \), the estimate~\ref{item:second_estimate_main_opening_integer} of point~\ref{item:second_estimates_main_opening} follows directly from estimate~\ref{item:first_estimate_main_opening_integer} of point~\ref{item:first_estimates_main_opening}
    using again the additivity of the integral.
    The estimates for the \( L^{p} \) norm of \( u \circ \upPhi \) (estimates~\ref{item:first_estimate_main_opening_all}) are proven similarly.

    The estimates for the Gagliardo seminorm are proved similarly, replacing the additivity of the integral by Lemma~\ref{lemma:fractional_additivity}.
    Indeed, if \( k \geq 1 \), this lemma ensures that 
    \begin{multline*}
        \lvert D^{j}(u^{d}\circ\upPhi_{d}) \rvert_{W^{\sigma,p}(\omega)}^{p}
        \leq
        \sum_{\sigma^{d} \in \Uc^{d}} \lvert D^{j}(u^{d}\circ\upPhi_{d}) \rvert_{W^{\sigma,p}(T_{\sigma^{d}}(Q_{3}))}^{p} \\
            + \lvert D^{j}(u^{d}\circ\upPhi_{d}) \rvert_{L^{p}(\omega\setminus\Supp\upPhi_{d})}^{p}
            + \C\eta^{-\sigma p}\lVert D^{j}(u^{d} \circ \upPhi_{d}) \rVert_{L^{p}(\omega)}^{p}
        \quad 
        \text{for every \( j \in \{1,\dots,k\} \).}    
    \end{multline*}
    We note that here, we made use of the fact that the distance between the support of the map provided by Proposition~\ref{prop:block_opening} and the complement of \( Q_{3} \) 
    is bounded from below by a constant multiple of \( \eta \).
    Estimate~\ref{item:first_estimate_main_opening_frac_high} of point~\ref{item:first_estimates_main_opening} then follows as for the integer order estimate.
    To obtain the estimate~\ref{item:second_estimate_main_opening_frac_high} for point~\ref{item:second_estimates_main_opening}, we observe that actually \( \dist{(\Supp\upPhi,\omega \setminus (U^{\ell}+Q^{m}_{2\rho\eta}))} \) 
    is bounded from below by a constant multiple of \( \eta \).
    We conclude by making again use of Lemma~\ref{lemma:fractional_additivity} along with the integer order estimate that we already obtained.
    Indeed, we have 
    \begin{multline*}
        \lvert D^{j}(u \circ \upPhi)-D^{j}u \rvert_{W^{\sigma,p}(\omega)}^{p}
        \leq 
        \lvert D^{j}(u \circ \upPhi)-D^{j}u \rvert_{W^{\sigma,p}(U^{\ell}+Q^{m}_{2\rho\eta})}^{p} \\
            + \lvert D^{j}(u \circ \upPhi)-D^{j}u \rvert_{W^{\sigma,p}(\omega \setminus \Supp\upPhi)}^{p}
            + \eta^{-\sigma p}\lVert D^{j}(u \circ \upPhi)-D^{j}u \rVert_{L^{p}(\omega)}^{p}.
    \end{multline*}
    The first term is upper bounded using the triangle inequality and estimate~\ref{item:first_estimate_main_opening_frac_high} of~\ref{item:first_estimates_main_opening}, the second one vanishes by definition of the geometric support,
    and the third one is the integer order term that we already estimated (item~\ref{item:second_estimate_main_opening_integer} of~\ref{item:second_estimates_main_opening}).
    The case \( 0 < s < 1 \) is handled in the same way, replacing \( D^{j}u \) by \( u \).
    \resetconstant
\end{proof}

We now turn to the proof of Proposition~\ref{prop:block_opening}.
Consider some fixed Borel map \( u \colon Q^{m} \to \Nc \) such that \( u \in W^{s,p}(Q^{m}) \).
In order to prove that there exists some map \( \upPhi \) (\emph{depending on \( u \)}) such that \( u \circ \upPhi \in W^{s,p}(Q^{m};\Nc) \) along with the corresponding estimates, it will be convenient to rely on some genericity arguments using the framework of \emph{Fuglede maps}, in a formalism developed by Bousquet, Ponce, and Van Schaftingen in~\cite{BPVS_screening}; our presentation is limited to the tools instrumental in our proofs.
The results below are taken from~\cite{BPVS_screening}, sometimes with slight modifications.
Nevertheless, we reproduce the proofs here for the convenience of the reader.

We start with the following lemma, see \cite{BPVS_screening}*{Proposition~2.1}, suited for \( L^{p} \) regularity, which gives a criterion to detect a family of maps \( \gamma \) such that composition with \( \gamma \) is compatible with \( L^{p} \) convergence. 

\begin{lemme}
    \label{lemma:opening_fuglede}
    Let \( (X,\mathcal{X},\mu) \) be a measure space, \( u \colon X \to \R \) a measurable map which does not vanish \( \mu \)-almost everywhere, and \( (u_{n})_{n \in \N} \) a sequence of maps in \( L^{p}(X,\mu) \) such that \( u_{n} \to u \) in \( L^{p}(X,\mu) \).
    There exists a summable function \( w \colon X \to [0,+\infty] \) satisfying \( \int_{X} w\,\d\mu > 0 \) and a subsequence \( (u_{n_{i}})_{i \in \N} \) such that for every measure space \( (Y,\mathcal{Y},\lambda) \) and every measurable map 
    \( \gamma \colon Y \to X \) satisfying \( w \circ \gamma \in L^{1}(Y,\lambda) \), we have \( u_{n_{i}} \circ \gamma \in L^{p}(Y,\lambda) \), 
    \[
        u_{n_{i}} \circ \gamma \to u \circ \gamma
        \quad 
        \text{in \( L^{p}(Y,\lambda) \)},
    \]
    and
    \[
        \int_{Y} \lvert u \circ \gamma \rvert^{p} \,\d\lambda
        \leq
        2\frac{\int_{Y} w \circ \gamma\,\d\lambda}{\int_{X} w\,\d\mu}\int_{X} \lvert u \rvert^{p}\,\d \mu.
    \]
\end{lemme}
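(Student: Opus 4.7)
The plan is to use a standard Fuglede-type genericity construction: extract a rapidly converging subsequence and build a single summable weight $w$ that simultaneously dominates $|u|^p$ and a geometrically weighted sum of the errors $|u_{n_i}-u|^p$. The rate of extraction will be chosen to make the total mass of $w$ exactly twice that of $|u|^p$, which directly produces the factor $2$ in the stated inequality.

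Since $(u_n)$ converges to $u$ in $L^p(X,\mu)$, one has $u \in L^p(X,\mu)$, and the nonvanishing hypothesis ensures that $\alpha := \lVert u \rVert_{L^p(X,\mu)} > 0$. I would then pass to a subsequence $(u_{n_i})_{i \in \N}$ satisfying
\[
    \lVert u_{n_i} - u \rVert_{L^p(X,\mu)}^p \leq 4^{-i}\alpha^p
    \quad \text{for every } i \in \N,
\]
and define the weight
\[
    w = \lvert u \rvert^p + \sum_{i \in \N} 2^i \lvert u_{n_i} - u \rvert^p.
\]
By monotone convergence, $\int_X w \,\d\mu = \alpha^p + \sum_{i}2^i \cdot 4^{-i}\alpha^p = 2\alpha^p$, so $w$ is summable with $\int_X w\,\d\mu = 2\int_X |u|^p\,\d\mu > 0$.

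Now let $(Y,\mathcal{Y},\lambda)$ and $\gamma \colon Y \to X$ be such that $w \circ \gamma \in L^1(Y,\lambda)$. The pointwise inequality $\lvert u \rvert^p \leq w$ shows $u\circ\gamma \in L^p(Y,\lambda)$, and $2^i|u_{n_i} - u|^p \leq w$ gives $u_{n_i}\circ\gamma \in L^p(Y,\lambda)$ together with
\[
    \lVert (u_{n_i} - u)\circ\gamma \rVert_{L^p(Y,\lambda)}^p
    \leq 2^{-i}\int_Y w\circ\gamma\,\d\lambda \xrightarrow[i \to \infty]{} 0,
\]
which yields $u_{n_i}\circ\gamma \to u\circ\gamma$ in $L^p(Y,\lambda)$. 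Finally, from $\lvert u\rvert^p \leq w$ and the equality $\int_X w\,\d\mu = 2\int_X |u|^p\,\d\mu$, I compute
\[
    \int_Y \lvert u\circ\gamma\rvert^p\,\d\lambda
    \leq \int_Y w\circ\gamma\,\d\lambda
    = \frac{\int_Y w\circ\gamma\,\d\lambda}{\int_X w\,\d\mu}\int_X w\,\d\mu
    = 2\,\frac{\int_Y w\circ\gamma\,\d\lambda}{\int_X w\,\d\mu}\int_X \lvert u\rvert^p\,\d\mu,
\]
which gives the desired inequality (with equality, in fact, up to the trivial bound $\leq$). No serious obstacle is expected; the only point of care is calibrating the decay rate of the subsequence so that the constant in the final inequality comes out as $2$ rather than something larger, which is exactly why the choice $4^{-i}\alpha^p$ is made.
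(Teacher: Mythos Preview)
Your argument is correct and follows the same Fuglede-weight strategy as the paper, which builds $w = \bigl(|u| + \sum_i \kappa_i |u_{n_i}-u|\bigr)^p$ rather than your additive form $|u|^p + \sum_i 2^i|u_{n_i}-u|^p$, but to the same effect. One cosmetic slip: your claimed equality $\int_X w\,\d\mu = 2\alpha^p$ is only an inequality $\leq$ (the subsequence bound is $\leq 4^{-i}\alpha^p$, not $=$), and under the paper's convention $0\in\N$ the right-hand side is $3\alpha^p$; this is harmless since the final chain only needs $\int_X w\,\d\mu \leq 2\int_X |u|^p\,\d\mu$, which you recover by indexing from $i=1$ or tightening the decay to $8^{-i}\alpha^p$.
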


We insist on the fact that the map \( w \) depends on \( u \).
Even modifying \( u \) on a null set may change the map \( w \) given by Lemma~\ref{lemma:opening_fuglede}.

\begin{proof}
    We choose a sequence \( (\kappa_{i})_{i \in \N} \) diverging to \( +\infty \) such that \( \kappa_{i} \geq 1 \) for every \( i \in \N \).
    We then extract a subsequence \( (u_{n_{i}})_{i \in \N} \) so that
    \[
        \lVert u \rVert_{L^{p}(X,\mu)} + \sum_{i \in \N} \kappa_{i}\lVert u_{n_{i}} - u \rVert_{L^{p}(X,\mu)} < 2^{\frac{1}{p}}\lVert u \rVert_{L^{p}(X,\mu)}
    \]
    and we define \( w \colon X \to [0,+\infty] \) by 
    \[
        w = \biggl(\lvert u \rvert + \sum_{i \in \N} \kappa_{i}\lvert u_{n_{i}} - u \rvert\biggr)^{p}.
    \]
    We deduce from the triangle inequality and Fatou's lemma that \( w \) is summable with
    \begin{equation}
    \label{estimate:int_w}
        \biggl(\int_{X} w \,\d \mu \biggr)^{\frac{1}{p}}
        =
        \lVert w^{\frac{1}{p}} \rVert_{L^{p}(X,\mu)}
        \leq 
        \lVert u \rVert_{L^{p}(X,\mu)} + \sum_{i \in \N} \kappa_{i}\lVert u_{n_{i}} - u \rVert_{L^{p}(X,\mu)} 
        < 
        2^{\frac{1}{p}}\lVert u \rVert_{L^{p}(X,\mu)}.
    \end{equation}
    Since \( \kappa_{i} \geq 1 \), we have
    \[
        \lvert u_{n_{i}} \rvert^{p} \leq \Bigl( \lvert u \rvert + \lvert u_{n_{i}} - u \rvert\Bigr)^{p} \leq w.
    \]
    Hence, if \( \gamma \colon Y \to X \) satisfies \( w \circ \gamma \in L^{1}(Y,\lambda) \), we find that \( u_{n_{i}} \circ \gamma \in L^{p}(Y,\lambda) \), and moreover, we have
    \[
        \int_{Y} \lvert u_{n_{i}} \circ \gamma - u \circ \gamma \rvert^{p}\,\d \lambda
        \leq 
        \frac{1}{\kappa_{i}^{p}}\int_{Y} w \circ \gamma\,\d \lambda.
    \]
    Letting \( i \to +\infty \) allows us to conclude that \( u_{n_{i}} \circ \gamma \to u \circ \gamma \) in \( L^{p}(Y,\lambda) \).
    Furthermore, since \( \lvert u \circ \gamma \rvert^{p} \leq w \circ \gamma \), we obtain
    \[
        \int_{Y} \lvert u \circ \gamma \rvert^{p}\,\d \lambda
        \leq 
        \int_{Y} w \circ \gamma\,\d \lambda.
    \]
    Combining the above inequality with~\eqref{estimate:int_w} provides us with the desired estimate, and therefore concludes the proof.
    \resetconstant
\end{proof}

Using the previous lemma, we may now obtain a criterion to detect a family of maps \( \gamma \) such that composition with \( \gamma \) is compatible with \( W^{k,p} \) regularity, along with the corresponding estimates; see~\cite{BPVS_screening}*{Proposition~2.14}.
Once again, note that the map \( w \) given by the lemma below depends on \( u \).

\begin{lemme}
\label{lemma:Fuglede_Wkp}
    Let \( \Omega \subset \R^{m} \) be an open set and \( u \in W^{k,p}(\Omega) \).
    There exists a summable map \( w \colon \Omega \to [0,+\infty] \) such that 
    \[
        \int_{\Omega} w = 1
    \]
    and such that for every open set \( \omega \subset \R^{M} \) and every map \( \gamma \in \Cc^{\infty}(\omega;\Omega) \) with bounded derivatives,
    if \( w \circ \gamma \) is summable, then we have \( u \circ \gamma \in W^{k,p}(\omega) \), the derivatives \( D^{j}(u \circ \gamma) \) are given by the classical Faà di Bruno formula, and
    \[
        \lVert D^{j}u \circ \gamma \rVert_{L^{p}(\omega)}
        \leq
        C\biggl(\int_{\omega}w\circ\gamma\biggr)^{\frac{1}{p}}\lVert D^{j}u \rVert_{L^{p}(\Omega)}
        \quad 
        \text{for every \( j \in \{0,\dots,k\} \),}
    \]
    for some constant \( C > 0 \) depending on \( m \), \( M \), \( k \), and \( p \).
\end{lemme}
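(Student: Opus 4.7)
The plan is to apply Lemma~\ref{lemma:opening_fuglede} simultaneously to the sequences \( D^{j} u_{n} \) converging to \( D^{j} u \) in \( L^{p}(\Omega) \) for every \( j \in \{0, \ldots, k\} \), where \( (u_{n}) \) is a sequence of smooth maps converging to \( u \) in \( W^{k,p}(\Omega) \) (which exists by the Meyers--Serrin theorem). For each such \( j \) with \( D^{j} u \not\equiv 0 \), the lemma produces a summable weight \( w_{j} \) on \( \Omega \) and a subsequence along which its conclusions hold; by iterated (diagonal) extraction, I can arrange that the same subsequence \( (u_{n_{i}}) \) works simultaneously for every \( j \). The indices \( j \) for which \( D^{j}u \equiv 0 \) are trivial and may be skipped by setting \( w_{j} = 0 \).

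Next, I would construct a single weight \( w \) controlling all the \( w_{j} \) at once by taking a positive linear combination of the normalized weights \( w_{j}/\int_{\Omega} w_{j} \), rescaled so that \( \int_{\Omega} w = 1 \). With such a choice one has \( w_{j} \leq C_{j} w \) pointwise, so the assumption \( w \circ \gamma \in L^{1}(\omega) \) forces \( w_{j} \circ \gamma \in L^{1}(\omega) \) for every \( j \), and the estimate
\[
	\lVert (D^{j} u) \circ \gamma \rVert_{L^{p}(\omega)} \leq C \Bigl(\int_{\omega} w \circ \gamma\Bigr)^{\frac{1}{p}} \lVert D^{j} u \rVert_{L^{p}(\Omega)}
\]
follows directly from the estimate in Lemma~\ref{lemma:opening_fuglede}. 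The main point requiring care is to choose the normalization so that the final constant \( C \) depends only on \( k \) and \( p \): the bound \( \int_{\Omega} w_{j} \leq 2 \lVert D^{j} u \rVert_{L^{p}(\Omega)}^{p} \) coming from the construction in the proof of Lemma~\ref{lemma:opening_fuglede} is precisely what is needed to cancel the \( u \)-dependent factors against \( \lVert D^{j} u \rVert_{L^{p}(\Omega)} \) on the right-hand side.

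Finally, to identify \( u \circ \gamma \) as a \( W^{k,p}(\omega) \) map obeying the classical Faà di Bruno chain rule, I would exploit the smoothness of \( u_{n_{i}} \) and \( \gamma \): the usual Faà di Bruno formula expresses \( D^{j}(u_{n_{i}} \circ \gamma) \) as a universal polynomial in the quantities \( (D^{\ell} u_{n_{i}}) \circ \gamma \) for \( 1 \leq \ell \leq j \) and the derivatives \( D^{\beta} \gamma \) for \( 1 \leq \lvert \beta \rvert \leq j \). By construction, each \( (D^{\ell} u_{n_{i}}) \circ \gamma \to (D^{\ell} u) \circ \gamma \) in \( L^{p}(\omega) \); since the derivatives of \( \gamma \) are bounded, \( D^{j}(u_{n_{i}} \circ \gamma) \) converges in \( L^{p}(\omega) \) to the Faà di Bruno expression with \( u \) in place of \( u_{n_{i}} \). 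Passing to the limit in the integration by parts identity against a test function \( \varphi \in \Cc^{\infty}_{c}(\omega) \), and using the convergence \( u_{n_{i}} \circ \gamma \to u \circ \gamma \) in \( L^{p}(\omega) \) from the case \( j = 0 \), then identifies this limit as the weak derivative \( D^{j}(u \circ \gamma) \), completing the proof. The main obstacle is the bookkeeping needed to keep the constant \( C \) absolute throughout the construction; once \( w \) is correctly normalized, the rest is a routine limiting argument.
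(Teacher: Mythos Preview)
Your proposal is correct and follows essentially the same approach as the paper: approximate \( u \) by smooth maps, apply Lemma~\ref{lemma:opening_fuglede} to each \( D^{j}u \) along a common subsequence, take \( w \) to be a convex combination of the normalized \( w_{j} \), and pass to the limit in the Fa\`a di Bruno formula to identify the weak derivatives of \( u \circ \gamma \). The paper phrases the last step as invoking the closure property of Sobolev spaces rather than testing against \( \varphi \in \Cc^{\infty}_{c}(\omega) \), and handles the degenerate case \( D^{j}u \equiv 0 \) by a blanket without-loss-of-generality assumption, but these are cosmetic differences.
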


We note for further use that, under the assumptions of Lemma~\ref{lemma:Fuglede_Wkp}, applying the Faà di Bruno formula, we may estimate \( D^{j}(u \circ \gamma) \) as follows:
\begin{multline}
\label{eq:Fuglede_faa_di_bruno}
    \lVert D^{j}(u \circ \gamma) \rVert_{L^{p}(\omega)} \\
    \leq
    C\biggl(\int_{\omega}w\circ\gamma\biggr)^{\frac{1}{p}}\sum_{i=1}^{j}\sum_{\substack{1\leq t_{1}\leq \cdots\leq t_{i} \\ t_{1}+\cdots+t_{i}=j}}
        \lVert D^{t_{1}}\gamma \rVert_{L^{\infty}(\omega)}\cdots \lVert D^{t_{i}}\gamma \rVert_{L^{\infty}(\omega)}\lVert D^{i}u \rVert_{L^{p}(\Omega)}.
\end{multline}

We also make an important remark about a measurability issue.
In Lemma~\ref{lemma:opening_fuglede}, we worked with arbitrary measure spaces.
On the other hand, here we implicitly assume that \( \R \) and \( \R^{m} \) are endowed with the Borel \( \sigma \)-algebra (and not the Lebesgue \( \sigma \)-algebra) in order to ensure that continuous maps are measurable.

\begin{proof}
	We may assume, without loss of generality, that \( u \) and its \( k \) first derivatives are not almost everywhere equal to \( 0 \).
    Let \( (u_{n})_{n \in \N} \) be a sequence of smooth maps converging to \( u \) in \( W^{k,p}(\Omega) \).
    We apply inductively Lemma~\ref{lemma:opening_fuglede} to \( D^{i}u \) for \( i \in \{0,\dots,k\} \) to obtain summable maps \( w_{i} \colon \Omega \to [0,+\infty] \) satisfying \( \int_{\Omega} w_{i} > 0 \) and a subsequence \( (u_{n_{l}})_{l \in \N} \) such that, for every measurable map \( \gamma \colon \omega \to \Omega \) such that \( w_{i} \circ \gamma \) is summable,
    \( D^{i}u_{n_{l}} \circ \gamma \to D^{i}u \circ \gamma \) in \( L^{p}(\omega) \), and 
    \[
        \int_{\omega} \lvert D^{i}u \circ \gamma \rvert^{p}
        \leq
        2\frac{\int_{\omega} w_{i} \circ \gamma}{\int_{\Omega} w_{i}}\int_{\Omega} \lvert D^{i}u \rvert^{p}.
    \]
    Let 
    \[
        w = \frac{1}{k+1}\sum_{i=0}^{k}\frac{w_{i}}{\int_{\Omega}w_{i}}.
    \]
    
    It is readily seen that 
    \[
        \int_{\Omega} w = 1.
    \]
    Observe also that \( w_{i} \leq (k+1)w\int_{\Omega}w_{i} \).
    Therefore, if \( w \circ \gamma \) is summable, we find that \( D^{i}u \circ \gamma \in L^{p}(\omega) \) with
    \begin{equation}
    \label{eq:estimate_Diu_composition_gamma}
        \int_{\omega} \lvert D^{i}u \circ \gamma \rvert^{p}
        \leq
        2(k+1)\biggl(\int_{\omega} w \circ \gamma\biggr)\int_{\Omega} \lvert D^{i}u \rvert^{p}.
    \end{equation}
    If in addition \( \gamma \) is smooth and has bounded derivatives, since \( D^{i}u_{n_{l}} \circ \gamma \to D^{i}u \circ \gamma \) in \( L^{p}(\omega) \), \( D^{i}(u_{n_{l}} \circ \gamma) \) converges in \( L^{p}(\omega) \) to a map which coincides with the function one would obtain by applying the Faà di Bruno formula to compute \( D^{i}(u \circ \gamma) \).
    Hence, the closure property for Sobolev spaces ensures that \( u \circ \gamma \in W^{k,p}(\omega) \) and that the Faà di Bruno formula actually applies.
    The estimates for \( D^{j}u \circ \gamma \) are already contained in inequality~\eqref{eq:estimate_Diu_composition_gamma}, and therefore the proof is complete.
    \resetconstant
\end{proof}

After dealing with integer order Sobolev spaces, we present the next lemma, which contains the construction of a detector for maps preserving fractional Sobolev regularity under composition; see~\cite{BPVS_screening}*{Proposition~2.12}.

\begin{lemme}
\label{lemma:Fuglede_fractional}
    Let \( \Omega \subset \R^{m} \) be an open set and \( u \in W^{\sigma,p}(\Omega) \).
    Define \( w \colon \Omega \to [0,+\infty] \) by 
    \[
        w(x) = \int_{\Omega}\frac{\lvert u(x)-u(y) \rvert^{p}}{\lvert x-y \rvert^{m+\sigma p}}\,\d y.
    \]
    Assume moreover that there exists \( c > 0 \) such that \( \lvert B^{m}_{\lambda}(z) \cap \Omega \rvert \geq c\lambda^{m} \) for every \( z \in \Omega \) and \( 0 < \lambda \leq \frac{1}{2}\diam\Omega \).
    For every open set \( \omega \subset \R^{M} \) and every Lipschitz map \( \gamma \colon \omega \to \Omega \), if \( w \circ \gamma \) is summable, then we have \( u \circ \gamma \in W^{\sigma,p}(\omega) \) with 
    \[
        \lvert u \circ \gamma \rvert_{W^{\sigma,p}(\omega)}
        \leq 
        C \lvert \gamma \rvert_{\Cc^{0,1}(\omega)}^{\sigma} \biggl(\int_{\omega} w \circ \gamma \biggr)^{\frac{1}{p}}
    \]
    for some constant \( C > 0 \) depending on \( m \), \( M \), \( \sigma \), \( p \), and \( c \).
\end{lemme}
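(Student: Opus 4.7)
The plan is to bound the Gagliardo integral
\[
\lvert u\circ\gamma\rvert_{W^{\sigma,p}(\omega)}^p=\int_\omega\int_\omega \frac{\lvert u(\gamma(x))-u(\gamma(y))\rvert^p}{\lvert x-y\rvert^{M+\sigma p}}\,\d x\,\d y
\]
by $CL^{\sigma p}\int_\omega w\circ\gamma$ (where $L=\lvert\gamma\rvert_{\Cc^{0,1}(\omega)}$) via the classical averaging technique already exploited in the proof of Lemma~\ref{lemma:finite_fractional_additivity}. Fix $x,y\in\omega$, set $z=\gamma(x)$, $v=\gamma(y)$, $t=\lvert x-y\rvert$, so that $\lvert z-v\rvert\leq Lt$, and take $B(x,y)=B^m_{2Lt}(z)\cap\Omega$, which contains $v$ and, by the density hypothesis, satisfies $\lvert B(x,y)\rvert\geq c(2Lt)^m$ whenever $2Lt\leq\frac{1}{2}\diam\Omega$; a minor bookkeeping argument based on the uniform bound $\lvert w-z\rvert\leq\diam\Omega$ handles the complementary regime. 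Triangle inequality plus averaging over $w\in B(x,y)$ then yields
\[
\lvert u(z)-u(v)\rvert^p\leq \frac{2^{p-1}}{\lvert B(x,y)\rvert}\int_{B(x,y)}\bigl(\lvert u(z)-u(w)\rvert^p+\lvert u(w)-u(v)\rvert^p\bigr)\,\d w.
\]

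Plug this into the double Gagliardo integral and treat the two contributions separately by Fubini. For the term $\lvert u(z)-u(w)\rvert^p$, swap the $y$- and $w$-integrals: for fixed $x\in\omega$ and $w\in\Omega$ the constraint $w\in B(x,y)$ reads $\lvert x-y\rvert\geq\lvert w-z\rvert/(2L)$, and the radial estimate
\[
\int_{\{y\in\omega\colon\lvert x-y\rvert\geq\lvert w-z\rvert/(2L)\}}\frac{\d y}{\lvert x-y\rvert^{M+\sigma p}\lvert B(x,y)\rvert}\leq \frac{CL^{\sigma p}}{\lvert w-z\rvert^{m+\sigma p}}
\]
reconstructs the definition of $w(\gamma(x))$ after integration against $\lvert u(z)-u(w)\rvert^p$ over $\Omega$. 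Integration in $x$ yields the bound $CL^{\sigma p}\int_\omega w\circ\gamma$.

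For the term $\lvert u(w)-u(v)\rvert^p$ the natural constraint $\lvert w-z\rvert\leq 2Lt$ cannot be converted into a pure $y$--$w$ relation, so the trick is to swap $x$ and $w$ (keeping $y$ fixed) and combine with the triangle inequality $\lvert w-v\rvert\leq\lvert w-z\rvert+\lvert z-v\rvert\leq 3Lt$, which reads $\lvert x-y\rvert\geq\lvert w-v\rvert/(3L)$: an inequality purely in $x,y,w$ and therefore exploitable after the swap. The same radial computation yields $CL^{\sigma p}/\lvert w-v\rvert^{m+\sigma p}$, and summation against $\lvert u(w)-u(v)\rvert^p$ over $\Omega$ reconstructs $w(\gamma(y))$. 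Adding the two contributions and taking $p$-th roots finishes the estimate; measurability of $u\circ\gamma$ is immediate since $u$ is Borel and $\gamma$ continuous. The main obstacle is precisely this asymmetric Fubini manoeuvre in the second term: the naive pointwise bound $\lvert u(z)-u(v)\rvert^p\leq C(Lt)^{\sigma p}(w(z)+w(v))$ produces the logarithmically divergent integral $\int_\omega\d y/\lvert x-y\rvert^M$, so genuine progress requires expressing the integration domain in terms of the remaining free variables only, which is exactly what the triangle-inequality trick provides.
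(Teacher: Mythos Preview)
Your argument is correct and follows the same averaging--Tonelli--radial-integration strategy as the paper's proof. The one notable difference is the choice of averaging set: you take the ball \(B^m_{2L|x-y|}(\gamma(x))\cap\Omega\), centered at \(\gamma(x)\) with radius depending on \(L|x-y|\), whereas the paper uses the symmetric ball \(\Bc_{x,y}=B^m_{|\gamma(x)-\gamma(y)|}\bigl(\tfrac{\gamma(x)+\gamma(y)}{2}\bigr)\cap\Omega\), centered at the midpoint with radius \(|\gamma(x)-\gamma(y)|\). This symmetric choice buys two simplifications: first, since \(|\gamma(x)-z|\) and \(|\gamma(y)-z|\) are both bounded by \(\tfrac32|\gamma(x)-\gamma(y)|\) for \(z\in\Bc_{x,y}\), the two terms after the triangle inequality are handled by a single identical computation, so the ``asymmetric Fubini manoeuvre'' you identify as the main obstacle simply does not arise; second, since \(\tfrac12|\gamma(x)-\gamma(y)|\leq\tfrac12\diam\Omega\) automatically, the density hypothesis applies directly and there is no complementary regime to bookkeep. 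Your asymmetric ball works too, at the cost of precisely those two extra steps.
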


We recall that \( \lvert \gamma \rvert_{\Cc^{0,1}(\omega)} \) denotes the Lipschitz seminorm of \( \gamma \), defined by 
\[
	\lvert \gamma \rvert_{\Cc^{0,1}(\omega)}
	=
	\sup_{\substack{x,y \in \omega \\ x \neq y}} \frac{\lvert \gamma(x)-\gamma(y)\rvert}{\lvert x-y \rvert}.
\]

In contrast to what happens for integer order Sobolev spaces, here we have an explicit expression for \( w \) depending on \( u \).
It is useful to observe that 
\[
    \int_{\Omega} w = \lvert u \rvert_{W^{\sigma,p}(\Omega)}^{p}.
\]
We also comment on the assumption on the volume of balls in \( \Omega \), which will be crucial during the proof.
It is in particular satisfied if \( \Omega \) is a cube.
Indeed, in this case, any ball centered at a point of \( \Omega \) with radius less that \( \frac{1}{2}\diam\Omega \) has at least one quadrant in \( \Omega \), which implies that \( \Omega \) satisfies the assumptions of Lemma~\ref{lemma:Fuglede_fractional}.
To prove Proposition~\ref{prop:block_opening}, we only need to apply Lemma~\ref{lemma:Fuglede_fractional} on cubes, but later on in Section~\ref{sect:shrinking}, we will need to use a similar technique on more general domains,
whence our motivation for already presenting a more general statement here.

\begin{proof}
    For every \( x \), \( y \in \omega \), we let \( \Bc_{x,y} = B^{m}_{\lvert \gamma(x)-\gamma(y) \rvert}\Bigl(\frac{\gamma(x)+\gamma(y)}{2}\Bigr) \cap \Omega \).
    We write
    \[
        \lvert u \circ \gamma(x) - u \circ \gamma(y) \rvert^{p}
        \leq
        \C\biggl(\fint_{\Bc_{x,y}} \lvert u \circ \gamma(x) - u(z) \rvert^{p}\,\d z + \fint_{\Bc_{x,y}} \lvert u(z) - u \circ \gamma(y) \rvert^{p}\,\d z\biggr).
    \]
    Note that
    \[
        B^{m}_{\frac{\lvert \gamma(x)-\gamma(y) \rvert}{2}}(\gamma(x)) \cap \Omega \subset \Bc_{x,y}.
    \]
    Since \( \frac{\lvert \gamma(x)-\gamma(y) \rvert}{2} \leq \frac{1}{2}\diam\Omega \), we deduce that \( \lvert \Bc_{x,y} \rvert \geq \C\lvert \gamma(x)-\gamma(y) \rvert^{m} \).
    Moreover, we observe that for every \( z \in \Bc_{x,y} \), we have 
    \[
        \lvert \gamma(x)-z \rvert
        \leq
        \biggl\lvert\frac{\gamma(x)+\gamma(y)}{2}-z \biggr\rvert + \frac{1}{2}\lvert \gamma(x)-\gamma(y) \rvert
        \leq
        \frac{3}{2}\lvert \gamma(x)-\gamma(y) \rvert,
    \]
    and similarly \( \lvert \gamma(y)-z \rvert \leq \frac{3}{2}\rvert \gamma(x)-\gamma(y) \rvert \).
    Hence,
    \[
        \lvert u \circ \gamma(x) - u \circ \gamma(y) \rvert^{p}
        \leq
        \C\biggl(\int_{\Bc_{x,y}} \frac{\lvert u \circ \gamma(x) - u(z) \rvert^{p}}{\lvert \gamma(x)-z \rvert^{m}}\,\d z + \int_{\Bc_{x,y}} \frac{\lvert u(z) - u \circ \gamma(y) \rvert^{p}}{\lvert \gamma(y)-z \rvert^{m}}\,\d z\biggr).
    \]
    Dividing by \( \lvert x-y \rvert^{M+\sigma p} \) and integrating over \( \omega \times \omega \), we deduce that 
    \[
        \lvert u \circ \gamma \rvert_{W^{\sigma,p}(\omega)}^{p}
        \leq
        \Cl{cst:Fuglede_fractional_Tonelli}\int_{\omega}\int_{\omega}\int_{\Bc_{x,y}} \frac{\lvert u \circ \gamma(x) - u(z) \rvert^{p}}{\lvert x-y \rvert^{M+\sigma p} \lvert \gamma(x)-z \rvert^{m}}\,\d z \d y \d x.
    \]
    We use Tonelli's theorem to deduce that
    \[
        \lvert u \circ \gamma \rvert_{W^{\sigma,p}(\omega)}^{p}
        \leq
        \Cr{cst:Fuglede_fractional_Tonelli}\int_{\Omega}\int_{\omega}\int_{\Yc_{x,z}} \frac{\lvert u \circ \gamma(x) - u(z) \rvert^{p}}{\lvert x-y \rvert^{M+\sigma p} \lvert \gamma(x)-z \rvert^{m}}\,\d y \d x \d z,
    \]
    where \( \Yc_{x,z} \) is the set of all \( y \in \omega \) such that \( z \in \Bc_{x,y} \), that is,
    \[
        \Yc_{x,z} = \{ y \in \omega \mathpunct{:} \lvert \gamma(x)+\gamma(y)-2z \rvert < 2\lvert \gamma(x)-\gamma(y) \rvert \}.
    \]
    Observe that 
    \begin{multline*}
        \Yc_{x,z}
        \subset 
        \Bigl\{ y \in \R^{M}\mathpunct{:} \lvert \gamma(x)-z \rvert < \frac{3}{2}\lvert \gamma(x)-\gamma(y) \rvert\Bigr\} \\
        \subset 
        \Bigl\{ y \in \R^{M}\mathpunct{:} \lvert \gamma(x)-z \rvert < \frac{3}{2}\lvert \gamma \rvert_{\Cc^{0,1}(\omega)} \lvert x-y \rvert\Bigr\}
        =
        \R^{M} \setminus B_{r}^{M}(x),
    \end{multline*}
    where 
    \[
    	r = r(x,z) = \frac{2\lvert \gamma(x)-z \rvert}{3\lvert \gamma \rvert_{\Cc^{0,1}(\omega)}}.
    \]
    Hence, integrating with respect to \( y \),
    \begin{align*}
        \lvert u \circ \gamma \rvert_{W^{\sigma,p}(\omega)}^{p}
        &\leq
        \Cr{cst:Fuglede_fractional_Tonelli}\int_{\Omega}\int_{\omega}\int_{\R^{M} \setminus B_{r}^{M}(x)} \frac{\lvert u \circ \gamma(x) - u(z) \rvert^{p}}{\lvert x-y \rvert^{M+\sigma p} \lvert \gamma(x)-z \rvert^{m}}\,\d y \d x \d z \\
        &\leq
        \C\lvert \gamma \rvert_{\Cc^{0,1}(\omega)}^{\sigma p}\int_{\Omega}\int_{\omega}\frac{\lvert u \circ \gamma(x) - u(z) \rvert^{p}}{\lvert \gamma(x)-z \rvert^{m+\sigma p}}\,\d x \d z,
    \end{align*}
    which concludes the proof.
    \resetconstant
\end{proof}

Now that we have at our disposal a criterion to detect a family of maps that preserve membership in Sobolev spaces after composition, it would be useful to know if, given a detector \( w \) associated to a fixed map \( u \in W^{s,p} \), we may actually construct many smooth maps \( \gamma \) such that \( w \circ \gamma \) is summable.
This is based on a genericity argument, and is the purpose of the next lemma, whose proof relies on an averaging argument initially due to Federer and Fleming~\cite{FF_currents}.
Our presentation and proof are taken from~\cite{BPVS_density_higher_order}*{Lemma~2.5}.

\begin{lemme}
\label{lemma:averaging_argument_FF}
    Let \( \omega \), \( \Omega \), and \( P \subset \R^{m} \) be measurable sets, with \( 0 < \lvert P \rvert < +\infty \).
    Let \( \upPhi \colon \omega+P \to \Omega \) and \( w \colon \Omega \to [0,+\infty] \) be measurable maps.
    For every \( a \in P \), define the map \( \upPhi_{a} \colon \omega \to \R^{m} \) by \( \upPhi_{a}(x) = \upPhi(x-a)+a \). 
    Assume that, for every \( a \in P \) and \( x \in \omega \), \( \upPhi_{a}(x) \in \Omega \).
    There exists a subset \( A \subset P \) of positive measure such that, for every \( a \in A \), we have 
    \begin{equation*}
        \int_{\omega} w \circ \upPhi_{a}
        \leq
        C\frac{\lvert \omega+P \rvert}{\lvert P \rvert}\int_{\Omega} w
    \end{equation*}
    for some constant \( C > 0 \).
\end{lemme}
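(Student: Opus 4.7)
The strategy is the classical Federer--Fleming averaging argument: I would control the $P$-average of the quantity $a \mapsto \int_\omega w \circ \upPhi_a$ by a constant multiple of $\frac{\lvert \omega + P \rvert}{\lvert P \rvert} \int_\Omega w$, and then extract the desired set $A$ via a Chebyshev-type selection.

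First I would consider the double integral
\[
    I = \int_P \int_\omega w(\upPhi(x-a) + a) \, \d x \, \d a,
\]
which is well-defined by Tonelli's theorem because $w \geq 0$. For each fixed $a$ I would perform the translation $y = x - a$ and rewrite $I$ as
\[
    I = \iint_{\R^m \times \R^m} \mathbf{1}_P(a) \, \mathbf{1}_\omega(y + a) \, w(\upPhi(y) + a) \, \d y \, \d a.
\]

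The heart of the argument is then to swap the order of integration and, for fixed $y$, to substitute $b = \upPhi(y) + a$ in the inner $a$-variable. The Jacobian is one, and the hypothesis that $\upPhi_a(x) \in \Omega$ whenever $a \in P$ and $x \in \omega$ guarantees that the relevant $b$'s lie in $\Omega$; hence, extending $w$ by zero outside $\Omega$,
\[
    \int \mathbf{1}_P(a) \, \mathbf{1}_\omega(y + a) \, w(\upPhi(y) + a) \, \d a
    \leq \int_{\upPhi(y) + P} w(b) \, \d b
    \leq \int_\Omega w.
\]
The outer integration effectively ranges over those $y$ for which there exists $a \in P$ with $y + a \in \omega$, a set whose measure is bounded by $\lvert \omega + P \rvert$ (the Minkowski sum appearing in the statement). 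Consequently $I \leq \lvert \omega + P \rvert \int_\Omega w$.

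Dividing by $\lvert P \rvert$ shows that the $P$-average of $a \mapsto \int_\omega w \circ \upPhi_a$ is at most $\frac{\lvert \omega + P \rvert}{\lvert P \rvert} \int_\Omega w$. By Chebyshev's inequality, the set
\[
    A = \Bigl\{ a \in P \mathpunct{:} \int_\omega w \circ \upPhi_a \leq 2 \, \frac{\lvert \omega + P \rvert}{\lvert P \rvert} \int_\Omega w \Bigr\}
\]
has measure at least $\lvert P \rvert / 2 > 0$, which yields the conclusion with $C = 2$. The only genuinely delicate point is carefully tracking the support of the integrand through the successive translations so as to justify the geometric bound by $\lvert \omega + P \rvert$; beyond this, the whole argument is a routine manipulation of nonnegative integrals and presents no serious obstacle.
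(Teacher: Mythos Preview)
Your proposal is correct and follows essentially the same route as the paper: compute the $P$-average of $a\mapsto\int_\omega w\circ\upPhi_a$ via the translation $y=x-a$ and Tonelli, bound it by $\frac{|\omega+P|}{|P|}\int_\Omega w$, and then select $A$ by a Chebyshev-type argument. The only cosmetic difference is that the paper keeps a free parameter $0<\theta<1$ (yielding $|A|\ge\theta|P|$ and $C=\frac{1}{1-\theta}$), which makes explicit that $|A|$ can be taken arbitrarily close to $|P|$ at the cost of enlarging $C$; your choice $C=2$ corresponds to $\theta=\tfrac12$.
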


The constant \( C \) in the above estimate does not depend on the different parameters involved in the statement of the lemma.
However, as we shall see in the proof, the measure of the set \( A \) may be taken arbitrarily close to \( \lvert P \rvert \) provided that we enlarge \( C \) accordingly.

\begin{proof}
    We are going to estimate the average 
    \[
        \fint_{P}\biggl(\int_{\omega} w \circ \upPhi_{a}\biggr)\,\d a.
    \]
    By a change of variable by translation and Tonelli's theorem, we compute that 
    \begin{multline*}
        \int_{P}\biggl(\int_{\omega} w \circ \upPhi_{a}\biggr)\,\d a
        =
        \int_{P}\biggl(\int_{\omega+a} w(\upPhi(y)+a)\,\d y\biggr)\,\d a \\ 
        \leq 
        \int_{\omega+P}\biggl(\int_{P\cap (y-\omega)} w(\upPhi(y)+a)\,\d a\biggr)\,\d y 
        \leq
        \int_{\omega+P}\biggl(\int_{\Omega} w(x)\,\d x\biggr)\,\d y
        = 
        \lvert \omega+P \rvert\int_{\Omega} w.
    \end{multline*}
    Therefore,
    \[
        \fint_{P}\biggl(\int_{\omega} w \circ \upPhi_{a}\biggr)\,\d a
        \leq 
        \frac{\lvert \omega+P \rvert}{\lvert P\rvert}\int_{\Omega}w.
    \]
    Hence, for every \( 0 < \theta < 1 \), there exists a subset \( A \subset P \) with measure \( \lvert A \rvert \geq \theta\lvert P \rvert \) such that, for every \( a \in A \), we have 
    \[
        \int_{\omega} w \circ \upPhi_{a}
        \leq
        \frac{1}{1-\theta}\frac{\lvert \omega+P \rvert}{\lvert P\rvert}\int_{\Omega} w,
    \]
    and the proof of the lemma is complete.
    \resetconstant
\end{proof}

With all these tools at our disposal, we are now ready to prove Proposition~\ref{prop:block_opening}.
We start by constructing one model map \( \upPhi \) satisfying the geometric properties in the conclusion of the proposition.
Then we use the previous lemmas to show that \( \upPhi_{a} \) satisfies all the conclusions of Proposition~\ref{prop:block_opening} for some \( a \in \R^{m} \).

\begin{proof}[Proof of Proposition~\ref{prop:block_opening}]
	We use the notation introduced in~\eqref{eq:def_rectangles_opening}.
    We start with the construction of the model map \( \upPhi \).
    Let \( \lambda > 0 \) be such that 
    \[
        \lambda < \min\biggl(\frac{\ulr-\ulrho}{2},\frac{\olr-\ulr}{2},\frac{\olrho-\olr}{2}\biggr).
    \]
    We define \( \upPhi \colon Q_{4,1} \to Q_{4,1} \) by 
    \[
        \upPhi(x',x'') = \Bigl(x',\phi\Bigl(\frac{x'}{\eta},\frac{x''}{\eta}\Bigr)x''\Bigr),
    \]
    where \( \phi \colon Q_{4,1} \to [0,1] \) is a smooth function such that
    \begin{enumerate}[label=(\alph*)]
        \item for \( x \in Q_{1,1}+B^{m}_{\lambda} \), \( \phi(x) = 0 \);
        \item for \( x \in (Q_{4,1} \setminus Q_{2,1})+B^{m}_{\lambda} \), \( \phi(x) = 1 \).
    \end{enumerate}
    Recall that the \( Q_{i,1} \) are the rectangles defined in~\eqref{eq:def_rectangles_opening} with parameter \( \eta = 1 \).
    By scaling, we have 
    \[
        \lVert D^{j}\upPhi \rVert_{L^{\infty}(Q_{4})} \leq \C\eta^{1-j}
        \quad
        \text{for every \( j \in \{1,\dots,k+1\} \).}
    \]
    Now we set \( \upPhi_{a}(x) = \upPhi(x-a)+a \) for every \( a \in B^{m}_{\lambda\eta} \).
    By construction, \( \upPhi_{a} \) satisfies the geometric properties~\ref{item:form_block_opening} to~\ref{item:support_block_opening} for every \( a \in B^{m}_{\lambda\eta} \).

    We now turn to the Sobolev estimates~\ref{item:estimates_block_opening}.
    In the case where \( k = 0 \), we apply Lemma~\ref{lemma:Fuglede_fractional} to \( u \), with \( \Omega = Q_{4} \).
    Let \( w \colon Q_{4} \to [0,+\infty] \) be the corresponding detector.
    By Lemma~\ref{lemma:averaging_argument_FF} with \( \omega = Q_{3} \), \( \Omega = Q_{4} \), and \( P = B^{m}_{\lambda\eta} \),
    there exists \( a \in B^{m}_{\lambda\eta} \) such that 
    \[
        \int_{Q_{3}} w \circ \upPhi_{a} 
        \leq 
        \C\frac{\lvert Q_{3}+B^{m}_{\lambda\eta} \rvert}{\lvert B^{m}_{\lambda\eta} \rvert}\int_{Q_{4}} w.
    \]
    Since \( Q_{3} \) has sides whose length is proportional to \( \eta \), this implies that 
    \begin{equation}
    \label{eq:estimate_detector_sle1}
        \int_{Q_{3}} w \circ \upPhi_{a} 
        \leq 
        \C\int_{Q_{4}} w.
    \end{equation}
    Therefore, \( u \circ \upPhi_{a} \in W^{s,p}(Q_{4}) \) and 
    \[
        \lvert u \circ \upPhi_{a} \rvert_{W^{s,p}(Q_{3})}
        \leq 
        \C \lvert \upPhi_{a} \rvert_{\Cc^{0,1}(Q_{3})}^{s} \biggl( \int_{Q_{3}} w \circ \upPhi_{a} \biggr)^{\frac{1}{p}}.
    \]
    Combining the estimate on the derivative of \( \upPhi_{a} \), equation~\eqref{eq:estimate_detector_sle1} and the remark following Lemma~\ref{lemma:Fuglede_fractional}, we conclude that 
    \[
        \lvert u \circ \upPhi_{a} \rvert_{W^{s,p}(Q_{3})}
        \leq 
        \C \lvert u \rvert_{W^{s,p}(Q_{4})}.
    \]
    The \( L^{p} \) estimate is obtained as in the case \( k \geq 1 \) below, and this concludes the proof when \( 0 < s < 1 \).

    If now \( k \geq 1 \), we apply Lemma~\ref{lemma:Fuglede_Wkp} to \( u \) to obtain a detector \( w_{0} \colon Q_{4} \to [0,+\infty] \) 
    and we apply Lemma~\ref{lemma:Fuglede_fractional} to \( D^{j}u \) for every \( j \in \{1,\dots,k\} \) to obtain a detector \( w_{j} \colon Q_{4} \to [0,+\infty] \).
    (In the case where \( \sigma = 0 \), we skip this second step and only construct \( w_{0} \).
    In the sequel we continue to speak about \( w_{j} \) for \( j \in \{0,\dots,k\} \), it is implicit that when \( \sigma = 0 \) we only consider \( w_{0} \).)
    Then we invoke Lemma~\ref{lemma:averaging_argument_FF} to find some \( a \in B^{m}_{\lambda\eta} \) such that 
    \begin{equation}
    \label{eq:choice_a_block_opening_kge1}
        \int_{Q_{3}} w_{j} \circ \upPhi_{a} 
        \leq 
        \C\frac{\lvert Q_{3}+B^{m}_{\lambda\eta} \rvert}{\lvert B^{m}_{\lambda\eta} \rvert}\int_{Q_{4}} w_{j}
        \quad
        \text{for every \( j \in \{0,\dots,k\} \).}
    \end{equation}
    It is indeed possible to choose the same \( a \) simultaneously for each \( w_{j} \) since the set \( A \) in Lemma~\ref{lemma:averaging_argument_FF}
    can be chosen of measure arbitrarily close of \( \lvert B^{m}_{\lambda\eta} \rvert \).

    For the integer order derivatives, using the estimates on the derivatives of \( \upPhi_{a} \) and the fact that \( \int_{Q_{4}} w_{0} = 1 \), we immediately deduce that \( u \circ \upPhi_{a} \in W^{k,p}(Q_{3}) \),
    \[
        \lVert D^{j}(u \circ \upPhi_{a}) \rVert_{L^{p}(Q_{3})}
        \leq 
        \C\sum_{i=1}^{j}\eta^{i-j}\lVert D^{i}u \rVert_{L^{p}(Q_{4})},
    \]
    and 
    \[
        \lVert u \circ \upPhi_{a} \rVert_{L^{p}(Q_{3})}
        \leq 
        \lVert u \rVert_{L^{p}(Q_{4})}.
    \]
    In the case \( k = 0 \), we may still obtain the \( L^{p} \) estimate at order \( 0 \) above, 
    constructing the detector \( w_{0} \) with the help of Lemma~\ref{lemma:opening_fuglede} instead of Lemma~\ref{lemma:Fuglede_Wkp}
    and using again the fact that we may choose a suitable \( a \) for several detectors simultaneously.

    Dealing with fractional order derivatives requires additional computations.
    We continue to work with \( a \) as in~\eqref{eq:choice_a_block_opening_kge1}.
    Using the Faà di Bruno formula -- which is indeed valid for \( u \circ \upPhi_{a} \) by Lemma~\ref{lemma:Fuglede_Wkp} -- and the multilinearity of the differential, we find 
    \begin{multline}
    \label{eq:faa_di_bruno_block_opening}
        \lvert D^{j}(u \circ \upPhi_{a})(x) - D^{j}(u \circ \upPhi_{a})(y) \rvert^{p} 
        \leq 
        \C\sum_{i = 1}^{j}\Bigl(\lvert D^{i}u \circ \upPhi_{a}(x) - D^{i}u \circ \upPhi_{a}(y) \rvert^{p}\eta^{(i-j)p} \\
             + \sum_{t = 1}^{j} \lvert D^{i}u \circ \upPhi_{a}(x)\rvert^{p}\eta^{(i-1-j+t)p}\lvert D^{t}\upPhi_{a}(x)-D^{t}\upPhi_{a}(y) \rvert^{p}\Bigr).
    \end{multline}
    When dividing~\eqref{eq:faa_di_bruno_block_opening} by \( \lvert x-y \rvert^{m+\sigma p} \) and integrating over \( Q_{3} \times Q_{3} \), the first term on the right-hand side gives 
    \( \eta^{(i-j)p}\lvert D^{i}u \circ \upPhi_{a} \rvert_{W^{\sigma,p}(Q_{3})}^{p} \).
    As in the case \( 0 < s < 1 \), we may estimate it as 
    \[
        \eta^{(i-j)p}\lvert D^{i}u \circ \upPhi_{a} \rvert_{W^{\sigma,p}(Q_{3})}^{p}
        \leq 
        \C \eta^{(i-j)p}\lvert D^{i}u \rvert_{W^{\sigma,p}(Q_{4})}^{p}.
    \]
    For the second term on the right-hand side of~\eqref{eq:faa_di_bruno_block_opening}, we use an optimization argument. 
    For every \( r > 0 \), we write
    \begin{multline*}
        \int_{Q_{3}} \frac{\lvert D^{t}\upPhi_{a}(x) - D^{t}\upPhi_{a}(y) \rvert^{p}}{\lvert x-y \rvert^{m+\sigma p}}\,\d y
        \leq 
        \C\biggl(\int_{B^{m}_{r}(x)} \eta^{-tp}\frac{1}{\lvert x-y \rvert^{m+\sigma p-p}}\,\d y \\
            + \int_{\R^{m} \setminus B^{m}_{r}(x)} \eta^{(1-t)p}\frac{1}{\lvert x-y \rvert^{m+\sigma p}}\,\d y\biggr)
        \leq
        \C(\eta^{-tp}r^{p-\sigma p} + \eta^{(1-t)p}r^{-\sigma p}).
    \end{multline*}
    Letting \( r = \eta \), we find
    \[
        \int_{Q_{3}} \frac{\lvert D^{t}\upPhi_{a}(x) - D^{t}\upPhi_{a}(y) \rvert^{p}}{\lvert x-y \rvert^{m+\sigma p}}\,\d y
        \leq \C\eta^{(1-t-\sigma)p}.
    \]
    Therefore,
    \begin{multline*}
        \int_{Q_{3}}\int_{Q_{3}} 
            \frac{\lvert D^{i}u \circ \upPhi_{a}(x) \rvert^{p} \eta^{(i-1-j+t)p} \lvert D^{t}\upPhi_{a}(x) - D^{t}\upPhi_{a}(y) \rvert^{p}}{\lvert x-y \rvert^{m+\sigma p}}\,\d x\d y \\
        \leq
        \C\eta^{(i-j-\sigma)p}\int_{Q_{3}} \lvert D^{i}u \circ \upPhi_{a}(x) \rvert^{p}\,\d x 
        \leq
        \C\eta^{(i-j-\sigma)p}\int_{Q_{4}} \lvert D^{i}u \rvert^{p},
    \end{multline*}
    where the last inequality follows from Lemma~\ref{lemma:Fuglede_Wkp}.
    Gathering the estimates for both terms in~\eqref{eq:faa_di_bruno_block_opening} yields the desired fractional estimate and concludes the proof.
    \resetconstant
\end{proof}

We conclude this section with two additional results which are the counterparts of~\cite{BPVS_density_higher_order}*{Addendum~1 and~2 to Proposition~2.1} in the context of fractional order estimates.
From now on, we place ourselves under the assumptions of Proposition~\ref{prop:main_opening}.
The first proposition ensures that the opening procedure does not increase too much the energy on one given cube.

\begin{prop}
\label{prop:addendum1_opening}
    Let \( \Kc^{m} \) be a cubication containing \( \Uc^{m} \).
    \begin{enumerate}[label=(\alph*)]
    	\item\label{item:addendum1_sgeq1} If \( s \geq 1 \) and if \( u \in W^{1,sp}(K^{m}+Q^{m}_{2\rho\eta};\R^{\nu}) \), then the map \( \upPhi \colon \R^{m} \to \R^{m} \) provided by Proposition~\ref{prop:main_opening} can be chosen with the additional property that \( u \circ \upPhi \in W^{1,sp}(K^{m}+Q^{m}_{\rho\eta};\R^{\nu}) \), and for every \( \sigma^{m} \in \Kc^{m} \),
	    	\[
		    	\lVert D(u \circ \upPhi)\rVert_{L^{sp}(\sigma^{m}+Q^{m}_{\rho\eta})} 
		    	\leq
		    	C'\lVert Du \rVert_{L^{sp}(\sigma^{m}+Q^{m}_{2\rho\eta})}
	    	\]
    	for some constant \( C' > 0 \) depending on \( m \), \( s \), \( p \), and \( \rho \).
    	\item\label{item:addendum1_sle1} If \( 0 < s < 1 \), then the map \( \upPhi \colon \R^{m} \to \R^{m} \) provided by Proposition~\ref{prop:main_opening} can be chosen with the additional property that \( u \circ \upPhi \in W^{s,p}(K^{m}+Q^{m}_{\rho\eta};\R^{\nu}) \), and for every \( \sigma^{m} \in \Kc^{m} \),
	    	\[
		    	\lvert u \circ \upPhi \rvert_{W^{s,p}(\sigma^{m}+Q^{m}_{\rho\eta})}
		    	\leq 
		    	C'\lvert u \rvert_{W^{s,p}(\sigma^{m}+Q^{m}_{2\rho\eta})}
	    	\]
    		for some constant \( C' > 0 \) depending on \( m \), \( s \), \( p \), and \( \rho \). 
    \end{enumerate}
\end{prop}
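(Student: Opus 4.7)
The plan is to refine, at each step of the construction of $\upPhi$ in the proof of Proposition~\ref{prop:main_opening}, the choice of the translation parameter in the Federer--Fleming averaging argument (Lemma~\ref{lemma:averaging_argument_FF}) so that it also controls a localized detector attached to every cube $\sigma^{m} \in \Kc^{m}$. Recall that $\upPhi = \upPhi_{0} \circ \cdots \circ \upPhi_{\ell}$ is glued from blocks $\upPhi^{\sigma^{d}}_{a_{\sigma^{d}}}$ whose geometric support lies in $T_{\sigma^{d}}(Q_{2})$, and this rectangle meets only a bounded number (depending on $m$) of cubes of $\Kc^{m}$. Hence only finitely many additional detectors per block have to be handled simultaneously, which is precisely what Lemma~\ref{lemma:averaging_argument_FF} permits: as noted after its statement, the good set $A \subset B^{m}_{\lambda\eta}$ can be taken of measure arbitrarily close to $\lvert B^{m}_{\lambda\eta} \rvert$ at the cost of enlarging the constant, so finite intersections of good sets still have positive measure.

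For part~\ref{item:addendum1_sle1}, I would apply Lemma~\ref{lemma:Fuglede_fractional} to $u$ on each enlarged cube $\sigma^{m} + Q^{m}_{2\rho\eta}$, producing a local detector $w_{\sigma^{m}}$ with $\int w_{\sigma^{m}} = \lvert u \rvert_{W^{s,p}(\sigma^{m} + Q^{m}_{2\rho\eta})}^{p}$. When choosing $a_{\sigma^{d}}$ in the proof of Proposition~\ref{prop:block_opening}, I would additionally require
\[
    \int w_{\sigma^{m}} \circ \upPhi^{\sigma^{d}}_{a_{\sigma^{d}}} \leq C \int w_{\sigma^{m}}
\]
for every $\sigma^{m}$ adjacent to $\sigma^{d}$. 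Lemma~\ref{lemma:Fuglede_fractional} then delivers the desired local fractional bound on $u \circ \upPhi^{\sigma^{d}}_{a_{\sigma^{d}}}$. Iterating over $d \in \{0,\dots,\ell\}$, with $u$ replaced at each step by the intermediate map $u^{d}$ and the detectors rebuilt from $u^{d}$, propagates the local estimate through the $\ell + 1$ compositions. For part~\ref{item:addendum1_sgeq1}, since $s \geq 1$, the chain rule $D(u \circ \upPhi) = (Du \circ \upPhi)\, D\upPhi$ together with $\lVert D\upPhi \rVert_{L^{\infty}} \leq C$ reduces the task to a local $L^{sp}$ estimate on $Du \circ \upPhi$; this is handled analogously by applying Lemma~\ref{lemma:opening_fuglede} to $\lvert Du \rvert^{sp}$ on $\sigma^{m} + Q^{m}_{2\rho\eta}$ and adding the resulting detector to the averaging argument.

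The main delicate point, and the step I expect to require the most care, is to preserve the same outer radius $2\rho\eta$ on the right-hand side throughout the whole iteration: at each step $d$ one would naturally lose a bit of margin, ending up with a radius slightly smaller than $2\rho\eta$, which is incompatible with the statement. The fix is to introduce an auxiliary sequence of intermediate radii $\rho = r_{\ell} < r_{\ell-1} < \cdots < r_{0} < 2\rho$ (chosen together with the sequences $\rho_{d}$, $\ulr_{d}$, $\olr_{d}$ already introduced in the proof of Proposition~\ref{prop:main_opening}), and to run the localized argument at step $d$ with outer radius $r_{d-1}\eta$ and inner radius $r_{d}\eta$. After the full iteration one then recovers the bound
\[
    \lvert u \circ \upPhi \rvert_{W^{s,p}(\sigma^{m} + Q^{m}_{\rho\eta})} \leq C' \lvert u \rvert_{W^{s,p}(\sigma^{m} + Q^{m}_{2\rho\eta})}
\]
with $C'$ equal to the product of the $\ell + 1$ constants picked up at each step, which depends only on $m$, $s$, $p$, and $\rho$, as required.
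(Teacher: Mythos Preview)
Your approach is correct but more elaborate than the paper's. The key simplification you miss is that the block estimates of Proposition~\ref{prop:block_opening} are \emph{already} local to each cube: since $T_{\sigma^{d}}(Q_{4}) \subset \sigma^{m} + Q^{m}_{\rho_{d-1}\eta}$ whenever $\sigma^{d}$ is a face of $\sigma^{m}$, the block bound (energy on $T_{\sigma^{d}}(Q_{3})$ controlled by energy on $T_{\sigma^{d}}(Q_{4})$) already has its right-hand side inside the desired enlarged cube. Hence for part~\ref{item:addendum1_sle1} the paper adds \emph{no} new detector at all: it simply reuses the $W^{s,p}$ block estimate already established in Proposition~\ref{prop:main_opening} and glues over the finitely many $d$-faces of $\sigma^{m}$ via Lemma~\ref{lemma:finite_fractional_additivity}. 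For part~\ref{item:addendum1_sgeq1} it adds a \emph{single} extra detector per block (so that Proposition~\ref{prop:block_opening} holds also with parameters $(1,sp)$), not one per adjacent cube. Likewise, your auxiliary radii $r_{d}$ are redundant: the sequence $\rho_{d}$ from the proof of Proposition~\ref{prop:main_opening} already plays exactly this role, yielding at step $d$ the bound on $\sigma^{m}+Q^{m}_{\rho_{d}\eta}$ in terms of $\sigma^{m}+Q^{m}_{\rho_{d-1}\eta}$. Your per-cube-detector route does work, and in principle it buys you a way to bypass Lemma~\ref{lemma:finite_fractional_additivity} entirely (apply Lemma~\ref{lemma:Fuglede_fractional} directly to $\upPhi_{d}$ on $\sigma^{m}+Q^{m}_{r_{d}\eta}$, using that the $L^{1}$-integral of the detector splits additively over the identity region and the disjoint block supports); but as written you apply Lemma~\ref{lemma:Fuglede_fractional} per block and leave the fractional gluing step implicit.
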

\begin{proof}
    \ref{item:addendum1_sgeq1}~In the case \( s \geq 1 \), since the choice of the parameter \( a \) involved in the construction of the map provided by Proposition~\ref{prop:block_opening}
    is made over a set of positive measure, according to the remark following Lemma~\ref{lemma:averaging_argument_FF}, 
    we may assume that the maps \( \upPhi^{\sigma^{d}} \) involved in the construction of the map \( \upPhi \) satisfy in addition the conclusion of Proposition~\ref{prop:block_opening} with parameters \( 1 \) and \( sp \).

    We keep the notation used in the proof of Proposition~\ref{prop:main_opening}.
    Let \( d \in \{0,\dots,\ell\} \).
    We are going to prove that
    \[
        \lVert D(u^{d} \circ \upPhi_{d})\rVert_{L^{sp}(\sigma^{m}+Q^{m}_{\rho_{d}\eta})} 
        \leq
        C'\lVert Du^{d} \rVert_{L^{sp}(\sigma^{m}+Q^{m}_{\rho_{d-1}\eta})},
    \]
    and the conclusion will follow by induction.
    By our additional assumption on the maps \( \upPhi^{\sigma^{d}} \), we have
    \[
        \lVert D(u^{d} \circ \upPhi_{d})\rVert_{L^{sp}(T_{\sigma^{d}}(Q_{3}))} 
        \leq
        \lVert D(u^{d} \circ \upPhi_{d})\rVert_{L^{sp}(T_{\sigma^{d}}(Q_{4}))} 
        \leq
        \C\lVert Du^{d}\rVert_{L^{sp}(T_{\sigma^{d}}(Q_{4}))}
    \]
    for every \( \sigma^{d} \in \Uc^{d} \).
    We conclude by using the fact that
    \[
        \Supp\upPhi_{d} \subset \bigcup_{\sigma^{d} \in \Uc^{d}}T_{\sigma^{d}}(Q_{2})
        \subset 
        \bigcup_{\sigma^{d} \in \Uc^{d}}T_{\sigma^{d}}(Q_{3})
    \]
    along with the additivity of the integral.

    \ref{item:addendum1_sle1}~The proof of the case \( 0 < s < 1 \) is identical, except that we replace the additivity of the integral by Lemma~\ref{lemma:finite_fractional_additivity}.
    Here we use the fact that the number of \( d \)-faces of a given cube depends only on \( d \) and \( m \), 
    and that the geometric support of \( \upPhi^{\sigma^{d}} \) is contained in \( T_{\sigma^{d}}(Q_{2}) \), which is slightly smaller than \( T_{\sigma^{d}}(Q_{3}) \).
    This justifies the application of Lemma~\ref{lemma:finite_fractional_additivity}.
    \resetconstant
\end{proof}

The second proposition gives \( \VMO \)-type estimates for the opened map.
As we mentioned in our informal presentation, such estimates are one of the main features of the opening procedure, 
and they follow from the fact that \( u \circ \upPhi \) behaves locally as a map of \( \ell \) variables in \( U^{\ell}+Q^{m}_{\rho\eta} \).

\begin{prop}
\label{prop:addendum2_opening}
    Under the assumptions of Propositions~\ref{prop:main_opening} and~\ref{prop:addendum1_opening}, the map \( \upPhi \colon \R^{m} \to \R^{m} \) satisfies the following estimates:
    \begin{enumerate}[label=(\alph*)]
    	\item\label{item:addendum2_sgeq1} if \( s \geq 1 \), then 
    		\begin{enumerate}[label=(\roman*)]
    			\item\label{item:addendum_2_sgeq1_limit} it holds that
	    			\[
		    			\lim_{r \to 0}\sup_{Q^{m}_{r}(a)\subset U^{\ell}+Q^{m}_{\rho\eta}}r^{\frac{\ell}{sp}-1}
		    			\fint_{Q^{m}_{r}(a)}\fint_{Q^{m}_{r}(a)}\lvert u \circ \upPhi(x) - u \circ \upPhi(y) \rvert\,\d x\d y
		    			=
		    			0;
	    			\]
    			\item\label{item:addendum_2_sgeq1_estimate} for every \( \sigma^{m} \in \Uc^{m} \) and every \( Q^{m}_{r}(a) \subset U^{\ell}+Q^{m}_{\rho\eta} \) with \( a \in \sigma^{m} \),
	    			\[
		    			\fint_{Q^{m}_{r}(a)}\fint_{Q^{m}_{r}(a)} \lvert u \circ \upPhi(x) - u \circ \upPhi(y) \rvert\,\d x\d y
		    			\leq
		    			C''\frac{r^{1-\frac{\ell}{sp}}}{\eta^{\frac{m-\ell}{sp}}}\lVert Du \rVert_{L^{sp}(\sigma^{m}+Q^{m}_{2\rho\eta})};
	    			\]
    		\end{enumerate}
    	\item\label{item:addendum2_sle1} if \( 0 < s < 1 \), then 
    		\begin{enumerate}[label=(\roman*)]
    			\item\label{item:addendum_2_sle1_limit} it holds that
    				\[
	    				\lim_{r \to 0}\sup_{Q^{m}_{r}(a)\subset U^{\ell}+Q^{m}_{\rho\eta}}r^{\frac{\ell}{p}-s}
	    				\fint_{Q^{m}_{r}(a)}\fint_{Q^{m}_{r}(a)}\lvert u \circ \upPhi(x) - u \circ \upPhi(y) \rvert\,\d x\d y
	    				=
	    				0;
    				\]
    			\item\label{item:addendum_2_sle1_estimate} for every \( \sigma^{m} \in \Uc^{m} \) and every \( Q^{m}_{r}(a) \subset U^{\ell}+Q^{m}_{\rho\eta} \) with \( a \in \sigma^{m} \),
    				\[
	    				\fint_{Q^{m}_{r}(a)}\fint_{Q^{m}_{r}(a)} \lvert u \circ \upPhi(x) - u \circ \upPhi(y) \rvert\,\d x\d y
	    				\leq
	    				C''\frac{r^{s-\frac{\ell}{p}}}{\eta^{\frac{m-\ell}{p}}}\lvert u \rvert_{W^{s,p}(\sigma^{m}+Q^{m}_{2\rho\eta})};
    				\]
    		\end{enumerate}
    \end{enumerate}
	for some constant \( C'' > 0 \) depending on \( m \), \( s \), \( p \), and \( \rho \).
\end{prop}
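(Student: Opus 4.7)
The core observation is geometric: by property~\ref{item:first_geometric_main_opening} of Proposition~\ref{prop:main_opening}, for every cube $Q^m_r(a) \subset U^\ell + Q^m_{\rho\eta}$ with $a \in \sigma^m \in \Uc^m$, one may select an $\ell$-face $\sigma^\ell$ of $\sigma^m$ in $\Uc^\ell$ and adapted coordinates $(x',x'') \in \R^\ell \times \R^{m-\ell}$ so that $u\circ\upPhi$ depends only on $x'$ throughout $Q^m_r(a)$. Setting $v(x') \Def u\circ\upPhi(x',c_0)$ for any fixed admissible transverse value $c_0$, the two-point average on the left-hand side reduces to an $\ell$-dimensional one:
\[
    \fint_{Q^m_r(a)}\fint_{Q^m_r(a)}\lvert u\circ\upPhi(x)-u\circ\upPhi(y)\rvert\,\d x\d y = \fint_{Q^\ell_r(a')}\fint_{Q^\ell_r(a')}\lvert v(x')-v(y')\rvert\,\d x'\d y'.
\]
The strategy is to bound this $\ell$-dimensional average via a suitable Sobolev estimate on $Q^\ell_r(a')$, and then to convert the resulting $\ell$-dimensional Sobolev quantity of $v$ back into an $m$-dimensional one of $u\circ\upPhi$, which is finally controlled by Proposition~\ref{prop:addendum1_opening}.

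For part~\ref{item:addendum2_sgeq1}, I apply Poincaré--Wirtinger in $\R^\ell$ followed by Jensen's inequality to obtain
\[
    \fint_{Q^\ell_r(a')}\fint_{Q^\ell_r(a')}\lvert v(x')-v(y')\rvert\,\d x'\d y' \leq C r^{1-\frac{\ell}{sp}}\lVert Dv \rVert_{L^{sp}(Q^\ell_r(a'))}.
\]
Since $u\circ\upPhi$ is constant in the transverse directions on $Q^{m-\ell}_{\rho\eta}(c_0)$, Fubini's theorem gives
\[
    \lVert Dv \rVert_{L^{sp}(Q^\ell_r(a'))}^{sp} = \frac{1}{(2\rho\eta)^{m-\ell}}\lVert D(u\circ\upPhi)\rVert_{L^{sp}(Q^\ell_r(a')\times Q^{m-\ell}_{\rho\eta}(c_0))}^{sp},
\]
and Proposition~\ref{prop:addendum1_opening}\ref{item:addendum1_sgeq1} applied on $\sigma^m$ then yields estimate~\ref{item:addendum_2_sgeq1_estimate}. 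For the limit~\ref{item:addendum_2_sgeq1_limit}, the previous inequalities combine into the uniform bound $r^{\frac{\ell}{sp}-1}\fint\fint \leq C\lVert Dv \rVert_{L^{sp}(Q^\ell_r(a'))}$; since $\Uc^\ell$ is finite and $Dv \in L^{sp}$ on a bounded set (Proposition~\ref{prop:addendum1_opening} again), uniform absolute continuity of the $L^{sp}$ norm drives the right-hand side to zero as $r \to 0$, uniformly in $a$.

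For part~\ref{item:addendum2_sle1}, the Poincaré step is replaced by a direct Hölder/Jensen estimate using the Gagliardo kernel: the bound $\lvert x'-y' \rvert^{\ell+sp} \leq (2\sqrt{\ell}\,r)^{\ell+sp}$ inside the integral defining $\lvert v \rvert_{W^{s,p}}^p$ gives
\[
    \fint_{Q^\ell_r(a')}\fint_{Q^\ell_r(a')}\lvert v(x')-v(y')\rvert\,\d x'\d y' \leq C r^{s-\frac{\ell}{p}}\lvert v \rvert_{W^{s,p}(Q^\ell_r(a'))}.
\]
The main obstacle is then to bound $\lvert v \rvert_{W^{s,p}(Q^\ell_r(a'))}^p$ by $C\eta^{-(m-\ell)}\lvert u\circ\upPhi \rvert_{W^{s,p}(D)}^p$, where $D \Def Q^\ell_r(a')\times Q^{m-\ell}_{\rho\eta}(c_0)$, since the nonlocality of the Gagliardo seminorm prevents a direct Fubini. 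I handle this by bounding $\lvert u\circ\upPhi \rvert_{W^{s,p}(D)}^p$ from below, restricting the double integral to the subset where $\lvert x''-y'' \rvert \leq \lvert x'-y' \rvert$ and $x'' \in Q^{m-\ell}_{\rho\eta/2}(c_0)$: on this set $\lvert x-y \rvert^{m+sp} \leq 2^{(m+sp)/2}\lvert x'-y' \rvert^{m+sp}$, while integrating in $y''$ over the ball of radius $\lvert x'-y' \rvert$ centered at $x''$ produces a factor $\lvert x'-y' \rvert^{m-\ell}$ and integrating in $x''$ contributes a factor of order $\eta^{m-\ell}$. Assuming first that $r$ is small compared to $\rho\eta$ (the remaining range of $r$ being trivial), these factors combine to give the desired control. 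Proposition~\ref{prop:addendum1_opening}\ref{item:addendum1_sle1} then produces~\ref{item:addendum_2_sle1_estimate}, and the limit~\ref{item:addendum_2_sle1_limit} follows as in part~\ref{item:addendum2_sgeq1} from uniform absolute continuity of the Gagliardo seminorm.
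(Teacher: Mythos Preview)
Your proof is correct and follows essentially the same route as the paper: reduce to an $\ell$-dimensional mean oscillation via the constancy property~\ref{item:first_geometric_main_opening}, apply Poincaré--Wirtinger (in the form appropriate to $s\geq 1$ or $0<s<1$), lift the resulting $\ell$-dimensional energy of $v$ back to the $m$-dimensional energy of $u\circ\upPhi$, and close with Proposition~\ref{prop:addendum1_opening}; the limits then follow from absolute continuity over the finitely many faces in $\Uc^{\ell}$.

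The only place where your argument departs from the paper is in the lifting step for $0<s<1$. You bound $\lvert u\circ\upPhi\rvert_{W^{s,p}(D)}^{p}$ from below by restricting the double integral to the region $\{\lvert x''-y''\rvert\leq\lvert x'-y'\rvert,\ x''\in Q^{m-\ell}_{\rho\eta/2}\}$ and reading off the factor $\eta^{m-\ell}$ by hand. The paper instead writes the identity
\[
    \lvert v \rvert_{W^{s,p}(Q^{\ell}_{r}(a'))}^{p}
    =\frac{1}{(2\rho\eta)^{m-\ell}}\int_{Q^{m-\ell}_{\rho\eta}(a'')}\lvert u\circ\upPhi(\cdot,x'')\rvert_{W^{s,p}(Q^{\ell}_{r}(a'))}^{p}\,\d x''
\]
(trivial by constancy in $x''$) and then invokes the standard slicing inequality $\int_{Q''}\lvert f(\cdot,x'')\rvert_{W^{s,p}(Q')}^{p}\,\d x''\leq C\lvert f\rvert_{W^{s,p}(Q'\times Q'')}^{p}$. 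Your computation is precisely one way of proving this slicing inequality in the special case at hand, so the two arguments are the same in substance; the paper's formulation is a bit cleaner and avoids the case split on the size of $r$, while yours is more self-contained.
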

\begin{proof}
	We start with the proof of items~\ref{item:addendum_2_sgeq1_limit}.
    Let \( a \in \R^{m} \) and \( r > 0 \) be such that \( Q^{m}_{r}(a)\subset U^{\ell}+Q^{m}_{\rho\eta} \)\,, and write \( Q^{m}_{r}(a) = Q^{\ell}_{r}(a') \times Q^{m-\ell}_{r}(a'') \).
    Then \( a \in U^{\ell}+Q^{m}_{\rho\eta-r} \), and hence there exists \( \tau^{\ell} \in \Uc^{\ell} \) such that \( Q^{m}_{r}(a) \subset \tau^{\ell}+Q^{m}_{\rho\eta} \).
    We may assume that \( \tau^{\ell} = Q^{m}_{\eta} \times \{0\}^{m-\ell} \).
    Recall that the map \( \upPhi \) is constant on each \( (m-\ell) \)-dimensional cube orthogonal to \( Q^{\ell}_{(1+\rho)\eta} \times \{0\}^{m-\ell} \).
    Hence we may define \( v \colon Q^{\ell}_{(1+\rho)\eta} \to \R^{\nu} \) by
    \[
        v(x') = u \circ \upPhi(x',a'').
    \]
    Using Proposition~\ref{prop:addendum1_opening}, we deduce that \( v \in W^{1,sp}(Q^{\ell}_{(1+\rho)\eta};\R^{\nu}) \) in the \( s \geq 1 \) case, respectively \( v \in W^{s,p}(Q^{\ell}_{(1+\rho)\eta};\R^{\nu}) \) in the \( 0 < s < 1 \) case.
    We next note that
    \[
        \fint_{Q^{m}_{r}(a)}\fint_{Q^{m}_{r}(a)} \lvert u \circ \upPhi(x) - u \circ \upPhi(y) \rvert\,\d x\d y
        = 
        \fint_{Q^{\ell}_{r}(a')}\fint_{Q^{\ell}_{r}(a')} \lvert v(x') - v(y') \rvert\,\d x'\d y'.
    \]
    The Poincaré--Wirtinger inequality implies that
    \[
        \fint_{Q^{\ell}_{r}(a')}\fint_{Q^{\ell}_{r}(a')} \lvert v(x') - v(y') \rvert\,\d x'\d y'
        \leq
        \C r^{1-\frac{\ell}{sp}}\lVert Dv \rVert_{L^{sp}(Q^{\ell}_{r}(a'))}
    \]
    if \( s \geq 1 \), respectively 
    \[
        \fint_{Q^{\ell}_{r}(a')}\fint_{Q^{\ell}_{r}(a')} \lvert v(x') - v(y') \rvert\,\d x'\d y'
        \leq
        \C r^{s-\frac{\ell}{p}}\lvert v \rvert_{W^{s,p}(Q^{\ell}_{r}(a'))}
    \]
    if \( 0 < s < 1 \).
    It then suffices to invoke Lebesgue's lemma to obtain both items~\ref{item:addendum_2_sgeq1_limit}.
    
    We now turn to the proof of items~\ref{item:addendum_2_sgeq1_estimate}. 
    When \( s \geq 1 \), we observe that
    \begin{equation}
    \label{eq:slicing_opening_addendum_2}
        \lVert D(u \circ \upPhi) \rVert_{L^{sp}(Q^{\ell}_{r}(a')\times Q^{m-\ell}_{\rho\eta}(a''))}
        =
        (2\rho\eta)^{\frac{m-\ell}{sp}}\lVert Dv \rVert_{L^{sp}(Q^{\ell}_{r}(a'))},
    \end{equation}
    and hence, assuming in addition that \( a \in \sigma^{m} \) with \( \sigma^{m} \in \Uc^{m} \), we have
    \[
        \lVert Dv \rVert_{L^{sp}(Q^{\ell}_{r}(a'))}
        =
        \frac{1}{(2\rho\eta)^{\frac{m-\ell}{sp}}}\lVert D(u \circ \upPhi) \rVert_{L^{sp}(Q^{\ell}_{r}(a')\times Q^{m-\ell}_{\rho\eta}(a''))}
        \leq
        \frac{1}{(2\rho\eta)^{\frac{m-\ell}{sp}}}\lVert D(u \circ \upPhi) \rVert_{L^{sp}(\sigma^{m}+Q^{m}_{\rho\eta})}.
    \]
    Thus,
    \[
        \fint_{Q^{m}_{r}(a)}\fint_{Q^{m}_{r}(a)} \lvert u \circ \upPhi(x) - u \circ \upPhi(y) \rvert\,\d x\d y
        \leq
        \C\frac{r^{1-\frac{\ell}{sp}}}{(2\rho\eta)^{\frac{m-\ell}{sp}}}\lVert D(u \circ \upPhi) \rVert_{L^{sp}(\sigma^{m}+Q^{m}_{\rho\eta})}.
    \]
    Proposition~\ref{prop:addendum1_opening} implies that
    \[
        \lVert D(u \circ \upPhi) \rVert_{L^{sp}(\sigma^{m}+Q^{m}_{\rho\eta})}
        \leq
        \C\lVert Du \rVert_{L^{sp}(\sigma^{m}+Q^{m}_{2\rho\eta})},
    \]
    which yields the desired conclusion.

    For the case \( 0 < s < 1 \), we follow the same path, replacing inequality~\eqref{eq:slicing_opening_addendum_2} by the fact that 
    \begin{multline*}
        \lvert v \rvert_{W^{s,p}(Q^{\ell}_{r}(a'))}
        =
        \frac{1}{(2\rho\eta)^{\frac{m-\ell}{p}}}\biggl(\int_{Q^{m-\ell}_{\rho\eta}(a'')}\lvert u \circ \upPhi(\cdot,x'') \rvert_{W^{s,p}(Q^{\ell}_{r}(a'))}^{p}\,\d x''\biggr)^{\frac{1}{p}} \\
        \leq 
        \C \frac{1}{\eta^{\frac{m-\ell}{p}}}\lvert u \rvert_{W^{s,p}(Q^{\ell}_{r}(a')\times Q^{m-\ell}_{\rho\eta}(a''))}.
    \end{multline*}
    This concludes the proof of the proposition.
    \resetconstant
\end{proof}

\section{Adaptive smoothing}
\label{sect:adaptive_smoothing}

In this section, we present the adaptive smoothing, which consists in a regularization by convolution, \( x \mapsto \varphi_{\psi(x)} \ast u(x) \), where the parameter \( \psi \) of convolution is allowed to depend on the point \( x \) where the regularized map is calculated.
Already implicit in the proof of the \( H = W \) theorem~\cite{H=W}, this method was made popular by Schoen and Uhlenbeck~\cite{SU_regularity_theory_harmonic_maps}*{Section~3}.
The approach we follow here is an adaptation, suited to fractional Sobolev spaces, of the one in~\cite{BPVS_density_higher_order}*{Section~3}.

We now become more specific.
Let \( \varphi \) be a \emph{mollifier}, i.e.,
\[
	\varphi \in \Cc^{\infty}_{\mathrm{c}}(B^{m}_{1}),
	\quad 
	\varphi \geq 0 \quad \text{in \( B^{m}_{1} \),}
	\quad
	\varphi \text{ is radial,}
	\quad
	\text{and}
	\quad
	\int_{B^{m}_{1}} \varphi = 1.
\]

Let \( u \in L^{1}_{\mathrm{loc}}(\Omega)\), and consider a map \( \psi \in \Cc^{\infty}(\Omega;(0,+\infty)) \).
For every \( x \in \Omega \) satisfying \( \dist{(x,\partial\Omega)} \geq \psi(x) \), we may define
\[
	\varphi_{\psi} \ast u(x) = \int_{B^{m}_{1}} \varphi(z)u(x+\psi(x)z)\,\d z.
\]
A change of variable yields
\begin{equation}
\label{eq:reformulation_adaptive_smoothing}
	\varphi_{\psi} \ast u(x)
	=
	\frac{1}{\psi(x)^{m}}\int_{B^{m}_{\psi(x)}(x)} \varphi\Bigl(\frac{y-x}{\psi(x)}\Bigr)u(y)\,\d y.
\end{equation}
In particular, \( \varphi_{\psi} \ast u \) is smooth.

Let us first note a straightforward inequality.
Let \( \omega \subset \{ x \in \Omega \mathpunct{:} \dist{(x,\partial\Omega)} \geq \psi(x) \} \), so that \( \varphi_{\psi} \ast u \) is well-defined on \( \omega \).
For any \( x \in \omega \), we write
\[
	\varphi_{\psi} \ast u(x) - u(x)
	=
	\int_{B^{m}_{1}} \varphi(z)(u(x+\psi(x)z)-u(x))\,\d z.
\]
Therefore, by Minkowski's inequality, we find
\begin{multline}
\label{eq:Lp_smooth_leq_translation}
	\lVert \varphi_{\psi} \ast u - u \rVert_{L^{p}(\omega)}
	\leq
	\int_{B^{m}_{1}}\varphi(z)\biggl(\int_{\omega} \lvert u(x+\psi(x)z)-u(x) \rvert^{p}\,\d x\biggr)^{\frac{1}{p}}\,\d z \\
	\leq
	\sup_{v \in B^{m}_{1}} \lVert \tau_{\psi v}(u)-u \rVert_{L^{p}(\omega)},
\end{multline}
where \( \tau_{\psi v}(u)(x) = u(x+\psi(x)v) \).
Our main task in this section will be to obtain estimates in the spirit of~\eqref{eq:Lp_smooth_leq_translation} for maps in \( W^{s,p}(\Omega;\R^{\nu}) \).

Before stating the main result of this section, we pause to explain the role of an important assumption.
In the sequel, we will assume that \( \lVert D\psi \rVert_{L^{\infty}(\Omega)} < 1 \).
We illustrate the usefulness of this condition in the simpler context of \( L^{p} \) estimates.
We start by using Minkowski's inequality to write
\[
	\lVert \varphi_{\psi} \ast u \rVert_{L^{p}(\omega)}
	\leq
	\int_{B^{m}_{1}} \varphi(z)\biggl(\int_{\omega} \lvert u(x+\psi(x)z) \rvert^{p}\,\d x\biggr)^{\frac{1}{p}}\,\d z.
\]
Next we use the change of variable \( w = x+\psi(x)z \).
We note that the map \( \upPsi \colon \omega \to \Omega \) defined by \( \upPsi(x) = x+\psi(x)z \) satisfies \( D\upPsi(x) = \id + D\psi(x) \otimes z \).
Therefore, by rank-one perturbation of the identity (see e.g.~\cite{serre_matrices}*{Section~3.8}), we deduce that
\[
    \jac \upPsi = \lvert \det{(\id+D\psi \otimes z)} \rvert = \lvert 1+D\psi \cdot z \rvert \geq 1 - \lVert D\psi \rVert_{L^{\infty}(\Omega)} 
    \quad\text{for \( z \in B^{m}_{1} \)}
\]
(where \(\jac \upPsi = \lvert\det (D\upPsi)\rvert\) is the Jacobian of \( \upPsi \)).
Thanks to the assumption \( \lVert D\psi \rVert_{L^{\infty}(\Omega)} < 1 \), the linear map \( D\Psi(x) \) is invertible for \( z \in B^{m}_{1} \), so that the above change of variable is well-defined with Jacobian
less than \( \frac{1}{1 - \lVert D\psi \rVert_{L^{\infty}(\omega)}} \).
We conclude that
\begin{equation}
\label{eq:Lp_bound_convolution}
	\lVert \varphi_{\psi} \ast u \rVert_{L^{p}(\omega)}
	\leq
	\frac{1}{(1 - \lVert D\psi \rVert_{L^{\infty}(\omega)})^{\frac{1}{p}}}\lVert u \rVert_{L^{p}(\Omega)}.
\end{equation}

We now state the main result of this section, which is the counterpart of~\cite{BPVS_density_higher_order}*{Proposition~3.2} in the context of fractional Sobolev spaces.

\begin{prop}
\label{prop:adaptive_smoothing}
Let \( \varphi \in \Cc^{\infty}_{\mathrm{c}}(B^{m}_{1}) \) be a mollifier and let \( \psi \in \Cc^{\infty}(\Omega) \) be a nonnegative function such that \( \lVert D\psi \rVert_{L^{\infty}(\Omega)} < 1 \).
For every \( u \in W^{s,p}(\Omega;\R^{\nu}) \) and every open set \( \omega \subset \{ x \in \Omega\mathpunct{:} \dist{(x,\partial\Omega)} > \psi(x) \} \), we have \( \varphi_{\psi} \ast u \in W^{s,p}(\omega;\R^{\nu}) \), and moreover, the following estimates hold: 
\begin{enumerate}[label=(\roman*)]
    \item\label{item:first_estimates_convolution}
    \begin{enumerate}[label=(\alph*)]
    	\item\label{item:first_estimates_convolution_sle1} if \( 0 < s < 1 \), then 
	    	\[
		    	\lvert \varphi_{\psi}\ast u\lvert_{W^{s,p}(\omega)}
		    	\leq 
		    	C\frac{1}{(1-\lVert D\psi \rVert_{L^{\infty}(\omega)})^{\frac{2}{p}}}\lvert u\lvert_{W^{s,p}(\Omega)};
	    	\]
	    \item\label{item:first_estimates_convolution_sgeq1_integer} if \( s \geq 1 \), then for every \( j \in \{1,\dots,k\} \),
	    	\[
		    	\eta^{j}\lVert D^{j}(\varphi_{\psi}\ast u)\lVert_{L^{p}(\omega)}
		    	\leq
		    	C\frac{1}{(1-\lVert D\psi \rVert_{L^{\infty}(\omega)})^{\frac{1}{p}}}\sum_{i=1}^{j}\eta^{i}\lVert D^{i}u \rVert_{L^{p}(\Omega)};
	    	\]
	    \item\label{item:first_estimates_convolution_sgeq1_frac} if \( s \geq 1 \) and \( \sigma \neq 0 \), then for every \( j \in \{1,\dots,k\} \),
	    	\begin{multline*}
		    	\eta^{j+\sigma}\lvert D^{j}(\varphi_{\psi}\ast u)\lvert_{W^{\sigma,p}(\omega)} \\
		    	\leq
		    	C\frac{1}{(1-\lVert D\psi \rVert_{L^{\infty}(\omega)})^{\frac{2}{p}}}\sum_{i=1}^{j}\Bigl(\eta^{i}\lVert D^{i}u \rVert_{L^{p}(\Omega)} 
		    	+ \eta^{i+\sigma}\lvert D^{i}u\lvert_{W^{\sigma,p}(\Omega)}\Bigr);
	    	\end{multline*}
    \end{enumerate} 
    \item\label{item:second_estimates_convolution}
    \begin{enumerate}[label=(\alph*)]
    	\item\label{item:second_estimates_convolution_sle1} if \( 0 < s < 1 \), then 
	    	\[
		    	\lvert \varphi_{\psi}\ast u-u\lvert_{W^{s,p}(\omega)}
		    	\leq 
		    	\sup_{v \in B^{m}_{1}}\lvert \tau_{\psi v}(u)-u \rvert_{W^{s,p}(\omega)};
	    	\]
    	\item\label{item:second_estimates_convolution_sgeq1_integer} if \( s \geq 1 \), then for every \( j \in \{1,\dots,k\} \),
	    	\begin{multline*}
	    		\eta^{j}\lVert D^{j}(\varphi_{\psi}\ast u)-D^{j}u\lVert_{L^{p}(\omega)}
	    		\leq
	    		\sup_{v \in B^{m}_{1}}\eta^{j}\lVert \tau_{\psi v}(D^{j}u)-D^{j}u \rVert_{L^{p}(\omega)} \\
	    		+ C\frac{1}{(1-\lVert D\psi \rVert_{L^{\infty}(\omega)})^{\frac{1}{p}}}\sum_{i=1}^{j}\eta^{i}\lVert D^{i}u \rVert_{L^{p}(A)};
	    	\end{multline*}
    	\item\label{item:second_estimates_convolution_sgeq1_frac} if \( s \geq 1 \) and \( \sigma \neq 0 \), then for every \( j \in \{1,\dots,k\} \),
    		\begin{multline*}
    			\eta^{j+\sigma}\lvert D^{j}(\varphi_{\psi}\ast u)-D^{j}u\lvert_{W^{\sigma,p}(\omega)}
    			\leq
    			\sup_{v \in B^{m}_{1}}\eta^{j+\sigma}\lvert \tau_{\psi v}(D^{j}u)-D^{j}u \rvert_{W^{\sigma,p}(\omega)} \\
    			+ C\frac{1}{(1-\lVert D\psi \rVert_{L^{\infty}(\omega)})^{\frac{2}{p}}}\sum_{i=1}^{j}\Bigl(\eta^{i}\lVert D^{i}u \rVert_{L^{p}(A)}
    			+ \eta^{i+\sigma}\lvert D^{i}u\lvert_{W^{\sigma,p}(A)}\Bigr);
    		\end{multline*}
    \end{enumerate} 
\end{enumerate}
for some constant \( C > 0 \) depending on \( m \), \( s \), and \( p \), where
\[
	A = \bigcup_{x \in \omega \cap \supp D\psi} B^{m}_{\psi(x)}(x)
\]
and \( \eta > 0 \) satisfies
\[
	\eta^{j}\lVert D^{j}\psi \rVert_{L^{\infty}} \leq \eta
	\quad
	\text{for every \( j \in \{2,\dots,k+1\} \).}
\]
\end{prop}
\begin{proof}
    The proof of item~\ref{item:first_estimates_convolution} is completely analogous to the proof of item~\ref{item:second_estimates_convolution} and uses the same ingredients.
    Hence we focus on item~\ref{item:second_estimates_convolution}, and we explain in the end what should be changed in order to get~\ref{item:first_estimates_convolution}.

    We start with the integer order estimate in the case \( s \geq 1 \).
    By the Faà di Bruno formula, for every \( x \in \omega \), we have
    \begin{equation}
    \label{eq:Faa_di_Bruno_adaptive_convolution}
    \begin{split}
        D^{j}(\varphi_{\psi}\ast u)(x)
        =&
        \int_{B^{m}_{1}}\varphi(z)D^{j}u(x+\psi(x)z)[(\id+D\psi(x)\otimes z)^{j}]\,\d z \\
            &+ \sum_{i=1}^{j-1}\sum_{l=1}^{n(i,j)}\int_{B^{m}_{1}}\varphi(z)D^{i}u(x+\psi(x)z)[L_{i,l,j}(x,z)]\,\d z,
    \end{split}
	\end{equation}
    where \( n(i,j) \in \N_{\ast} \) and \( L_{i,l,j}(x,z) \) is a linear mapping \( \R^{j\times m} \to \R^{i\times m} \) depending on \( \psi \) and its derivatives.
    More precisely, each entry of \( L_{i,l,j}(x,z) \) is either \( \id+D\psi(x)\otimes z \) or \( D^{t}\psi(x) \otimes z \) for some \( t \in \{2,\dots,j\} \), and the sum over all \( i \) components of \( L_{i,l,j}(x,z) \) of the order of the derivative of \( \psi \) appearing in this component is \( j \).
    Moreover, since \( i < j \), at least one entry of \( L_{i,j,l}(x,z) \) has the form \( D^{t}\psi(x) \), and thus the second integral in~\eqref{eq:Faa_di_Bruno_adaptive_convolution} lives only on \( \supp D\psi \).
    Therefore, taking into account the assumption \( \lVert D^{t}u \rVert_{L^{\infty}} \leq \eta^{1-t} \), we deduce that 
    \[
        \lvert D^{i}u(x+\psi(x)z)[L_{i,l,j}(x,z)] \rvert 
        \leq 
        \C \lvert D^{i}u(x+\psi(x)z) \rvert \eta^{i-j}\chi_{\supp D\psi}(x).
    \]

    On the other hand, we note that, by \( j \)-linearity of \( D^{j}u \), we may write \( D^{j}u(x+\psi(x)z)[(\id+D\psi(x)\otimes z)^{j}] \) as the sum of \( D^{j}u(x+\psi(x)z) \) and \( 2^{j}-1 \) terms
    which are the composition of \( D^{j}u(x+\psi(x)z) \) with a \( j \)-linear map \( \R^{j\times m} \to \R^{j \times m} \) whose entries are either \( \id \) or \( D\psi(x)\otimes z \), with at least one of them
    being the latter.
    Hence, since \( \lVert D\psi \rVert_{L^{\infty}} < 1 \), each of those \( 2^{j}-1 \) last terms is bounded by \( \lvert D^{j}u(x+\psi(x)z) \rvert \chi_{\supp D\psi}(x) \).
    For instance, if \( j = 2 \), then 
    \begin{multline*}
    	D^{2}u(x+\psi(x)z)[(\id+D\psi(x)\otimes z)^{2}]
    	=
    	D^{2}u(x+\psi(x)z) + D^{2}u(x+\psi(x)z)[\id,D\psi(x)\otimes z] \\
    	+ D^{2}u(x+\psi(x)z)[D\psi(x)\otimes z,\id] + D^{2}u(x+\psi(x)z)[D\psi(x)\otimes z, D\psi(x)\otimes z].
    \end{multline*}
    We observe that indeed, the three last terms are obtained by composition of \( D^{2}u(x+\psi(x)z) \) with a bilinear map, at least one of whose entries being \( D\psi(x)\otimes z \).

    As a consequence, for every \( x \in \omega \), we may write 
    \[
    \begin{split}
        \lvert D^{j}(\varphi_{\psi}\ast u)(x) - D^{j}u(x) \rvert 
        \leq& 
        \int_{B^{m}_{1}}\varphi(z)\lvert D^{j}u(x+\psi(x)z) - D^{j}u(x) \rvert\,\d z \\
            &+ \C \sum_{i=1}^{j} \eta^{i-j}\chi_{\supp D\psi}(x)\int_{B^{m}_{1}}\varphi(z)\lvert D^{i}u(x+\psi(x)z) \rvert\,\d z.
    \end{split}
    \]
    Minkowski's inequality ensures that 
    \[
    \begin{split}
        \lVert D^{j}(\varphi_{\psi}\ast u) - D^{j}u \rVert_{L^{p}(\omega)}
        \leq& 
        \int_{B^{m}_{1}}\varphi(z)\biggl[\biggl(\int_{\omega} \lvert D^{j}u(x + \psi(x) z)-D^{j}u(x)\rvert^{p}\,\d x\biggr)^{\frac{1}{p}} \\
            &+ C\sum_{i=1}^{j}\eta^{i-j}\biggl(\int_{\omega \cap \supp D\psi} \lvert D^{i}u(x+\psi(x) z) \rvert^{p}\,\d x\biggr)^{\frac{1}{p}}\biggr]\,\d z.
    \end{split}
    \]
    For the first term, we note that 
    \[
        \biggl(\int_{\omega} \lvert D^{j}u(x + \psi(x) z)-D^{j}u(x)\rvert^{p}\,\d x\biggr)^{\frac{1}{p}}
        \leq 
        \sup_{v \in B^{m}_{1}}\lVert \tau_{\psi v}(D^{j}u)-D^{j}u\rVert_{L^{p}(\omega)}.
    \]
    For the second term, we use the change of variable \( w = x + \psi(x)z \) that we considered before.
    Taking into account the definition of the set \( A \), we have \( w \in B_{\psi(x)}(x) \subset A \), and therefore 
    \[
        \biggl(\int_{\omega \cap \supp D\psi} \lvert D^{i}u(x+\psi(x) z) \rvert^{p}\,\d x\biggr)^{\frac{1}{p}}
        \leq 
        \frac{1}{(1-\lVert D\psi \rVert_{L^{\infty}(\omega)})^{\frac{1}{p}}}\biggl(\int_{A} \lvert D^{i}u(w) \rvert^{p}\,\d w\biggr)^{\frac{1}{p}}.
    \]
    We obtain the desired estimate by using the fact that \( \varphi \) has integral equal to \( 1 \).

    The estimate in the fractional case \( 0 < s < 1 \) is straightforward.
    Indeed, we first write 
    \begin{multline*}
        \lvert \varphi_{\psi}\ast u(x)-u(x)-\varphi_{\psi}\ast u(y)+u(y) \rvert \\
        \leq 
        \int_{B^{m}_{1}}\varphi(z)\lvert u(x+\psi(x)z)-u(x)-u(y+\psi(y)z)+u(y)\rvert\,\d z.
    \end{multline*}
    Minkowski's inequality then implies that 
    \begin{align*}
        &\lvert \varphi_{\psi} \ast u - u \rvert_{W^{s,p}(\omega)} \\
        &\quad\leq 
        \int_{B^{m}_{1}}\varphi(z)\biggl(\int_{\omega}\int_{\omega}\frac{\lvert u(x+\psi(x)z)-u(x)-u(y+\psi(y)z)+u(y)\rvert^{p}}{\lvert x-y \rvert^{m+sp}}\,\d x\d y\biggr)^{\frac{1}{p}}\,\d z \\
        &\quad\leq 
        \sup_{v \in B^{m}_{1}}\lvert \tau_{\psi v}(u)-u \rvert_{W^{s,p}(\omega)}.
    \end{align*}

    For the fractional estimate when \( s \geq 1 \), we again use equation~\eqref{eq:Faa_di_Bruno_adaptive_convolution} and the observations following this equation.
    Let \( x \), \( y \in \omega \).
    We proceed by distinction of cases, using the multilinearity of the differential.
    
    \emph{Case 1: \( x \), \( y \in \supp D\psi \).}
    For the terms with \( i < j \), using the \( j \)-linearity of \( D^{j}u \), we estimate
    \begin{align*}
        &\lvert D^{i}u(x+\psi(x)z)[L_{i,l,j}(x,z)] - D^{i}u(y+\psi(y)z)[L_{i,l,j}(y,z)] \rvert \\
        &\quad\leq \C\biggl(\sum_{t=1}^{j}\eta^{i-1-j+t}\lvert D^{t}\psi(x) - D^{t}\psi(y) \rvert\lvert D^{i}u(x+\psi(x)z) \rvert \\
        &\quad\quad+ \eta^{i-j}\lvert D^{i}u(x+\psi(x)z)-D^{i}u(y+\psi(y)z) \rvert\biggr).
    \end{align*}
    On the other hand, for the term involving the derivative of order \( j \) of \( u \), we have
    \begin{multline*}
	    \lvert D^{j}u(x+\psi(x)z)[(\id+D\psi(x)\otimes z)^{j}] - D^{j}u(x) - D^{j}u(y+\psi(y)z)[(\id+D\psi(y)\otimes z)^{j}] + D^{j}u(y) \rvert \\
	    \leq
	    A_{j}(x,y,z)
	    + \C B_{j}(x,y,z)
	    + \C\lvert D^{j}u(x+\psi(x)z) \rvert \lvert D\psi(x) - D\psi(y) \rvert,
    \end{multline*}
    where
    \begin{gather*}
    	A_{j}(x,y,z) = \lvert D^{j}u(x+\psi(x)z) - D^{j}u(x) - D^{j}u(y+\psi(y)z) + D^{j}u(y) \rvert \\
    	\intertext{and}
    	B_{j}(x,y,z) = \lvert D^{j}u(x+\psi(x)z) - D^{j}u(y+\psi(y)z) \rvert. 
    \end{gather*}
    Now, for \( t \in \{1,\dots,j\} \), we estimate
    \begin{align*}
	    &\int_{\omega}\frac{\lvert D^{t}\psi(x) - D^{t}\psi(y) \rvert^{p}}{\lvert x-y \rvert^{m+\sigma p}}\,\d y \\
	    &\quad\leq 
	    \eta^{-tp}\int_{B^{m}_{r}(x)}\frac{1}{\lvert x-y \rvert^{m+(\sigma-1)p}}\,\d y
	    + \C\eta^{(1-t)p}\int_{\R^{m}\setminus B^{m}_{r}(x)}\frac{1}{\lvert x-y \rvert^{m+\sigma p}}\,\d y \\
	    &\quad\leq
	    \C\Bigl(\eta^{-tp}r^{(1-\sigma)p} + \eta^{(1-t)p}r^{-\sigma p}\Bigr)
    \end{align*}
    for every \( r > 0 \).
    Letting \( r = \eta \) yields
    \begin{equation}
    \label{eq:optimization_final_adaptive_smoothing}
	    \int_{\omega}\frac{\lvert D^{t}\psi(x) - D^{t}\psi(y) \rvert^{p}}{\lvert x-y \rvert^{m+\sigma p}}\,\d y
	    \leq
	    \C\eta^{(1-t-\sigma)p}.
    \end{equation}
    Therefore, using Minkowski's inequality on the expression obtained from~\eqref{eq:Faa_di_Bruno_adaptive_convolution}, we deduce that 
    \begin{align*}
	    &\biggl(\int_{\omega\cap\supp D\psi}\int_{\omega\cap\supp D\psi}\frac{\lvert D^{j}(\varphi_{\psi}\ast u)(x) - D^{j}u(x) - D^{j}(\varphi_{\psi}\ast u)(y) + D^{j}u(y) \rvert^{p}}{\lvert x-y \rvert^{m+\sigma p}}\,\d x\d y\biggr)^{\frac{1}{p}} \\
	    &\quad\leq
	    \int_{B_{1}^{m}}\varphi(z)\biggl[\biggl(\int_{\omega\cap\supp D\psi}\int_{\omega\cap\supp D\psi}\frac{A_{j}(x,y,z)^{p}}{\lvert x-y \rvert^{m+\sigma p}}\,\d x\d y\biggr)^{\frac{1}{p}} \\
	    &\quad\quad+ \C\sum_{i=1}^{j}\eta^{i-j}\biggl(\int_{\omega \cap\supp D\psi}\int_{\omega\cap\supp D\psi}
	    \frac{B_{j}(x,y,z)^{p}}{\lvert x-y \rvert^{m+\sigma p}}\,\d x\d y\biggr)^{\frac{1}{p}} \\
	    &\quad\quad+\C \sum_{i=1}^{j}\eta^{i-j-\sigma}\biggl(\int_{\omega\cap\supp D\psi} \lvert D^{i}u(x+\psi(x)z)\rvert^{p}\,\d x\biggr)^{\frac{1}{p}}\biggr]\,\d z.
    \end{align*}
    
    \emph{Case 2: without loss of generality,  \( x \in \supp D\psi \) and \( y \notin \supp D\psi \).}
    In this case, since each \( L_{i,l,j}(y,z) \) has at least one entry equal to \( D^{t}\psi(y) \), we find
    \[
    	 D^{i}u(y+\psi(y)z)[L_{i,l,j}(y,z)] = 0 = D^{i}u(x+\psi(x)z)[L_{i,l,j}(y,z)].
    \]
    Hence,
    \begin{multline*}
	    \lvert D^{i}u(x+\psi(x)z)[L_{i,l,j}(x,z)] - D^{i}u(y+\psi(y)z)[L_{i,l,j}(y,z)] \rvert \\
	    \leq \sum_{t=1}^{j}\eta^{i-1-j+t}\lvert D^{t}\psi(x) - D^{t}\psi(y) \rvert\lvert D^{i}u(x+\psi(x)z) \rvert.
    \end{multline*}
    On the other hand, we have 
    \begin{multline*}
	    \lvert D^{j}u(x+\psi(x)z)[(\id+D\psi(x)\otimes z)^{j}] - D^{j}u(x) - D^{j}u(y+\psi(y)z)[(\id+D\psi(y)\otimes z)^{j}] + D^{j}u(y) \rvert \\
	    \leq
	    A_{j}(x,y,z)
	    + \C\lvert D^{j}u(x+\psi(x)z) \rvert \lvert D\psi(x) - D\psi(y) \rvert.
    \end{multline*}
    We then argue as in Case~1, using~\eqref{eq:optimization_final_adaptive_smoothing} to deal with the terms containing \( \lvert D^{t}\psi(x)-D^{t}\psi(y) \rvert \), and we deduce that 
    \begin{align*}
	    &\biggl(\int_{\omega\setminus\supp D\psi}\int_{\omega\cap\supp D\psi}\frac{\lvert D^{j}(\varphi_{\psi}\ast u)(x) - D^{j}u(x) - D^{j}(\varphi_{\psi}\ast u)(y) + D^{j}u(y) \rvert^{p}}{\lvert x-y \rvert^{m+\sigma p}}\,\d x\d y\biggr)^{\frac{1}{p}} \\
	    &\quad\leq
	    \int_{B_{1}^{m}}\varphi(z)\biggl[\biggl(\int_{\omega\setminus\supp D\psi}\int_{\omega\cap\supp D\psi}\frac{A_{j}(x,y,z)^{p}}{\lvert x-y \rvert^{m+\sigma p}}\,\d x\d y\biggr)^{\frac{1}{p}} \\
	    &\quad\quad+\C \sum_{i=1}^{j}\eta^{i-j-\sigma}\biggl(\int_{\omega\cap\supp D\psi} \lvert D^{i}u(x+\psi(x)z)\rvert^{p}\,\d x\biggr)^{\frac{1}{p}}\biggr]\,\d z.
    \end{align*}
    
    \emph{Case 3: \( x \), \( y \notin \supp D\psi \).}
    In this case, for \( i < j \), we observe that
	    \[
	    \lvert D^{i}u(x+\psi(x)z)[L_{i,l,j}(x,z)] - D^{i}u(y+\psi(y)z)[L_{i,l,j}(y,z)] \rvert = 0.
	    \]
    Moreover, 
    \begin{multline*}
	    \lvert D^{j}u(x+\psi(x)z)[(\id+D\psi(x)\otimes z)^{j}] - D^{j}u(x) - D^{j}u(y+\psi(y)z)[(\id+D\psi(y)\otimes z)^{j}] + D^{j}u(y) \rvert \\
	    =
	    A_{j}(x,y,z).
    \end{multline*}
    Hence, unlike in the previous cases, estimate~\eqref{eq:optimization_final_adaptive_smoothing} is not needed, and a simple application of Minkowski's inequality yields
    \begin{multline*}
	    \biggl(\int_{\omega\setminus \supp D\psi}\int_{\omega\setminus \supp D\psi}\frac{\lvert D^{j}(\varphi_{\psi}\ast u)(x) - D^{j}u(x) - D^{j}(\varphi_{\psi}\ast u)(y) + D^{j}u(y) \rvert^{p}}{\lvert x-y \rvert^{m+\sigma p}}\,\d x\d y\biggr)^{\frac{1}{p}} \\
	    \leq
	    \int_{B_{1}^{m}}\varphi(z)\biggl[\biggl(\int_{\omega\setminus \supp D\psi}\int_{\omega\setminus \supp D\psi}\frac{A_{j}(x,y,z)^{p}}{\lvert x-y \rvert^{m+\sigma p}}\,\d x\d y\biggr)^{\frac{1}{p}}\,\d z.
    \end{multline*}

    Gathering the estimates obtained in Cases~1,~2, and~3, we deduce that
    \begin{align*}
        &\biggl(\int_{\omega}\int_{\omega}\frac{\lvert D^{j}(\varphi_{\psi}\ast u)(x) - D^{j}u(x) - D^{j}(\varphi_{\psi}\ast u)(y) + D^{j}u(y) \rvert^{p}}{\lvert x-y \rvert^{m+\sigma p}}\,\d x\d y\biggr)^{\frac{1}{p}} \\
        &\quad\leq
        \C\int_{B_{1}^{m}}\varphi(z)\biggl[\biggl(\int_{\omega}\int_{\omega}\frac{A_{j}(x,y,z)^{p}}{\lvert x-y \rvert^{m+\sigma p}}\,\d x\d y\biggr)^{\frac{1}{p}} \\
        &\quad\quad+ \sum_{i=1}^{j}\eta^{i-j}\biggl(\int_{\omega \cap\supp D\psi}\int_{\omega\cap\supp D\psi}
                \frac{B_{j}(x,y,z)^{p}}{\lvert x-y \rvert^{m+\sigma p}}\,\d x\d y\biggr)^{\frac{1}{p}} \\
        &\quad\quad+ \sum_{i=1}^{j}\eta^{i-j-\sigma}\biggl(\int_{\omega\cap\supp D\psi} \lvert D^{i}u(x+\psi(x)z)\rvert^{p}\,\d x\biggr)^{\frac{1}{p}}\biggr]\,\d z.
    \end{align*}
    For the first term, we observe once again that
    \begin{equation*}
        \biggl(\int_{\omega}\int_{\omega}\frac{A_{j}(x,y,z)^{p}}{\lvert x-y \rvert^{m+\sigma p}}\,\d x\d y\biggr)^{\frac{1}{p}} \\
        \leq
        \sup_{v \in B^{m}_{1}}\lvert \tau_{\psi v}(D^{j}u)-D^{j}u \rvert_{W^{\sigma,p}(\omega)}.
    \end{equation*}
    For the third term, we use the change of variable \( w = x+\psi(x)z \), and we find
    \begin{align*}
        \int_{\omega\cap\supp D\psi} \lvert D^{i}u(x+\psi(x)z)\rvert^{p}\,\d x
        &\leq
        \frac{1}{(1-\lVert D\psi \rVert_{L^{\infty}(\omega)})}\int_{A} \lvert D^{i}u(w) \rvert^{p} \,\d w \\
        &\leq
        \frac{1}{(1-\lVert D\psi \rVert_{L^{\infty}(\omega)})^{2}}\lVert D^{i}u \rVert_{L^{p}(A)}^{p}.
    \end{align*}
    For the second term, we make use of the change of variable \( w = x+\psi(x)z \) and \( \tilde{w} = y+\psi(y)z \).
    Observe that \( \lvert w-\tilde{w} \rvert \leq 2\lvert x-y \rvert \), and hence
    \begin{multline*}
        \int_{\omega \cap\supp D\psi}\int_{\omega\cap\supp D\psi}\frac{B_{j}(x,y,z)^{p}}{\lvert x-y \rvert^{m+\sigma p}}\,\d x\d y \\
        \leq
        \C\frac{1}{(1-\lVert D\psi \rVert_{L^{\infty}(\omega)})^{2}}\int_{A}\int_{A}\frac{\lvert D^{i}u(w)-D^{i}u(\tilde{w}) \rvert^{p}}{\lvert w-\tilde{w} \rvert^{m+\sigma p}}\,\d w\d \tilde{w}.
        \end{multline*}	
    Using the fact that \( \varphi \) has integral equal to \( 1 \), this concludes the proof of the fractional estimate when \( s \geq 1 \).
    
    The proof of assertion~\ref{item:first_estimates_convolution} follows the same strategy.
    The only change is that, instead of grouping the term \( D^{j}u(x+\psi(x)z) \) coming from the first term in~\eqref{eq:Faa_di_Bruno_adaptive_convolution} with the \( D^{j}u \),
    we have to estimate it as all the other terms.
    Unlike the \( 2^{j}-1 \) terms involving a derivative of order \( j \) of \( u \), this term does not vanish outside of the support of \( D\psi \).
    This explains the presence of the norm on the whole \( \Omega \) in estimates~\ref{item:first_estimates_convolution}.
    \resetconstant
\end{proof}

Adaptive smoothing is a very useful tool to approximate a \( W^{s,p} \) map by smooth maps, but it has a major drawback in the context of Sobolev spaces with values into manifolds.
Indeed, if \( u \in W^{s,p}(\Omega;\Nc) \), in general \( \varphi_{\psi} \ast u \) does not take values into \( \Nc \), 
since the convolution product is in general not compatible with the constraint.
Therefore, it will be crucial in the proof of Theorem~\ref{thm:density_class_R} to be able to estimate the distance between the smoothed maps and the manifold.
We close this section by a discussion devoted to this purpose, which also sheds light on how to use the estimates obtained during the opening procedure in the previous section. 

We work in a slightly more general setting, assuming that \( u \in W^{s,p}(\Omega;\R^{\nu}) \) is such that \( u(x) \in F \) for almost every \( x \in \Omega \), where \( F \subset \R^{\nu} \) is an arbitrary closed set.
We place ourselves under the assumptions of Propositions~\ref{prop:main_opening},~\ref{prop:addendum1_opening}, and~\ref{prop:addendum2_opening}.
We denote by \( \upPhi^{\op}_{\eta} \) the map provided by Proposition~\ref{prop:main_opening} and we set \( u^{\op}_{\eta} = u \circ \upPhi^{\op}_{\eta} \).
Let \( u^{\sm}_{\eta} = \varphi_{\psi_{\eta}} \ast u^{\op}_{\eta} \),
where \( \varphi \) is a fixed mollifier, and the variable regularization parameter \( \psi_{\eta} \) is to be chosen later on, depending on \( \eta \).

Let \( 0 < \ulrho < \rho \) be fixed, and assume that \( \Uc^{m}_{\eta} \) is a subskeleton of some skeleton \( \Kc^{m}_{\eta} \) such that \( K^{m}_{\eta} \subset \omega \).
To fix the ideas, one may keep in mind that \( K^{m}_{\eta} = \omega \) in the case where \( \omega \) can be decomposed as a finite union of cubes of radius \( \eta \).
We consider a subskeleton \( \Ec^{m}_{\eta} \) of \( \Uc^{m}_{\eta} \) such that 
\begin{equation}
\label{eq:Em_int_Um}
	E^{m}_{\eta} \subset \Int U^{m}_{\eta}
\end{equation}
in the relative topology of \( K^{m}_{\eta} \).
(Later on in the proof of Theorem~\ref{thm:density_class_R}, \( \Ec^{m}_{\eta} \) will be the class of all bad cubes.)

Given a set \( S \subset \R^{\nu} \), the \emph{directed Hausdorff distance from \( S \) to \( F \)} is defined as 
\[
	\Dist_{F}{(S)} 
	=
	\sup\{\dist{(x,F)}\mathpunct{:} x \in S\}.
\]
Our objective is to show that, for a suitable choice of \( \psi_{\eta} \) and \( r > 0 \), we have 
\begin{multline}
\label{eq:main_estimate_dist_sm_F_sgeq1}
	\Dist_{F}{(u^{\sm}_{\eta}((K^{m}\setminus U^{m}_{\eta})\cup (U^{\ell}_{\eta}+Q^{m}_{\ulrho\eta})))}
	\leq 
	\max\biggl\{\max_{\sigma^{m} \in \Kc^{m}_{\eta} \setminus \Ec^{m}_{\eta}} \Cl{cst:dist1_sm}\frac{1}{\eta^{\frac{m}{sp}-1}}\lVert Du \rVert_{L^{sp}(\sigma^{m}+Q^{m}_{2\rho\eta})}, \\
	\sup_{x \in U^{\ell}_{\eta}+Q^{m}_{\ulrho\eta}} \Cl{cst:dist2_sm}\fint_{Q^{m}_{r}(x)}\fint_{Q^{m}_{r}(x)} \lvert u^{\op}_{\eta}(y)-u^{\op}_{\eta}(z)\rvert\,\d y\d z\biggr\}
\end{multline}
if \( s \geq 1 \), respectively 
\begin{multline}
\label{eq:main_estimate_dist_sm_F_sle1}
	\Dist_{F}{(u^{\sm}_{\eta}((K^{m}\setminus U^{m}_{\eta})\cup (U^{\ell}_{\eta}+Q^{m}_{\ulrho\eta})))}
	\leq 
	\max\biggl\{\max_{\sigma^{m} \in \Kc^{m}_{\eta} \setminus \Ec^{m}_{\eta}} \Cr{cst:dist1_sm}\frac{1}{\eta^{\frac{m}{p}-s}}\lvert u \rvert_{W^{s,p}(\sigma^{m}+Q^{m}_{2\rho\eta})}, \\
	\sup_{x \in U^{\ell}_{\eta}+Q^{m}_{\ulrho\eta}} \Cr{cst:dist2_sm}\fint_{Q^{m}_{r}(x)}\fint_{Q^{m}_{r}(x)} \lvert u^{\op}_{\eta}(y)-u^{\op}_{\eta}(z)\rvert\,\d y\d z\biggr\}
\end{multline}
if \( 0 < s < 1 \).
We note that, in order to make the right-hand side of~\eqref{eq:main_estimate_dist_sm_F_sgeq1}, respectively~\eqref{eq:main_estimate_dist_sm_F_sle1}, small, we need to take \( r \) sufficiently small, 
and also to have control on the \( L^{sp} \) norm of \( Du \), respectively the \( W^{s,p} \) norm of \( u \), on the cubes in \( \Kc^{m}_{\eta} \setminus \Ec^{m}_{\eta} \).
This will be our motivation for the choice of good and bad cubes in the proof of Theorem~\ref{thm:density_class_R}.

We proceed with the proof of~\eqref{eq:main_estimate_dist_sm_F_sgeq1}, respectively~\eqref{eq:main_estimate_dist_sm_F_sle1}.
Since \( u^{\op}_{\eta} \) takes its values into \( F \), for almost every \( z \in Q^{m}_{\psi_{\eta}(x)}(x) \), we have
\[
	\dist{(u^{\sm}_{\eta}(x),F)} \leq \lvert u^{\sm}_{\eta}(x)-u^{\op}_{\eta}(z) \rvert.
\] 
Averaging over \( Q^{m}_{\psi_{\eta}(x)}(x) \), we find 
\[
    \dist{(u^{\sm}_{\eta}(x),F)}
    \leq 
    \fint_{Q^{m}_{\psi_{\eta}(x)}(x)} \lvert u^{\sm}_{\eta}(x)-u^{\op}_{\eta}(z) \rvert\,\d z.
\]
Using the rewriting~\eqref{eq:reformulation_adaptive_smoothing}, we deduce that, for every \( x \in \omega \),
\begin{equation}
\begin{aligned}
\label{eq:dist_sm_F_by_oscillation}
    \dist{(u^{\sm}_{\eta}(x),F)}
    &\leq 
    \fint_{Q^{m}_{\psi_{\eta}(x)}(x)}\fint_{Q^{m}_{\psi_{\eta}(x)}(x)}\varphi\Bigl(\frac{y-x}{\psi(x)}\Bigr)\lvert u^{\op}_{\eta}(y)-u^{\op}_{\eta}(z) \rvert\,\d y\d z \\
    &\leq 
    \Cl{cst:bound_mollifier}\fint_{Q^{m}_{\psi_{\eta}(x)}(x)}\fint_{Q^{m}_{\psi_{\eta}(x)}(x)}\lvert u^{\op}_{\eta}(y)-u^{\op}_{\eta}(z) \rvert\,\d y\d z.
\end{aligned}
\end{equation} 

If \( Q^{m}_{\psi_{\eta}(x)}(x) \subset U^{\ell}_{\eta}+Q^{m}_{\rho\eta} \) and \( \ell \leq sp \), 
Proposition~\ref{prop:addendum2_opening} ensures that the right-hand side of~\eqref{eq:dist_sm_F_by_oscillation} can be made arbitrarily small if we take \( \psi_{\eta}(x) \) sufficiently small.
This invites us to choose \( \psi_{\eta} \) to be very small in a neighborhood of \( U^{\ell} \).

On the other hand, the Poincaré--Wirtinger inequality ensures that 
\begin{equation}
\label{eq:dist_sm_F_good_cubes_sgeq1}
    \dist{(u^{\sm}_{\eta}(x),F)}
    \leq 
    \C\frac{1}{\psi_{\eta}(x)^{\frac{m}{sp}-1}}\lVert Du^{\op}_{\eta} \rVert_{L^{sp}(Q^{m}_{\psi(x)}(x))}
\end{equation}
if \( s \geq 1 \), respectively 
\begin{equation}
\label{eq:dist_sm_F_good_cubes_sle1}
    \dist{(u^{\sm}_{\eta}(x),F)}
    \leq 
    \C\frac{1}{\psi_{\eta}(x)^{\frac{m}{p}-s}}\lvert Du^{\op}_{\eta} \rvert_{W^{s,p}(Q^{m}_{\psi(x)}(x))}
\end{equation}
if \( 0 < s < 1 \).
These estimates are only useful in the region where we can control the \( L^{sp} \) norm of \( Du \) or the \( W^{s,p} \) norm of \( u \), 
that is, on the good cubes.
On the other hand, since \( sp < m \),~\eqref{eq:dist_sm_F_good_cubes_sgeq1} and~\eqref{eq:dist_sm_F_good_cubes_sle1} suggest that \( \psi_{\eta} \) should not be too small.

We now pause to explain the construction of a function \( \psi_{\eta} \) suited for our approximation results. 
As explained in Section~\ref{sect:sketch_proof}, we distinguish between three regimes.
In \( U^{\ell}_{\eta}+Q^{m}_{\rho\eta} \), we take \( \psi_{\eta} \) very small, according to Proposition~\ref{prop:addendum2_opening}.
On the good cubes, we take \( \psi_{\eta} \) of order \( \eta \), in order to apply~\eqref{eq:dist_sm_F_good_cubes_sgeq1}, respectively~\eqref{eq:dist_sm_F_good_cubes_sle1}.
Between these two regimes, we need a transition region in order for \( \psi_{\eta} \) to change of magnitude.
Here the second part of Proposition~\ref{prop:addendum2_opening} comes into play.
Indeed, if \( x \in \sigma^{m} \) for some \( \sigma^{m} \in \Uc^{m}_{\eta} \) and \( Q^{m}_{\psi_{\eta}(x)} \subset U^{\ell}_{\eta}+Q^{m}_{\rho\eta} \)\,,
we have 
\begin{equation}
\label{eq:dist_sm_F_transition_sgeq1}
    \dist{(u^{\sm}_{\eta}(x),F)}
    \leq 
    \C\frac{\psi_{\eta}(x)^{1-\frac{\ell}{sp}}}{\eta^{\frac{m-\ell}{sp}}}\lVert Du \rVert_{L^{sp}(\sigma^{m}+Q^{m}_{2\rho\eta})}
\end{equation}
if \( s \geq 1 \), respectively 
\begin{equation}
\label{eq:dist_sm_F_transition_sle1}
    \dist{(u^{\sm}_{\eta}(x),F)}
    \leq 
    \C\frac{\psi_{\eta}(x)^{s-\frac{\ell}{p}}}{\eta^{\frac{m-\ell}{p}}}\lvert u \rvert_{W^{s,p}(\sigma^{m}+Q^{m}_{2\rho\eta})}
\end{equation}
if \( 0 < s < 1 \).
Again, this inequality is only useful on good cubes, but now it requires an upper bound on \( \psi_{\eta} \) instead if we take \( \ell \leq sp \).

Hence, we proceed with the following construction.
Assumption~\eqref{eq:Em_int_Um} ensures that we have enough room for the transition region for \( \psi_{\eta} \): we have \( \dist{(E^{m}_{\eta},K^{m}_{\eta} \setminus U^{m}_{\eta})} \geq \eta \). 
Therefore, we may find \( \zeta_{\eta} \in \Cc^{\infty}(\Omega) \) such that 
\begin{enumerate}[label=(\alph*)]
\label{list:assumptions_zeta_smoothing}
	\item\label{item:zeta_between_0_and_1} \( 0 \leq \zeta_{\eta} \leq 1 \) in \( \Omega \);
	\item\label{item:zeta_eq1} \( \zeta_{\eta} = 1 \) in \( K^{m}_{\eta} \setminus U^{m}_{\eta} \);
	\item\label{item:zeta_eq0} \( \zeta_{\eta} = 0 \) in \( E^{m}_{\eta} \);
	\item\label{item:D_zeta} for every \( j \in \{1,\dots,k+1\} \), 
		\[
			\eta^{j}\lVert D^{j}\zeta_{\eta} \rVert_{L^{\infty}} \leq \Cl{cst:Dzeta}
		\]
		for some constant \( \Cr{cst:Dzeta} > 0 \) depending only on \( m \).
\end{enumerate}
Now we pick \( 0 < r < t \) and we let 
\[
    \psi_{\eta} = t\zeta_{\eta} + r(1-\zeta_{\eta}).
\]
Therefore, \( \psi_{\eta} = r \) on \( E^{m}_{\eta} \) and \( \psi_{\eta} = t \) on \( K^{m}_{\eta} \setminus U^{m}_{\eta} \).
As we observed, we will need to take \( r \) very small, while keeping \( t \) of order \( \eta \).
We choose 
\begin{equation}
\label{eq:definition_t_smoothing}
	t = \min\Bigl\{\frac{\kappa}{\Cr{cst:Dzeta}},\rho-\ulrho\Bigr\}\eta
\end{equation}
for some fixed \( 0 < \kappa < 1 \).
Therefore, 
\[
    \eta^{j}\lVert D^{j}\psi_{\eta} \rVert_{L^{\infty}}
    \leq \kappa\eta
    \quad
    \text{for every \( j \in \{1,\dots,k+1\} \),}
\]
which ensures that the assumptions of Proposition~\ref{prop:adaptive_smoothing} are satisfied.
Moreover, we have \( 0 < \psi_{\eta} \leq (\rho-\ulrho)\eta \), 
which implies that, if \( x \in U^{\ell}_{\eta} + Q^{m}_{\ulrho\eta} \), then \( Q^{m}_{\psi_{\eta}(x)}(x) \subset U^{\ell}_{\eta}+Q^{m}_{\rho\eta} \).
Estimate~\eqref{eq:main_estimate_dist_sm_F_sgeq1}, respectively~\eqref{eq:main_estimate_dist_sm_F_sle1}, is a straightforward consequence of estimate~\eqref{eq:dist_sm_F_by_oscillation} for \( x \in E^{m}_{\eta} \cap (U^{\ell}+Q^{m}_{\ulrho\eta}) \),
estimate~\eqref{eq:dist_sm_F_good_cubes_sgeq1}, respectively~\eqref{eq:dist_sm_F_good_cubes_sle1}, for \( x \in K^{m}_{\eta} \setminus U^{m}_{\eta} \), 
and estimate~\eqref{eq:dist_sm_F_transition_sgeq1}, respectively~\eqref{eq:dist_sm_F_transition_sle1}, for \( x \in (U^{m}_{\eta} \setminus E^{m}_{\eta}) \cap (U^{\ell}_{\eta}+Q^{m}_{\ulrho\eta}) \).

Before closing this section, we summarize what we have obtained so far.
Given a map \( u \in W^{s,p}(\Omega;F) \), we have constructed a smooth map \( u^{\sm}_{\eta} \) for which we may estimate its distance to \( u \) in \( W^{s,p} \).
Moreover, even though \( u^{\sm}_{\eta} \) does not necessarily take values into \( F \), we are able to control the distance between \( u^{\sm}_{\eta} \) and \( F \) everywhere on the cubication \( K^{m}_{\eta} \), except on the cubes in \( \Uc^{m}_{\eta} \), far from their \( \ell \)-skeleton.
Therefore, our next step is to be able to modify \( u^{\sm}_{\eta} \) into a new map which, on the cubes in \( \Uc^{m}_{\eta} \), depends only on the values of \( u^{\sm}_{\eta} \) near the \( \ell \)-skeleton of the cubes,
while controlling the \( W^{s,p} \) distance between \( u^{\sm}_{\eta} \) and this new map.
\resetconstant

\section{Thickening}
\label{sect:thickening}

This section is devoted to the thickening procedure.
As we explained in Section~\ref{sect:sketch_proof}, this technique is reminiscent of the homogeneous extension method,
which was used by Bethuel to deal with the case \( s = 1 \); see~\cite{bethuel_approximation}.
This approach is valid for \( W^{s,p} \) maps with \( s < 1+\frac{1}{p} \) (but not beyond \( s = 1+\frac{1}{p} \)).
In order to deal with \( W^{s,p} \) maps with arbitrary \( s \), a new tool, thickening, is needed.
Its construction was performed by Bousquet, Ponce, and Van Schaftingen in~\cite{BPVS_density_higher_order}*{Section~4}, which also contains the analytic estimates that make thickening instrumental in the proof of Theorem~\ref{thm:density_class_R} for integer \( s \).
In this section, we establish the fractional counterparts of the estimates in~\cite{BPVS_density_higher_order}.
The main feature of this section is the need for new techniques, taking into account the geometry of the thickening maps, that we develop in order to obtain fractional estimates.
This will become transparent, e.g., in the proof of estimates~\ref{item:block_thickening_geometric_sle1} and~\ref{item:block_thickening_geometric_sgeq1_frac} in Proposition~\ref{prop:block_thickening_analytic}, relying crucially on estimate~\eqref{eq:mean_value_thickening}.

\begin{prop}
\label{prop:main_thickening}
    Let \( \Omega \subset \R^{m} \) be open, \( \ell \in \{0,\dots,m-1\} \), \( \eta > 0 \), \( 0 < \rho < 1 \), \( \Kc^{m} \) be a cubication in \( \R^{m} \) of radius \( \eta \), 
    \( \Uc^{m} \) be a subskeleton of \( \Kc^{m} \) such that \( U^{m}+Q^{m}_{\rho\eta} \subset \Omega \), and \( \Tc^{\ell^{\ast}} \) be the dual skeleton of \( \Uc^{\ell} \).
    There exists a smooth map \( \upPhi \colon \R^{m} \setminus T^{\ell^{\ast}} \to \R^{m} \) such that
    \begin{enumerate}[label=(\roman*)]
        \item\label{item:main_thickining_injective} \( \upPhi \) is injective;
        \item\label{item:main_thickening_geometry} for every \( \sigma^{m} \in \Kc^{m} \), \( \upPhi(\sigma^{m} \setminus T^{\ell^{\ast}}) \subset \sigma^{m} \setminus T^{\ell^{\ast}} \);
        \item\label{item:main_thickening_support} \( \Supp\upPhi \subset U^{\ell}+Q^{m}_{\rho\eta} \) and \( \upPhi(U^{m}\setminus T^{\ell^{\ast}}) \subset U^{\ell}+Q^{m}_{\rho\eta} \);
        \item\label{item:main_thickening_singularity} for every \( j \in \N_{\ast} \) and for every \( x \in (U^{m}+Q^{m}_{\rho\eta}) \setminus T^{\ell^{\ast}} \),
            \[
                \lvert D^{j}\upPhi(x) \rvert \leq C\frac{\eta}{\dist(x,T^{\ell^{\ast}})^{j}}
            \]
            for some constant \( C > 0 \) depending on \( j \), \( m \) and \( \rho \).
    \end{enumerate}
    If in addition \( \ell + 1 > sp \), then for every \( u \in W^{s,p}(\Omega;\R^{\nu}) \), we have \( u \circ \upPhi \in W^{s,p}(\Omega;\R^{\nu}) \),
    and moreover, the following estimates hold:
    \begin{enumerate}[label=(\alph*)]
    	\item\label{item:main_thickening_estimate_sle1} if \( 0 < s < 1 \), then
	    	\[
		    	\eta^{s}\lvert u \circ \upPhi - u \rvert_{W^{s,p}(\Omega)}
		    	\leq 
		    	C'\Bigl(\eta^{s}\lvert u \rvert_{W^{s,p}(U^{m}+Q^{m}_{\rho\eta})} + \lVert u \rVert_{L^{p}(U^{m}+Q^{m}_{\rho\eta})}\Bigr);
	    	\]
        \item\label{item:main_thickening_estimate_sgeq1_integer} if \( s \geq 1 \), then for every \( j \in \{1,\dots,k\} \),
            \[
                \eta^{j}\lVert D^{j}(u \circ \upPhi)-D^{j}u \rVert_{L^{p}(\Omega)}
                \leq
                C'\sum_{i=1}^{j}\eta^{i}\lVert D^{i}u \rVert_{L^{p}(U^{m}+Q^{m}_{\rho\eta})};
            \]
        \item\label{item:main_thickening_estimate_sgeq1_frac} if \( s \geq 1 \) and \( \sigma \neq 0 \), then for every \( j \in \{1,\dots,k\} \),
            \[
                \eta^{j+\sigma}\lvert D^{j}(u \circ \upPhi)-D^{j}u \rvert_{W^{\sigma,p}(\Omega)}
                \leq
                C'\sum_{i=1}^{j}\Bigl(\eta^{i}\lVert D^{i}u \rVert_{L^{p}(U^{m}+Q^{m}_{\rho\eta})} + \eta^{i+\sigma}\lvert D^{i}u \rvert_{W^{\sigma,p}(U^{m}+Q^{m}_{\rho\eta})}\Bigr);
            \]
        \item\label{item:main_thickening_estimate_all} for every \( 0 < s < +\infty \),
            \[
                \lVert u \circ \upPhi - u \rVert_{L^{p}(\Omega)}
                \leq 
                C'\lVert u \rVert_{L^{p}(U^{m}+Q^{m}_{\rho\eta})};
            \]
    \end{enumerate}
    for some constant \( C' > 0 \) depending on \( m \), \( s \), \( p \), and \( \rho \).
\end{prop}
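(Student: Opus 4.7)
My plan follows the structure of the opening proof, reducing Proposition~\ref{prop:main_thickening} to an analogous block thickening proposition on a single cube and then assembling the blocks. On each cube $\sigma^m \in \Uc^m$, identified through an isometry with the model cube $Q^m_\eta$, I would construct a smooth map $\upPhi_\sigma$ that is the identity in a neighborhood of $\partial \sigma^m$ and has a radial-like structure centered on the dual slice $\tau^{\ell^*} = T^{\ell^*} \cap \sigma^m$. Concretely, $\upPhi_\sigma$ fixes the orthogonal projection of $x$ onto $\tau^{\ell^*}$ and, in the complementary direction, acts by a radial dilation governed by a smooth profile $g$ that collapses the whole cube onto a thin tube around the $\ell$-faces while remaining the identity near the boundary. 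Gluing such blocks with the identity outside $U^m$ yields a well-defined global $\upPhi$, since each block has geometric support strictly inside its cube.

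The geometric assertions~\ref{item:main_thickining_injective}--\ref{item:main_thickening_support} then follow from the explicit construction, while the singular-derivative estimate~\ref{item:main_thickening_singularity} follows from differentiating $g$ composed with the smooth distance function $r(x) = \dist(x,\tau^{\ell^*})$ and using the natural scaling of $g$.

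The bulk of the proof is the Sobolev estimates, which I would obtain by first proving local versions on a single block and then globalizing via the additivity of the integral, together with Lemmas~\ref{lemma:fractional_additivity} and~\ref{lemma:finite_fractional_additivity} to handle the Gagliardo seminorm. For the integer-order estimate~\ref{item:main_thickening_estimate_sgeq1_integer}, I would expand $D^j(u \circ \upPhi)$ via the Faà di Bruno formula, bound the factors $|D^i\upPhi|$ using~\ref{item:main_thickening_singularity}, and evaluate the resulting integrals by the change of variable $y = \upPhi(x)$. The Jacobian of this change of variable balances the singular powers of $\dist(x,T^{\ell^*})$ coming from the derivative estimate precisely when $\ell + 1 > sp$, which is where the dimensional hypothesis enters; the $L^p$ estimate~\ref{item:main_thickening_estimate_all} is handled in the same way without Faà di Bruno.

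The main obstacle is the fractional estimates~\ref{item:main_thickening_estimate_sle1} and~\ref{item:main_thickening_estimate_sgeq1_frac}, where pointwise change-of-variable arguments do not apply directly because of the nonlocality of the Gagliardo seminorm. Following the strategy suggested in the excerpt, I would establish a mean-value-type inequality controlling $|u\circ\upPhi(x) - u\circ\upPhi(y)|^p$ by a double average of $|u(z)-u(w)|^p$ over regions of size comparable to $|\upPhi(x)-\upPhi(y)|$, chosen to respect the radial structure of $\upPhi_\sigma$. Integrating this inequality against $|x-y|^{-m-\sigma p}$, applying Tonelli's theorem, and pushing the integral forward by $\upPhi$ reduces the $W^{\sigma,p}$ seminorm of $u\circ\upPhi$ to that of $u$ on $U^m + Q^m_{\rho\eta}$, with the condition $\ell + 1 > sp$ again ensuring integrability of the resulting Jacobian factors. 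For $s \geq 1$, this reasoning combines with Faà di Bruno: the top-order term involving $D^j u \circ \upPhi$ is treated by the mean-value inequality and carries the fractional regularity, while the lower-order terms with pointwise factors of $D^i u \circ \upPhi$ are absorbed into the integer-order and $L^p$ estimates already established, using an optimization of the type seen in the proof of Proposition~\ref{prop:adaptative_smoothing} to handle the factors $|D^t \upPhi(x) - D^t \upPhi(y)|$. Finally, since $\upPhi = \id$ on $\R^m \setminus (U^m + Q^m_{\rho\eta})$, the difference $u\circ\upPhi - u$ is supported in $U^m + Q^m_{\rho\eta}$, and the triangle inequality together with Lemma~\ref{lemma:fractional_additivity} converts the local estimates into the advertised global bounds on $\Omega$.
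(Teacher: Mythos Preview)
Your geometric construction has a genuine gap that makes it fail for $\ell \leq m-2$. If each block $\upPhi_\sigma$ coincides with the identity near $\partial\sigma^m$, then every point of $\partial\sigma^m \setminus T^{\ell^{\ast}}$ is fixed by $\upPhi$. But for $\ell \leq m-2$, an $(m-1)$-face $\sigma^{m-1}\subset\partial\sigma^m$ contains points at $\ell^\infty$-distance $\eta$ from $U^{\ell}$ (e.g.\ any point close to the center of $\sigma^{m-1}$ yet off $T^{\ell^{\ast}}$); these are not in $U^{\ell}+Q^{m}_{\rho\eta}$ when $\rho<1$, so the second inclusion in~\ref{item:main_thickening_support} cannot hold. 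Relatedly, for $\ell^{\ast}\geq 1$ the set $T^{\ell^{\ast}}\cap\sigma^m$ is not a single affine $\ell^{\ast}$-plane but a union of pieces meeting at the center of $\sigma^m$, so there is no well-defined ``orthogonal projection onto $\tau^{\ell^{\ast}}$'' and no single radial profile $g$ that can do the job on the whole cube. The paper's construction is essentially different: $\upPhi$ is built by a downward induction on $d$ from $m$ to $\ell+1$, placing at each step the block of Proposition~\ref{prop:block_thickening_geometric} around every $d$-face $\sigma^d\in\Uc^{d}$ (not around the $m$-cubes) and composing; inside each such rectangular block the dual piece is a genuine affine slab $\{0\}^d\times Q^{m-d}_{\tau_{d-1}\eta}$, which is what makes a single radial map legitimate. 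Because faces of dimension $<m$ are shared by adjacent cubes and are handled by the \emph{same} block, the transition across $\partial\sigma^m$ is automatic and~\ref{item:main_thickening_support} follows by induction.

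On the analytic side your outline is broadly in the right spirit, but two ingredients you do not mention are essential and not consequences of what you state. First, the change of variable needs a \emph{lower} bound on $\jac\upPhi$ of the form in Proposition~\ref{prop:block_thickening_geometric}\ref{item:block_thickening_geometric_jacobian}; this comes from the explicit rank-one structure $\upPhi(x)=(\lambda(x)x',x'')$ and is not implied by the derivative bound~\ref{item:main_thickening_singularity}. Second, the mean-value step behind the fractional estimate (equation~\eqref{eq:mean_value_thickening}) cannot be carried out along the segment $[x,y]$, which may approach $T$ arbitrarily closely; one needs the detour path of Lemma~\ref{lemma:path_mean_value}, along which $\zeta\geq\min(\zeta(x),\zeta(y))$, to control $|D\upPhi|$ uniformly. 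Without these two points the ``push forward by $\upPhi$ and use $\ell+1>sp$'' argument does not close.
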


We emphasize that, unlike for opening in Section~\ref{sect:opening}, the map \( \upPhi \) constructed in Proposition~\ref{prop:main_thickening} above is independent of the map \( u \in W^{s,p} \) it shall be composed with.

Similarly to opening, crucial to the proof of Theorem~\ref{thm:density_class_R} is the fact that the thickening procedure increases the energy of the map \( u \) at most by a constant factor in the region where \( u \) is modified.
This, in turn, implies that the distance between \( u \) and \( u \circ \upPhi \) is controlled by the energy of \( u \) on \( U^{m}+Q^{m}_{\rho\eta} \)\,, as stated in the conclusion of Proposition~\ref{prop:main_thickening}.
In the proof of Theorem~\ref{thm:density_class_R}, this will be used in combination with the fact that the measure of the set \( U^{m}+Q^{m}_{\rho\eta} \) tends to \( 0 \) as \( \eta \to 0 \) in order to ensure that \( u \circ \upPhi \) is close to \( u \) when \( \eta \) is sufficiently small.

As for the opening, Proposition~\ref{prop:main_thickening} is proved blockwise: we first construct, in Proposition~\ref{prop:block_thickening_geometric}, a map, still denoted \( \upPhi \), which thickens each face of \( \Tc^{\ell^{\ast}} \). 
We then suitably glue those maps to obtain a thickening map as in Proposition~\ref{prop:main_thickening}.
Before giving the description of the building blocks used in the proof of Proposition~\ref{prop:main_thickening}, we introduce some additional notation similarly to what we did for opening.
The construction of the map in Proposition~\ref{prop:block_thickening_geometric} below involves three parameters \( 0 < \ulrho < \rho < \olrho < 1 \).
These parameters being fixed, we define the rectangles 
\begin{equation}
\label{eq:definition_rectangles_thickening}
    Q_{1} = Q^{d}_{(1-\olrho)\eta} \times Q^{m-d}_{\ulrho\eta},
    \quad
    Q_{2} = Q^{d}_{(1-\rho)\eta} \times Q^{m-d}_{\ulrho\eta},
    \quad
    Q_{3} = Q^{d}_{(1-\rho)\eta} \times Q^{m-d}_{\rho\eta}.
\end{equation}
We note that \( Q_{1} \subset Q_{2} \subset Q_{3} \).
We also set \( T = \{0\}^{d} \times Q^{m-d}_{\rho\eta} \), the part of the dual skeleton contained in \( Q_{3} \).
The rectangle \( Q_{3} \) contains the geometric support of \( \upPhi \), that is, \( \upPhi = \id \) outside of \( Q_{3} \).
The rectangle \( Q_{2} \) is the region where the thickening procedure is fully performed: 
the set \( T \cap Q_{2} \) is entirely mapped outside of \( Q_{1} \), in \( Q_{2} \setminus Q_{1} \).
The region \( Q_{3} \setminus Q_{2} \) serves as a transition, on which the map \( \upPhi \) becomes less and less singular, 
until it reaches the exterior of \( Q_{3} \) where it coincides with the identity.

This section is organized as follows.
First we describe the geometric construction of the building blocks for thickening.
Then we prove the analytic estimates satisfied by the composition of a map \( u \in W^{s,p} \) with those building blocks.
Finally, we explain the construction of the global thickening map based on the aforementioned building blocks, and we prove all properties stated in the conclusion of Proposition~\ref{prop:main_thickening}.

We start by stating the geometric properties satisfied by the building blocks, which do not depend on the map \( u \) to which thickening is applied.
The map \( \upPhi \) constructed in Proposition~\ref{prop:block_thickening_geometric} is exactly the map given by~\cite{BPVS_density_higher_order}*{Proposition~4.3}.
Hence, we shall not give a complete proof of Proposition~\ref{prop:block_thickening_geometric}, but we will limit ourselves to recall, for the convenience of the reader, the main steps in the construction of the map \( \upPhi \).

The main difference with~\cite{BPVS_density_higher_order} is the proof of the Sobolev estimates.
In~\cite{BPVS_density_higher_order}, they were obtained on the whole \( \Omega \) as a corollary of the geometric properties of the map \( \upPhi \),
by the use of the change of variable theorem.
This approach does not seem to work to deal with the Gagliardo seminorm.
Hence, we first establish the estimates for the building blocks, and we deduce the global estimates by gluing, as for opening.
To do so, we need to take into account some additional features of the map \( \upPhi \), that are part of its construction in~\cite{BPVS_density_higher_order}*{Proof of Proposition~4.3}
but do not appear in the conclusion of Proposition~4.3 in~\cite{BPVS_density_higher_order}.

The construction of the map \( \upPhi \) involves another map \( \zeta \colon \R^{m} \to \R \), which we describe hereafter.
For \( (x',x'') \in \R^{d}\times \R^{m-d} \), we define 
\begin{equation}
\label{eq:definition_zeta_thickening}
    \zeta(x',x'') = \sqrt{\lvert x' \rvert^{2}+\eta^{2}\theta\Bigl(\frac{x''}{\eta}\Bigr)}.
\end{equation}
In~\cite{BPVS_density_higher_order}, \( \theta \colon \R^{m-d} \to [0,1] \) is an arbitrary smooth map such that 
\( \theta(x'') = 0 \) if \( x'' \in Q^{m-d}_{\ulrho} \) and \( \theta(x'') = 1 \) if \( x'' \in \R^{m-d} \setminus Q^{m-d}_{\rho} \).
For our purposes, we need to be more precise in our choice of \( \theta \).
We would like to choose \( \theta \) to be nondecreasing with respect to cubes, that is, \( \theta(x'') \) depends only on the \( \infty \)-norm of \( x'' \) 
and \( \theta(x'') \leq \theta(y'') \) if \( \lvert x'' \rvert_{\infty} \leq \lvert y'' \rvert_{\infty} \).
However, this is not possible if we want \( \theta \) to be smooth, since the \( \infty \)-norm is not differentiable.
Nevertheless, we may choose \( \theta \) sufficiently close to be nondecreasing with respect to cubes for our purposes, by replacing the \( \infty \)-norm by some \( q \)-norm for \( q \) sufficiently large.

More precisely, we take \( 1 < q < +\infty \) sufficiently large, depending on \( \ulrho \) and \( \rho \), so that there exists \( 0 < r_{1} < r_{2} \) satisfying 
\[
    Q^{m-d}_{\ulrho} 
    \subset 
    \{ x'' \in \R^{m-d} \mathpunct{:} \lvert x'' \rvert_{q} < r_{1} \}
    \subset
    \{ x'' \in \R^{m-d} \mathpunct{:} \lvert x'' \rvert_{q} < r_{2} \}
    \subset
    Q^{m-d}_{\rho}.
\]
This is indeed possible since \( Q^{m-d}_{\ulrho} \) and \( Q^{m-d}_{\rho} \) are respectively the balls of radius \( \ulrho \) and \( \rho \) with respect to the \( \infty \)-norm in \( \R^{m-d} \),
and since the \( q \)-norm converges uniformly on compact sets to the \( \infty \)-norm as \( q \to +\infty \).
We then pick a nondecreasing smooth map \( \tilde{\theta} \colon \R_{+} \to [0,1] \) such that \( \tilde{\theta}(r) = 0 \) if \( 0 \leq r \leq r_{1} \) and \( \tilde{\theta}(r) = 1 \) if \( r \geq r_{2} \).
We finally set \( \theta(x'') = \tilde{\theta}(\lvert x'' \rvert_{q}) \).
Since \( 1 < q < +\infty \), \( \theta \) is smooth on \( \R^{m-d} \), and, by our choice of \( q \), \( r_{1} \), and \( r_{2} \), we indeed have 
\( \theta(x'') = 0 \) if \( x'' \in Q^{m-d}_{\ulrho} \) and \( \theta(x'') = 1 \) if \( x'' \in \R^{m-d} \setminus Q^{m-d}_{\rho} \).

With the description of the map \( \zeta \) at our disposal, we are now ready to state Proposition~\ref{prop:block_thickening_geometric}.
We recall that the rectangles \( Q_{i} \) in~\eqref{eq:definition_rectangles_thickening} depend on \( d \) and \( \eta \).

\begin{prop}
\label{prop:block_thickening_geometric}
    Let \( d \in \{1,\dots,m\} \), \( \eta > 0 \), and \( 0 < \ulrho < \rho < \olrho \).
    There exists a smooth function \( \upPhi \colon \R^{m} \setminus T \to \R^{m} \) such that
    \begin{enumerate}[label=(\roman*)]
        \item\label{item:block_thickening_geometric_injective} \( \upPhi \) is injective;
        \item\label{item:block_thickening_geometric_support} \( \Supp\upPhi \subset Q_{3} \);
        \item\label{item:block_thickening_geometric_geometry} \( \upPhi(Q_{2}\setminus T) \subset Q_{2} \setminus Q_{1} \);
        \item\label{item:block_thickening_geometric_singularity} for every \( x \in Q_{3} \setminus T \),
            \[
                \lvert D^{j}\upPhi(x) \rvert \leq C\frac{\eta}{\zeta^{j}(x)}
                \quad
                \text{for every \( j \in \N_{\ast} \)}
            \]
            for some constant \( C > 0 \) depending on \( j \), \( m \), \( \ulrho \), \( \rho \), and \( \olrho \);
        \item\label{item:block_thickening_geometric_jacobian} for every \( x \in \R^{m} \setminus T \),
            \[
                \jac\upPhi(x) \geq C'\frac{\eta^{\beta}}{\zeta^{\beta}(x)}
                \quad 
                \text{for every \( 0 < \beta < d \),}
            \]
            for some constant \( C' > 0 \) depending on \( \beta \), \( m \), \( \ulrho \), \( \rho \), and \( \olrho \).
    \end{enumerate}
\end{prop}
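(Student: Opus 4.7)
The plan is to follow the construction given in \cite{BPVS_density_higher_order}*{Proof of Proposition~4.3}, recalling only the essential ingredients needed to verify the five stated properties. The idea is to define $\upPhi$ as a perturbation that pushes points close to $T$ outward along the $x'$-variable while leaving the $x''$-variable essentially unchanged, using $\zeta$ defined in~\eqref{eq:definition_zeta_thickening} as the relevant smoothed distance to $T$.

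Concretely, one first chooses a smooth nondecreasing function $\Lambda \colon [0,+\infty) \to [0,+\infty)$ with $\Lambda(r) \geq (1-\olrho)\eta$ for every $r$, with $\Lambda(r) = r$ for $r$ larger than some threshold comparable to $\eta$, and with $\Lambda(\zeta(x)) \leq (1-\rho)\eta$ whenever $(x',x'') \in Q_2$. Near $T$, one then sets
\[
    \upPhi(x',x'') = \left( \Lambda(\zeta(x',x'')) \, \frac{x'}{\zeta(x',x'')}\,,\; x'' \right),
\]
which makes sense for $(x',x'') \in \R^{m} \setminus T$ since $\zeta > 0$ there. The map is interpolated to the identity on the transition annulus $Q_{3} \setminus Q_{2}$ by a standard cutoff. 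The choice of a smooth $q$-norm with $q$ large in the definition of $\theta$, as explained before the statement of the proposition, is what guarantees that the level sets of $\zeta$ approximate cubes closely enough so that the required separation between the regions $Q_{1}$ and $Q_{2} \setminus Q_{1}$ can be realized simultaneously in $x'$ and $x''$.

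With such a construction, property~\ref{item:block_thickening_geometric_injective} follows from the fact that $\upPhi$ preserves $x''$ and, for each fixed $x''$, acts in the $x'$-variable as a strictly increasing radial map; properties~\ref{item:block_thickening_geometric_support} and~\ref{item:block_thickening_geometric_geometry} are built into the choice of $\Lambda$ and of the cutoff. For the derivative estimate~\ref{item:block_thickening_geometric_singularity}, the key input is the standard bound $\lvert D^{j}\zeta(x) \rvert \leq C\zeta(x)^{1-j}$, valid because $\zeta$ is a regularized distance to $T$; combining it with the chain rule and the boundedness of $\Lambda$ and its derivatives on bounded sets then yields the $j$-th order estimate.

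The main obstacle, as in~\cite{BPVS_density_higher_order}, is the Jacobian lower bound~\ref{item:block_thickening_geometric_jacobian}. For each fixed $x''$, I would compute the differential of $\upPhi$ in a polar-like frame adapted to the $x'$-variable: the radial direction contributes a factor proportional to $\Lambda'(\zeta)$ while the $d-1$ angular directions each contribute a factor of $\Lambda(\zeta)/\zeta$, which near $T$ is of order $\eta/\zeta$. The resulting Jacobian is therefore at least of order $(\eta/\zeta)^{d-1}$ on the region where $\Lambda'$ is bounded below. The slack $\beta < d$ (rather than $\beta \leq d-1$) is then exactly what allows one to absorb the degeneracy of $\Lambda'$ in the transition zones by interpolating with a larger power of $\eta/\zeta$, and this is the step where the precise choices of $\Lambda$ and of the smoothed norm in $\theta$ become crucial.
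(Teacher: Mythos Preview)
Your overall architecture --- push radially in the $x'$-variable while freezing $x''$, using $\zeta$ as a regularized distance --- matches the paper's construction $\upPsi(x',x'')=(\varphi(\zeta(x)/\eta)\,x',x'')$. Properties~\ref{item:block_thickening_geometric_injective}--\ref{item:block_thickening_geometric_singularity} are indeed routine once the map is written down.

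The genuine gap is in~\ref{item:block_thickening_geometric_jacobian}. You correctly identify that the Jacobian factors as $\Lambda'(\zeta)\,(\Lambda(\zeta)/\zeta)^{d-1}$, and the angular part gives $(\eta/\zeta)^{d-1}$. But you have not specified $\Lambda$, and the estimate for the full range $0<\beta<d$ is \emph{not} robust under ``generic smooth nondecreasing $\Lambda$ with $\Lambda\geq(1-\olrho)\eta$''. For such $\Lambda$, the derivative $\Lambda'$ must vanish as $\zeta\to 0$ (since $\Lambda$ levels off), and if it vanishes polynomially --- say $\Lambda'(r)\sim r^{k}$ --- the Jacobian is only $\sim(\eta/\zeta)^{d-1-k}$, which fails for $\beta>d-1-k$. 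The degeneracy is at $\zeta\to 0$, i.e.\ \emph{near $T$}, not ``in the transition zones''; the slack $\beta<d$ does not absorb it by interpolation. What actually makes~\ref{item:block_thickening_geometric_jacobian} work is the specific logarithmic profile the paper uses,
\[
    \varphi(r)=\frac{1-\olrho}{r}\Bigl(1+\frac{b}{\ln(1/r)}\Bigr)\quad\text{for small }r,
\]
so that $\Lambda(r)=\eta\,(r/\eta)\varphi(r/\eta)$ has $\Lambda'(r)\sim \eta\,r^{-1}(\ln(\eta/r))^{-2}$ and hence $\jac\upPsi\sim (\eta/\zeta)^{d}(\ln(\eta/\zeta))^{-2}$, which dominates $(\eta/\zeta)^{\beta}$ precisely for every $\beta<d$. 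This choice is the heart of the matter and cannot be left unspecified.

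A secondary difference: the paper does not interpolate to the identity by a cutoff. It builds $\varphi(r)=1$ for $r\geq 1-\rho$ into the profile, so $\upPsi$ is automatically the identity outside the \emph{cylinder} $B_{3}=B^{d}_{(1-\rho)\eta}\times Q^{m-d}_{\rho\eta}$, and then composes with a separate bounded-derivative diffeomorphism $\upTheta$ (supported in $Q_{3}$, dilating $B_{1}$ past $Q_{1}$) to pass from cylinders to rectangles. A naive cutoff interpolation risks spoiling both injectivity and the Jacobian lower bound in the transition annulus, so if you keep your formulation you would need to argue these points separately.
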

\begin{proof}
    As we already mentioned, the desired map \( \upPhi \) is provided by~\cite{BPVS_density_higher_order}*{Proposition~4.3}.
    Hence, we limit ourselves to briefly recall its construction for the convenience of the reader, and we refer to~\cite{BPVS_density_higher_order} for the 
    complete proof of its properties.
    For technical reasons, we start by constructing an intermediate map \( \upPsi \colon \R^{m} \setminus T \to \R^{m} \) as follows; see~\cite{BPVS_density_higher_order}*{Lemma~4.5}.
    We define
    \[
        B_{1} = B^{d}_{(1-\olrho)\eta} \times Q^{m-d}_{\ulrho\eta},
        B_{2} = B^{d}_{(1-\rho)\eta} \times Q^{m-d}_{\ulrho\eta},
        B_{3} = B^{d}_{(1-\rho)\eta} \times Q^{m-d}_{\rho\eta}.
    \]
    The map \( \upPsi \) is constructed to satisfy the conclusion of Proposition~\ref{prop:block_thickening_geometric} with the rectangles \( Q_{i} \) replaced by the corresponding cylinders \( B_{i} \) for \( i \in \{1,2,3\} \).
    Since \( B_{i} \subset Q_{i} \), it will then suffice to compose \( \upPsi \) with a suitable diffeomorphism \( \upTheta \colon \R^{m} \to \R^{m} \) 
    that dilates \( B_{1} \) to a set containing \( Q_{1} \) in order to obtain the desired map \( \upPhi \).

    We choose a smooth map \( \varphi \colon (0,+\infty) \to [1,+\infty) \) such that 
    \begin{enumerate}[label=(\alph*)]
        \item\label{item:phi_small_r_thickening} for \( 0 < r \leq 1-\olrho \), 
            \[ 
                \varphi(r) = \frac{1-\olrho}{r}\Bigl(1+\frac{b}{\ln(\frac{1}{r})}\Bigr);
            \]
        \item\label{item:phi_large_r_thickening} for \( r \geq 1-\rho \), \( \varphi(r) = 1 \);
        \item\label{item:phir_increasing_thickening} the function \( r \in (0,+\infty) \mapsto r\varphi(r) \) is increasing.
    \end{enumerate}
    This is possible provided that we choose \( b > 0 \) such that 
    \[
        (1-\olrho)\Bigl(1+\frac{b}{\ln\frac{1}{1-\olrho}}\Bigr) < 1-\rho.
    \]
    Then we define \( \lambda \colon \R^{m} \setminus T \to [1,+\infty) \) by 
    \[
        \lambda(x) = \varphi\Bigl(\frac{\zeta(x)}{\eta}\Bigr),
    \]
    and finally 
    \[
        \upPsi(x',x'') = (\lambda(x',x'')x',x'').
    \]

    The injectivity of \( \upPsi \) is a consequence of assumption~\ref{item:phir_increasing_thickening} on \( \varphi \).
    The fact that \( \Supp\upPsi \subset B_{3} \) relies on assumption~\ref{item:phi_large_r_thickening} on \( \varphi \), since we may observe that \( \zeta(x) \geq (1-\rho)\eta \) if \( x \in \R^{m}\setminus B_{3} \),
    and therefore \( \lambda(x) = 1 \).
    Combining the observation that, using~\ref{item:phir_increasing_thickening} again,
    \[
        r\varphi(r) \geq \lim_{r \to 0} r\varphi(r) = 1-\olrho
    \]
    with the fact that \( \zeta(x) = \lvert x' \rvert \) if \( x = (x',x'') \in B_{2} \), we find that \( \upPsi(B_{2}\setminus T) \subset B_{2}\setminus B_{1} \).
    In order to obtain~\ref{item:block_thickening_geometric_singularity} on \( B_{3} \setminus T \), we estimate \( \lvert D^{j}\lambda(x) \rvert \) with the help of the Faà di Bruno formula, and then conclude using Leibniz's rule.
    The proof of estimate~\ref{item:block_thickening_geometric_jacobian} is more delicate.
    The Jacobian of \( \upPsi \) may be explicitly evaluated as the determinant of a rank-one perturbation of a diagonal map, as we did in the proof of~\eqref{eq:Lp_bound_convolution}, and one then uses the properties of \( \varphi \) and \( \zeta \) to get the required lower bound on the obtained expression.
    We refer the reader to~\cite{BPVS_density_higher_order}*{Lemma~4.5} for the details.

    It remains to correct the fact that we worked with the cylinders \( B_{i} \) instead of the rectangles \( Q_{i} \).
    This essentially amount to construct a suitable deformation of \( \R^{m} \) with bounded derivatives and a suitable lower bound on the Jacobian. 
    We let \( \upTheta \colon \R^{m} \to \R^{m} \) be a diffeomorphism whose geometric support is contained in \( Q_{3} \), 
    which maps \( B_{2} \setminus B_{1} \) on a set contained in \( Q_{2} \setminus Q_{1} \) -- that is, \( \upTheta \) dilates \( B_{1} \) on a set containing \( Q_{1} \) --
    and satisfies the estimates 
    \[
        \eta^{j-1}\lvert D^{j}\upTheta \rvert 
        \leq 
        \C
        \quad 
        \text{and}
        \quad
        0 < \C \leq \jac\upTheta \leq \C 
        \quad 
        \text{on \( \R^{m} \).}
    \]
    We refer the reader to~\cite{BPVS_density_higher_order}*{Lemma~4.4} for the precise construction of this diffeomorphism.

    Finally, we let \( \upPhi = \upTheta \circ \upPsi \).
    We observe that this construction satisfies the geometric properties~\ref{item:block_thickening_geometric_injective} to~\ref{item:block_thickening_geometric_geometry}.
    The estimate~\ref{item:block_thickening_geometric_jacobian} on the Jacobian readily follows from the composition formula for the Jacobian.
    To get~\ref{item:block_thickening_geometric_singularity}, we invoke the Faà di Bruno formula to compute 
    \begin{align*}
        \lvert D^{j}\upPhi(x) \rvert 
        &\leq 
        \C\sum_{i=1}^{j}\sum_{\substack{1\leq t_{1} \leq \cdots \leq t_{i} \\ t_{1}+\cdots+t_{i}=j}}
            \lvert D^{i}\upTheta(x) \rvert \lvert D^{t_{1}}\upPsi(x)\rvert \cdots \lvert D^{t_{i}}\upPsi(x)\rvert \\
        &\leq 
        \C\sum_{i=1}^{j}\sum_{\substack{1\leq t_{1} \leq \cdots \leq t_{i} \\ t_{1}+\cdots+t_{i}=j}}
            \eta^{1-i}\frac{\eta}{\zeta^{t_{1}}(x)}\cdots\frac{\eta}{\zeta^{t_{i}}(x)}
        \leq
        \C\frac{\eta}{\zeta^{j}(x)}.
    \end{align*}
    This concludes the proof of the proposition.
    \resetconstant
\end{proof}

Now that we have the building block \( \upPhi \), we move to the Sobolev estimates satisfied by the composition \( u \circ \upPhi \).

\begin{prop}
\label{prop:block_thickening_analytic}
    Let \( d > sp \).
    Let \( \upPhi \) be as in Proposition~\ref{prop:block_thickening_geometric}.
    Let \( \omega \subset \R^{m} \) be such that \( Q_{3} \subset \omega \subset B^{m}_{c\eta} \) for some \( c > 0 \), and assume that there exists \( c' > 0 \) such that 
    \begin{equation}
    \label{eq:geometric_assumptions_block_thickening}
    	 \lvert B^{m}_{\lambda}(z) \cap \omega \rvert \geq c'\lambda^{m}
    	 \quad
    	 \text{for every \( z \in \omega \) and \( 0 < \lambda \leq \frac{1}{2}\diam\omega \).}
    \end{equation}
    For every \( u \in W^{s,p}(\omega;\R^{\nu}) \), we have \( u \circ \upPhi \in W^{s,p}(\omega;\R^{\nu}) \), and moreover, the following estimates hold:
    \begin{enumerate}[label=(\alph*)]
        \item\label{item:block_thickening_geometric_sle1} if \( 0 < s < 1 \), then
	        \[
		        \lvert u \circ \upPhi \rvert_{W^{s,p}(\omega)}
		        \leq 
		        C\lvert u \rvert_{W^{s,p}(\omega)};
	        \]
        \item\label{item:block_thickening_geometric_sgeq1_integer} if \( s \geq 1 \), then for every \( j \in \{1,\dots,k\} \),
            \[
                \eta^{j}\lVert D^{j}(u \circ \upPhi) \rVert_{L^{p}(\omega)}
                \leq
                C\sum_{i=1}^{j}\eta^{i}\lVert D^{i}u \rVert_{L^{p}(\omega)};
            \]
        \item\label{item:block_thickening_geometric_sgeq1_frac} if \( s \geq 1 \) and \( \sigma \neq 0 \), then for every \( j \in \{1,\dots,k\} \),
            \[
                \eta^{j+\sigma}\lvert D^{j}(u \circ \upPhi) \rvert_{W^{\sigma,p}(\omega)}
                \leq
                C\sum_{i=1}^{j}\Bigl(\eta^{i}\lVert D^{i}u \rVert_{L^{p}(\omega)} 
                    + \eta^{i+\sigma}\lvert D^{i}u \rvert_{W^{\sigma,p}(\omega)}\Bigr);
            \]
        \item\label{item:block_thickening_geometric_all} for every \( 0 < s < +\infty \),
            \[
                \lVert u \circ \upPhi \rVert_{L^{p}(\omega)}
                \leq 
                C\lVert u \rVert_{L^{p}(\omega)};
            \]
    \end{enumerate}
    for some constant \( C > 0 \) depending on \( s \), \( m \), \( p \), \( c \), \( c' \), \( \ulrho \), \( \rho \), and \( \olrho \).
\end{prop}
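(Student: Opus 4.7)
My plan is to exploit the geometric information from Proposition~\ref{prop:block_thickening_geometric} pointwise: the derivative bounds $|D^{j}\upPhi(x)| \leq C\eta/\zeta(x)^{j}$ and the Jacobian lower bound $\jac\upPhi(x) \geq C'\eta^{\beta}/\zeta(x)^{\beta}$ for any $0<\beta<d$. The hypothesis $d>sp$ is exactly what is needed to pick $\beta$ slightly larger than any exponent $jp$ appearing in our estimates (since $jp\leq kp\leq sp<d$), so that the Jacobian compensates the singular factor $1/\zeta^{jp}$ coming from the chain rule when we change variables $\tilde x=\upPhi(x)$. The approach parallels the block-opening proof in Proposition~\ref{prop:block_opening}, but replaces Fuglede-style averaging by a direct, deterministic change-of-variable argument made possible by the explicit lower bound on $\jac\upPhi$. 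Throughout, we use that $\Supp\upPhi\subset Q_{3}\subset\omega$ and $\upPhi(Q_{3}\setminus T)\subset Q_{3}$, so $\upPhi(\omega)\subset\omega$.

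\textbf{Integer and $L^{p}$ estimates (b) and (d).} I would apply the Faà di Bruno formula pointwise on $\omega\setminus T$ to get
\[
    |D^{j}(u\circ\upPhi)(x)|^{p} \leq C\sum_{i=1}^{j} |D^{i}u(\upPhi(x))|^{p}\,\frac{\eta^{ip}}{\zeta(x)^{jp}},
\]
arguing first for smooth $u$ and passing to the limit via density (as $T$ has zero measure, $u\circ\upPhi$ is well defined and lies in the appropriate Sobolev space by the resulting estimate). The change of variable $\tilde x=\upPhi(x)$ combined with $1/\jac\upPhi(x)\leq \zeta(x)^{\beta}/(C'\eta^{\beta})$ gives
\[
    \int_{\omega}\frac{|D^{i}u(\upPhi(x))|^{p}}{\zeta(x)^{jp}}\,\d x \leq \frac{1}{C'\eta^{\beta}}\int_{\upPhi(\omega)} |D^{i}u(\tilde x)|^{p}\,\zeta(\upPhi^{-1}(\tilde x))^{\beta-jp}\,\d\tilde x.
\]
Choosing $\beta=jp$ (admissible as $jp\leq kp\leq sp<d$) makes the $\zeta$-power disappear, yielding (b). The $L^{p}$ bound (d) follows by the same change of variable with any fixed $\beta\in(0,d)$, using $\zeta\leq c\eta$ on $\omega$ to absorb the remaining $\zeta^{\beta}$ factor.

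\textbf{Fractional estimates (a) and (c).} This is the delicate part. For (a), I split $\omega\times\omega = (Q_{3}\times Q_{3}) \cup (Q_{3}\times(\omega\setminus Q_{3})) \cup ((\omega\setminus Q_{3})\times Q_{3}) \cup ((\omega\setminus Q_{3})\times(\omega\setminus Q_{3}))$. The last piece contributes trivially since $\upPhi=\id$ there; the mixed pieces are handled by a single change of variable on the $Q_{3}$-component, using that $|x-y|$ is bounded below by the distance of $y$ to $Q_{3}$, together with Lemma~\ref{lemma:finite_fractional_additivity} and the geometric assumption~\eqref{eq:geometric_assumptions_block_thickening} on $\omega$. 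The main piece $Q_{3}\times Q_{3}$ is treated by the double change of variable $(\tilde x,\tilde y)=(\upPhi(x),\upPhi(y))$. To bound the transformed kernel $|\upPhi^{-1}(\tilde x)-\upPhi^{-1}(\tilde y)|^{-(m+sp)}/(\jac\upPhi(x)\jac\upPhi(y))$ by $C/|\tilde x-\tilde y|^{m+sp}$, the key input is a mean value estimate of the form $|\upPhi(x)-\upPhi(y)|\leq C\eta\,|x-y|/\min(\zeta(x),\zeta(y))$, obtained by integrating $|D\upPhi|$ along the segment joining $x$ and $y$ (which stays away from $T$ in a quantifiable way because of the geometry of $\upPhi$). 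Raising this to the power $m+sp$ and compensating with the Jacobian bound $\jac\upPhi(x)\jac\upPhi(y)\geq C\eta^{2\beta}/(\zeta(x)\zeta(y))^{\beta}$ with $\beta\in(sp,d)$ produces a uniformly bounded kernel, giving (a). For (c), I would combine this with the Faà di Bruno expansion from the integer case: the difference $|D^{j}(u\circ\upPhi)(x)-D^{j}(u\circ\upPhi)(y)|$ is split by multilinearity into terms containing $|D^{i}u(\upPhi(x))-D^{i}u(\upPhi(y))|$ (treated as in (a)) and cross terms containing $|D^{t}\upPhi(x)-D^{t}\upPhi(y)|$ (estimated by the mean value theorem applied to $D^{t}\upPhi$, whose derivative is again controlled by $C\eta/\zeta^{t+1}$). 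The resulting integrals in $y$ are handled by the optimization argument already employed in the proof of Proposition~\ref{prop:block_opening}~\eqref{eq:optimization_final_adaptative_smoothing}, truncating at radius $\eta$.

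\textbf{Main obstacle.} The serious difficulty is setting up the fractional estimate (a) cleanly: the thickening map is not Lipschitz (it blows up near $T$), so the fractional seminorm does not transform under $\upPhi$ through any straightforward substitution. One must establish the mean value estimate \eqref{eq:mean_value_thickening} and then balance it against the Jacobian lower bound, using crucially the strict inequality $d>sp$ to have room to choose $\beta\in(sp,d)$. This is where the precise geometric structure of $\upPhi$ (not merely a generic diffeomorphism with a singular set) enters decisively, and it is what distinguishes the analysis here from the opening construction.
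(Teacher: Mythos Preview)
Your integer-order and $L^{p}$ arguments for (b) and (d) are essentially the paper's, and the overall architecture for (c) (Fa\`a di Bruno split, then optimization on the cross terms) is also correct in spirit. The real problem is your plan for the fractional seminorm in (a), which does not work as stated, and this gap propagates into (c).

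The double change of variable $(\tilde x,\tilde y)=(\upPhi(x),\upPhi(y))$ cannot produce a uniformly bounded kernel. From the mean value estimate you correctly identify, one gets $|x-y|^{-(m+sp)}\leq C\,\eta^{m+sp}\,\zeta(x)^{-(m+sp)}|\tilde x-\tilde y|^{-(m+sp)}$ (taking $\zeta(x)\leq\zeta(y)$). To compensate $\zeta(x)^{-(m+sp)}$ by the Jacobian bound $\jac\upPhi(x)\jac\upPhi(y)\geq C\eta^{2\beta}/(\zeta(x)\zeta(y))^{\beta}$ you would need, in the worst case $\zeta(y)\sim\eta$ and $\zeta(x)\to 0$, that $\beta\geq m+sp$. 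But $\beta<d\leq m<m+sp$, so the kernel blows up. The exponent $m+sp$ in the Gagliardo kernel is simply too large to be absorbed by a Jacobian bound whose admissible exponent is strictly below $d$. The hypothesis $d>sp$ gives you room up to $sp$, not up to $m+sp$.

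The paper circumvents this by an averaging trick rather than a double change of variable. One inserts an average over $\mathcal{B}_{x,y}=B^{m}_{|\upPhi(x)-\upPhi(y)|}\bigl(\tfrac{\upPhi(x)+\upPhi(y)}{2}\bigr)\cap\omega$ (this is where assumption~\eqref{eq:geometric_assumptions_block_thickening} enters, to lower-bound $|\mathcal{B}_{x,y}|$), applies Tonelli to swap the $y$- and $z$-integrals, and then integrates out $y$ using the mean value estimate to bound the set $\{y:\ z\in\mathcal{B}_{x,y}\}\subset \R^{m}\setminus B^{M}_{c\zeta(x)|\upPhi(x)-z|/\eta}(x)$. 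This leaves a single integral in $x$ with weight $\eta^{sp}/\zeta(x)^{sp}$, and now a \emph{single} change of variable with $\beta=sp<d$ suffices. The point is that averaging decouples $x$ and $y$ so that only the exponent $sp$, not $m+sp$, has to be compensated by the Jacobian.

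Two smaller points. First, the mean value estimate cannot be obtained along the straight segment $[x,y]$, which may cross $T$; the paper uses a special Lipschitz path (Lemma~\ref{lemma:path_mean_value}) along which $\zeta\geq\min(\zeta(x),\zeta(y))$. Second, in (c) the optimization for the cross terms $|D^{t}\upPhi(x)-D^{t}\upPhi(y)|$ must truncate at radius $r=\zeta(x)$, not $r=\eta$, because the derivative bounds are in terms of $\zeta$; this yields $\int_{\omega}\frac{|D^{t}\upPhi(x)-D^{t}\upPhi(y)|^{p}}{|x-y|^{m+\sigma p}}\,\d y\leq C\eta^{p}/\zeta(x)^{(t+\sigma)p}$, which then feeds into a single change of variable with $\beta=(j+\sigma)p<d$.
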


We comment on the assumptions~\eqref{eq:geometric_assumptions_block_thickening} on \( \omega \).
In this section, \( \omega \) will be a rectangle whose sidelengths are constant multiples of \( \eta \).
However, in Section~\ref{sect:shrinking}, we will use Proposition~\ref{prop:block_shrinking_analytic}, which is very similar to Proposition~\ref{prop:block_thickening_analytic}, with a more complicated \( \omega \).
This is why we stated Proposition~\ref{prop:block_thickening_analytic} in a rather general form.

The assumption that \( \omega \) is contained in some ball having radius of order \( \eta \) is purely technical.
It ensures that estimate~\ref{item:block_thickening_geometric_singularity} of Proposition~\ref{prop:block_thickening_geometric} still applies for \( x \in \omega \setminus T \), possibly increasing the constant.
Indeed, since \( \Supp\upPhi \subset Q_{3} \), this estimate clearly still applies when \( x \notin Q_{3} \), but the constant deteriorates as \( \lvert x' \rvert \to +\infty \), since then \( \upPhi = \id \) while \( \zeta(x) \to +\infty \).
We could bypass this restriction, but it would not be useful since anyway we intend to apply Proposition~\ref{prop:block_thickening_analytic} to domains \( \omega \) satisfying this requirement.

To prove Proposition~\ref{prop:block_thickening_analytic}, we need a technical lemma.
As it was the case for opening, in the proof of the fractional Sobolev estimates, we will need to estimate terms of the form \( \lvert D^{t}\upPhi(x)-D^{t}\upPhi(y) \rvert \).
However, unlike for the opening, we cannot upper bound such terms by a simple application of the mean value theorem along a segment connecting \( x \) and \( y \), since such a segment 
could potentially get very close to -- or even cross -- the dual skeleton \( T \) where \( \upPhi \) is singular.
The following lemma provides us with a suitable path along which to apply the mean value theorem.

\begin{lemme}
\label{lemma:path_mean_value}
    For every \( x \), \( y \in \R^{m} \setminus T \), there exists a Lipschitz path \( \gamma \colon [0,1] \to \R^{m} \setminus T \) from \( x \) to \( y \) such that 
    \[ 
        \lvert \gamma \rvert_{\Cc^{0,1}([0,1])} \leq C\lvert x-y \rvert
    \] 
    for some constant \( C > 0 \) depending only on \( m \), and such that \( \zeta \geq \min(\zeta(x),\zeta(y)) \) along \( \gamma \), where \( \zeta \) is the map defined in~\eqref{eq:definition_zeta_thickening}.
\end{lemme}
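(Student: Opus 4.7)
Without loss of generality, assume $\zeta(x) \leq \zeta(y)$ and set $r := \zeta(x)$. The plan is to construct $\gamma$ as an explicit detour in the $\R^d$-direction, exploiting the product structure $\R^m = \R^d \times \R^{m-d}$ underlying the definition $\zeta^2(x',x'') = |x'|^2 + \eta^2 \theta(x''/\eta)$. The key geometric fact I would exploit is that any displacement $(x',x'') \mapsto (x' + t\xi, x'')$ with $\xi \in \R^d$ and $\langle x', \xi\rangle \geq 0$ can only increase $|x'|$ (since $|x'+t\xi|^2 = |x'|^2 + 2t\langle x',\xi\rangle + t^2|\xi|^2 \geq |x'|^2$) while leaving the $\theta$-term untouched, so that $\zeta$ is nondecreasing along such a displacement.

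The first step is to pick a unit vector $\xi \in \R^d$ satisfying $\langle x',\xi\rangle \geq 0$ and $\langle y',\xi\rangle \geq 0$ simultaneously. Such a $\xi$ always exists, since the intersection of two closed hemispheres of $S^{d-1}$ is nonempty, and a canonical choice (for instance in the plane spanned by $x'$ and $y'$, bisecting their angle when acute and taken perpendicularly otherwise) can be made with uniform quantitative control. I would then take $\gamma$ to be the concatenation of the three segments
\[
x \;\longrightarrow\; x + \lambda(\xi,0) \;\longrightarrow\; y + \lambda(\xi,0) \;\longrightarrow\; y
\]
for a parameter $\lambda \geq 0$ to be chosen. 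On the first and third pieces only the $\R^d$-coordinate varies, so by the geometric observation above $\zeta$ is nondecreasing, hence $\zeta \geq r$. On the middle piece the $\R^d$-component equals $(1-t)x' + ty' + \lambda\xi$; expanding its squared norm and using $\langle (1-t)x' + ty', \xi\rangle \geq 0$ gives $|(1-t)x' + ty' + \lambda\xi|^2 \geq \lambda^2$, so $\zeta \geq \lambda$ there. Taking $\lambda = r$ therefore secures $\zeta \geq r$ along the entire concatenation, while the total length is at most $2\lambda + |x-y| = 2r + |x-y|$.

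The above three-piece construction gives the Lipschitz bound $|\gamma|_{\Cc^{0,1}([0,1])} \leq C|x-y|$ directly in the regime $|x-y| \gtrsim r$. The hard part of the argument is the short-distance regime $|x-y| \ll r$, where the rigid outer detour of length $\sim r$ would exceed the Lipschitz budget. I would handle this case separately by replacing the three-piece path by a smooth perturbation of the straight segment, namely $\gamma(t) = (1-t)x + ty + \mu(t)(\xi,0)$, where $\mu \geq 0$ is a smooth bump supported near $t = 1/2$ whose amplitude is chosen just large enough to compensate for any drop of $\zeta$ along $[x,y]$. The quantitative input is that $\nabla \zeta^2 = (2x', \eta \nabla \theta(x''/\eta))$ is uniformly bounded in the relevant neighborhood of $x$ and $y$ in terms of $r$ and $\eta$, so that the drop of $\zeta^2$ along the straight segment is controlled, and the amplitude of $\mu$ required to restore $\zeta^2 \geq r^2$ can be bounded in terms of $|x-y|$. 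Gluing the two regimes via a cutoff in the ratio $|x-y|/r$ then produces a single Lipschitz path with the required properties. I expect the principal obstacle to be the delicate quantitative balancing in this short-distance regime, which is also what drives the dependence of the constant $C$ on the ambient parameters of $\theta$ through $m$.
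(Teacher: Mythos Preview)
Your approach is genuinely different from the paper's, and the short-distance regime contains a real gap.

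The paper does not translate and bump. It exploits the fact that $\zeta$ depends only on the pair $(\lvert x'\rvert_2,\lvert x''\rvert_q)$ and is nondecreasing in each coordinate: the path goes from $(x',x'')$ to $(y',x'')$ by an arc of great circle in $\R^d$ (keeping $\lvert z'\rvert$ constant) followed by a radial segment (so $\lvert z'\rvert$ moves monotonically between $\lvert x'\rvert$ and $\lvert y'\rvert$), and then analogously in the $\R^{m-d}$ factor using a $q$-sphere arc plus a radial segment. Monotonicity of $(\lvert z'\rvert,\lvert z''\rvert_q)$ along each leg gives $\zeta\ge\min(\zeta(x),\zeta(y))$ automatically, and since arc length on a sphere is comparable to chord length, the total length is $\le C\lvert x-y\rvert$ uniformly. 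There is no short/long distance dichotomy at all.

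Your bump argument for $\lvert x-y\rvert\ll r$ does not close. Take $x'=y'=0$ and $\lvert x''\rvert_q=\lvert y''\rvert_q$ with $x''\ne y''$, so that $\zeta(x)=\zeta(y)=r$ and the chord $[x'',y'']$ lies strictly inside the $q$-ball. Along the straight segment the entire drop in $\zeta^2$ comes from the $\theta$-term; for the large values of $q$ used in the paper the $q$-sphere is nearly flat near the relevant faces, so $\lvert z''\rvert_q$ can dip \emph{linearly} in $\lvert x-y\rvert$, and the drop in $\zeta^2$ is of order $\eta\,\lvert x-y\rvert$. Your perturbation $\mu(t)(\xi,0)$ then gains only $\mu(t)^2$ in $\zeta^2$, because the cross term $2\mu\langle z',\xi\rangle$ vanishes identically when $z'=0$. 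You are forced to take $\max_t\mu(t)$ of order $\sqrt{\eta\,\lvert x-y\rvert}$, hence path length of the same order, which is not bounded by $C\lvert x-y\rvert$ as $\lvert x-y\rvert\to 0$. A perturbation in the $\R^d$-direction alone cannot repair, at the required linear rate, a loss originating in the $\R^{m-d}$-factor; this is exactly what the $q$-sphere arc in the paper's construction handles.

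A secondary issue: when $d=1$, two closed hemispheres of $S^{0}=\{\pm1\}$ can be disjoint (take $x'>0$, $y'<0$), so your unit vector $\xi$ need not exist.
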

\begin{proof}
    We recall the well-known fact that, given \( x \), \( y \) on a sphere, there exists a Lipschitz path on this sphere connecting those two points,
    with Lipschitz constant less that \( \C\lvert x-y \rvert \).
    Indeed, it suffices to take the shortest arc of great circle joining \( x \) to \( y \).
    The same fact holds for any \( q \)-sphere with \( 1 \leq q \leq +\infty \).
    This can be deduced from the Euclidean case using the \emph{changing norm projection} defined by \( x \mapsto \frac{\lvert x \rvert_{q}}{\lvert x \rvert_{2}}x \),
    which is a Lipschitz map.

    The desired path is then obtained as follows.
    If \( x = (x',x'') \) and \( y = (y',y'') \), we first go from \( x \) to \( (y',x'') \) by following successively an arc of great circle and a straight line in the first \( d \) components, while keeping the \( m-d \) last components fixed.
    Then we go from \( (y',x'') \) to \( y \) by following a path on a \( q \)-sphere as above, where \( q \) is the parameter used in the definition of \( \zeta \), followed by a straight line in the \( m-d \) last components, while keeping the first \( d \) components fixed.
    By construction, using the observations above, this path has Lipschitz constant less than \( \C\lvert x-y \rvert \).
    Moreover, since \( \zeta \) only depends on the \( 2 \)-norm of the \( d \) first components and on the \( q \)-norm of the \( m-d \) last components and is increasing with respect to both these parameters,
    we conclude that the constructed path has all the expected properties.
    \resetconstant
\end{proof}

We may now prove Proposition~\ref{prop:block_thickening_analytic}.

\begin{proof}[Proof of Proposition~\ref{prop:block_thickening_analytic}]
    The integer order estimates were obtained in~\cite{BPVS_density_higher_order}*{Corollary~4.2}.
    Since the proof in the fractional case relies, in part, on the calculations in the integer case, we reproduce here, for the convenience of the reader, the proof in~\cite{BPVS_density_higher_order}.
    When \( s \geq 1 \), we have \( d \geq sp \geq 1 \), and hence the dimension of \( T \) is less than \( m-d-1 \leq m-2 \).
 	Therefore, in order to prove that \( u \circ \upPhi \in W^{k,p}(\omega;\R^{\nu}) \), it suffices to prove that 
 	\[
 		\int_{\omega \setminus T} \lvert D^{j}(u \circ \upPhi) \rvert^{p}
 		< 
 		+\infty
 		\quad
 		\text{for every \( j \in \{0,\dots,k\} \).}
 	\]
    By the Faà di Bruno formula, we estimate for every \( j \in \{1,\dots,k\} \) and \( x \in \omega \setminus T \)
    \[	
        \lvert D^{j}(u \circ \upPhi)(x) \rvert^{p}
        \leq
        \C\sum_{i=1}^{j}\sum_{\substack{1\leq t_{1}\leq \cdots \leq t_{i} \\ t_{1} + \cdots + t_{i} = j}} \lvert D^{i}u(\upPhi(x)) \rvert^{p} \lvert D^{t_{1}}\upPhi(x) \rvert^{p} \cdots \lvert D^{t_{i}}\upPhi(x) \rvert^{p}.
    \]
    Let \( 0 < \beta < d \).
    Using the estimates on the derivatives and the Jacobian of \( \upPhi \), we find
    \[ 
    	\lvert D^{t_{l}}\upPhi \rvert \leq \C\frac{(\jac\upPhi)^{\frac{t_{l}}{\beta}}}{\eta^{t_{l}-1}},
    \] 
    and therefore 
    \begin{align*}
        \lvert D^{j}(u \circ \upPhi)(x) \rvert^{p}
        &\leq
        \C\sum_{i=1}^{j}\sum_{\substack{1\leq t_{1}\leq \cdots \leq t_{i} \\ t_{1} + \cdots + t_{i} = j}}
        \lvert D^{i}u(\upPhi(x)) \rvert^{p}\frac{(\jac\upPhi(x))^{\frac{t_{1}p}{\beta}}}{\eta^{(t_{1}-1)p}}\cdots\frac{(\jac\upPhi(x))^{\frac{t_{i}p}{\beta}}}{\eta^{(t_{i}-1)p}} \\
        &\leq \Cl{cst:fixed_block_thickening}\sum_{i=1}^{j}\lvert D^{i}u(\upPhi(x)) \rvert^{p}\frac{(\jac\upPhi(x))^{\frac{jp}{\beta}}}{\eta^{(j-i)p}}.
    \end{align*}
    Since \( jp \leq sp < d \), we may choose \( \beta = jp \).
    Hence,
    \[
        \lvert D^{j}(u \circ \upPhi)(x) \rvert^{p}
        \leq
        \Cr{cst:fixed_block_thickening}\sum_{i=1}^{j}\lvert D^{i}u(\upPhi(x)) \rvert^{p}\frac{\jac\upPhi(x)}{\eta^{(j-i)p}}.
    \]
    Since \( \upPhi \) is injective and \( \Supp\upPhi \subset Q_{3} \), 
    we have \( \upPhi(\omega\setminus T) \subset \omega \).
    Hence, the change of variable theorem ensures that
    \[
        \int_{\omega\setminus T} \eta^{jp}\lvert D^{j}(u \circ \upPhi)\rvert^{p}
        \leq
        \int_{\omega\setminus T}\Cr{cst:fixed_block_thickening}\sum_{i=1}^{j}\eta^{ip}\lvert D^{i}u(\upPhi(x)) \rvert^{p}\jac\upPhi(x)\,\d x 
        \leq
        \Cr{cst:fixed_block_thickening}\sum_{i=1}^{j}\int_{\omega} \eta^{ip}\lvert D^{i}u \rvert^{p}.
    \]
    
    The proof of the zero order estimate (valid in the full range \( 0 < s < +\infty \)) is straightforward using the same change of variable, noting that in particular, \( \jac \upPhi \geq \C > 0 \).
    In particular, we have \( u \circ \upPhi \in W^{k,p}(\omega;\R^{\nu}) \).

    We now turn to the proof of the fractional estimate in the case \( 0 < s < 1 \).
    
    \emph{Step~1: Mean value-type estimate.}
    We prove that, for every \( x \), \( y \in \omega \setminus T \),
    \begin{equation}
    \label{eq:mean_value_thickening}
	    \frac{\lvert \upPhi(x) - \upPhi(y) \rvert}{\lvert x-y \rvert}
	    \leq
	    \C\frac{\eta}{\zeta(y)}.
    \end{equation}
    It suffices to consider the case when \( \zeta(x) \leq \zeta(y) \).
    First assume that \( \zeta(y) \leq 2\zeta(x) \).
    In this case, we use the mean value theorem with the path \( \gamma \) provided by Lemma~\ref{lemma:path_mean_value} along with the estimate satisfied by \( D\upPhi \) to write
    \[
        \frac{\lvert \upPhi(x) - \upPhi(y) \rvert}{\lvert x-y \rvert}
        \leq
        \C\frac{\eta}{\zeta(x)}
        \leq
        \C\frac{\eta}{\zeta(y)}.
    \]
    We consider now the case where \( 2\zeta(x) \leq \zeta(y) \).
    We observe that we have \( \zeta(y) - \zeta(x) \leq \Cl{cst:lipschitz_zeta_eta}\lvert x-y \rvert \) -- this can be seen as a consequence of the triangle inequality for the Euclidean norm.
    Hence,
    \[
        \Cr{cst:lipschitz_zeta_eta}\lvert x-y \rvert \geq \zeta(y) - \zeta(x) \geq \frac{1}{2}\zeta(y).
    \]
    On the other hand, since \( \omega \subset B^{m}_{c\eta} \), we have \( \lvert \upPhi(x) - \upPhi(y) \rvert \leq \C\eta \).
    This concludes the proof of~\eqref{eq:mean_value_thickening}.

	\emph{Step~2: Averaging.}
    We write
    \[
        \iint\limits_{\substack{(\omega\setminus T) \times (\omega\setminus T) \\ \zeta(x) \leq \zeta(y)}}
            \frac{\lvert u\circ\upPhi(x) - u\circ\upPhi(y) \rvert^{p}}{\lvert x-y \rvert^{m+s p}} \,\d x\d y 
        \leq
        \C\int_{\omega\setminus T}\int_{\omega\setminus T}\fint_{\Bc_{x,y}}
            \frac{\lvert u\circ\upPhi(x) - u(z) \rvert^{p}}{\lvert x-y \rvert^{m+sp}} \,\d z\d x\d y,
    \]
    where we have defined 
    \[
            \Bc_{x,y} = B^{m}_{\lvert \upPhi(x)-\upPhi(y) \rvert}\biggl(\frac{\upPhi(x)+\upPhi(y)}{2}\biggr) \cap \omega.
    \]
    We observe that
    \[  
        B^{m}_{\frac{\lvert \upPhi(x)-\upPhi(y) \rvert}{2}}(\upPhi(x)) \cap \omega
        \subset 
        \Bc_{x,y}.
    \]
    Therefore, since \( \frac{\lvert \upPhi(x)-\upPhi(y) \rvert}{2} \leq \frac{1}{2}\diam\omega \), we find 
    \[
        \lvert \Bc_{x,y} \rvert \geq \C\lvert \upPhi(x)-\upPhi(y) \rvert^{m}.
    \]
    Here, we have used the volume assumption~\eqref{eq:geometric_assumptions_block_thickening}.
    Hence,
    \begin{align*}
        &\int_{\omega\setminus T}\int_{\omega\setminus T}\fint_{\Bc_{x,y}}
            \frac{\lvert u\circ\upPhi(x) - u(z) \rvert^{p}}{\lvert x-y \rvert^{m+sp}} \,\d z\d x\d y \\
        &\quad\leq
        \C\int_{\omega\setminus T}\int_{\omega\setminus T}\int_{\Bc_{x,y}}
            \frac{\lvert u\circ\upPhi(x) - u(z) \rvert^{p}}{\lvert \upPhi(x)-\upPhi(y) \rvert^{m}\lvert x-y \rvert^{m+sp}} \,\d z\d x\d y \\
        &\quad\leq
        \C\int_{\omega\setminus T}\int_{\omega\setminus T}\int_{\Bc_{x,y}}
            \frac{\lvert u\circ\upPhi(x) - u(z) \rvert^{p}}{\lvert \upPhi(x)-z \rvert^{m}\lvert x-y \rvert^{m+sp}} \,\d z\d x\d y,
    \end{align*}
    where we made use of the fact that \( \lvert \upPhi(x)-z \rvert \leq \frac{3}{2}\lvert \upPhi(x)-\upPhi(y) \rvert \) whenever \( z \in \Bc_{x,y} \).
    Invoking Tonelli's theorem, we find
    \begin{multline*}
        \int_{\omega\setminus T}\int_{\omega\setminus T}\int_{\Bc_{x,y}}
            \frac{\lvert u\circ\upPhi(x) - u(z) \rvert^{p}}{\lvert \upPhi(x)-z \rvert^{m}\lvert x-y \rvert^{m+sp}} \,\d z\d x\d y \\
        =
        \int_{\omega\setminus T}\int_{\omega\setminus T}\int_{\Yc_{x,z}}
            \frac{\lvert u\circ\upPhi(x) - u(z) \rvert^{p}}{\lvert \upPhi(x)-z \rvert^{m}\lvert x-y \rvert^{m+sp}} \,\d y\d z\d x,
    \end{multline*}
    where \( \Yc_{x,z} \) is the set of all those \( y \in \omega\setminus T \) such that \( z \in \Bc_{x,y} \), that is,
    \[
        \Yc_{x,z} = \{y \in \omega\setminus T \mathpunct{:} \lvert \upPhi(x) + \upPhi(y) - 2z \rvert \leq 2\lvert \upPhi(x)-\upPhi(y) \rvert\}.
    \]
    Since \( \lvert \upPhi(x) + \upPhi(y) - 2z \rvert \geq 2\lvert \upPhi(x)-z \rvert - \lvert \upPhi(x)-\upPhi(y) \rvert \), we find, using~\eqref{eq:mean_value_thickening},
    \[
        \Yc_{x,z}
        \subset
        \Bigl\{y \in \R^{m} \mathpunct{:} \lvert \upPhi(x)-z \rvert \leq \frac{3}{2}\lvert \upPhi(x)-\upPhi(y) \rvert\Bigr\}
        \subset
        \Bigl\{y \in \R^{m} \mathpunct{:} \lvert \upPhi(x)-z \rvert \leq \C\frac{\eta}{\zeta(x)}\lvert x-y \rvert\Bigr\}.
    \]
    Therefore,
    \[
        \int_{\Yc_{x,z}}\frac{1}{\lvert x-y \rvert^{m+sp}}\,\d y
        \leq
        \C\frac{\eta^{sp}}{\zeta(x)^{sp}\lvert \upPhi(x)-z \rvert^{sp}}.
    \]
    We conclude that
    \[
        \iint\limits_{\substack{(\omega\setminus T) \times (\omega\setminus T) \\ \zeta(x) \leq \zeta(y)}}
            \frac{\lvert u\circ\upPhi(x) - u\circ\upPhi(y) \rvert^{p}}{\lvert x-y \rvert^{m+sp}} \,\d x\d y 
        \leq
        \C\int_{\omega\setminus T}\int_{\omega}
            \frac{\lvert u\circ\upPhi(x) - u(z) \rvert^{p}}{\lvert \upPhi(x)-z \rvert^{m+sp}}\frac{\eta^{sp}}{\zeta(x)^{sp}} \,\d z\d x.
    \]
    
    \emph{Step~3: Change of variable.}
    Since \( 0 < sp < d \), we may apply estimate~\ref{item:block_thickening_geometric_jacobian} of Proposition~\ref{prop:block_thickening_geometric} with \( \beta = sp \).
    Taking into account the fact that \( \upPhi \) is injective and \( \upPhi(\omega \setminus T) \subset \omega \), we deduce from the change of variable theorem that 
    \begin{align*}
        \lvert u \circ \upPhi \rvert_{W^{s,p}(\omega)}^{p}
        &\leq 
        \Cl{cst:cst_fixed_block_thickening_sle1}\int_{\omega\setminus T}\int_{\omega}
            \frac{\lvert u\circ\upPhi(x) - u(z) \rvert^{p}}{\lvert \upPhi(x)-z \rvert^{m+sp}}\jac\upPhi(x) \,\d z\d x \\
        &\leq 
        \Cr{cst:cst_fixed_block_thickening_sle1}\int_{\omega}\int_{\omega}
            \frac{\lvert u(y) - u(z) \rvert^{p}}{\lvert y-z \rvert^{m+sp}} \,\d z\d y.
    \end{align*}
    This concludes the proof in the case \( 0 < s < 1 \).

    We finish with the fractional estimate in the case \( s \geq 1 \).
    
    \emph{Step~1: Estimate of \( \lvert D^{j}u(x)-D^{j}u(y) \rvert \).}
    Consider \( x \), \( y \in \omega \setminus T \) such that, without loss of generality, \( \zeta(x) \leq \zeta(y) \).
    As in the previous sections, using the Faà di Bruno formula, the multilinearity of the differential, and the estimates on the derivatives of \( \upPhi \), we write
    \begin{equation}
    \label{eq:Faa_di_bruno_block_thickening_sgeq1}
   	\begin{aligned}
        &\lvert D^{j}(u \circ \upPhi)(x) - D^{j}(u \circ \upPhi)(y) \rvert \\
        &\quad\leq
        \C\sum_{i=1}^{j}\Bigl(\lvert D^{i}u\circ\upPhi(x) - D^{i}u\circ\upPhi(y) \rvert \frac{\eta^{i}}{\zeta(y)^{j}} \\
        &\quad\quad+ \sum_{t=1}^{j} \lvert D^{i}u\circ\upPhi(x) \rvert \lvert D^{t}\upPhi(x) - D^{t}\upPhi(y) \rvert \frac{\eta^{i-1}}{\zeta(x)^{j-t}}\Bigr).
    \end{aligned}
    \end{equation}
    
    \emph{Step~2: Estimate of the second term in~\eqref{eq:Faa_di_bruno_block_thickening_sgeq1}.}
    We proceed as we did in the proofs of Propositions~\ref{prop:block_opening} and~\ref{prop:adaptive_smoothing}, relying on an optimization argument. 
    We split the integral over \( B^{m}_{r}(x) \) and \( \R^{m}\setminus B^{m}_{r}(x) \) and we insert \( r = \zeta(x) \) to arrive at
    \[
        \int\limits_{\substack{\omega\setminus T \\ \zeta(x) \leq \zeta(y)}} 
            \frac{\lvert D^{t}\upPhi(x) - D^{t}\upPhi(y) \rvert^{p}}{\lvert x-y \rvert^{m+\sigma p}}\,\d y
        \leq
        \C\frac{\eta^{p}}{\zeta(x)^{(t+\sigma)p}}.
    \]
    Hence,
    \begin{multline*}
        \iint\limits_{\substack{(\omega\setminus T) \times (\omega\setminus T) \\ \zeta(x) \leq \zeta(y)}} 
        \frac{\lvert D^{i}u\circ\upPhi(x) \rvert^{p} \lvert D^{t}\upPhi(x) - D^{t}\upPhi(y) \rvert^{p} }{\lvert x-y \rvert^{m+\sigma p}}\frac{\eta^{(i-1)p}}{\zeta(x)^{(j-t)p}}\,\d x\d y \\
        \leq
        \C\int_{\omega\setminus T}\lvert D^{i}u\circ\upPhi(x) \rvert^{p}\frac{\eta^{ip}}{\zeta(x)^{(j+\sigma)p}}\,\d x.
    \end{multline*}
    Now, by the estimate satisfied by \( \jac\upPhi \), we have, for \( 0 < \beta < d \), 
    \[
        \int_{\omega\setminus T}\lvert D^{i}u\circ\upPhi(x) \rvert^{p}\frac{\eta^{ip}}{\zeta(x)^{(j+\sigma)p}}\,\d x 
        \leq
        \C\eta^{ip-(j+\sigma)p}\int_{\omega\setminus T}\lvert D^{i}u\circ\upPhi(x) \rvert^{p}(\jac\upTheta(x))^{\frac{(j+\sigma)p}{\beta}}\,\d x.
    \]
    Since \( d > sp \geq (j+\sigma)p \), we may choose \( \beta = (j+\sigma)p \).
    We conclude by using the change of variable theorem that
    \[
        \int_{\omega \setminus T}\lvert D^{i}u\circ\upPhi(x) \rvert^{p}\frac{\eta^{ip}}{\zeta(x)^{(j+\sigma)p}}\,\d x \\
        \leq
        \C\eta^{(i-j-\sigma)p}\int_{\omega} \lvert D^{i}u \rvert^{p}.
    \]

    \emph{Step~3: Estimate of the first term in~\eqref{eq:Faa_di_bruno_block_thickening_sgeq1}: averaging.}
    We use the same methodology as for the case \( 0 < s < 1 \).
    Hence, we only write the main steps of the reasoning.
    We write 
    \begin{align*}
        &\iint\limits_{\substack{(\omega\setminus T)\times(\omega\setminus T) \\ \zeta(x) \leq \zeta(y)}} 
            \frac{\lvert D^{i}u\circ\upPhi(x) - D^{i}u\circ\upPhi(y) \rvert^{p}}{\lvert x-y \rvert^{m+\sigma p}}\frac{\eta^{ip}}{\zeta(y)^{jp}}\,\d x\d y \\
        &\quad\leq 
        \iint\limits_{\substack{(\omega\setminus T)\times(\omega\setminus T) \\ \zeta(x) \leq \zeta(y)}}\fint_{\Bc_{x,y}} 
            \frac{\lvert D^{i}u\circ\upPhi(x) - D^{i}u(z) \rvert^{p}}{\lvert x-y \rvert^{m+\sigma p}}\frac{\eta^{ip}}{\zeta(y)^{jp}}\,\d z\d x\d y \\
        &\quad\quad+
            \iint\limits_{\substack{(\omega\setminus T)\times(\omega\setminus T) \\ \zeta(x) \leq \zeta(y)}}\fint_{\Bc_{x,y}}
                \frac{\lvert D^{i}u(z) - D^{i}u\circ\upPhi(y) \rvert^{p}}{\lvert x-y \rvert^{m+\sigma p}}\frac{\eta^{ip}}{\zeta(y)^{jp}}\,\d z\d x\d y \\
        &\quad\leq 
        \int_{\omega \setminus T}\int_{\omega \setminus T}\fint_{\Bc_{x,y}}
            \frac{\lvert D^{i}u\circ\upPhi(x) - D^{i}u(z) \rvert^{p}}{\lvert x-y \rvert^{m+\sigma p}}\frac{\eta^{ip}}{\zeta(x)^{jp}}\,\d z\d x\d y.
    \end{align*}
    Observe that here it is important that we wrote estimate~\eqref{eq:Faa_di_bruno_block_thickening_sgeq1} with \( \frac{1}{\zeta(y)} \) on the first term in the right-hand side,
    so that we may further upper bound \( \frac{1}{\zeta(y)} \) by \( \frac{1}{\zeta(x)} \).
    We then pursue exactly as in the case \( 0 < s < 1 \).
    Using the volume assumption~\eqref{eq:geometric_assumptions_block_thickening}, we find
    \begin{multline*}
    	\int_{\omega \setminus T}\int_{\omega \setminus T}\fint_{\Bc_{x,y}}
    	\frac{\lvert D^{i}u\circ\upPhi(x) - D^{i}u(z) \rvert^{p}}{\lvert x-y \rvert^{m+\sigma p}}\frac{\eta^{ip}}{\zeta(x)^{jp}}\,\d z\d x\d y \\
	    \leq
	    \C\int_{\omega\setminus T}\int_{\omega\setminus T}\int_{\Bc_{x,y}}
	    \frac{\lvert D^{i}u\circ\upPhi(x) - D^{i}u(z) \rvert^{p}}{\lvert \upPhi(x)-z \rvert^{m}\lvert x-y \rvert^{m+\sigma p}}\frac{\eta^{ip}}{\zeta(x)^{jp}} \,\d z\d x\d y.
    \end{multline*}
    Relying on Tonelli's theorem, we deduce that 
    \begin{multline*}
	    \int_{\omega\setminus T}\int_{\omega\setminus T}\int_{\Bc_{x,y}}
	    \frac{\lvert D^{i}u\circ\upPhi(x) - D^{i}u(z) \rvert^{p}}{\lvert \upPhi(x)-z \rvert^{m}\lvert x-y \rvert^{m+\sigma p}}\frac{\eta^{ip}}{\zeta(x)^{jp}} \,\d z\d x\d y \\
	    =
	    \int_{\omega\setminus T}\int_{\omega\setminus T}\int_{\Yc_{x,z}}
	    \frac{\lvert D^{i}u\circ\upPhi(x) - D^{i}u(z) \rvert^{p}}{\lvert \upPhi(x)-z \rvert^{m}\lvert x-y \rvert^{m+\sigma p}}\frac{\eta^{ip}}{\zeta(x)^{jp}} \,\d y\d z\d x.
    \end{multline*}
    Using the inclusion
    \[
    	\Yc_{x,z} \subset \R^{m} \setminus B^{m}_{r}(x),
    \]
    where 
    \[
    r = r(x,z) = \frac{\C\lvert \upPhi(x)-z \rvert\zeta(x)}{\eta},
    \]
    we conclude that 
    \begin{multline*}
        \iint\limits_{\substack{(\omega\setminus T)\times(\omega\setminus T) \\ \zeta(x) \leq \zeta(y)}} 
            \frac{\lvert D^{i}u\circ\upPhi(x) - D^{i}u\circ\upPhi(y) \rvert^{p}}{\lvert x-y \rvert^{m+\sigma p}}\frac{\eta^{ip}}{\zeta(y)^{jp}}\,\d x\d y \\
        \leq 
        \C\int_{\omega \setminus T}\int_{\omega}\frac{\lvert D^{i}u \circ \upPhi(x)-D^{i}u(z) \rvert^{p}}{\lvert \upPhi(x)-z \rvert^{m+\sigma p}}\frac{\eta^{\sigma p}}{\zeta(x)^{\sigma p}}\frac{\eta^{ip}}{\zeta(x)^{jp}}\,\d z\d x.
    \end{multline*}
    
    \emph{Step~3: Estimate of the first term in~\eqref{eq:Faa_di_bruno_block_thickening_sgeq1}: change of variable.}
    As previously, we use estimate~\ref{item:block_thickening_geometric_jacobian} of Proposition~\ref{prop:block_thickening_geometric} with \( \beta = sp \) and the change of variable theorem to conclude that 
    \begin{multline*}
        \iint\limits_{\substack{(\omega\setminus T)\times(\omega\setminus T) \\ \zeta(x) \leq \zeta(y)}}
            \frac{\lvert D^{i}u\circ\upPhi(x) - D^{i}u\circ\upPhi(y) \rvert^{p}}{\lvert x-y \rvert^{m+\sigma p}}\frac{\eta^{ip}}{\zeta(y)^{jp}}\,\d x\d y \\
        \leq 
        \C\eta^{(i-j)p}\int_{\omega}\int_{\omega}\frac{\lvert D^{i}u(y)-D^{i}u(z) \rvert^{p}}{\lvert x-y \rvert^{m+\sigma p}}\,\d z\d y.
    \end{multline*}
    Gathering the estimates for both terms in~\eqref{eq:Faa_di_bruno_block_thickening_sgeq1} we obtain the desired conclusion, hence finishing the proof of the proposition.
    \resetconstant
\end{proof}

Now that we have constructed the building block for the thickening procedure, we are ready to proceed with the proof of Proposition~\ref{prop:main_thickening}.
We start by presenting an informal explanation of the construction to clarify the method.

We first apply thickening around the vertices of the dual skeleton \( \Tc^{\ell^{\ast}} \), which are actually the centers of the cubes in \( \Uc^{m} \), with parameters \( 0 < \rho_{m} < \tau_{m-1} < \rho_{m-1} \).
This maps the complement of the center of each cube on a neighborhood of the faces of the cube.
Then we apply thickening around the edges of the dual skeleton, which are segments of lines passing through the center of the \( (m-1) \)-faces of \( \Uc^{m} \), with parameters \( \rho_{m-1} < \tau_{m-2} < \rho_{m-2} \).
This maps the part of the complement of the edges of \( \Tc^{\ell^{\ast}} \) lying at distance at most \( \rho_{m-1} \) of the \( (m-1) \)-faces of \( \Uc^{m} \) on a neighborhood of the \( (m-2) \)-faces of \( \Uc^{m} \).
But since at the previous step the complement of the centers of the cubes was already mapped in a neighborhood of the faces of width \( \rho_{m-1} \), 
we deduce that the whole complement of the \( 1 \)-skeleton of \( \Tc^{\ell^{\ast}} \) is mapped on a neighborhood of \( \Uc^{m-2} \).
We pursue this procedure by induction until we reach dimension \( \ell^{\ast} \) with respect to the dual skeleton -- which corresponds to dimension \( \ell \) with respect to \( \Uc^{m} \) --
and this produces the required map \( \upPhi \).

Figures~\ref{fig:thickening_around_vertices},~\ref{fig:thickening_around_edges}, and~\ref{fig:final_thickening} provide an illustration of this procedure on one cube when \( m = 2 \) and \( \ell = 0 \).
This allows us to see the combination of two steps of the induction procedure.
Figure~\ref{fig:thickening_around_vertices} shows thickening around the vertices of the dual skeleton, which correspond to the centers of the cubes of \( \Uc^{m} \).
The values of \( u \) in the blue region on the left part of the figure are propagated into the blue region on the right part of the figure.
This creates a point singularity in the center of each cube, depicted in red.
Figure~\ref{fig:thickening_around_edges} illustrates thickening around the edges of the dual skeleton.
The values of \( u \) in the dark blue region on the left part of the figure are propagated into the dark blue region on the right part of the figure, which creates line singularities in red.
The map \( u \) is left unchanged on the white region, the part in light blue serving as a transition.
The boundaries of the regions in Figure~\ref{fig:thickening_around_vertices} are shown in light colors, to illustrate how all the different regions involved in the construction combine together.
The combination of both steps inside the square is shown in Figure~\ref{fig:final_thickening}.
The values in the blue regions on the corners are propagated inside of the whole square, which creates line singularities in red, forming a cross.

\begin{figure}[ht]
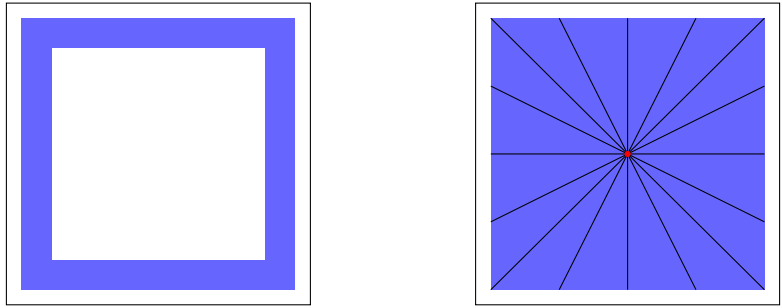

	~
	\hfill
	\includegraphics[page=4]{figures_strong_density.pdf}
	\hfill
	\includegraphics[page=5]{figures_strong_density.pdf}
	\hfill
	~
	\caption{Thickening around vertices}
	\label{fig:thickening_around_vertices}
\end{figure}

\begin{figure}[ht]
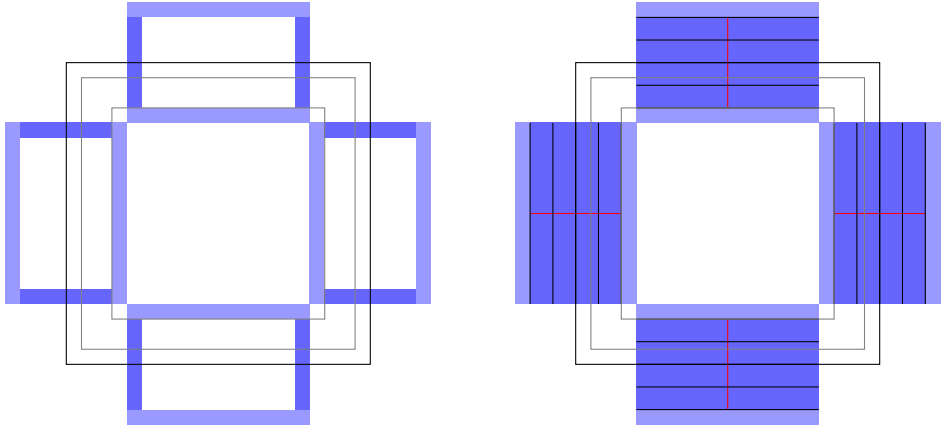

	~
	\hfill
	\includegraphics[page=6]{figures_strong_density.pdf}
	\hfill
	\includegraphics[page=7]{figures_strong_density.pdf}
	\hfill
	~
	\caption{Thickening around edges}
	\label{fig:thickening_around_edges}
\end{figure}

\begin{figure}[ht]
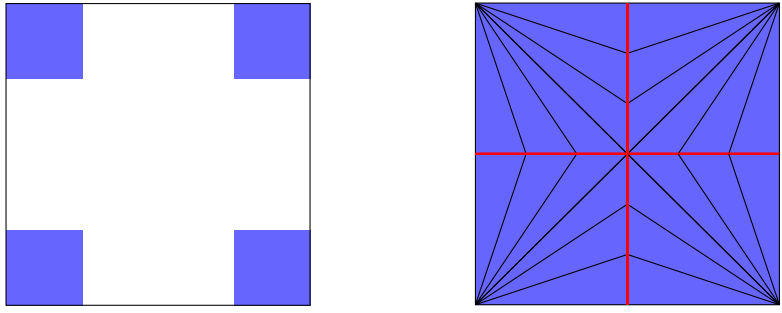

	~
	\hfill
	\includegraphics[page=8]{figures_strong_density.pdf}
	\hfill
	\includegraphics[page=9]{figures_strong_density.pdf}
	\hfill
	~
	\caption{Final thickening at order \( 1 \)}
	\label{fig:final_thickening}
\end{figure}

\begin{proof}[Proof of Proposition~\ref{prop:main_thickening}]
    The map \( \upPhi \) is constructed as follows.
    We first take finite sequences \( (\rho_{i})_{\ell \leq i \leq m} \) and \( (\tau_{i})_{\ell \leq i \leq m} \) such that
    \[
        0 < \rho_{m} < \tau_{m-1} < \rho_{m-1} < \cdots < \rho_{\ell+1} < \tau_{\ell} < \rho_{\ell} = \rho.
    \]
    The map \( \upPhi \) is defined by downward induction.
    For \( d = m \), we let \( \upPhi^{d} = \id \).
    Then, if \( d \in \{\ell+1,\dots,m\} \), given \( \sigma^{d} \in \Uc^{d} \), we identify \( \sigma^{d} \) with \( Q^{d}_{\eta} \times \{0\}^{m-d} \) and \( T^{(d-1)^{\ast}} \cap (\sigma^{d}+Q^{m}_{\tau_{d-1}\eta}) \)
    with \( \{0\}^{d} \times Q^{m-d}_{\tau_{d-1}\eta} \)\,, and we let \( \upPhi_{\sigma^{d}} \) be the map given by~Proposition~\ref{prop:block_thickening_geometric} applied around \( \sigma^{d} \)
    with parameters \( \ulrho = \rho_{d} \), \( \rho = \tau_{d-1} \), and \( \olrho = \rho_{d-1} \).
    We let \( \upPsi^{d} \colon \R^{m} \setminus T^{(d-1)^{\ast}} \to \R^{m} \) be defined by
    \[
        \upPsi^{d}(x) = 
        \begin{cases}
            \upPhi_{\sigma^{d}}(x) & \text{if \( x \in T_{\sigma^{d}}(Q_{3}) \) for some \( \sigma^{d} \in \Uc^{d} \),} \\
            x & \text{otherwise,}
        \end{cases}
    \]
    where \( T_{\sigma^{d}} \) is an isometry mapping \( Q^{d}_{\eta} \times \{0\}^{m-d} \) on \( \sigma^{d} \).
    Finally, we define \( \upPhi^{d-1} = \upPsi^{d} \circ \upPhi^{d} \).
    The desired map is given by \( \upPhi = \upPhi^{\ell} \).
    
    As we mentioned, properties~\ref{item:main_thickining_injective} to~\ref{item:main_thickening_singularity} are already contained in~\cite{BPVS_density_higher_order}*{Proposition~4.1}, so that we only need to prove estimates~\ref{item:main_thickening_estimate_sle1} to~\ref{item:main_thickening_estimate_all}.
    We first prove estimates with \( \upPhi \) replaced by \( \upPsi^{d} \) for every \( d \in \{\ell+1,\dots,m\} \).
    We let \( \omega = Q^{d}_{(1-\rho_{m})\eta} \times Q^{m-d}_{\rho\eta} \).
    We note that \( \omega \) satisfies the assumptions of Proposition~\ref{prop:block_thickening_analytic}.
    In particular, we have \( Q_{3} \subset \omega \subset U^{m}+Q^{m}_{\rho\eta} \) for every \( d \in \{\ell+1,\dots,m\} \).
    We apply Proposition~\ref{prop:block_thickening_analytic} to find that, for every \( \sigma^{d} \in \Uc^{d} \), the following estimates hold:
    \begin{enumerate}[label=(\alph*)]
        \item if \( 0 < s < 1 \), then
	        \[
		        \lvert u \circ \upPhi_{\sigma^{d}} \rvert_{W^{s,p}(T_{\sigma^{d}}(\omega))}
		        \leq 
		        \C\lvert u \rvert_{W^{s,p}(T_{\sigma^{d}}(\omega))};
	        \]
        \item if \( s \geq 1 \), then for every \( j \in \{1,\dots,k\} \),
            \[
                \eta^{j}\lVert D^{j}(u \circ \upPhi_{\sigma^{d}}) \rVert_{L^{p}(T_{\sigma^{d}}(\omega))}
                \leq
                \C\sum_{i=1}^{j}\eta^{i}\lVert D^{i}u \rVert_{L^{p}(T_{\sigma^{d}}(\omega))};
            \]
        \item if \( s \geq 1 \) and \( \sigma \neq 0 \), then for every \( j \in \{1,\dots,k\} \),
            \[
                \eta^{j+\sigma}\lvert D^{j}(u \circ \upPhi_{\sigma^{d}}) \rvert_{W^{\sigma,p}(T_{\sigma^{d}}(\omega))}
                \leq
                \C\sum_{i=1}^{j}\Bigl(\eta^{i}\lVert D^{i}u \rVert_{L^{p}(T_{\sigma^{d}}(\omega))} \\
                    + \eta^{i+\sigma}\lvert D^{i}u \rvert_{W^{\sigma,p}(T_{\sigma^{d}}(\omega))}\Bigr);
            \]
        \item for every \( 0 < s < +\infty \),
            \[
                \lVert u \circ \upPhi_{\sigma^{d}} \rVert_{L^{p}(T_{\sigma^{d}}(\omega))}
                \leq 
                \C\lVert u \rVert_{L^{p}(T_{\sigma^{d}}(\omega))}.
            \]
    \end{enumerate}
    
    Using the additivity of the integral for integer order estimates and Lemma~\ref{lemma:fractional_additivity} for fractional order estimates, we find that
    \begin{enumerate}[label=(\alph*)]
        \item if \( 0 < s < 1 \), then
	        \begin{multline*}
		        \lvert u \circ \upPsi^{d} \rvert_{W^{s,p}(U^{m}+Q^{m}_{\rho\eta})}^{p}
		        \leq 
		        \C\sum_{\sigma^{d} \in \Uc^{d}} \lvert u \circ \upPhi_{\sigma^{d}} \rvert_{W^{s,p}(T_{\sigma^{d}}(\omega))}^{p} \\
		        + \C\lvert u \circ \upPsi^{d} \rvert_{W^{s,p}((U^{m}+Q^{m}_{\rho\eta}) \setminus \Supp\upPsi^{d})}^{p} 
		        + \C\eta^{-sp}\lVert u \circ \upPsi^{d} \rVert_{L^{p}(U^{m}+Q^{m}_{\rho\eta})}^{p};
	        \end{multline*}
        \item if \( s \geq 1 \), then for every \( j \in \{1,\dots,k\} \),
            \begin{multline*}
                \lVert D^{j}(u \circ \upPsi^{d}) \rVert_{L^{p}(U^{m}+Q^{m}_{\rho\eta})}^{p}
                \leq 
                \C\sum_{\sigma^{d} \in \Uc^{d}} \lVert D^{j}(u \circ \upPhi_{\sigma^{d}}) \rVert_{L^{p}(T_{\sigma^{d}}(\omega))}^{p} \\
                    + \C\lVert D^{j}(u \circ \upPsi^{d}) \rVert_{L^{p}((U^{m}+Q^{m}_{\rho\eta}) \setminus \Supp\upPsi^{d})}^{p};
            \end{multline*}
        \item if \( s \geq 1 \) and \( \sigma \neq 0 \), then for every \( j \in \{1,\dots,k\} \),
            \begin{multline*}
                \lvert D^{j}(u \circ \upPsi^{d}) \rvert_{W^{\sigma,p}(U^{m}+Q^{m}_{\rho\eta})}^{p}
                \leq 
                \C\sum_{\sigma^{d} \in \Uc^{d}} \lvert D^{j}(u \circ \upPhi_{\sigma^{d}}) \rvert_{W^{\sigma,p}(T_{\sigma^{d}}(\omega))}^{p} \\
                    + \C\lvert D^{j}(u \circ \upPsi^{d}) \rvert_{W^{\sigma,p}((U^{m}+Q^{m}_{\rho\eta}) \setminus \Supp\upPsi^{d})}^{p} 
                    + \C\eta^{-\sigma p}\lVert D^{j}(u \circ \upPsi^{d}) \rVert_{L^{p}(U^{m}+Q^{m}_{\rho\eta})}^{p};
            \end{multline*}
        \item for every \( 0 < s < +\infty \),
            \[
                \lVert u \circ \upPsi^{d} \rVert_{L^{p}(U^{m}+Q^{m}_{\rho\eta})}^{p}
                \leq 
                \C\sum_{\sigma^{d} \in \Uc^{d}} \lVert u \circ \upPhi_{\sigma^{d}} \rVert_{L^{p}(T_{\sigma^{d}}(\omega))}^{p} 
                    + \C\lVert u \circ \upPsi^{d} \rVert_{L^{p}((U^{m}+Q^{m}_{\rho\eta}) \setminus \Supp\upPsi^{d})}^{p}.
            \]
    \end{enumerate}
    Combining both sets of estimates, by downward induction, we deduce that 
    \begin{enumerate}[label=(\alph*)]
        \item if \( 0 < s < 1 \), then
	        \[
		        \eta^{s}\lvert u \circ \upPhi \rvert_{W^{s,p}(U^{m}+Q^{m}_{\rho\eta})}
		        \leq 
		        \C\Bigl(\eta^{s}\lvert u \rvert_{W^{s,p}(U^{m}+Q^{m}_{\rho\eta})} + \lVert u \rVert_{L^{p}(U^{m}+Q^{m}_{\rho\eta})}\Bigr);
	        \]
        \item if \( s \geq 1 \), then for every \( j \in \{1,\dots,k\} \),
            \[
                \eta^{j}\lVert D^{j}(u \circ \upPhi) \rVert_{L^{p}(U^{m}+Q^{m}_{\rho\eta})}
                \leq
                \C\sum_{i=1}^{j}\eta^{i}\lVert D^{i}u \rVert_{L^{p}(U^{m}+Q^{m}_{\rho\eta})};
            \]
        \item if \( s \geq 1 \) and \( \sigma \neq 0 \), then for every \( j \in \{1,\dots,k\} \),
            \[
                \eta^{j+\sigma}\lvert D^{j}(u \circ \upPhi) \rvert_{W^{\sigma,p}(U^{m}+Q^{m}_{\rho\eta})}
                \leq
                \C\sum_{i=1}^{j}\Bigl(\eta^{i}\lVert D^{i}u \rVert_{L^{p}(U^{m}+Q^{m}_{\rho\eta})} + \eta^{i+\sigma}\lvert D^{i}u \rvert_{W^{\sigma,p}(U^{m}+Q^{m}_{\rho\eta})}\Bigr);
            \]
        \item for every \( 0 < s < +\infty \),
            \[
                \lVert u \circ \upPhi \rVert_{L^{p}(U^{m}+Q^{m}_{\rho\eta})}
                \leq 
                \C\lVert u \rVert_{L^{p}(U^{m}+Q^{m}_{\rho\eta})}.
            \]
    \end{enumerate}
    Conclusion follows by an additional application of the additivity of the integral or Lemma~\ref{lemma:fractional_additivity}, by noting that actually \( \Supp\upPhi \subset U^{m} + Q^{m}_{\tau_{\ell}\eta} \).
    \resetconstant
\end{proof}

We close this section with a discussion about how the thickening technique that we investigated inserts itself in the proof of Theorem~\ref{thm:density_class_R}.
At the end of Section~\ref{sect:adaptive_smoothing}, we obtained an estimate on 
\( \Dist_{F}{(u^{\sm}_{\eta}((K^{m}\setminus U^{m}_{\eta})\cup (U^{\ell}_{\eta}+Q^{m}_{\ulrho\eta}))} \), where we recall that \( u^{\sm}_{\eta} \) is the map obtained by successively opening and smoothing a map \( u \in W^{s,p}(\Omega;F) \), with \( F \subset \R^{\nu} \) being an arbitrary closed set.
Informally, we were able to control the distance between \( u^{\sm}_{\eta} \) and \( F \) except on the cubes in \( \Uc^{m}_{\eta} \),
far from the \( \ell \)-skeleton.
We apply now thickening to the map \( u^{\sm}_{\eta} \).
Let \( \upPhi^{\thck}_{\eta} \) be the map provided by Proposition~\ref{prop:main_thickening} applied to \( \Uc^{m}_{\eta} \) with \( \Kc^{m} = \Kc^{m}_{\eta} \)
and using parameter \( \ulrho \).
We set \( u^{\thck}_{\eta} = u^{\sm}_{\eta} \circ \upPhi^{\thck}_{\eta} \).
To have \( u \in W^{s,p} \) along with the estimates provided by Proposition~\ref{prop:main_thickening}, we need to take \( \ell + 1 > sp \).
Since we already required \( \ell \leq sp \) in Section~\ref{sect:adaptive_smoothing}, this invites us to work with \( \ell = [sp] \).

By inclusion~\ref{item:main_thickening_geometry} in Proposition~\ref{prop:main_thickening}, we have \( \upPhi^{\thck}_{\eta}(K^{m}_{\eta} \setminus (T^{\ell^{\ast}}_{\eta}\cup U^{m}_{\eta})) \subset K^{m}_{\eta} \setminus U^{m}_{\eta} \).
On the other hand, by inclusion~\ref{item:main_thickening_support} in Proposition~\ref{prop:main_thickening}, we have \( \upPhi^{\thck}_{\eta}(U^{m}_{\eta}\setminus T^{\ell^{\ast}}_{\eta}) \subset U^{\ell}_{\eta}+Q^{m}_{\ulrho\eta} \).
Therefore,
\[
    \upPhi^{\thck}_{\eta}(K^{m}_{\eta}\setminus T^{\ell^{\ast}}_{\eta})
    \subset 
    (K^{m}_{\eta}\setminus U^{m}_{\eta}) \cup (U^{\ell}_{\eta}+Q^{m}_{\ulrho\eta}).
\]

Combining this observation with estimate~\eqref{eq:main_estimate_dist_sm_F_sgeq1}, respectively~\eqref{eq:main_estimate_dist_sm_F_sle1}, we deduce that 
\begin{multline}
    \label{eq:main_estimate_dist_thck_F_sgeq1}
        \Dist_{F}{(u^{\thck}_{\eta}(K^{m}_{\eta}\setminus T^{\ell^{\ast}}_{\eta}))}
        \leq 
        \max\biggl\{\max_{\sigma^{m} \in \Kc^{m}_{\eta} \setminus \Ec^{m}_{\eta}} C\frac{1}{\eta^{\frac{m}{sp}-1}}\lVert Du \rVert_{L^{sp}(\sigma^{m}+Q^{m}_{2\rho\eta})}, \\
            \sup_{x \in U^{\ell}_{\eta}+Q^{m}_{\ulrho\eta}} C'\fint_{Q^{m}_{r}(x)}\fint_{Q^{m}_{r}(x)} \lvert u^{\op}_{\eta}(y)-u^{\op}_{\eta}(z)\rvert\,\d y\d z\biggr\}
    \end{multline}
if \( s \geq 1 \), respectively 
\begin{multline}
\label{eq:main_estimate_dist_thck_F_sle1}
    \Dist_{F}{(u^{\thck}_{\eta}(K^{m}_{\eta}\setminus T^{\ell^{\ast}}_{\eta}))}
    \leq 
    \max\biggl\{\max_{\sigma^{m} \in \Kc^{m}_{\eta} \setminus \Ec^{m}_{\eta}} C\frac{1}{\eta^{\frac{m}{p}-s}}\lvert u \rvert_{W^{s,p}(\sigma^{m}+Q^{m}_{2\rho\eta})}, \\
        \sup_{x \in U^{\ell}_{\eta}+Q^{m}_{\ulrho\eta}} C'\fint_{Q^{m}_{r}(x)}\fint_{Q^{m}_{r}(x)} \lvert u^{\op}_{\eta}(y)-u^{\op}_{\eta}(z)\rvert\,\d y\d z\biggr\}
\end{multline}
if \( 0 < s < 1 \).
Moreover, \( u^{\sm}_{\eta} \) being smooth, the map \( u^{\thck}_{\eta} \) is smooth on \( K^{m}_{\eta} \setminus T^{\ell^{\ast}}_{\eta} \).
To summarize, we have obtained a map \( u^{\thck}_{\eta} \) which is smooth on \( K^{m}_{\eta} \setminus T^{\ell^{\ast}}_{\eta} \), 
and whose distance from \( F \) is controlled on the whole \( K^{m}_{\eta} \setminus T^{\ell^{\ast}}_{\eta} \).

Now let us get back to the case we are interested in, that is, where \( F = \Nc \).
In this case, it is well-known that there exists \( \iota > 0 \) such that the nearest point projection \( \upPi \colon \Nc + B^{m}_{\iota} \to \Nc \) is well-defined and smooth.
The open set \( \Nc + B^{m}_{\iota} \) is called a \emph{tubular neighborhood of \( \Nc \)}.
Assume that the right-hand side of~\eqref{eq:main_estimate_dist_thck_F_sgeq1} or~\eqref{eq:main_estimate_dist_thck_F_sle1} is less than \( \iota \).
We note that this requires both to take \( r \) sufficiently small and to choose \( \Ec^{m}_{\eta} \) such that, for every \( \sigma^{m} \in \Kc^{m}_{\eta} \setminus \Ec^{m}_{\eta} \),
\begin{equation}
\label{eq:estimate_good_cubes_thickening}
    \lVert Du \rVert_{L^{sp}(\sigma^{m}+Q^{m}_{2\rho\eta})}
    \leq 
    \frac{\eta^{\frac{m}{sp}-1}}{C}\iota, 
    \quad 
    \text{respectively}
    \quad 
    \lvert u \rvert_{W^{s,p}(\sigma^{m}+Q^{m}_{2\rho\eta})}
    \leq 
    \frac{\eta^{\frac{m}{p}-s}}{C}\iota.
\end{equation}
Under this assumption, the map \( u_{\eta} = \upPi \circ u^{\thck}_{\eta} \) is well-defined and smooth on \( K^{m}_{\eta} \setminus T^{\ell^{\ast}}_{\eta} \),
and takes its values into \( \Nc \).

We next prove that the map \( u_{\eta} \) actually belongs to the class \( \Rc_{m-[sp]-1}(K^{m}_{\eta};\Nc) \).
This follows from property~\ref{item:main_thickening_singularity} in Proposition~\ref{prop:main_thickening}.
Indeed, since \( m-[sp]-1 = \ell^{\ast} \), the singular set of \( u^{\thck}_{\eta} \), and hence of \( u_{\eta}  \), is as in the definition of \( \Rc_{m-[sp]-1}(K^{m}_{\eta};\Nc) \).
Therefore, it only remains to prove the estimates on the derivatives of \( u_{\eta} \).
Since \( u^{\sm}_{\eta} \) and \( \upPi \) are smooth, we deduce from the Faà di Bruno formula that
\begin{align*}
	\lvert D^{j}u_{\eta} \rvert
	&\leq
	\C\sum_{i=1}^{j}\sum_{\substack{1 \leq t_{1} \leq \cdots \leq t_{i} \\ t_{1} + \cdots + t_{i} = j}}
	\lvert D^{i}(\upPi \circ u^{\sm}_{\eta}) \rvert \lvert D^{t_{1}}\upPhi^{\thck}_{\eta} \rvert \cdots \lvert D^{t_{i}}\upPhi^{\thck}_{\eta} \rvert \\
	&\leq
	\C\sum_{i=1}^{j}\sum_{\substack{1 \leq t_{1} \leq \cdots \leq t_{i} \\ t_{1} + \cdots + t_{i} = j}}
	\lvert D^{t_{1}}\upPhi^{\thck}_{\eta} \rvert \cdots \lvert D^{t_{i}}\upPhi^{\thck}_{\eta} \rvert.
\end{align*}
By property~\ref{item:main_thickening_singularity} in Proposition~\ref{prop:main_thickening}, we conclude that, for \( x \in (U^{m}_{\eta}+Q^{m}_{\ulrho\eta}) \setminus T^{\ell^{\ast}}_{\eta} \),
\begin{equation}
\label{eq:decay_derivative_thickened_map}
	\lvert D^{j}u_{\eta}(x) \rvert
	\leq
	\C\sum_{i=1}^{j}\sum_{\substack{1 \leq t_{1} \leq \cdots \leq t_{i} \\ t_{1} + \cdots + t_{i} = j}}
	\frac{\eta}{\dist(x,T^{\ell^{\ast}})^{t_{1}}} \cdots \frac{\eta}{\dist(x,T^{\ell^{\ast}})^{t_{i}}}
	\leq
	\C\frac{\eta^{i}}{\dist(x,T^{\ell^{\ast}})^{j}}.
\end{equation}

Combining~\eqref{eq:decay_derivative_thickened_map} with the fact that, clearly, \( u_{\eta} \) is smooth outside \( U^{m}_{\eta}+Q^{m}_{\ulrho\eta} \), we find that \( u_{\eta} \) belongs indeed to \( \Rc_{m-[sp]-1}(K^{m}_{\eta};\Nc) \).

\resetconstant

With all these observations and tools at our disposal, we are finally ready to proceed with the proof of Theorem~\ref{thm:density_class_R}.
It only remains to explain carefully how to implement the aforementioned steps and to check that the estimates obtained at each step combine to yield \( u_{\eta} \to u \) in \( W^{s,p} \) as \( \eta \to 0 \).

\section{\texorpdfstring{Density of the class \( \Rc \)}{Density of the class R}}
\label{sect:density_class_R}

This section is devoted to the proof of the density of the class \( \Rc_{m-[sp]-1}(\Omega;\Nc) \) in \( W^{s,p}(\Omega;\Nc) \).
For the sake of clarity, we start by proving the result when the domain \( \Omega \) is a cube, which is the case covered by Theorem~\ref{thm:density_class_R} stated in the introduction.
In a second step, we explain how to deal with more general domains.

As we explained, the major part of the work that remains to be done is to suitably estimate the \( W^{s,p} \) distance between the maps \( u_{\eta} \) and \( u \).
As the reader may have noticed, the Sobolev estimates obtained in Sections~\ref{sect:opening} to~\ref{sect:thickening} deteriorate as \( \eta \to 0 \).
For instance, the term involving the \( L^{p} \) norm of \( D^{i}u \) in the estimate of the \( j \)-order derivative blows up at rate \( \eta^{i-j} \).
As we shall see in the proof, this blow-up is compensated by the fact that the measure of the set \( U^{m}_{\eta} + Q^{m}_{2\rho\eta} \) decays sufficiently fast as \( \eta \to 0 \).
For the integer order terms, this is exploited by a combination of the Hölder and Gagliardo--Nirenberg inequalities.
The treatment of fractional order terms is more involved, and we bring ourselves back to the integer order setting with the help of the following lemma.

\begin{lemme}
\label{lemma:interpolation_estimate}
    Let \( \Omega \subset \R^{m} \) be a convex set and let \( \omega \subset \Omega \).
    For every \( q \), \( r \geq p \) and every \( u \in W^{1,1}_{\mathrm{loc}}(\Omega) \),
    \[
        \lvert u \rvert_{W^{\sigma,p}(\omega)}
        \leq 
        C\lvert \omega \rvert^{\frac{1}{p}-\frac{\sigma}{r}-\frac{1-\sigma}{q}}\lVert Du \rVert_{L^{r}(\Omega)}^{\sigma} \lVert u \rVert_{L^{q}(\omega)}^{1-\sigma}
    \]
    for some constant \( C > 0 \) depending only on \( m \).
\end{lemme}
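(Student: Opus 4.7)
The plan is to establish the bound by a two-scale decomposition of the Gagliardo seminorm at a threshold \( R > 0 \), controlling the small scales via the gradient norm and the large scales via \( \lVert u \rVert_{L^q} \), and then optimizing in \( R \). Write
\[
    \lvert u \rvert_{W^{\sigma,p}(\omega)}^p = I_1 + I_2,
\]
where \( I_1 \) and \( I_2 \) are respectively the integrals of \( \lvert u(x)-u(y) \rvert^p / \lvert x-y \rvert^{m+\sigma p} \) over \( \{(x,y) \in \omega \times \omega : \lvert x-y \rvert \leq R\} \) and over the complementary region \( \{(x,y) \in \omega \times \omega : \lvert x-y \rvert > R\} \).

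For the far-diagonal part \( I_2 \), I would apply the crude bound \( \lvert u(x)-u(y) \rvert^p \leq 2^{p-1}(\lvert u(x) \rvert^p + \lvert u(y) \rvert^p) \), compute the tail integral \( \int_{\R^m \setminus B^m_R(x)} \lvert x-y \rvert^{-m-\sigma p}\,\d y \leq C R^{-\sigma p} \), and then use Hölder's inequality (allowed since \( q \geq p \)) to upgrade \( L^p(\omega) \) to \( L^q(\omega) \). This gives
\[
    I_2 \leq C R^{-\sigma p} \lvert \omega \rvert^{1-p/q} \lVert u \rVert_{L^q(\omega)}^p.
\]

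For the near-diagonal part \( I_1 \), I would exploit the convexity of \( \Omega \): for \( x, y \in \omega \), the segment from \( x \) to \( y \) lies in \( \Omega \), so \( u(y)-u(x) = \int_0^1 Du(x+t(y-x)) \cdot (y-x)\,\d t \). Applying Minkowski's integral inequality to pull the \( t \)-integration outside the \( L^p \) norm and then, for each fixed \( t \in (0,1) \), performing the change of variable \( z = x+t(y-x) \) (whose Jacobian is \( t^{m} \) and whose image stays in \( \Omega \) by convexity), one reduces the inner integral, after swapping the \( x \) and \( z \) integrations, to a convolution of the kernel \( K(h) = \lvert h \rvert^{-(m-(1-\sigma)p)}\chi_{B^m_{tR}}(h) \) with \( \chi_\omega \). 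Young's inequality yields \( \lVert K \ast \chi_\omega \rVert_{L^{r/(r-p)}(\R^m)} \leq C(tR)^{(1-\sigma)p}\lvert \omega \rvert^{1-p/r} \), and Hölder's inequality with exponent \( r/p \geq 1 \) then brings out the factor \( \lVert Du \rVert_{L^r(\Omega)}^p \). The powers of \( t \) from the Jacobian cancel precisely those generated by Young's inequality, so the \( t \)-integration only contributes a constant, producing
\[
    I_1 \leq C R^{(1-\sigma)p}\lvert \omega \rvert^{1-p/r}\lVert Du \rVert_{L^r(\Omega)}^p.
\]

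Finally, I would optimize in \( R \) by taking \( R = \lvert \omega \rvert^{1/r-1/q}\lVert u \rVert_{L^q(\omega)}/\lVert Du \rVert_{L^r(\Omega)} \) (the degenerate cases \( Du \equiv 0 \) or \( u \equiv 0 \) being trivial), which balances \( I_1 \) and \( I_2 \). A short algebraic computation shows that the exponents on \( \lvert \omega \rvert \), \( \lVert u \rVert_{L^q(\omega)} \), and \( \lVert Du \rVert_{L^r(\Omega)} \) in the resulting sum combine, after taking the \( p \)-th root, into exactly \( \frac{1}{p}-\frac{\sigma}{r}-\frac{1-\sigma}{q} \), \( 1-\sigma \), and \( \sigma \). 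The main obstacle will be the bookkeeping in the estimate of \( I_1 \): a naive application of the change of variable produces a divergent factor \( \int_0^1 t^{-(1-\sigma)p}\,\d t \), and it is essential to swap the \( x \) and \( z \) integrations \emph{before} bounding so that Young's inequality generates the compensating factor \( (tR)^{(1-\sigma)p} \) that restores integrability in \( t \).
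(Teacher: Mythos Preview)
Your proof is correct and follows essentially the same strategy as the paper's: split the Gagliardo integral at a scale, bound the far part by \( \lVert u \rVert_{L^q} \) via H\"older, bound the near part by \( \lVert Du \rVert_{L^r} \) using the line-segment representation (which is where convexity enters), and optimize. The only difference is in the bookkeeping for \( I_1 \): the paper uses Jensen rather than Minkowski, performs the simpler change of variable \( h = y-x \), and then bounds the inner \( x \)-integral directly by H\"older and the observation that \( \lvert (\omega \cap (\omega-h)) + th \rvert \leq \lvert \omega \rvert \), avoiding Young's convolution inequality and the delicate cancellation of \( t \)-powers you flag --- but the outcome and the final optimization are identical.
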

\begin{proof}
    By density, we may assume that \( u \in \Cc^{\infty}(\Omega) \).
    We once again rely on an optimization technique.
    For every \( \rho > 0 \), we write 
    \[
        \lvert u \rvert_{W^{\sigma,p}(\omega)}^{p}
        \leq 
        \int_{\omega}\int_{\omega \setminus B^{m}_{\rho}(x)} \frac{\lvert u(x)-u(y) \rvert^{p}}{\lvert x-y \rvert^{m+\sigma p}}\,\d y\d x 
            + \int_{\omega}\int_{\omega \cap B^{m}_{\rho}(x)} \frac{\lvert u(x)-u(y) \rvert^{p}}{\lvert x-y \rvert^{m+\sigma p}}\,\d y\d x.
    \]
    The first term is readily estimated as 
    \[
        \int_{\omega}\int_{\omega \setminus B^{m}_{\rho}(x)} \frac{\lvert u(x)-u(y) \rvert^{p}}{\lvert x-y \rvert^{m+\sigma p}}\,\d y\d x
        \leq 
        \Cl{cst:fixed_cst_interpolation_estimate} \rho^{-\sigma p}\int_{\omega} \lvert u \rvert^{p}
        \leq 
        \Cr{cst:fixed_cst_interpolation_estimate} \rho^{-\sigma p}\lvert \omega \rvert^{1-\frac{p}{q}}\biggl(\int_{\omega} \lvert u \rvert^{q}\biggr)^{\frac{p}{q}}.
    \]

    For the second term, we start by using the mean value theorem along with Jensen's inequality to find 
    \[
        \int_{\omega}\int_{\omega \cap B^{m}_{\rho}(x)} \frac{\lvert u(x)-u(y) \rvert^{p}}{\lvert x-y \rvert^{m+\sigma p}}\,\d y\d x
        \leq 
        \int_{0}^{1}\int_{\omega}\int_{\omega \cap B^{m}_{\rho}(x)} \frac{\lvert Du(x+t(y-x)) \rvert^{p}}{\lvert x-y \rvert^{m+(\sigma-1)p}}\,\d y \d x \d t.
    \]
    Here we use the convexity of \( \Omega \) to ensure that \( x+t(y-x) \in \Omega \) for every \( x \), \( y \in \omega \) and \( t \in [0,1] \).
    We use the change of variable \( h = y-x \) -- so that \( h \in (\omega - \omega) \cap B^{m}_{\rho} \), where \( \omega - \omega \) is the set of all the differences of two points in \( \omega \) -- and Tonelli's theorem to deduce that 
    \[
        \int_{\omega}\int_{\omega \cap B^{m}_{\rho}(x)} \frac{\lvert u(x)-u(y) \rvert^{p}}{\lvert x-y \rvert^{m+\sigma p}}\,\d y\d x
        \leq 
        \int_{0}^{1} \int_{(\omega - \omega) \cap B^{m}_{\rho}} \int_{\omega \cap (\omega-h)} \frac{\lvert Du(x+th) \rvert^{p}}{\lvert h \rvert^{m+(\sigma-1)p}} \,\d x \d h \d t.
    \]
    By convexity of \( \Omega \), if \( x \in \omega \cap (\omega-h) \), we have \( x+th \in \Omega \) for every \( t \in [0,1] \).
    Moreover, the measure of the set \( (\omega \cap (\omega - h))+th \) is less than \( \lvert \omega \rvert \).
    Hence,
    \begin{multline*}
        \int_{\omega}\int_{\omega \cap B^{m}_{\rho}(x)} \frac{\lvert u(x)-u(y) \rvert^{p}}{\lvert x-y \rvert^{m+\sigma p}}\,\d y\d x \\
        \leq 
        \lvert \omega \rvert^{1-\frac{p}{r}}\int_{0}^{1} \int_{(\omega - \omega) \cap B^{m}_{\rho}}\frac{1}{\lvert h \rvert^{m+(\sigma-1)p}} \biggl(\int_{\Omega} \lvert Du(z) \rvert^{r} \,\d z\biggr)^{\frac{p}{r}}\,\d h \d t.
    \end{multline*}
    We conclude that 
    \[
        \int_{\omega}\int_{\omega \cap B^{m}_{\rho}(x)} \frac{\lvert u(x)-u(y) \rvert^{p}}{\lvert x-y \rvert^{m+\sigma p}}\,\d y\d x
        \leq 
        \C \rho^{(1-\sigma)p}\lvert \omega \rvert^{1-\frac{p}{r}}\biggr(\int_{\Omega}\lvert Du \rvert^{r}\biggr)^{\frac{p}{r}}.
    \]

    We may assume that \( Du \) does not vanish identically, otherwise there is nothing to prove.
    We insert 
    \[
        \rho = \lvert \omega \rvert^{\frac{1}{r}-\frac{1}{q}}\frac{\lVert u \rVert_{L^{q}(\omega)}}{\lVert Du \rVert_{L^{p}(\Omega)}},
    \]
    and we find 
    \[
        \lvert u \rvert_{W^{\sigma,p}(\omega)}
        \leq 
        \C \lvert \omega \rvert^{\frac{1}{p}-\frac{\sigma}{r}-\frac{1-\sigma}{q}}\lVert u \rVert_{L^{q}(\omega)}^{1-\sigma}\lVert Du \rVert_{L^{r}(\Omega)}^{\sigma}.
    \]
    The proof of the lemma is complete.
    \resetconstant
\end{proof}

We finally prove Theorem~\ref{thm:density_class_R}.
Recall that \( \Omega = Q^{m} \).
Note that, in Sections~\ref{sect:opening} to~\ref{sect:thickening}, no assumptions were required on \( \Omega \).
During the proof, we shall carefully indicate whenever restrictions on \( \Omega \) are needed.
Then, we shall explain how the proof should be modified when \( \Omega \) is not a cube, which will lead to a counterpart of Theorem~\ref{thm:density_class_R} for more general domains \( \Omega \).

\begin{proof}[Proof of Theorem~\ref{thm:density_class_R}]
    Let \( u \in W^{s,p}(Q^{m};\Nc) \).
    We note that, for every \( \gamma > 0 \), the map \( u_{\gamma} \) defined by \( u_{\gamma}(x) = u\bigl(\frac{x}{1+2\gamma}\bigr) \) 
    belongs to \( W^{s,p}(Q^{m}_{1+2\gamma}) \) and satisfies \( u_{\gamma} \to u \) in \( W^{s,p}(Q^{m}) \) as \( \gamma \to 0 \).
    Therefore, we may assume that \( u \in W^{s,p}(Q^{m}_{1+2\gamma};\Nc) \).
    Here we used the fact that \( \Omega = Q^{m} \), but we could work instead with any domain on which such a dilation argument may be implemented.

    Let \( 0 < \eta < \gamma \) and \( 0 < \rho < \frac{1}{2} \), so that \( 2\rho\eta < \gamma \).
    Guided by the observations at the end of Sections~\ref{sect:adaptive_smoothing} and~\ref{sect:thickening}, we define the following families of cubes.
    We let \( \Kc^{m}_{\eta} \) be a cubication of \( Q^{m}_{1+\gamma} \), that is, \( K^{m}_{\eta} = Q^{m}_{1+\gamma} \).
    This uses that \( Q^{m}_{1+\gamma} \) is a cube, but the important fact is that \( Q^{m} \subset K^{m}_{\eta} \subset Q^{m}_{1+\gamma} \).
    Then, following~\eqref{eq:estimate_good_cubes_thickening}, we construct the set of bad cubes \( \Ec^{m}_{\eta} \) as the family of all cubes \( \sigma^{m} \in \Kc^{m}_{\eta} \) such that 
    \begin{equation}
    \label{eq:estimate_bad_cubes_class_R_sgeq1}
        \lVert Du \rVert_{L^{sp}(\sigma^{m}+Q^{m}_{2\rho\eta})}
        \leq 
        \frac{\eta^{\frac{m}{sp}-1}}{C}\iota 
        \quad 
        \text{if \( s \geq 1 \),}
	\end{equation}
	respectively
	\begin{equation} 
	\label{eq:estimate_bad_cubes_class_R_sle1}
        \lvert u \rvert_{W^{s,p}(\sigma^{m}+Q^{m}_{2\rho\eta})}
        \leq 
        \frac{\eta^{\frac{m}{p}-s}}{C}\iota     
        \quad 
        \text{if \( 0 < s < 1 \),}
    \end{equation}
    where \( \iota > 0 \) is the radius of a tubular neighborhood of \( \Nc \).
    We also define \( \Uc^{m}_{\eta} \) to be the set of all cubes in \( \Kc^{m}_{\eta} \) intersecting a cube in \( \Ec^{m}_{\eta} \).
    Doing so, we indeed have \( E^{m}_{\eta} \subset \Int U^{m}_{\eta} \) in the relative topology of \( K^{m}_{\eta} \).

    We apply opening to the map \( u \) choosing \( \ell = [sp] \).
    Let \( \upPhi^{\op}_{\eta} \colon \R^{m} \to \R^{m} \) be the smooth map provided by Proposition~\ref{prop:main_opening} applied to \( u \) with \( \Omega = Q^{m}_{1+2\gamma} \), and define 
    \[
        u^{\op}_{\eta} = u \circ \upPhi^{\op}_{\eta}.
    \]
    Hence, we find that
    \begin{enumerate}[label=(\alph*)]
    	\item if \( 0 < s < 1 \), then
	    	\[
		    	\eta^{s}\lvert u^{\op}_{\eta}-u \rvert_{W^{s,p}(Q^{m}_{1+2\gamma})}
		    	\leq 
		    	\C\Bigl(\eta^{s}\lvert u \rvert_{W^{s,p}(U^{\ell}_{\eta} + Q^{m}_{2\rho\eta})}
		    	+ \lVert u \rVert_{L^{p}(U^{\ell}_{\eta} + Q^{m}_{2\rho\eta})}\Bigr);
	    	\]
	    \item if \( s \geq 1 \), then for every \( j \in \{1,\dots,k\} \),
		    \[
			    \eta^{j}\lVert D^{j}u^{\op}_{\eta}-D^{j}u \rVert_{L^{p}(Q^{m}_{1+2\gamma})} \leq \C\sum_{i=1}^{j}\eta^{i}\lVert D^{i}u \rVert_{L^{p}(U^{\ell}_{\eta} + Q^{m}_{2\rho\eta})};
		    \]
		\item if \( s \geq 1 \) and \( \sigma \neq 0 \), then for every \( j \in \{1,\dots,k\} \),
			\[
				\eta^{j+\sigma}\lvert D^{j}u^{\op}_{\eta}-D^{j}u \rvert_{W^{\sigma,p}(Q^{m}_{1+2\gamma})} \leq \C\sum_{i=1}^{j}\Bigl(\eta^{i}\lVert D^{i}u \rVert_{L^{p}(U^{\ell}_{\eta} + Q^{m}_{2\rho\eta})} 
				+ \eta^{i+\sigma}\lvert D^{i}u \rvert_{W^{\sigma,p}(U^{\ell}_{\eta} + Q^{m}_{2\rho\eta})}\Bigr);
			\]
		\item for every \( 0 < s < +\infty \),
			\[
				\lVert u^{\op}_{\eta} - u \rVert_{L^{p}(Q^{m}_{1+2\gamma})}
				\leq 
				\C\lVert u \rVert_{L^{p}(U^{\ell}_{\eta}+Q^{m}_{2\rho\eta})}.
			\]
    \end{enumerate}
   
    Then we apply adaptive smoothing to the map \( u^{\op}_{\eta} \) with \( \Omega = Q^{m}_{1+2\gamma} \).
    Let \( \varphi \in B^{m}_{1} \) be a fixed mollifier.
    Since \( E^{m}_{\eta} \subset \Int U^{m}_{\eta} \), we may define \( \psi_{\eta} \) as at the end of Section~\ref{sect:adaptive_smoothing}.
    Namely, we let 
    \[
        \psi_{\eta} = t\zeta_{\eta} + r(1-\zeta_{\eta}),
    \]
    where \( \zeta_{\eta} \) satisfies assumptions~\ref{item:zeta_between_0_and_1} to~\ref{item:D_zeta} page~\pageref{list:assumptions_zeta_smoothing} and \( 0 < r < t \) 
    with \( t \) defined by~\eqref{eq:definition_t_smoothing}.
    With this choice, \( \psi_{\eta} \) satisfies the assumptions of Proposition~\ref{prop:adaptive_smoothing}, and moreover \( 0 < \psi_{\eta} \leq \rho\eta \).
    This implies that \( Q^{m}_{1+\gamma} \subset \{ x \in Q^{m}_{1+2\gamma} \mathpunct{:} \dist{(x,\partial Q^{m}_{1+2\gamma})} \geq \psi(x) \} \), and hence 
    \( u^{\sm}_{\eta} = \varphi_{\psi_{\eta}} \ast u^{\op}_{\eta} \) is well-defined and smooth on \( Q^{m}_{1+\gamma} \).
    Moreover, recalling that \( \tau_{\psi_{\eta} v}(u)(x) = u(x + \psi_{\eta}(x)v) \), Proposition~\ref{prop:adaptive_smoothing} and equation~\eqref{eq:Lp_smooth_leq_translation} for the zero order case applied with \( \omega = Q^{m}_{1+\gamma} \) ensure that
    \begin{enumerate}[label=(\alph*)]
    	\item if \( 0 < s < 1 \), then 
    		\[
	    		\lvert u^{\sm}_{\eta}-u^{\op}_{\eta}\lvert_{W^{s,p}(Q^{m}_{1+\gamma})}
	    		\leq  
	    		\sup_{v \in B^{m}_{1}}\lvert \tau_{\psi_{\eta} v}(u^{\op}_{\eta})-u^{\op}_{\eta} \rvert_{W^{s,p}(Q^{m}_{1+\gamma})};
    		\]
    	\item if \( s \geq 1 \), then for every \( j \in \{1,\dots,k\} \),
    		\begin{multline*}
	    		\eta^{j}\lVert D^{j}u^{\sm}_{\eta}-D^{j}u^{\op}_{\eta}\lVert_{L^{p}(Q^{m}_{1+\gamma})}
	    		\leq
	    		\sup_{v \in B^{m}_{1}}\eta^{j}\lVert \tau_{\psi_{\eta} v}(D^{j}u^{\op}_{\eta})-D^{j}u^{\op}_{\eta} \rVert_{L^{p}(Q^{m}_{1+\gamma})} \\
	    		+ \C\sum_{i=1}^{j}\eta^{i}\lVert D^{i}u^{\op}_{\eta} \rVert_{L^{p}(A)};
    		\end{multline*}
    	\item if \( s \geq 1 \) and \( \sigma \neq 0 \), then for every \( j \in \{1,\dots,k\} \),
    		\begin{multline*}
	    		\eta^{j+\sigma}\lvert D^{j}u^{\sm}_{\eta}-D^{j}u^{\op}_{\eta}\lvert_{W^{\sigma,p}(Q^{m}_{1+\gamma})}
	    		\leq
	    		\sup_{v \in B^{m}_{1}}\eta^{j+\sigma}\lvert \tau_{\psi_{\eta} v}(D^{j}u^{\op}_{\eta})-D^{j}u^{\op}_{\eta} \rvert_{W^{\sigma,p}(Q^{m}_{1+\gamma})} \\
	    		+ \C\sum_{i=1}^{j}\Bigl(\eta^{i}\lVert D^{i}u^{\op}_{\eta} \rVert_{L^{p}(A)}
	    		+ \eta^{i+\sigma}\lvert D^{i}u^{\op}_{\eta}\lvert_{W^{\sigma,p}(A)}\Bigr);
    		\end{multline*}
    	\item for every \( 0 < s < +\infty \),
    		\[
	    		\lVert u^{\sm}_{\eta} - u^{\op}_{\eta} \rVert_{L^{p}(Q^{m}_{1+\gamma})}
	    		\leq 
	    		\sup_{v \in B^{m}_{1}}\lVert \tau_{\psi_{\eta}v}(u^{\op}_{\eta})-u^{\op}_{\eta}\rVert_{L^{p}(Q^{m}_{1+\gamma})}.
    		\]
    \end{enumerate}
	Here,
    \[
        A = \bigcup_{x \in Q^{m}_{1+\gamma} \cap \supp D\psi_{\eta}} B^{m}_{\psi_{\eta}(x)}(x).
    \]

    By the triangle inequality, for every \( v \in B^{m}_{1} \), we have
    \begin{align*}
        &\lVert \tau_{\psi_{\eta} v}(D^{j}u^{\op}_{\eta})-D^{j}u^{\op}_{\eta} \rVert_{L^{p}(Q^{m}_{1+\gamma})} \\
        &\quad\leq
        \lVert \tau_{\psi_{\eta} v}(D^{j}u^{\op}_{\eta})-\tau_{\psi_{\eta} v}(D^{j}u) \rVert_{L^{p}(Q^{m}_{1+\gamma})} \\
        &\quad\quad+ \lVert \tau_{\psi_{\eta} v}(D^{j}u)-D^{j}u \rVert_{L^{p}(Q^{m}_{1+\gamma})} 
            + \lVert D^{j}u-D^{j}u^{\op}_{\eta} \rVert_{L^{p}(Q^{m}_{1+\gamma})} .
    \end{align*}
    By the change of variable theorem, we find
    \[
        \lVert \tau_{\psi_{\eta} v}(D^{j}u^{\op}_{\eta})-\tau_{\psi_{\eta} v}(D^{j}u) \rVert_{L^{p}(Q^{m}_{1+\gamma})}
        \leq
        \C\lVert D^{j}u^{\op}_{\eta}-D^{j}u \rVert_{L^{p}(Q^{m}_{1+\gamma})}.
    \]
    A similar estimate holds for the Gagliardo seminorm.
    Furthermore, observing that \( \supp D\psi_{\eta} \subset U^{m}_{\eta} \) and using that \( \psi_{\eta} \leq \rho\eta \), 
    we have \( A \subset U^{m}_{\eta}+Q^{m}_{\rho\eta} \).
    Combining this with estimate~\ref{item:first_estimates_main_opening} in Proposition~\ref{prop:main_opening} applied with \( \omega = U^{m}_{\eta}+Q^{m}_{\rho\eta} \)\,,
    we deduce that 
    \begin{enumerate}[label=(\alph*)]
    	\item if \( 0 < s < 1 \), then
    		\begin{multline*}
	    		\eta^{s}\lvert u^{\sm}_{\eta}-u^{\op}_{\eta}\lvert_{W^{s,p}(Q^{m}_{1+\gamma})}
	    		\leq  
	    		\sup_{v \in B^{m}_{1}}\eta^{s}\lvert \tau_{\psi_{\eta} v}(u)-u \rvert_{W^{s,p}(Q^{m}_{1+\gamma})} \\
	    		+ \C\Bigl(\eta^{s}\lvert u \rvert_{W^{s,p}(U^{m}_{\eta} + Q^{m}_{2\rho\eta})}
	    		+ \lVert u \rVert_{L^{p}(U^{m}_{\eta} + Q^{m}_{2\rho\eta})}\Bigr);
    		\end{multline*}
    	\item if \( s \geq 1 \), then for every \( j \in \{1,\dots,k\} \),
    		\begin{multline*}
	    		\eta^{j}\lVert D^{j}u^{\sm}_{\eta}-D^{j}u^{\op}_{\eta}\lVert_{L^{p}(Q^{m}_{1+\gamma})}
	    		\leq
	    		\sup_{v \in B^{m}_{1}}\eta^{j}\lVert \tau_{\psi_{\eta} v}(D^{j}u)-D^{j}u \rVert_{L^{p}(Q^{m}_{1+\gamma})} \\
	    		+ \C\sum_{i=1}^{j}\eta^{i}\lVert D^{i}u \rVert_{L^{p}(U^{m}_{\eta}+Q^{m}_{2\rho\eta})};
    		\end{multline*}
    	\item if \( s \geq 1 \) and \( \sigma \neq 0 \), then for every \( j \in \{1,\dots,k\} \),
    		\begin{multline*}
	    		\eta^{j+\sigma}\lvert D^{j}u^{\sm}_{\eta}-D^{j}u^{\op}_{\eta}\lvert_{W^{\sigma,p}(Q^{m}_{1+\gamma})}
	    		\leq
	    		\sup_{v \in B^{m}_{1}}\eta^{j+\sigma}\lvert \tau_{\psi_{\eta} v}(D^{j}u)-D^{j}u \rvert_{W^{\sigma,p}(Q^{m}_{1+\gamma})} \\
	    		+ \C\sum_{i=1}^{j}\Bigl(\eta^{i}\lVert D^{i}u \rVert_{L^{p}(U^{m}_{\eta}+Q^{m}_{2\rho\eta})}
	    		+ \eta^{i+\sigma}\lvert D^{i}u\lvert_{W^{\sigma,p}(U^{m}_{\eta}+Q^{m}_{2\rho\eta})}\Bigr);
    		\end{multline*}
    	\item for every \( 0 < s < +\infty \),
    		\[
	    		\lVert u^{\sm}_{\eta} - u^{\op}_{\eta} \rVert_{L^{p}(Q^{m}_{1+\gamma})}
	    		\leq 
	    		\sup_{v \in B^{m}_{1}}\lVert \tau_{\psi_{\eta}v}(u)-u \rVert_{L^{p}(Q^{m}_{1+\gamma})}
	    		+ \C\lVert u \rVert_{L^{p}(U^{m}_{\eta}+Q^{m}_{2\rho\eta})}.
    		\]
    \end{enumerate}

    Finally, we apply thickening to the map \( u^{\sm}_{\eta} \).
    We choose \( 0 < \ulrho < \rho \), let \( \upPhi^{\thck}_{\eta} \colon \R^{m} \setminus T^{\ell^{\ast}}_{\eta} \to \R^{m} \) be the smooth map given by Proposition~\ref{prop:main_thickening} 
    applied with parameter \( \ulrho \) and with \( \Omega = Q^{m}_{1+\gamma} \), where we recall that \( \Tc^{\ell^{\ast}}_{\eta} \) is the skeleton dual to \( \Uc^{m}_{\eta} \), and we set
    \[
        u^{\thck}_{\eta} = u^{\sm}_{\eta} \circ \upPhi^{\thck}_{\eta}.
    \]
    This map coincides with \( u^{\sm}_{\eta} \) outside of \( U^{m}_{\eta}+Q^{m}_{\ulrho\eta} \).
    Since \( \ell+1 > sp \), Proposition~\ref{prop:main_thickening} ensures that \( u^{\thck}_{\eta} \in W^{s,p}(Q^{m}_{1+\gamma};\R^{\nu}) \), and moreover, the following estimates hold:
    \begin{enumerate}[label=(\alph*)]
    	\item if \( 0 < s < 1 \), then 
    		\[
	    		\eta^{s}\lvert u^{\thck}_{\eta}-u^{\sm}_{\eta} \rvert_{W^{s,p}(Q^{m}_{1+\gamma})}
	    		\leq 
	    		\C\Bigl(\eta^{s}\lvert u^{\sm}_{\eta} \rvert_{W^{s,p}(U^{m}_{\eta}+Q^{m}_{\ulrho\eta})}
	    		+ \lVert u^{\sm}_{\eta} \rVert_{L^{p}(U^{m}_{\eta}+Q^{m}_{\ulrho\eta})}\Bigr);
    		\]
    	\item if \( s \geq 1 \), then for every \( j \in \{1,\dots,k\} \),
    		\[
	    		\eta^{j}\lVert D^{j}u^{\thck}_{\eta}-D^{j}u^{\sm}_{\eta} \rVert_{L^{p}(Q^{m}_{1+\gamma})}
	    		\leq
	    		\C\sum_{i=1}^{j}\eta^{i}\lVert D^{i}u^{\sm}_{\eta} \rVert_{L^{p}(U^{m}_{\eta}+Q^{m}_{\ulrho\eta})};
    		\]
    	\item if \( s \geq 1 \) and \( \sigma \neq 0 \), then for every \( j \in \{1,\dots,k\} \),
    		\[
	    		\eta^{j+\sigma}\lvert D^{j}u^{\thck}_{\eta}-D^{j}u^{\sm}_{\eta} \rvert_{W^{\sigma,p}(Q^{m}_{1+\gamma})}
	    		\leq
	    		\C\sum_{i=1}^{j}\Bigl(\eta^{i}\lVert D^{i}u^{\sm}_{\eta} \rVert_{L^{p}(U^{m}_{\eta}+Q^{m}_{\ulrho\eta})} + \eta^{i+\sigma}\lvert D^{i}u^{\sm}_{\eta} \rvert_{W^{\sigma,p}(U^{m}_{\eta}+Q^{m}_{\ulrho\eta})}\Bigr);
    		\]
    	\item for every \( 0 < s < +\infty \),
    		\[
    		\lVert u^{\thck}_{\eta}-u^{\sm}_{\eta} \rVert_{L^{p}(Q^{m}_{1+\gamma})}
    		\leq 
    		\C\lVert u^{\sm}_{\eta} \rVert_{L^{p}(U^{m}_{\eta}+Q^{m}_{\ulrho\eta})}.
    		\]
    \end{enumerate}
    Hence, invoking estimate~\ref{item:first_estimates_convolution} in Proposition~\ref{prop:adaptive_smoothing} with \( \Omega = U^{m}_{\eta}+Q^{m}_{(\rho+\ulrho)\eta} \) and \( \omega = U^{m}_{\eta}+Q^{m}_{\ulrho\eta} \), and then estimate~\ref{item:first_estimates_main_opening} in Proposition~\ref{prop:main_opening} with \( \omega = U^{m}_{\eta}+Q^{m}_{2\rho\eta} \), we obtain
    \begin{enumerate}[label=(\alph*)]
    	\item if \( 0 < s < 1 \), then 
    		\[
	    		\eta^{s}\lvert u^{\thck}_{\eta}-u^{\sm}_{\eta} \rvert_{W^{s,p}(Q^{m}_{1+\gamma})}
	    		\leq 
	    		\C\Bigl(\eta^{s}\lvert u \rvert_{W^{s,p}(U^{m}_{\eta}+Q^{m}_{2\rho\eta})} 
	    		+ \lVert u \rVert_{L^{p}(U^{m}_{\eta}+Q^{m}_{2\rho\eta})}\Bigr);
    		\]
    	\item if \( s \geq 1 \), then for every \( j \in \{1,\dots,k\} \),
    		\[
    		\eta^{j}\lVert D^{j}u^{\thck}_{\eta}-D^{j}u^{\sm}_{\eta} \rVert_{L^{p}(Q^{m}_{1+\gamma})}
    		\leq
    		\C\sum_{i=1}^{j}\eta^{i}\lVert D^{i}u \rVert_{L^{p}(U^{m}_{\eta}+Q^{m}_{2\rho\eta})};
    		\]
    	\item if \( s \geq 1 \) and \( \sigma \neq 0 \), then for every \( j \in \{1,\dots,k\} \),
    		\[
	    		\eta^{j+\sigma}\lvert D^{j}u^{\thck}_{\eta}-D^{j}u^{\sm}_{\eta} \rvert_{W^{\sigma,p}(Q^{m}_{1+\gamma})}
	    		\leq
	    		\C\sum_{i=1}^{j}\Bigl(\eta^{i}\lVert D^{i}u \rVert_{L^{p}(U^{m}_{\eta}+Q^{m}_{2\rho\eta})} + \eta^{i+\sigma}\lvert D^{i}u \rvert_{W^{\sigma,p}(U^{m}_{\eta}+Q^{m}_{2\rho\eta})}\Bigr);
    		\]
    	\item for every \( 0 < s < +\infty \),
    		\[
	    		\lVert u^{\thck}_{\eta}-u^{\sm}_{\eta} \rVert_{L^{p}(Q^{m}_{1+\gamma})}
	    		\leq 
	    		\C\lVert u \rVert_{L^{p}(U^{m}_{\eta}+Q^{m}_{2\rho\eta})}.
    		\]
    \end{enumerate}

    Using the triangle inequality, letting \( A_{\mu} = U^{m}_{\eta}+Q^{m}_{2\rho\eta} \), we conclude that
    \begin{enumerate}[label=(\alph*)]
    	\item if \( 0 < s < 1 \), then 
    		\begin{multline*}
	    		\eta^{s}\lvert u^{\thck}_{\eta}-u_{\eta} \rvert_{W^{s,p}(Q^{m}_{1+\gamma})}
	    		\leq 
	    		\sup_{v \in B^{m}_{1}}\eta^{s}\lvert \tau_{\psi_{\eta} v}(u)-u \rvert_{W^{s,p}(Q^{m}_{1+\gamma})} \\
	    		+ \C\Bigl(\eta^{s}\lvert u \rvert_{W^{s,p}(A_{\mu})} 
	    		+ \lVert u \rVert_{L^{p}(A_{\mu})}\Bigr);
    		\end{multline*}
    	\item if \( s \geq 1 \), then for every \( j \in \{1,\dots,k\} \),
    		\begin{multline*}
	    		\eta^{j}\lVert D^{j}u^{\thck}_{\eta}-D^{j}u \rVert_{L^{p}(Q^{m}_{1+\gamma})} \\
	    		\leq
	    		\sup_{v \in B^{m}_{1}}\eta^{j}\lVert \tau_{\psi_{\eta} v}(D^{j}u)-D^{j}u \rVert_{L^{p}(Q^{m}_{1+\gamma})} 
	    		+ \C\sum_{i=1}^{j}\eta^{i}\lVert D^{i}u \rVert_{L^{p}(A_{\mu})};
    		\end{multline*}
    	\item if \( s \geq 1 \) and \( \sigma \neq 0 \), then for every \( j \in \{1,\dots,k\} \),
    		\begin{align*}
	    		&\eta^{j+\sigma}\lvert D^{j}u^{\thck}_{\eta}-D^{j}u \rvert_{W^{\sigma,p}(Q^{m}_{1+\gamma})} \\
	    		&\quad\leq
	    		\sup_{v \in B^{m}_{1}}\eta^{j+\sigma}\lvert \tau_{\psi_{\eta} v}(D^{j}u)-D^{j}u \rvert_{W^{\sigma,p}(Q^{m}_{1+\gamma})} \\
	    		&\quad\quad+ \C\sum_{i=1}^{j}\Bigl(\eta^{i}\lVert D^{i}u \rVert_{L^{p}(A_{\mu})} + \eta^{i+\sigma}\lvert D^{i}u \rvert_{W^{\sigma,p}(A_{\mu})}\Bigr);
    		\end{align*}
    	\item for every \( 0 < s < +\infty \),
    		\[
    		\lVert u^{\thck}_{\eta}-u \rVert_{L^{p}(Q^{m}_{1+\gamma})}
    		\leq 
    		\sup_{v \in B^{m}_{1}} \lVert \tau_{\psi_{\eta}v}(u)-u \rVert_{L^{p}(Q^{m}_{1+\gamma})}
    		+ \C\lVert u \rVert_{L^{p}(A_{\mu})}.
    		\]
	    \end{enumerate}

    Due to our choice of \( \psi_{\eta} \), and since \( \ell \leq sp \) and
    \[
        Q^{m} \subset K^{m}_{\eta} \subset Q^{m}_{1+\gamma} \subset \{ x \in Q^{m}_{1+2\gamma} \mathpunct{:} \dist{(x,\partial Q^{m}_{1+2\gamma})} \geq \psi(x) \}, 
    \]
    according to estimates~\eqref{eq:main_estimate_dist_thck_F_sgeq1} and~\eqref{eq:main_estimate_dist_thck_F_sle1}, we have
    \begin{multline}
    \label{eq:dist_thck_N_thm2_sgeq1}
        \Dist_{\Nc}{(u^{\thck}_{\eta}(K^{m}_{\eta}\setminus T^{\ell^{\ast}}_{\eta}))}
        \leq 
        \max\biggl\{\max_{\sigma^{m} \in \Kc^{m}_{\eta} \setminus \Ec^{m}_{\eta}} C\frac{1}{\eta^{\frac{m}{sp}-1}}\lVert Du \rVert_{L^{sp}(\sigma^{m}+Q^{m}_{2\rho\eta})}, \\
            \sup_{x \in U^{\ell}_{\eta}+Q^{m}_{\ulrho\eta}} C'\fint_{Q^{m}_{r}(x)}\fint_{Q^{m}_{r}(x)} \lvert u^{\op}_{\eta}(y)-u^{\op}_{\eta}(z)\rvert\,\d y\d z\biggr\}
    \end{multline}
    if \( s \geq 1 \), respectively 
    \begin{multline}
    \label{eq:dist_thck_N_thm2_sle1}
        \Dist_{\Nc}{(u^{\thck}_{\eta}(K^{m}_{\eta}\setminus T^{\ell^{\ast}}_{\eta}))}
        \leq 
        \max\biggl\{\max_{\sigma^{m} \in \Kc^{m}_{\eta} \setminus \Ec^{m}_{\eta}} C\frac{1}{\eta^{\frac{m}{p}-s}}\lvert u \rvert_{W^{s,p}(\sigma^{m}+Q^{m}_{2\rho\eta})}, \\
            \sup_{x \in U^{\ell}_{\eta}+Q^{m}_{\ulrho\eta}} C'\fint_{Q^{m}_{r}(x)}\fint_{Q^{m}_{r}(x)} \lvert u^{\op}_{\eta}(y)-u^{\op}_{\eta}(z)\rvert\,\d y\d z\biggr\}
    \end{multline}
    if \( 0 < s < 1 \).
    In~\eqref{eq:dist_thck_N_thm2_sgeq1} and~\eqref{eq:dist_thck_N_thm2_sle1}, we recall that \( r > 0 \) is a number used in the definition of \( \psi_{\eta} \), and chosen sufficiently small to ensure the validity of~\eqref{eq:main_estimate_dist_sm_F_sgeq1} or~\eqref{eq:main_estimate_dist_sm_F_sle1}.
    We note here that, in defining the bad cubes in equation~\eqref{eq:estimate_bad_cubes_class_R_sgeq1}, respectively~\eqref{eq:estimate_bad_cubes_class_R_sle1}, we take the constant \( C > 0 \) which shows up in estimate~\eqref{eq:dist_thck_N_thm2_sgeq1}, respectively~\eqref{eq:dist_thck_N_thm2_sle1}.
    Doing so, by definition of the set of bad cubes \( \Ec^{m}_{\eta} \), the first term in each max is smaller than the radius \( \iota \) of a tubular neighborhood of \( \Nc \).
    Moreover, since \( \ell \leq sp \), Proposition~\ref{prop:addendum2_opening} ensures that we may take \( r > 0 \) so small that the second term in each max is also smaller than \( \iota \).
    Therefore, we deduce that 
    \[
        \Dist_{\Nc}{(u^{\thck}_{\eta}(K^{m}_{\eta}\setminus T^{\ell^{\ast}}_{\eta}))}
        \leq 
        \iota.
    \]
    This enables us to define \( u_{\eta} = \upPi \circ u^{\thck}_{\eta} \), which, as we already explained at the end of Section~\ref{sect:thickening}, is smooth on \( K^{m}_{\eta} \setminus T^{\ell^{\ast}} \), 
    and belongs to \( \Rc_{m-[sp]-1}(K^{m}_{\eta};\Nc) \).
    
    Since \( Q^{m} \subset K^{m}_{\eta} \), to conclude, it only remains to prove that \( u_{\eta} \to u \) in \( W^{s,p}(K^{m}_{\eta}) \) as \( \eta \to 0 \).
    We claim that it suffices to show that \( u^{\thck}_{\eta} \to u \) in \( W^{s,p}(K^{m}_{\eta}) \) as \( \eta \to 0 \).
    Indeed, the map \( \upPi \) is smooth and has uniformly bounded derivatives, and \( \Nc \) is compact.
    Hence, the continuity of the composition operator from \( W^{s,p} \cap L^{\infty} \) to \( W^{s,p} \)
    -- see for instance~\cite{BM_sobolev_maps_to_the_circle}*{Chapter~15.3} -- ensures that, if \( u_{\eta}^{\thck} \to u \) in \( W^{s,p}(K^{m}_{\eta}) \), then \( u_{\eta} = \upPi \circ u^{\thck}_{\eta} \) converges in \( W^{s,p}(K^{m}_{\eta}) \) to \( \upPi \circ u = u \).
    
    We now prove that \( u^{\thck}_{\eta} \to u \).
    We start by noting that the continuity of the translation operator implies that
    \[
        \lim_{\eta\to 0}\sup_{v \in B^{m}_{1}}\lVert \tau_{\psi_{\eta} v}(D^{j}u)-D^{j}u \rVert_{L^{p}(Q^{m}_{1+\gamma})} = 0
    \]
    and
    \[
        \lim_{\eta\to 0}\sup_{v \in B^{m}_{1}}\lvert \tau_{\psi_{\eta} v}(D^{j}u)-D^{j}u \rvert_{W^{\sigma,p}(Q^{m}_{1+\gamma})} = 0
    \]
    for every \( j \in \{0,\dots,k\} \).

    We first deal with the case \( s \geq 1 \).
    By the Gagliardo--Nirenberg interpolation inequality -- see for instance~\cites{BM_fractional_GN, BM_GN_full_story} -- for every \( i \in \{1,\dots,k\} \), 
    we have \( D^{i}u \in L^{\frac{sp}{i}}(Q^{m}_{1+2\gamma}) \). 
    Hölder's inequality ensures that
    \[
        \lVert D^{i}u \rVert_{L^{p}(A_{\mu})}
        \leq
        \lvert A_{\mu} \rvert^{\frac{s-i}{sp}}\lVert D^{i}u \rVert_{L^{sp/i}(A_{\mu})}\,,
    \]
    while Lemma~\ref{lemma:interpolation_estimate} guarantees that 
    \[
        \lvert D^{i}u \rvert_{W^{\sigma,p}(A_{\mu})}
        \leq 
        \Cl{cst:cst_interpolation_lemma}\lvert A_{\mu} \rvert^{\frac{s-i-\sigma}{sp}}\lVert D^{i+1}u \rVert_{L^{sp/(i+1)}(Q^{m}_{1+2\gamma})}^{\sigma}\lVert D^{i}u \rVert_{L^{sp/i}(A_{\mu})}^{1-\sigma}.
    \]
    Here, we use the fact that \( Q^{m}_{1+2\gamma} \) is convex to justify the use of Lemma~\ref{lemma:interpolation_estimate}.

    We now wish to estimate the measure of the set \( A_{\mu} \).
    We first note that
    \begin{equation}
    \label{eq:measure_vs_number_bad_cubes}
        \lvert A_{\mu} \rvert
        \leq
        \C\card{(\Uc^{m}_{\eta})}\eta^{m},
    \end{equation}
    where \( \card{(\Uc^{m}_{\eta})} \) denotes the cardinality of the (finite) set of cubes \( \Uc^{m}_{\eta} \).
    Then, for every \( \sigma^{m} \in \Uc^{m}_{\eta} \), there exists \( \tau^{m} \in \Ec^{m}_{\eta} \) which intersects \( \sigma^{m} \), and thus \( \tau^{m}+Q^{m}_{2\rho\eta} \subset \sigma^{m} + Q^{m}_{2(1+\rho)\eta} \).
    If we write \( \sigma^{m} = Q^{m}_{\eta}(a) \), we find \( \tau^{m}+Q^{m}_{2\rho\eta} \subset Q^{m}_{\alpha\eta}(a) \) with \( \alpha = 3+2\rho \).
    Hence,
    \[
        \tau^{m}+Q^{m}_{2\rho\eta} \subset Q^{m}_{\alpha\eta}(a) \cap Q^{m}_{1+2\gamma}.
    \]
    We deduce from the definition of \( \Ec^{m}_{\eta} \) that
    \[
        \iota 
        <
        C\frac{1}{\eta^{\frac{m}{sp}-1}}\lVert Du \rVert_{L^{sp}(\tau^{m}+Q^{m}_{2\rho\eta})}
        \leq
        C\frac{1}{\eta^{\frac{m}{sp}-1}}\lVert Du \rVert_{L^{sp}(Q^{m}_{\alpha\eta}(a) \cap Q^{m}_{1+2\gamma})}.
    \]
    Since the number of overlaps between one of the cubes \( Q^{m}_{\alpha\eta}(a) \) and all the other ones is bounded from above by a number depending only on \( m \), 
    summing over all cubes in \( \Uc^{m}_{\eta} \) and using the additivity of the integral, we deduce that
    \begin{equation}
    \label{eq:estimate_number_bad_cubes_sgeq1}
    \begin{split}
        \card{(\Uc^{m}_{\eta})}
        &\leq
        \C\frac{1}{\eta^{m-sp}}\sum_{Q^{m}_{\eta}(a)\in \Uc^{m}_{\eta}} \int_{Q^{m}_{\alpha\eta}(a) \cap Q^{m}_{1+2\gamma}}\lvert Du \rvert^{sp} \\
        &\leq
        \C\frac{1}{\eta^{m-sp}}\int_{(U^{m}_{\eta}+Q^{m}_{2(\rho+1)\eta})\cap Q^{m}_{1+2\gamma}} \lvert Du \rvert^{sp}.
    \end{split}
    \end{equation}
    For further use, we already note that, in the case \( 0 < s < 1 \), the exact same reasoning leads to 
    \[
        \iota 
        <
        C\frac{1}{\eta^{\frac{m}{p}-s}}\lvert u \rvert_{W^{s,p}(Q^{m}_{\alpha\eta}(a) \cap Q^{m}_{1+2\gamma})}.
    \]
    As for the case \( s \geq 1 \), replacing the additivity of the integral by the superadditivity of the Gagliardo seminorm, we obtain 
    \begin{equation}
    \label{eq:estimate_number_bad_cubes_sle1}
        \card{(\Uc^{m}_{\eta})}
        \leq
        \C\frac{1}{\eta^{m-sp}}\int_{(U^{m}_{\eta}+Q^{m}_{2(\rho+1)\eta})\cap Q^{m}_{1+2\gamma}}\int_{(U^{m}_{\eta}+Q^{m}_{2(\rho+1)\eta})\cap Q^{m}_{1+2\gamma}} \frac{\lvert u(x)-u(y)\rvert^{p}}{\lvert x-y \rvert^{m+sp}}\,\d x\d y.
    \end{equation}
    In both cases \( s \geq 1 \) and \( 0 < s < 1 \), we conclude that
    \begin{equation}
    \label{eq:decay_measure_bad_cubes}
        \lim_{\eta \to 0}\frac{\lvert A_{\mu} \rvert}{\eta^{sp}}
        =
        0.
    \end{equation}
    Indeed, we first use estimate~\eqref{eq:measure_vs_number_bad_cubes} along with~\eqref{eq:estimate_number_bad_cubes_sgeq1}, respectively~\eqref{eq:estimate_number_bad_cubes_sle1}, to deduce that
    \[
    	\frac{\lvert A_{\mu} \rvert}{\eta^{sp}}
    	\leq 
    	\C\lVert Du \rVert_{L^{p}(Q^{m}_{1+2\gamma})}^{p},
    	\quad
    	\text{respectively}
    	\quad
    	\frac{\lvert A_{\mu} \rvert}{\eta^{sp}}
    	\leq 
    	\C\lvert u \rvert_{W^{s,p}(Q^{m}_{1+2\gamma})}^{p}.
    \]
    In particular, \( \lvert A_{\mu} \rvert \to 0 \).
	Using this information along with Lebesgue's lemma, we invoke again estimate~\eqref{eq:estimate_number_bad_cubes_sgeq1}, respectively~\eqref{eq:estimate_number_bad_cubes_sle1}, to deduce~\eqref{eq:decay_measure_bad_cubes}.

    We next proceed as follows.
    When \( s \geq 1 \), we find 
    \begin{equation}
    \label{eq:Diu_to_0_integer}
        \sum_{i=1}^{j}\eta^{i-j}\lVert D^{i}u \rVert_{L^{p}(A_{\mu})} \\
        \leq 
        \sum_{i=1}^{j}\eta^{s-j}\biggl(\frac{\lvert A_{\mu} \rvert}{\eta^{sp}}\biggr)^{\frac{s-i}{sp}}\lVert D^{i}u \rVert_{L^{sp/i}(A_{\mu})}
        \to 
        0
        \quad 
        \text{as \( \eta \to 0 \)}
    \end{equation}
    and 
    \begin{equation}
    \begin{split}
    \label{eq:Diu_to_0_frac}
        \sum_{i=1}^{j}&\Bigl(\eta^{i}\lVert D^{i}u \rVert_{L^{p}(A_{\mu})} + \eta^{i+\sigma}\lvert D^{i}u \rvert_{W^{\sigma,p}(A_{\mu})}\Bigr) \\
        \leq& 
        \sum_{i=1}^{j}\eta^{s-j-\sigma}\biggl(\frac{\lvert A_{\mu} \rvert}{\eta^{sp}}\biggr)^{\frac{s-i}{sp}}\lVert D^{i}u \rVert_{L^{sp/i}(A_{\mu})} \\
            &+ \Cr{cst:cst_interpolation_lemma}\sum_{i=1}^{j-1}\eta^{s-j-\sigma}\biggl(\frac{\lvert A_{\mu} \rvert}{\eta^{sp}}\biggr)^{\frac{s-i-\sigma}{sp}}\lVert D^{i}u \rVert_{L^{sp/i}(A_{\mu})}^{1-\sigma}\lVert D^{i+1}u \rVert_{L^{sp/(i+1)}(Q^{m}_{1+2\gamma})}^{\sigma} \\
            &+ \lvert D^{j}u \rvert_{W^{\sigma,p}(A_{\mu})} 
            \to 
            0
            \quad
            \text{as \( \eta \to 0 \).}
    \end{split}
    \end{equation}
    For the last term in~\eqref{eq:Diu_to_0_frac}, we use~\eqref{eq:decay_measure_bad_cubes} and the Lebesgue lemma.
    Similarly, we have 
    \[
        \lVert u \rVert_{L^{p}(A_{\mu})}
        \to 
        0
        \quad
        \text{as \( \eta \to 0 \).}
    \]
    This completes the proof that \( u^{\thck}_{\eta} \to u \) in \( W^{s,p}(Q^{m}_{1+\gamma}) \) when \( s \geq 1 \).

    The case \( 0 < s < 1 \) is concluded analogously.
    Note that since \( \Nc \) is compact, we have \( u \in L^{\infty} \).
    Therefore, we have
    \begin{align*}
        \lvert u \rvert_{W^{s,p}(A_{\mu})} 
            + \eta^{-s}\lVert u \rVert_{L^{p}(A_{\mu})}
        &\leq 
        \lvert u \rvert_{W^{s,p}(A_{\mu})}
            + \eta^{-s}\Cl{cst:L_infty_bound_density_class_R}\lvert A_{\mu} \rvert^{\frac{1}{p}} \\
        &\leq 
        \lvert u \rvert_{W^{s,p}(A_{\mu})}
            + \Cr{cst:L_infty_bound_density_class_R}\biggl(\frac{\lvert A_{\mu} \rvert}{\eta^{sp}}\biggr)^{\frac{1}{p}}
        \to 
        0
        \quad 
        \text{as \( \eta \to 0 \).}
    \end{align*}
    Combining this with the fact that \( \lVert u \rVert_{L^{p}(A_{\mu})} \to 0 \) as \( \eta \to 0 \),
    we deduce that \( u^{\thck}_{\eta} \to u \) in \( W^{s,p}(Q^{m}_{1+\gamma}) \) as \( \eta \to 0 \) when \( 0 < s < 1 \).
    This completes the proof of Theorem~\ref{thm:density_class_R}.
    \resetconstant
\end{proof}

We now explain how to deal with more general domains.
The first step is to be able to implement the dilation procedure used at the beginning of the proof.
The method we used adapts without any modification to domains that are starshaped with respect to one of their points.
However, using a more involved technique, it is possible to work with even more general domains.
The reader may consult~\cite{BM_sobolev_maps_to_the_circle}*{Lemma~15.25} for an implementation of this technique on smooth domains using the normal vector,
or~\cite{BPVS_screening} for an argument on continuous bounded domains using local parametrizations.
Here we show that the approach even works under the weaker \emph{segment condition}.

We recall that \( \Omega \) satisfies the \emph{segment condition} whenever, for every \( x \in \partial\Omega \), there exists an open set \( U_{x} \subset \R^{m} \) containing \( x \) 
and a nonzero vector \( z_{x} \in \R^{m} \) such that, if \( y \in U_{x} \cap \overline{\Omega} \), then \( y+tz_{x} \in \Omega \) for every \( 0 < t < 1 \).

\begin{lemme}
\label{lemma:dilation_segment_condition}
    Let \( \Omega \subset \R^{m} \) be a bounded open domain satisfying the segment condition.
    For every \( \gamma > 0 \) sufficiently small, there exists a smooth diffeomorphism \( \upPhi_{\gamma} \colon \R^{m} \to \R^{m} \) such that \( \upPhi_{\gamma}(\overline{\Omega}) \subset \Omega \) and 
    \[
        D^{j}\upPhi_{\gamma} \to \id 
        \quad 
        \text{uniformly on \( \R^{m} \) for every \( j \in \N \) as \( \gamma \to 0 \).}
    \]
\end{lemme}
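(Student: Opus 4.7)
The plan is to construct $\upPhi_{\gamma}$ as an infinitesimal translation of the form
\[
	\upPhi_{\gamma}(x) = x + \gamma v(x),
\]
where $v \in \Cc^{\infty}_{\mathrm{c}}(\R^{m};\R^{m})$ is a smooth vector field that does not depend on $\gamma$. With this ansatz, $D^{j}\upPhi_{\gamma} = D^{j}\id + \gamma D^{j}v$, so $D^{j}\upPhi_{\gamma} \to D^{j}\id$ uniformly on $\R^{m}$ for every $j \in \N$ as $\gamma \to 0$. Moreover, once $\gamma$ is small enough that $\gamma\lVert Dv \rVert_{L^{\infty}} < 1$, the differential of $\upPhi_{\gamma}$ is everywhere invertible; since $\upPhi_{\gamma}$ coincides with the identity outside a compact set, it is then a global diffeomorphism of $\R^{m}$ by a standard argument combining local invertibility with properness.

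It remains to choose $v$ so that $\upPhi_{\gamma}(\overline{\Omega}) \subset \Omega$ for every sufficiently small $\gamma > 0$. I would construct $v$ by patching via a partition of unity adapted to the segment condition. At each $x \in \partial\Omega$, the condition yields an open neighborhood $U_{x} \ni x$ and a direction $z_{x} \neq 0$ such that $y + tz_{x} \in \Omega$ whenever $y \in U_{x} \cap \overline{\Omega}$ and $t \in (0,1)$. By compactness of $\partial\Omega$, extract a finite subcover $U_{1},\dots,U_{N}$ with associated directions $z_{1},\dots,z_{N}$, add one further open set $U_{0}$ with $\overline{U_{0}} \subset \Omega$ to cover the interior of $\overline{\Omega}$, and set $z_{0} = 0$. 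Pick a smooth partition of unity $(\chi_{i})_{i=0}^{N}$ subordinate to $\{U_{i}\}_{i=0}^{N}$ with $\sum_{i} \chi_{i} \equiv 1$ on a neighborhood of $\overline{\Omega}$, and set $v = \psi\sum_{i=0}^{N} \chi_{i}z_{i}$, where $\psi \in \Cc^{\infty}_{\mathrm{c}}(\R^{m};[0,1])$ equals $1$ on a neighborhood of $\overline{\Omega}$.

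The main obstacle is to verify the inclusion $y + \gamma v(y) \in \Omega$ for every $y \in \overline{\Omega}$ and $\gamma$ sufficiently small. For $y$ at positive distance from $\partial\Omega$ this is immediate by choosing $\gamma$ small, but for $y$ close to or on $\partial\Omega$ the vector $v(y)$ is a nontrivial convex combination of directions, and a convex combination of admissible directions (in the sense of the segment condition) need not a priori be admissible itself. The key observation is that the segment condition is equivalent, near each boundary point, to $\Omega$ being locally the strict epigraph of a continuous function in suitable coordinates, so that the set of admissible inward directions forms an open convex half-space; hence any convex combination of admissible directions at a boundary point is again admissible. By refining the cover $\{U_{i}\}$ so that its diameter is small compared to the continuity modulus of the local parametrizations, and (if necessary) replacing each $z_{i}$ by a direction associated to a fixed reference point $y_{i} \in \partial\Omega \cap U_{i}$, one ensures that on every overlap $U_{i} \cap U_{j} \cap \overline{\Omega}$ both $z_{i}$ and $z_{j}$ are admissible at nearby boundary points, so that all their convex combinations remain admissible as well. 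A compactness argument on $\partial\Omega$ then produces a uniform threshold $\gamma_{0} > 0$ such that $y + \gamma v(y) \in \Omega$ for every $y \in \overline{\Omega}$ and $0 < \gamma < \gamma_{0}$, completing the construction.
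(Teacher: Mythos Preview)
Your argument has a real gap at the step where you assert that, under the segment condition, the admissible inward directions at a boundary point form an open convex half-space. This is false in general: take $\Omega = \{(x,y) : y > |x|^{1/2}\}$ near the origin. A direction $(z_1,z_2)$ satisfies $t(z_1,z_2) \in \Omega$ for all small $t>0$ only if $z_1 = 0$ and $z_2 > 0$, since $tz_2 > t^{1/2}|z_1|^{1/2}$ fails as $t\to 0$ whenever $z_1\neq 0$. So at the cusp the admissible cone collapses to a single ray. The half-space picture is valid for Lipschitz boundaries, not for merely continuous ones, and the segment condition is equivalent to the latter. Consequently the convex combination $v(y) = \sum\chi_i(y)z_i$ is not obviously admissible, and your ``refine the cover / compactness'' sketch does not address this.

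The paper sidesteps the issue by \emph{composing} rather than averaging. It covers $\partial\Omega$ by finitely many patches with bump functions $\psi_i$ and directions $z_i$, sets $\upPhi_{i,\gamma}(x) = x + \gamma\psi_i(x)z_i$, and defines $\upPhi_\gamma = \upPhi_{n,\gamma}\circ\cdots\circ\upPhi_{1,\gamma}$. Each $\upPhi_{i,\gamma}$ maps $\overline{\Omega}$ into $\overline{\Omega}$ directly by the segment condition applied to the single direction $z_i$; since every $x\in\partial\Omega$ lies in some patch where $\psi_i = 1$, that factor pushes it strictly into $\Omega$, and the remaining factors keep it there. No convex combination is ever formed. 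Your single-map ansatz can in fact be repaired by the same idea: write $y + \gamma\sum_j \chi_{i_j}(y)z_{i_j}$ as the endpoint of successive increments $\gamma\chi_{i_j}(y)z_{i_j}$, each of which lands in $\Omega$ by the segment condition once the intermediate points remain in $U_{i_j}$ (guaranteed by $\supp\chi_{i_j}\Subset U_{i_j}$ and $\gamma$ small). But that repair \emph{is} the composition argument, and it is cleaner to compose diffeomorphisms from the start as the paper does.
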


Geometrically, the segment condition means that \( \Omega \) cannot lie on both sides of \( \partial\Omega \).
A typical example of a domain \( \Omega \) not satisfying this assumption is given by two open cubes whose boundaries share a common face.
It is known -- see for instance~\cite{adams_sobolev_spaces}*{3.17} -- that there exists a \( W^{1,p} \) map on this domain which cannot be approximated by \( \Cc^{\infty}(\overline{\Omega}) \) maps, even in the real valued case.

\begin{proof}[Proof of Lemma~\ref{lemma:dilation_segment_condition}]
    Let \( B_{\delta} = B^{m-1}_{\delta} \times (-\delta,\delta) \) be a cylinder of radius and half-height \( \delta \).
    Since \( \partial\Omega \) is compact, there exists a finite number of points \( x_{1} \), \dots, \( x_{n} \in \R^{m} \) and associated isometries \( T_{1} \), \dots, \( T_{n} \) of \( \R^{m} \) mapping \( 0 \) to \( x_{i} \) such that 
    \begin{equation}
    \label{eq:covering_partial_omega_segment_condition}
        \partial\Omega \subset \bigcup_{i=1}^{n} T_{i}(B_{\delta/2}),
    \end{equation}
    and also associated nonzero vectors \( z_{1} \), \dots, \( z_{n} \in \R^{m} \) such that, if \( y \in T_{i}(B_{\delta}) \cap \overline{\Omega} \), 
    then \( y+tz_{i} \in \Omega \) for every \( 0 < t < 1 \).
    Let \( \psi \colon \R^{m-1} \to [0,1] \) be a smooth map such that \( \psi(x) = 1 \) if \( x \in B_{\delta/2} \) and \( \psi(x) = 0 \) if \( x \in \R^{m-1} \setminus B_{3\delta/4} \).
    For \( 0 < \gamma < 1 \), we define \( \upPhi_{i,\gamma} \colon \R^{m} \to \R^{m} \) by
    \[ 
        \upPhi_{i,\gamma}(x) = x+\gamma\psi(T^{-1}_{i}(x))z_{i}.
    \]
    If \( \gamma < (\lVert D\psi \rVert_{L^{\infty}}\lvert z_{i} \rvert)^{-1} \), we observe that \( \upPhi_{i,\gamma} \) is a smooth diffeomorphism.
    Moreover, by construction of the vectors \( z_{i} \), we have \( \upPhi_{i,\gamma}(\Omega) \subset \Omega \).

    We let 
    \[
        \upPhi_{\gamma} = \upPhi_{n,\gamma} \circ \cdots \circ \upPhi_{1,\gamma}.
    \]
    We observe that 
    \[
        D^{j}\upPhi_{\gamma} \to \id 
        \quad 
        \text{uniformly on \( \R^{m} \) for every \( j \in \N \) as \( \gamma \to 0 \).}
    \]
    By~\eqref{eq:covering_partial_omega_segment_condition} and the construction of the maps \( \upPhi_{i,\gamma} \), 
    for every \( x \in \partial\Omega \), there exists \( i \in \{1,\dots,n\} \) such that \( \upPhi_{i,\gamma}(x) \in \Omega \), 
    and this shows that \( \upPhi_{\gamma}(\overline{\Omega}) \subset \Omega \).
    This proves that the family of maps \( \upPhi_{\gamma} \) satisfies the conclusions of the lemma.
\end{proof}

Using this construction, we observe that, if \( \Omega \subset \R^{m} \) is a bounded domain satisfying the segment condition and \( u \in W^{s,p}(\Omega;\Nc) \),
the map \( u_{\gamma} = u \circ \upPhi_{\gamma} \) belongs to \( W^{s,p}(\Omega_{\gamma};\Nc) \),
where \( \Omega_{\gamma} = \upPhi_{\gamma}^{-1}(\Omega) \) is an open subset of \( \R^{m} \) containing \( \overline{\Omega} \).
Moreover, \( u_{\gamma} \to u \) in \( W^{s,p}(\Omega;\Nc) \) as \( \gamma \to 0 \).

Therefore, we may carry out the same reasoning as in the proof of Theorem~\ref{thm:density_class_R} by choosing a cubication \( \Kc^{m}_{\eta} \) such that 
\( \Omega \subset K^{m}_{\eta} \subset \Omega_{\gamma} \).

The other place in the proof of Theorem~\ref{thm:density_class_R} where we used a specific assumption on the domain is when we applied Lemma~\ref{lemma:interpolation_estimate},
because we needed convexity to justify the use of this lemma.
However, this may easily be bypassed.
Indeed, since we work on a dilated domain, by dilating slightly more if necessary, we may assume that \( u \in W^{s,p}(\widetilde{\Omega}) \) for some open set \( \widetilde{\Omega} \subset \R^{m} \) containing \( \overline{\Omega}_{\gamma} \).
It then suffices to apply instead Lemma~\ref{lemma:interpolation_estimate} to the map \( u\psi \in W^{s,p}(\R^{m}) \), where \( \psi \colon \R^{m} \to [0,1] \) is a smooth map such that \( \psi = 1 \) on \( \Omega_{\gamma} \) 
and \( \psi = 0 \) on \( \R^{m} \setminus \widetilde{\Omega} \).

Taking these modifications into consideration, the proof of Theorem~\ref{thm:density_class_R} above can be carried out the exact same way on any bounded domain \( \Omega \subset \R^{m} \) satisfying the segment condition.
This leads to the following result.

\begin{thm}
\label{thm:density_class_R_segment_condition}
    Let \( \Omega \subset \R^{m} \) be a bounded domain satisfying the segment condition.
    If \( sp < m \), then the class \( \Rc_{m-[sp]-1}(\Omega;\Nc) \) is dense in \( W^{s,p}(\Omega;\Nc) \).
\end{thm}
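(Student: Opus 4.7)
The plan is to reduce the statement to a careful reapplication of the proof of Theorem~\ref{thm:density_class_R} on cubications slightly larger than $\overline{\Omega}$, replacing the naive dilation argument by the diffeomorphism constructed in Lemma~\ref{lemma:dilation_segment_condition}. Given $u \in W^{s,p}(\Omega;\Nc)$ and $\gamma > 0$ small, I would set $u_{\gamma} = u \circ \upPhi_{\gamma}^{-1}$, which is well-defined on $\Omega_{\gamma} = \upPhi_{\gamma}^{-1}(\Omega) \supset \overline{\Omega}$. The convergence $D^{j}\upPhi_{\gamma} \to \id$ uniformly, together with a change-of-variable combined with the continuity of translations in $W^{s,p}$, gives $u_{\gamma} \to u$ in $W^{s,p}(\Omega;\Nc)$ as $\gamma \to 0$. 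It therefore suffices to approximate each $u_{\gamma}$, which enjoys the crucial feature that it is defined on a genuine open neighborhood of $\overline{\Omega}$.

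Next I would fix $\gamma > 0$ and choose $\eta > 0$ sufficiently small (depending on $\gamma$ and on $\dist(\overline{\Omega},\R^{m}\setminus\Omega_{\gamma})$) so that one can pick a cubication $\Kc^{m}_{\eta}$ with $\overline{\Omega} \subset K^{m}_{\eta} \subset \Omega_{\gamma}$. From this point the proof of Theorem~\ref{thm:density_class_R} transfers verbatim: define bad cubes through the local energy criterion~\eqref{eq:estimate_bad_cubes_class_R_sgeq1}--\eqref{eq:estimate_bad_cubes_class_R_sle1}, enlarge them to $\Uc^{m}_{\eta}$, successively apply opening (Proposition~\ref{prop:main_opening}), adaptative smoothing (Proposition~\ref{prop:adaptative_smoothing}) with the cutoff $\psi_{\eta}$ constructed at the end of Section~\ref{sect:adaptative_smoothing}, and thickening (Proposition~\ref{prop:main_thickening}), then compose with the nearest point projection $\upPi$. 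This produces a map $u_{\eta} \in \Rc_{m-[sp]-1}(K^{m}_{\eta};\Nc)$, whose restriction to $\Omega$ belongs to $\Rc_{m-[sp]-1}(\Omega;\Nc)$.

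The main obstacle is that the final convergence argument in the proof of Theorem~\ref{thm:density_class_R} invoked Lemma~\ref{lemma:interpolation_estimate} on $Q^{m}_{1+2\gamma}$, whose convexity is used in an essential way. To bypass this, I would fix an auxiliary open set $\tilde{\Omega}$ with $\overline{\Omega_{\gamma}} \subset \tilde{\Omega}$ (by dilating slightly further through Lemma~\ref{lemma:dilation_segment_condition} applied to $\Omega_{\gamma}$, whose boundary still satisfies the segment condition for small enough dilation) and pick $\chi \in \Cc^{\infty}_{c}(\tilde{\Omega})$ with $\chi = 1$ on $\Omega_{\gamma}$. Applying Lemma~\ref{lemma:interpolation_estimate} to the extension $\chi u_{\gamma} \in W^{s,p}(\R^{m})$ instead of $u_{\gamma}$ itself yields the very same Gagliardo--Nirenberg type bounds on $U^{m}_{\eta} + Q^{m}_{2\rho\eta} \subset \Omega_{\gamma}$, since $\chi u_{\gamma} = u_{\gamma}$ there. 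With this substitution, estimates~\eqref{eq:Diu_to_0_integer} and~\eqref{eq:Diu_to_0_frac}, together with the decay~\eqref{eq:decay_measure_bad_cubes} of the measure of the bad region (whose proof uses only the additivity/superadditivity of the $L^{p}$ or Gagliardo norm and hence is insensitive to the geometry of $\Omega$), go through unchanged.

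Combining the two limits $\eta \to 0$ (giving $u_{\eta} \to u_{\gamma}$ in $W^{s,p}(\Omega)$ with $u_{\eta} \in \Rc_{m-[sp]-1}(\Omega;\Nc)$) and then $\gamma \to 0$ (giving $u_{\gamma} \to u$) yields, by a diagonal extraction, the desired sequence in $\Rc_{m-[sp]-1}(\Omega;\Nc)$ converging to $u$ in $W^{s,p}(\Omega;\Nc)$. No step beyond the two modifications above is expected to require genuinely new ideas, the difficulty being purely of a bookkeeping nature in checking that none of the estimates from Sections~\ref{sect:opening}--\ref{sect:thickening} uses convexity or any regularity of $\partial\Omega$ stronger than the ability to fit $\overline{\Omega}$ inside a cubication contained in a neighborhood where $u$ is already defined.
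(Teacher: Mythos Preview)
Your proposal follows essentially the same route as the paper: replace the dilation by the diffeomorphism from Lemma~\ref{lemma:dilation_segment_condition}, choose a cubication $\Kc^{m}_{\eta}$ sandwiched between $\overline{\Omega}$ and $\Omega_{\gamma}$, rerun the proof of Theorem~\ref{thm:density_class_R} verbatim, and circumvent the convexity hypothesis of Lemma~\ref{lemma:interpolation_estimate} by multiplying with a cutoff supported in a slightly larger dilation $\tilde{\Omega}$. The only slip is a direction error in the definition of $u_{\gamma}$: since $\upPhi_{\gamma}(\overline{\Omega}) \subset \Omega$ and $\Omega_{\gamma} = \upPhi_{\gamma}^{-1}(\Omega)$, the map that is well-defined on $\Omega_{\gamma}$ is $u_{\gamma} = u \circ \upPhi_{\gamma}$, not $u \circ \upPhi_{\gamma}^{-1}$ (the latter would only be defined on $\upPhi_{\gamma}(\Omega) \subsetneq \Omega$); with this correction the argument goes through as you describe.
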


A second perspective of generalization for Theorem~\ref{thm:density_class_R} consists in replacing \( \Omega \) with a smooth manifold.
From now on, we assume that \( \Mc \) is a smooth, compact, connected Riemannian manifold of dimension \( m \), isometrically embedded in \( \R^{\tilde{\nu}} \) for some \( \tilde{\nu} \in \N \).
In the context where the domain is a smooth manifold, the suitable adaptation of the definition of the class \( \Rc \) is the following. 
We define the class \( \Rc_{i}(\Mc;\Nc) \) as the set of maps \( u \colon \Mc \to \Nc \) which are smooth on \( \Mc \setminus T \), where \( T \) is a finite union of \( i \)-dimensional
submanifolds of \( \Mc \), and such that for every \( j \in \N_{\ast} \) and \( x \in \Mc \setminus T \),
\[
    \lvert D^{j}u(x) \rvert \leq C\frac{1}{\dist{(x,T)}^{j}}
\]
for some constant \( C > 0 \) depending on \( u \) and \( j \).

Our next result is the following counterpart of Theorem~\ref{thm:density_class_R} when the domain is a smooth manifold.

\begin{thm}
\label{thm:density_class_R_manifold}
    If \( sp < m \), then the class \( \Rc_{m-[sp]-1}(\Mc;\Nc) \) is dense in \( W^{s,p}(\Mc;\Nc) \).
\end{thm}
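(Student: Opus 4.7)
The plan is to deduce Theorem~\ref{thm:density_class_R_manifold} from its Euclidean analog Theorem~\ref{thm:density_class_R_segment_condition} through a finite chart-by-chart patching argument. Fix a finite atlas $\{(U_{i},\varphi_{i})\}_{i=1}^{N}$ on $\Mc$ where each $\varphi_{i}\colon U_{i} \to \Omega_{i} \subset \R^{m}$ is a smooth diffeomorphism onto a bounded open set satisfying the segment condition, and choose shrinkings $V_{i} \subset\subset U_{i}$ whose union still covers $\Mc$. Because $\varphi_{i}$ and its inverse are smooth with uniformly bounded derivatives, composition with $\varphi_{i}^{\pm 1}$ gives bicontinuous bijections between the corresponding $W^{s,p}$ spaces and preserves the class $\Rc_{m-[sp]-1}$: a finite union of $(m-[sp]-1)$-dimensional planes is sent diffeomorphically to a finite union of $(m-[sp]-1)$-dimensional submanifolds, and the blow-up rate of the derivatives near the singular set is preserved.

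I would then proceed by induction on $i \in \{0,\ldots,N\}$. Starting from $u^{(0)} = u$, at step $i$ I construct $u^{(i)} \in W^{s,p}(\Mc;\Nc)$ that is $W^{s,p}$-close to $u^{(i-1)}$, coincides with $u^{(i-1)}$ outside an arbitrarily small neighborhood of $\overline{V_{i}}$ contained in $U_{i}$, and belongs to $\Rc_{m-[sp]-1}$ on a neighborhood of $V_{1} \cup \cdots \cup V_{i}$. The map $u^{(N)}$ then provides the desired approximation, the union of the singular sets contributed at each step remaining a finite union of $(m-[sp]-1)$-dimensional submanifolds of $\Mc$. To build $u^{(i)}$, transfer to the chart \emph{via} $\tilde{u} = u^{(i-1)} \circ \varphi_{i}^{-1}$ and run a localized version of the construction from the proof of Theorem~\ref{thm:density_class_R_segment_condition}. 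Fix an open set $\Omega_{i}'$ with $\varphi_{i}(\overline{V_{i}}) \subset \Omega_{i}' \subset\subset \Omega_{i}$ and perform the good/bad cube decomposition on a fine cubication of $\Omega_{i}$ with two localizations: classify as bad only those cubes that are both energy-bad \emph{and} meet $\overline{\Omega_{i}'}$, and choose the adaptive smoothing parameter $\psi_{\eta}$ with support in $\Omega_{i}'$. With these choices, opening, adaptive smoothing, and thickening are all confined to $\Omega_{i}'$, so the modified map coincides with $\tilde{u}$ on $\Omega_{i} \setminus \Omega_{i}'$; pulling back by $\varphi_{i}$ and extending by $u^{(i-1)}$ on $\Mc \setminus U_{i}$ defines $u^{(i)}$.

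The main obstacle is to verify that this localization preserves the key distance bound $\Dist_{\Nc}(u^{\thck}_{\eta}(K^{m}_{\eta} \setminus T^{[sp]^{\ast}}_{\eta})) \leq \iota$, which is needed for the final projection \emph{via} $\upPi$ to be well-defined. Outside $\Omega_{i}'$ the modified map equals $\tilde{u}$ and already takes values in $\Nc$, so the bound is trivial there; inside $\Omega_{i}'$ the analysis of Sections~\ref{sect:adaptative_smoothing} and~\ref{sect:thickening} applies essentially verbatim, since the good-cube estimates rely only on local smallness of the Sobolev energy (valid for good cubes meeting $\Omega_{i}'$) and the convergence estimate from the proof of Theorem~\ref{thm:density_class_R} uses only the total number of bad cubes, which is controlled by the Sobolev energy of $\tilde{u}$ and is unaffected by the localization. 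A secondary point is that the intrinsic $W^{s,p}$-norm on $\Mc$ is comparable to the sum of the pulled-back norms $\sum_{i} \lVert u \circ \varphi_{i}^{-1} \rVert_{W^{s,p}(\Omega_{i})}$ with constants depending only on the atlas, which transfers the chartwise convergence back to $\Mc$. A diagonal extraction over the scale parameter $\eta$ at each step then finishes the argument.
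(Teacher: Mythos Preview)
Your chart-by-chart patching differs substantially from the paper's route. The paper avoids localization entirely: it extends $u$ to a tubular neighborhood $\Omega = \Mc + B^{\tilde{\nu}}_{\iota/2}$ in the ambient $\R^{\tilde{\nu}}$ via $v = u \circ \upPi$, applies Theorem~\ref{thm:density_class_R_segment_condition} on $\Omega$ to obtain $v_{n} \in \Rc_{\tilde{\nu}-[sp]-1}(\Omega;\Nc)$, and then restricts back to $\Mc$ by a transversality-plus-slicing argument (Lemma~\ref{lemma:transversality}): for almost every small translate $a$, the trace $\tau_{a}(v_{n})_{\vert \Mc}$ lies in $\Rc_{m-[sp]-1}(\Mc;\Nc)$ and converges to $u$. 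The boundary case is then reduced to the closed case via a collar-neighborhood embedding.

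Your inductive scheme has a genuine gap at the interface between steps. After step $i-1$, the map $u^{(i-1)}$ is singular along some $(m-[sp]-1)$-submanifold $T^{(i-1)}$, and near $T^{(i-1)}$ the derivatives of $\tilde{u} = u^{(i-1)}\circ\varphi_{i}^{-1}$ blow up like $\dist(\cdot,\varphi_{i}(T^{(i-1)}))^{-1}$. A cube of radius $\eta$ meeting $\varphi_{i}(T^{(i-1)})$ therefore carries $L^{sp}$ gradient energy of order $\eta^{m-sp}$ --- exactly the scale of the bad-cube threshold --- for every small $\eta$, with a constant you do not control. When such an energy-bad cube lies just outside $\overline{\Omega_{i}'}$ it is forced ``good'' by your rule, yet it can be a neighbor of a genuine bad cube inside $\Omega_{i}'$, hence belong to $\Uc^{m}_{\eta}$ and be modified. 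On that cube the distance bound breaks: estimate~\eqref{eq:dist_sm_F_transition_sgeq1} and Proposition~\ref{prop:addendum2_opening} feed in $\lVert Du \rVert_{L^{sp}(\sigma^{m}+Q^{m}_{2\rho\eta})}$, which is not small, so $\Dist_{\Nc}(u^{\thck}_{\eta}(\cdot)) \leq \iota$ is not guaranteed and the nearest-point projection $\upPi$ cannot be applied. The same obstruction appears wherever $\psi_{\eta}$ must transition from order $\eta$ down to $0$ across a region met by the old singular set. You cannot cure this by shrinking $\Omega_{i}'$, since $T^{(i-1)}$ will generically cross $\partial\Omega_{i}'$; absorbing the old singularities into the bad set instead forces $\Uc^{m}_{\eta}$ to leave $\Omega_{i}'$ and destroys the localization on which your induction relies.
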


We first prove Theorem~\ref{thm:density_class_R_manifold} when \( \Mc \) has no boundary.
This allows us to rely on the nearest point projection onto \( \Mc \).
In the end, we shall briefly explain how to deduce the case with boundary from the case without boundary. 

Hence, we first assume that \( \Mc \) has no boundary.
Then, Theorem~\ref{thm:density_class_R_manifold} can be deduced from Theorem~\ref{thm:density_class_R_segment_condition}, by extending the function we want to approximate on a tubular neighborhood of \( \Mc \) 
and using a slicing argument.
The key observation is that, if \( \iota > 0 \) is the radius of a tubular neighborhood of \( \Mc \), then for every \( u \in W^{s,p}(\Mc;\Nc) \), the map \( v = u \circ \upPi \) belongs to \( W^{s,p}(\Mc + B^{\tilde{\nu}}_{\iota/2};\Nc) \).
Indeed, for any summable function \( w \colon \Mc \to [0,+\infty] \), we deduce from the coarea formula that
\[
	\int_{\Mc + B^{\tilde{\nu}}_{\iota/2}} w \circ \upPi
	\leq 
	\C\int_{\Mc}\biggl(\int_{\upPi^{-1}(x)\cap (\Mc + B^{\tilde{\nu}}_{\iota/2})} w(\upPi(y))\,\d \Ha^{\tilde{\nu}-m}(y)\biggr)\,\d x
	\leq 
	\C\iota^{\tilde{\nu}-m}\int_{\Mc} w
	<
	+\infty.
\]
The conclusion then follows from the theory of Fuglede maps presented in Section~\ref{sect:opening} (valid also for maps between manifolds, see~\cite{BPVS_screening}).
\resetconstant

To implement this strategy of extension and slicing, we need the following transversality result.

\begin{lemme}
\label{lemma:transversality}
    Let \( \Sigma \subset \R^{\tilde{\nu}} \) be an \( \ell \)-dimensional hyperplane.
    For almost every \( a \in \R^{\tilde{\nu}} \), the set \( \Mc \cap (\Sigma+a) \) is a smooth submanifold of \( \Mc \) of dimension \( m-\tilde{\nu}+\ell \) -- or the empty set if \( \ell < \tilde{\nu}-m \).
    Moreover, if \( \Mc \cap (\Sigma+a) \neq \varnothing \), then for every \( x \in \Mc \) and every \( a \) as above, we have 
    \[
    	\dist{(x,\Mc \cap (\Sigma+a))}
    	\leq 
    	C\dist{(x,\Sigma+a)},
    \]
    for some constant \( C > 0 \) depending on \( \Mc \), \( \Sigma \), and \( a \).
\end{lemme}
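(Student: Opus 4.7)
\medskip

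The plan is to realize $\Mc \cap (\Sigma + a)$ as a level set of the orthogonal projection onto the complement of $\Sigma$, and then deduce the smooth structure from Sard's theorem and the distance bound from transversality combined with compactness.

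More precisely, let $V = \Sigma^{\perp}$, of dimension $\tilde{\nu}-\ell$, and let $\pi \colon \R^{\tilde{\nu}} \to V$ denote the orthogonal projection. The key observation is that $x \in \Mc \cap (\Sigma + a)$ if and only if $x \in \Mc$ and $\pi(x) = \pi(a)$, so that $\Mc \cap (\Sigma+a) = (\pi|_{\Mc})^{-1}(\pi(a))$. The map $\pi|_{\Mc} \colon \Mc \to V$ is smooth between manifolds of dimension $m$ and $\tilde{\nu}-\ell$, so by Sard's theorem its critical values form a set of $\Ha^{\tilde{\nu}-\ell}$-measure zero in $V$. By Fubini (writing $\R^{\tilde{\nu}} = \Sigma \oplus V$), the set of $a \in \R^{\tilde{\nu}}$ such that $\pi(a)$ is a critical value of $\pi|_{\Mc}$ has Lebesgue measure zero in $\R^{\tilde{\nu}}$. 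For any other $a$, $\pi(a)$ is a regular value, hence either the preimage is empty (which must happen when $\tilde{\nu}-\ell > m$, i.e., $\ell < \tilde{\nu}-m$) or, by the regular value theorem, $\Mc \cap (\Sigma+a) = (\pi|_{\Mc})^{-1}(\pi(a))$ is a smooth submanifold of $\Mc$ of codimension $\tilde{\nu}-\ell$, that is, of dimension $m-\tilde{\nu}+\ell$.

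The distance inequality is the more delicate point, but follows by a standard transversality plus compactness argument. First, I would observe that for every $x \in \R^{\tilde{\nu}}$, $\dist(x, \Sigma+a) = \lvert \pi(x) - \pi(a) \rvert$. Fix $a$ as above with $\Mc \cap (\Sigma+a) \neq \varnothing$. For any $y \in \Mc \cap (\Sigma+a)$, regularity of $\pi(a)$ means $\mathrm{d}(\pi|_{\Mc})_{y}$ is surjective onto $V$; hence $T_{y}\Mc$ is transverse to $\Sigma$ in $\R^{\tilde{\nu}}$. By the implicit function theorem, there exist a neighborhood $U_{y}$ of $y$ in $\Mc$ and a constant $C_{y} > 0$ such that
\[
    \dist_{\Mc}(x, \Mc \cap (\Sigma+a)) \leq C_{y}\lvert \pi(x) - \pi(a) \rvert
    \quad \text{for every } x \in U_{y}.
\]
Since $\Mc \cap (\Sigma+a)$ is compact (as a closed subset of the compact $\Mc$), finitely many such neighborhoods $U_{y_{1}}, \dots, U_{y_{N}}$ cover it, and their union contains an $\varepsilon$-neighborhood (for the Euclidean distance in $\R^{\tilde{\nu}}$) of $\Mc \cap (\Sigma+a)$ in $\Mc$ for some $\varepsilon > 0$. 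For points $x \in \Mc$ outside that neighborhood, the distance $\dist(x, \Sigma+a)$ is bounded below by a positive constant (since $x \mapsto \dist(x, \Sigma+a)$ is continuous on the compact set $\Mc$ and vanishes only on $\Mc \cap (\Sigma+a)$), while $\dist(x, \Mc \cap (\Sigma+a))$ is uniformly bounded above by $\diam \Mc$, so the inequality holds up to enlarging the constant. Putting both estimates together yields the required global bound, with a constant depending on $\Mc$, $\Sigma$, and $a$ (through the transversality constants at the points of $\Mc \cap (\Sigma+a)$).

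The main obstacle I anticipate is the distance inequality rather than the smoothness: the smoothness is an essentially routine consequence of Sard's theorem, but a careless application of transversality would only yield a local estimate, and one has to genuinely use the compactness of $\Mc$ and of the intersection, together with a dichotomy between points near and far from $\Mc \cap (\Sigma+a)$, to upgrade it to the global bound stated in the lemma.
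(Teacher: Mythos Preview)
Your proof is correct and takes essentially the same approach as the paper. The only presentational differences are that the paper applies Sard to the map \( (x,z)\mapsto x-z \) on \( \Mc\times\Sigma \) rather than to the projection \( \pi|_{\Mc} \), and makes the local distance estimate explicit via a Lipschitz graph parametrization of \( \Mc \) near each intersection point rather than citing the implicit function theorem; the compactness argument and the near/far dichotomy for the global bound are identical.
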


Taking Lemma~\ref{lemma:transversality} for granted, we prove Theorem~\ref{thm:density_class_R_manifold}.

\begin{proof}[Proof of Theorem~\ref{thm:density_class_R_manifold} when \( \Mc \) has no boundary]
    Let \( u \in W^{s,p}(\Mc;\Nc) \).
    Let \( \iota > 0 \) be the radius of a tubular neighborhood of \( \Mc \), and let \( \upPi \colon \Mc + B^{\tilde{\nu}}_{\iota} \to \Mc \) be the nearest point projection.
    We define \( \Omega = \Mc + B^{\tilde{\nu}}_{\iota/2} \), which is a smooth bounded open subset of \( \R^{\tilde{\nu}} \).
    As explained above, the map \( v = u \circ \upPi \) belongs to \( W^{s,p}(\Omega;\Nc) \).
    Therefore, Theorem~\ref{thm:density_class_R_segment_condition} ensures the existence of a sequence \( (v_{n})_{n \in \N} \) of maps in \( \Rc_{\tilde{\nu}-[sp]-1}(\Omega;\Nc) \) 
    converging to \( v \) in \( W^{s,p}(\Omega) \) as \( n \to +\infty \).
    Invoking Lemma~\ref{lemma:transversality}, we deduce that for almost every \( a \in B^{\tilde{\nu}}_{\iota/2} \), the map \( u_{n,a} = \tau_{a}(v_{n})_{\vert \Mc} \colon \Mc \to \Nc \) belongs to the class \( \Rc_{m-[sp]-1}(\Mc;\Nc) \).
    Here, we recall that the translation \( \tau_{a}(v_{n}) \) is defined by \( \tau_{a}(v_{n})(x) = v_{n}(x+a) \).
    Indeed, the first part of Lemma~\ref{lemma:transversality} ensures that the singular set of \( u_{n,a} \) is as in the definition of the class \( \Rc_{m-[sp]-1} \).
    On the other hand, the distance estimate in Lemma~\ref{lemma:transversality} implies that the estimates on the derivatives of \( u_{n,a} \) are satisfied.
    
    Moreover, using a slicing argument, we find that for almost every \( a \in B^{\tilde{\nu}}_{\iota/2} \), up to extraction of a subsequence, \( (u_{n,a})_{n \in \N} \) converges in \( W^{s,p}(\Mc) \) to the map \( \tau_{a}(v)_{\vert \Mc} \).
    This can be seen, for instance, using the theory of Fuglede maps presented in Section~\ref{sect:opening}.
    Indeed, consider a summable map \( w \colon \Omega \to [0,+\infty] \), and let \( i \colon \Mc \to \Omega \) be the inclusion map.
    We observe that \( \tau_{a}(v)_{\vert \Mc} = v \circ (i+a) \), and we estimate 
    \[
        \int_{B^{\tilde{\nu}}_{\iota/2}} \int_{\Mc} w(i(x)+a)\,\d x \d a
        = \int_{\Mc} \int_{B^{\tilde{\nu}}_{\iota/2}(x)} w(a)\,\d a\d x
        \leq \lvert \Mc \rvert \lVert w \rVert_{L^{1}(\Omega)}
        < +\infty.
    \]
    Therefore, for almost every \( a \in B^{\tilde{\nu}}_{\iota/2} \), \( w \circ (i+a) \) is summable on \( \Mc \).
    If we now choose \( w \) to be a detector for \( W^{s,p} \) convergence, then, up to a subsequence independent of \( a \), we have 
    \[
    	u_{n,a} = v_{n} \circ (i+a) \to v \circ (i+a) = \tau_{a}(v)_{\vert \Mc}
    	\quad
    	\text{in \( W^{s,p}(\Mc) \) as \( n \to +\infty \).}
    \]
    
    On the other hand, by the continuity of translations in \( W^{s,p} \), we know that \( \tau_{a}(v)_{\vert \Mc} \to v_{\vert \Mc} = u \) in \( W^{s,p}(\Mc) \) as \( a \to 0 \) (more precisely, we should rely on an argument in the spirit of~\cite{BPVS_screening}*{Proposition~2.4}, since there is again a slicing involved here).
    
    We conclude the proof by invoking a diagonal argument: choosing a suitable sequence \( (a_{n})_{n \in \N} \) in \( B^{\tilde{\nu}}_{\iota/2} \) such that \( a_{n} \to 0 \),
    the maps \( u_{n} = u_{n,a_{n}} \) belong to \( \Rc_{m-[sp]-1}(\Mc;\Nc) \) and converge to \( u \) in \( W^{s,p}(\Mc) \) as \( n \to +\infty \).
\end{proof}

We now prove Lemma~\ref{lemma:transversality}.

\begin{proof}[Proof of Lemma~\ref{lemma:transversality}]
    Define \( \upPsi \colon \Mc \times \Sigma \to \R^{\tilde{\nu}} \) by 
    \[
        \upPsi(x,z) = x-z.
    \]
    The map \( \upPsi \) is a smooth map between smooth manifolds.
    Therefore, Sard's lemma ensures that for almost every \( a \in \R^{\tilde{\nu}} \), the linear map 
    \( D\upPsi(x,z) \colon T_{x}\Mc \times T_{z}\Sigma \to \R^{\tilde{\nu}} \) is surjective for every \( (x,z) \in \upPsi^{-1}(\{a\}) \).
    For such \( a \), we compute 
    \[
        \R^{\tilde{\nu}} 
        = D\upPsi(x,z)[T_{x}\Mc \times T_{z}\Sigma]
        = T_{x}\Mc + T_{z}\Sigma.
    \]
    Moreover we observe that \( (x,z) \in \upPsi^{-1}(\{a\}) \) if and only if \( x-z = a \).
    This shows that 
    \[
    	\R^{\tilde{\nu}} = T_{x}\Mc + T_{z}\Sigma
    	\quad
    	\text{for every \( x = z+a \in \Sigma+a \).}
    \]
    Otherwise stated, for almost every \( a \in \R^{\tilde{\nu}} \), \( \Mc \) and \( \Sigma+a \) are transversal,
    which implies that \( \Mc \cap (\Sigma+a) \) is a smooth submanifold of \( \Mc \) of dimension \( m-\tilde{\nu}+\ell \);
    see e.g.~\cite{warner}*{Theorem~1.39}.
    This concludes the proof of the first part of the lemma.
    
    We now turn to the distance estimate.
    Without loss of generality, we may restrict ourselves to prove the estimate when \( a = 0 \).
    Let \( y \in \Mc \cap \Sigma \).
    Since \( \Mc \) and \( \Sigma \) intersect transversely, after a suitable rotation followed by a translation -- which do not modify the distances -- we may assume that \( y = 0 \), and that there exist \( \delta > 0 \) and \( h > 0 \) such that 
    \( \Sigma = \{0\}^{\tilde{\nu}-\ell} \times \R^{\ell} \) and \( \Mc \cap (B_{\delta}^{m} \times (-h,h)^{\tilde{\nu}-m}) \) is the graph of a smooth map \( \phi \colon B_{\delta}^{m} \to (-h,h)^{\tilde{\nu}-m} \).
    Denote by \( \pi_{2} \colon \R^{\tilde{\nu}}  \to \R^{\ell} \) the projection onto the \( \ell \) last variables, which corresponds to the orthogonal projection onto \( \Sigma \), and by \( \pi_{1} \colon \R^{\tilde{\nu}} \to \R^{m} \) the projection onto the \( m \) first variables.
    We observe that, for \( x=(\pi_{1}(x),\phi(\pi_{1}(x))) \in \Mc \cap (B_{\delta}^{m} \times (-h,h)^{\tilde{\nu}-m}) \), we have 
    \[
    	\dist{(x,\Sigma)}
    	= 
    	\lvert x-(0,\pi_{2}(x)) \rvert,
    \]
    while 
    \begin{align*}
    	\dist{(x,\Mc \cap \Sigma)}
    	&\leq
    	\lvert (\pi_{1}(x),\phi(\pi_{1}(x))) - (\pi_{1}((0,\pi_{2}(x))),\phi(\pi_{1}((0,\pi_{2}(x)))))\rvert \\
    	&\leq 
    	(1+\lvert \phi \rvert_{\Cc^{0,1}})\dist{(x,\Sigma)}.
    \end{align*}

	We conclude by using a finite covering argument.
	Indeed, since \( \Mc\cap \Sigma \) is compact, we may cover it by a finite number of cylindrical domains as above.
	We obtain a neighborhood \( U_{\varepsilon} = \Mc\cap\Sigma + B^{\tilde{\nu}}_{\varepsilon} \) of radius \( \varepsilon > 0 \) such that, for every \( x \in U_{\varepsilon} \), we have 
	\[
		\dist{(x,\Mc \cap \Sigma)}
		\leq 
		\C\dist{(x,\Sigma)}.
	\]
	On the other hand, for points \( x \in \Mc \) with \( x \notin U_{\varepsilon} \), we have \( \dist{(x,\Sigma)} \geq \C > 0 \),
	while \( \dist{(x,\Mc \cap \Sigma)} \leq \C \) since \( \Mc \cap \Sigma \neq \varnothing \).
	This completes the proof of the lemma.
\end{proof}

Finally, we give the proof of Theorem~\ref{thm:density_class_R_manifold} in the case where \( \Mc \) has non-empty boundary.

\begin{proof}[Proof of Theorem~\ref{thm:density_class_R_manifold} when \( \Mc \) has non-empty boundary]
	The key idea is to view \( \Mc \), or more precisely any compact subset of the interior of \( \Mc \), as a subset of a smooth manifold \emph{without} boundary, embedded in \( \R^{\tilde{\nu}} \times \R \), identifying \( \R^{\tilde{\nu}} \) with \( \R^{\tilde{\nu}} \times \{0\} \).
	For this, we rely on~\cite{BPVS_density_complete_manifolds}*{Lemma~3.4}, which is a consequence of the collar neighborhood theorem.
	
	Let \( K \) be any compact subset in the relative interior of \( \Mc \).
	From~\cite{BPVS_density_complete_manifolds}*{Lemma~3.4}, we deduce that there exists a smooth compact submanifold \( \tilde{\Mc} \) of \( \R^{\tilde{\nu}} \times \R \), without boundary, such that 
	\[
		K \times \{0\} \subset \tilde{\Mc} 
		\quad
		\text{and}
		\quad
		\pi(\tilde{\Mc}) \subset \Mc,
	\]
	where \( \pi \colon \R^{\tilde{\nu}} \times \R \to \R^{\tilde{\nu}} \) is the projection onto the first \( \tilde{\nu} \) variables.
	
	Let \( u \in W^{s,p}(\Mc;\Nc) \).
	The map \( v = u \circ \pi \) belongs to \( W^{s,p}(\tilde{\Mc},\Nc) \).
	Hence, by Theorem~\ref{thm:density_class_R_manifold} for manifolds without boundary, there exists a sequence \( (v_{n}^{K})_{n \in \N} \) of maps in \( \Rc_{m-[sp]-1}(\tilde{\Mc};\Nc) \) such that \( v_{n}^{K} \to v \) in \( W^{s,p}(\tilde{\Mc}) \).
	In particular, \( (v_{n}^{K})_{\vert K} \to u_{\vert K} \) in \( W^{s,p}(K) \).
	
	Now, we observe that, for every \( \varepsilon > 0 \) sufficiently small, if we take \( K = K_{\varepsilon} \) such that \( \Mc \setminus K_{\varepsilon} \) is contained in a uniform neighborhood of radius \( \varepsilon \) of \( \partial\Mc \), then \( u_{\vert K_{\varepsilon}} \) may be dilated to a map \( u_{\varepsilon} \in W^{s,p}(\Mc;\Nc) \). 
	Moreover, if we denote by \( u_{n,\varepsilon} \) the corresponding dilations of the maps \( (v_{n}^{K_{\varepsilon}})_{\vert K_{\varepsilon}} \), we have both 
	\[
		u_{\varepsilon} \to u
		\quad
		\text{as \( \varepsilon \to 0 \)}
		\quad
		\text{and}
		\quad
		u_{n,\varepsilon} \to u_{\varepsilon}
		\quad
		\text{as \( n \to +\infty \).}
	\]
	We conclude using a diagonal argument.
\end{proof}

\section{Shrinking}
\label{sect:shrinking}

This section is dedicated to the shrinking procedure.
As we explained in Section~\ref{sect:sketch_proof}, shrinking is actually a more involved version of a scaling argument, whose purpose is to modify a given map in order to obtain a better map whose energy is controlled.
The main result of this section is the following proposition, counterpart of~\cite{BPVS_density_higher_order}*{Proposition~8.1} in the fractional setting, which provides the shrinking construction.
We emphasize that, similar to thickening but unlike opening, the map \( \upPhi \) does not depend on the map \( u \in W^{s,p} \) it shall be composed with.

\begin{prop}
\label{prop:main_shrinking}
    Let \( \ell \in \{0,\dots,m-1\} \), \( \eta > 0 \), \( 0 < \mu < \frac{1}{2} \), \( 0 < \tau < \frac{1}{2} \), 
    \( \Kc^{m} \) be a cubication in \( \R^{m} \) of radius \( \eta \), and \( \Tc^{\ell^{\ast}} \) be the dual skeleton of \( \Kc^{\ell} \).
    There exists a smooth map \( \upPhi \colon \R^{m} \to \R^{m} \) such that
    \begin{enumerate}[label=(\roman*)]
        \item\label{item:main_shrinking_injective} \( \upPhi \) is injective;
        \item\label{item:main_shrinking_geometric} for every \( \sigma^{m} \in \Kc^{m} \), \( \upPhi(\sigma^{m}) \subset \sigma^{m} \);
        \item\label{item:main_shrinking_support} \( \Supp\upPhi \subset T^{\ell^{\ast}} + Q^{m}_{2\mu\eta} \) and \( \upPhi(T^{\ell^{\ast}}+Q^{m}_{\tau\mu\eta}) \supset T^{\ell^{\ast}}+Q^{m}_{\mu\eta} \).
    \end{enumerate}
    If in addition \( \ell+1 > sp \), then for every \( u \in W^{s,p}(K^{m};\R^{\nu}) \) and every \( v \in W^{s,p}(K^{m};\R^{\nu}) \) such that 
    \( u = v \) on the complement of \( T^{\ell^{\ast}}+Q^{m}_{\mu\eta} \), we have \( u \circ \upPhi \in W^{s,p}(K^{m};\R^{\nu}) \), and moreover, the following estimates hold:
    \begin{enumerate}[label=(\alph*)]
    	\item\label{item:estimate_main_shrinking_sle1} if \( 0 < s < 1 \), then
	    	\begin{multline*}
		    	(\mu\eta)^{s}\lvert u\circ\upPhi - v \rvert_{W^{s,p}(K^{m})} \\
		    	\leq
		    	C\Bigl((\mu\eta)^{s} \lvert u \rvert_{W^{s,p}(K^{m} \cap (T^{\ell^{\ast}}+Q^{m}_{2\mu\eta}) \setminus (T^{\ell^{\ast}}+Q^{m}_{\mu\eta}))} + \lVert u \rVert_{L^{p}(K^{m} \cap (T^{\ell^{\ast}}+Q^{m}_{2\mu\eta}) \setminus (T^{\ell^{\ast}}+Q^{m}_{\mu\eta}))}\Bigr) \\
		    	+ C\tau^{\frac{\ell+1-sp}{p}}\Bigr((\mu\eta)^{s}\lvert u \rvert_{W^{s,p}(K^{m} \cap (T^{\ell^{\ast}}+Q^{m}_{2\mu\eta}))} + \lVert u \rVert_{L^{p}(K^{m} \cap (T^{\ell^{\ast}}+Q^{m}_{2\mu\eta}))} \Bigr) \\
		    	+  C(\mu\eta)^{s} \lvert v \rvert_{W^{s,p}(K^{m} \cap (T^{\ell^{\ast}}+Q^{m}_{2\mu\eta}))} + C\lVert v \rVert_{L^{p}(K^{m} \cap (T^{\ell^{\ast}}+Q^{m}_{2\mu\eta}))};
	    	\end{multline*}
	    \item\label{item:estimate_main_shrinking_sgeq1_integer} if \( s \geq 1 \), then for every \( j \in \{1,\dots,k\} \),
	    	\begin{multline*}
		    	(\mu\eta)^{j}\lVert D^{j}(u\circ\upPhi) - D^{j}v \rVert_{L^{p}(K^{m})}
		    	\leq
		    	C\sum_{i=1}^{j}(\mu\eta)^{i} \lVert D^{i}u \rVert_{L^{p}(K^{m} \cap (T^{\ell^{\ast}}+Q^{m}_{2\mu\eta}) \setminus (T^{\ell^{\ast}}+Q^{m}_{\mu\eta}))} \\
		    	+ C\tau^{\frac{\ell+1-sp}{p}}\sum_{i=1}^{j} (\mu\eta)^{i}\lVert D^{i}u \rVert_{L^{p}(K^{m} \cap (T^{\ell^{\ast}}+Q^{m}_{2\mu\eta}))}
		    	+ C(\mu\eta)^{j} \lVert D^{j}v \rVert_{L^{p}(K^{m} \cap (T^{\ell^{\ast}}+Q^{m}_{2\mu\eta}))};
	    	\end{multline*}
	    \item\label{item:estimate_main_shrinking_sgeq1_frac} if \( s \geq 1 \) and \( \sigma \neq 0 \), then for every \( j \in \{1,\dots,k\} \),
	    	\begin{multline*}
		    	(\mu\eta)^{j+\sigma}\lvert D^{j}(u\circ\upPhi) - D^{j}v \rvert_{W^{\sigma,p}(K^{m})} \\
		    	\leq
		    	C\sum_{i=1}^{j}\Bigl((\mu\eta)^{i} \lVert D^{i}u \rVert_{L^{p}(K^{m} \cap (T^{\ell^{\ast}}+Q^{m}_{2\mu\eta}) \setminus (T^{\ell^{\ast}}+Q^{m}_{\mu\eta}))} + (\mu\eta)^{i+\sigma} \lvert D^{i}u \rvert_{W^{\sigma,p}(K^{m} \cap (T^{\ell^{\ast}}+Q^{m}_{2\mu\eta}) \setminus (T^{\ell^{\ast}}+Q^{m}_{\mu\eta}))}\Bigr) \\
		    	+ C\tau^{\frac{\ell+1-sp}{p}}\sum_{i=1}^{j}\Bigr((\mu\eta)^{i}\lVert D^{i}u \rVert_{L^{p}(K^{m} \cap (T^{\ell^{\ast}}+Q^{m}_{2\mu\eta}))} + (\mu\eta)^{i+\sigma}\lvert D^{i}u \rvert_{W^{\sigma,p}(K^{m} \cap (T^{\ell^{\ast}}+Q^{m}_{2\mu\eta}))} \Bigr) \\
		    	+ C(\mu\eta)^{j} \lVert D^{j}v \rVert_{L^{p}(K^{m} \cap (T^{\ell^{\ast}}+Q^{m}_{2\mu\eta}))} + C(\mu\eta)^{j+\sigma} \lvert D^{j}v \rvert_{W^{\sigma,p}(K^{m} \cap (T^{\ell^{\ast}}+Q^{m}_{2\mu\eta}))};
	    	\end{multline*}
	    \item\label{item:estimate_main_shrinking_all} for \( 0 < s < +\infty \),
	    	\begin{multline*}
		    	\lVert u \circ \upPhi - v \rVert_{L^{p}(K^{m})}
		    	\leq 
		    	C\lVert u \rVert_{L^{p}(K^{m} \cap (T^{\ell^{\ast}}+Q^{m}_{2\mu\eta}) \setminus (T^{\ell^{\ast}}+Q^{m}_{\mu\eta}))} \\
		    	+ C\tau^{\frac{\ell+1-sp}{p}}\lVert u \rVert_{L^{p}(K^{m} \cap (T^{\ell^{\ast}}+Q^{m}_{2\mu\eta}))} 
		    	+ C\lVert v \rVert_{L^{p}(K^{m} \cap (T^{\ell^{\ast}}+Q^{m}_{2\mu\eta}))};
	    	\end{multline*}
    \end{enumerate}
    for some constant \( C > 0 \) depending on \( m \), \( s \), and \( p \).
\end{prop}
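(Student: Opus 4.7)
The plan is to mimic blockwise the construction of Section~\ref{sect:thickening}, with the essential modification that the building block associated with each face of $\Tc^{\ell^{\ast}}$ performs a linear dilation of ratio $1/\tau$ in the directions normal to that face (rather than an expansion away from the singular skeleton, as is the case for thickening). This modification will produce the small factor $\tau^{(\ell+1-sp)/p}$ in the estimates via a direct change of variable on the core, while the estimates on the transition annulus and the exterior will follow from routine adaptations of Proposition~\ref{prop:block_thickening_analytic}.

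For the geometric construction, I would fix a smooth nondecreasing function $\lambda\colon [0,+\infty) \to [\tau,1]$ satisfying $\lambda(r) = \tau$ for $0 \leq r \leq \tau\mu\eta$, $\lambda(r) = 1$ for $r \geq 2\mu\eta$, and such that $r \mapsto r/\lambda(r)$ is smooth and strictly increasing. For each $\sigma^{\ell^{\ast}} \in \Tc^{\ell^{\ast}}$, pick an isometry of $\R^{m}$ identifying $\sigma^{\ell^{\ast}}$ with $\{0\}^{\ell+1} \times Q^{\ell^{\ast}}_{\eta/2}$, and in the resulting local coordinates $(z,y) \in \R^{\ell+1} \times \R^{\ell^{\ast}}$ define the building block by
\[
\upPhi_{\sigma^{\ell^{\ast}}}(z,y) = \Bigl(\frac{z}{\lambda(\lvert z \rvert)},\, y\Bigr),
\]
where $\lvert \cdot \rvert$ denotes a smoothed $\infty$-norm on $\R^{\ell+1}$, analogous to the one used in~\eqref{eq:definition_zeta_thickening}. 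The global map $\upPhi$ is obtained by pasting these blocks and extending by the identity; the supports of the blocks are pairwise disjoint thanks to $\mu < \frac{1}{2}$. Injectivity, the inclusion $\upPhi(\sigma^{m}) \subset \sigma^{m}$ for every $\sigma^{m} \in \Kc^{m}$ (which holds because the dilation only acts on the $(\ell+1)$-dimensional directions transverse to $\sigma^{\ell^{\ast}}$), the support condition $\Supp\upPhi \subset T^{\ell^{\ast}}+Q^{m}_{2\mu\eta}$, and the core condition $\upPhi(T^{\ell^{\ast}}+Q^{m}_{\tau\mu\eta}) \supset T^{\ell^{\ast}}+Q^{m}_{\mu\eta}$ then all follow directly from the construction.

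The Sobolev estimates will be first established on a single building block and then glued via additivity of the integral together with Lemma~\ref{lemma:fractional_additivity}, applied with safety margin of order $\mu\eta$ between adjacent blocks, exactly as in the proofs of Propositions~\ref{prop:main_opening} and~\ref{prop:main_thickening}. For a single block, the first step is to decompose the support region into the core $\{\lvert z \rvert \leq \tau\mu\eta\}$ and the transition annulus $\{\tau\mu\eta \leq \lvert z \rvert \leq 2\mu\eta\}$. On the core, $\upPhi$ reduces to the affine map $(z,y) \mapsto (z/\tau,y)$, and the change of variable $w = z/\tau$ yields, for every $j \in \{0,\dots,k\}$,
\[
\int_{\{\lvert z\rvert \leq \tau\mu\eta\}} \lvert D^{j}(u \circ \upPhi)\rvert^{p}\,\d z\d y
\leq C\tau^{\ell+1-jp}\int_{\{\lvert w\rvert \leq \mu\eta\}} \lvert D^{j}u\rvert^{p}\,\d w\d y,
\]
with an analogous inequality for the Gagliardo seminorm in which $\ell+1-jp$ is replaced by $\ell+1-(j+\sigma)p$. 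Since $j+\sigma \leq s$ and $\tau < 1$, one has $\tau^{(\ell+1-(j+\sigma)p)/p} \leq \tau^{(\ell+1-sp)/p}$, which produces the small factor on the second line of the estimates. On the transition annulus, the derivatives of $\upPhi$ are bounded by $C(\mu\eta)^{1-j}$ uniformly in $\tau$, and $\jac\upPhi$ is bounded from below by a positive constant; hence Proposition~\ref{prop:block_thickening_analytic} (whose proof only uses such pointwise bounds together with a mean-value inequality of the form~\eqref{eq:mean_value_thickening}, readily verified for the present $\upPhi$) applies here and produces the first-line contribution. The $v$-terms on the third line then arise by writing $u \circ \upPhi - v = (u \circ \upPhi - u) + (u - v)$ and using $u = v$ outside $T^{\ell^{\ast}}+Q^{m}_{\mu\eta}$ to confine the second summand to the region where it contributes to the $v$-norms.

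The main obstacle will be the fractional estimate in the regime $s \geq 1$ with $\sigma \neq 0$: one must control the part of the double integral coming from pairs $(x,y)$ with $x$ in the core and $y$ in the transition annulus, since $D\upPhi$ jumps from order $1/\tau$ to order $1$ across the interface. The plan is to decompose $D^{j}(u \circ \upPhi)(x) - D^{j}(u \circ \upPhi)(y)$ via the Faà di Bruno formula exactly as in the proof of Proposition~\ref{prop:block_thickening_analytic}, and for the resulting terms involving $\lvert D^{t}\upPhi(x) - D^{t}\upPhi(y)\rvert$ apply the optimization argument used there, splitting the $y$-integral at a radius of order $\tau\mu\eta$. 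Integrability of the singular factors in this computation is ensured precisely by the hypothesis $\ell+1 > sp$, mirroring the role it plays in the thickening proposition.
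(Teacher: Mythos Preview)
Your single-step construction has a genuine gap: the building blocks around distinct faces of \( \Tc^{\ell^{\ast}} \) do \emph{not} have disjoint supports. Faces of the dual skeleton meet along lower-dimensional subfaces (for instance, when \( m=2 \) and \( \ell=0 \), the four dual edges inside a single cube meet at the cube center, and collinear dual edges from adjacent cubes share a midpoint). The tubular neighborhoods \( \sigma^{\ell^{\ast}}+Q^{m}_{2\mu\eta} \) around adjacent faces therefore overlap near these intersections regardless of how small \( \mu \) is. Worse, at an intersection point the maps you define for the incident faces disagree, since each dilates only in the directions transverse to its own face; pasting them would produce a discontinuous map, and composing them would destroy the support and injectivity properties.

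The paper avoids this by a \emph{downward induction over dimensions}: for each \( d=m,m-1,\dots,\ell+1 \), one applies a building block \( \upPhi_{\sigma^{d}} \) around every face \( \sigma^{d}\in\Kc^{d} \) (not around faces of \( \Tc^{\ell^{\ast}} \)), with parameters chosen so that the supports at a fixed \( d \) are pairwise disjoint. The step at dimension \( d \) shrinks a neighborhood of \( T^{m-d} \) into a smaller one, and the next step at dimension \( d-1 \) handles the part along the \( (m-d+1) \)-faces of the dual skeleton that the previous step could not reach. The analytic estimates for each block (Proposition~\ref{prop:block_shrinking_analytic}) are proved much as you sketch---splitting into core and complement and using two different Jacobian bounds---but the bookkeeping of which neighborhood of which \( T^{m-d} \) is removed at each inductive step is essential and is what your proposal is missing.
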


For integer order estimates, we could avoid mentioning the map \( v \) in the statement of Proposition~\ref{prop:main_shrinking}
and only establish energy estimates for \( u \circ \upPhi \) alone on \( K^{m} \cap (T^{\ell^{\ast}}+Q^{m}_{2\mu\eta}) \), 
as in~\cite{BPVS_density_higher_order}, as the estimates above then follow from the assumption \( u = v \) outside of \( T^{\ell^{\ast}}+Q^{m}_{2\mu\eta} \) 
using the additivity of the integral.
However, for fractional order estimates, we face the usual problem linked to the lack of additivity of the Gagliardo seminorm.

We pause here to explain how Proposition~\ref{prop:main_shrinking} will be used in the proof of Theorem~\ref{thm:density_smooth_functions}.
Given \( u \) and \( v \) as above, Proposition~\ref{prop:main_shrinking} allows us to control, \emph{via} a suitable choice of \( \tau > 0 \), the energy of \( u \circ \upPhi \) in terms of the energy of \( v \) alone.
Indeed, given \( \mu > 0 \) and \( \varepsilon > 0 \), if we choose \( \tau_{\mu} \) sufficiently small -- depending on \( u \) and \( v \) -- then, using the fact that \( u = v \) outside of \( T^{\ell^{\ast}} + Q^{m}_{\mu\eta} \),
we find
\begin{enumerate}[label=(\alph*)]
	\item\label{item:below_main_shrinking_sle1} if \( 0 < s < 1 \), then 
		\begin{multline*}
			(\mu\eta)^{s}\lvert u\circ\upPhi - v \rvert_{W^{s,p}(K^{m})} \\
			\leq
			C'\Bigl((\mu\eta)^{s}\lvert v \rvert_{W^{s,p}(K^{m} \cap (T^{\ell^{\ast}}+Q^{m}_{2\mu\eta}))} + \lVert v \rVert_{L^{p}(K^{m} \cap (T^{\ell^{\ast}}+Q^{m}_{2\mu\eta}))} \Bigr) + \varepsilon;
		\end{multline*}
	\item\label{item:below_main_shrinking_sgeq1_integer} if \( s \geq 1 \), then for every \( j \in \{1,\dots,k\} \),
		\[
			(\mu\eta)^{j}\lVert D^{j}(u\circ\upPhi) - D^{j}v \rVert_{L^{p}(K^{m})}
			\leq 
			C'\sum_{i=1}^{j} (\mu\eta)^{i}\lVert D^{i}v \rVert_{L^{p}(K^{m} \cap (T^{\ell^{\ast}}+Q^{m}_{2\mu\eta}))} + \varepsilon;
		\]
	\item\label{item:below_main_shrinking_sgeq1_frac} if \( s \geq 1 \) and \( \sigma \neq 0 \), then for every \( j \in \{1,\dots,k\} \),
		\begin{multline*}
			(\mu\eta)^{j+\sigma}\lvert D^{j}(u\circ\upPhi) - D^{j}v \rvert_{W^{\sigma,p}(K^{m})} \\
			\leq
			C'\sum_{i=1}^{j}\Bigl((\mu\eta)^{i}\lVert D^{i}v \rVert_{L^{p}(K^{m} \cap (T^{\ell^{\ast}}+Q^{m}_{2\mu\eta}))} + (\mu\eta)^{i+\sigma}\lvert D^{i}v \rvert_{W^{\sigma,p}(K^{m} \cap (T^{\ell^{\ast}}+Q^{m}_{2\mu\eta}))} \Bigr) + \varepsilon;
		\end{multline*}
	\item\label{item:below_main_shrinking_all} for every \( 0 < s < +\infty \),
		\[
			\lVert u \circ \upPhi - v \rVert_{L^{p}(K^{m})}
			\leq 
			C'\lVert v \rVert_{L^{p}(K^{m} \cap (T^{\ell^{\ast}}+Q^{m}_{2\mu\eta}))} + \varepsilon;
		\]
\end{enumerate}
for some constant \( C' > 0 \) depending on \( m \), \( s \), and \( p \).
The reason for the extra term \( \varepsilon \) in the right-hand sides of the above estimates is to cover the case where \( u \) is identically \( 0 \) while \( v \) does not vanish on \( T^{d^{\ast}} + Q^{m}_{\mu\eta} \).
Estimates~\ref{item:below_main_shrinking_sle1} to~\ref{item:below_main_shrinking_all} will be used in the proof of Theorem~\ref{thm:density_smooth_functions}.

This section is organized as follows.
In a first time, we explain the construction of the building blocks for shrinking, and we prove their geometric properties.
Then we state the analytic estimates satisfied by the composition of a \( W^{s,p} \) map \( u \) with those building blocks.
Finally, we explain how to suitably combine the building blocks in order to obtain the global shrinking construction, along with the required properties.

We start with the construction of the building blocks for shrinking, which is very similar to thickening.
Therefore, in this section, we shall follow an analogous path to the one in Section~\ref{sect:thickening}.
We start by introducing some additional notation, similar to Sections~\ref{sect:opening} and~\ref{sect:thickening}.
Let \( 0 < \ulmu < \mu < \olmu < 1 \) and \( 0 < \tau < \ulmu/\mu \) be fixed.
We set 
\[
    B_{1} = B^{d}_{\tau\mu\eta} \times Q^{m-d}_{(1-\olmu)\eta},
    \quad 
    Q_{2} = Q^{d}_{\ulmu\eta} \times Q^{m-d}_{(1-\olmu)\eta},
    \quad 
    Q_{3} = Q^{d}_{\mu\eta} \times Q^{m-d}_{(1-\mu)\eta}.
\]
Note that \( B_{1} \subset Q_{2} \subset Q_{3} \).
The rectangle \( Q_{3} \) contains the geometric support of the building block \( \upPhi \), that is, \( \upPhi = \id \) outside of \( Q_{3} \).
The rectangle \( Q_{2} \) is shrinked into the cylinder \( B_{1} \): we have \( \upPhi(B_{1}) \supset Q_{2} \).
As usual, the region in between serves as a transition region.

As we did for thickening, we split the construction of the building block \( \upPhi \) into two parts.
First, we deal with the geometric properties that need to be satisfied by \( \upPhi \) independently of the map \( u \), and then, we move to the Sobolev estimates satisfied by \( u \circ \upPhi \).
We take \( \upPhi \) to be exactly the map given by~\cite{BPVS_density_higher_order}*{Proposition~8.3}, and we therefore only recall briefly how this map is built,
referring the reader to~\cite{BPVS_density_higher_order} for the details.
Once again, the main change in our approach is that we establish the Sobolev estimates first for the building blocks, and then we glue them together in order to obtain the estimates given by Proposition~\ref{prop:main_shrinking}.

Analogously to Section~\ref{sect:thickening}, we define \( \zeta \colon \R^{m} \to \R \) by 
\begin{equation}
\label{eq:definition_zeta_shrinking}
    \zeta(x) = \sqrt{\lvert x' \rvert^{2}+(\mu\eta)^{2}\theta\Bigl(\frac{x''}{\mu\eta}\Bigr)+(\mu\eta)^{2}\varepsilon\tau^{2}}
\end{equation}
for every \( x = (x',x'') \in \R^{d} \times \R^{m-d} \).
Here, \( \theta \colon \R^{m-d} \to \R \) is defined similarly as in Section~\ref{sect:thickening}.
We choose \( 1 < q < +\infty \) sufficiently large so that there exist \( 0 < r_{1} < r_{2} \) satisfying 
\[
	Q^{m-d}_{(1-\olmu)/\mu} 
	\subset 
	\{ x'' \in \R^{m-d} \mathpunct{:} \lvert x'' \rvert_{q} < r_{1} \}
	\subset
	\{ x'' \in \R^{m-d} \mathpunct{:} \lvert x'' \rvert_{q} < r_{2} \}
	\subset
	Q^{m-d}_{(1-\mu)/\mu}.
\] 
Then, we pick a nondecreasing smooth map \( \tilde{\theta} \colon \R_{+} \to [0,1] \) such that \( \tilde{\theta}(r) = 0 \) if \( 0 \leq r \leq r_{1} \) and \( \tilde{\theta}(r) = 1 \) if \( r \geq r_{2} \).
Finally, we let \( \theta(x'')  = \tilde{\theta}(\lvert x'' \rvert_{q}) \).
With this definition, the map \( \theta \) is smooth and satisfies \( \theta(x'') = 0 \) if \( x'' \in Q^{m-d}_{(1-\olmu)/\mu} \) and \( \theta(x'') = 1 \) if \( x'' \in \R^{m-d} \setminus Q^{m-d}_{(1-\mu)/\mu} \).
The number \( \varepsilon > 0 \) is to be determined later on, depending only on \( \ulmu/\mu \).
As we will see in the course of the proof, the extra term involving \( \tau \), which was not present in Section~\ref{sect:thickening}, serves to obtain a desingularized construction.

We are now ready to state the geometric properties of \( \upPhi \), which are the purpose of Proposition~\ref{prop:block_shrinking_geometric} below.

\begin{prop}
\label{prop:block_shrinking_geometric}
Let \( d \in \{1,\dots,m\} \), \( \eta > 0 \), \( 0 < \ulmu < \mu < \olmu < 1 \), and \(0 < \tau < \ulmu/\mu \).
There exists a smooth function \( \upPhi \colon \R^{m} \to \R^{m} \) of the form \( \upPhi(x) = (\lambda(x)x',x'') \), with \( \lambda \colon \R^{m} \to [1,+\infty) \), and such that 
\begin{enumerate}[label=(\roman*)]
    \item\label{item:shrinking_geometric_injective} \( \upPhi \) is injective;
    \item\label{item:shrinking_geometric_support} \( \Supp \upPhi \subset Q_{3} \);
    \item\label{item:shrinking_geometric_geometry} \( \upPhi(B_{1}) \supset Q_{2} \);
    \item\label{item:shrinking_geometric_singularity} for every \( x \in Q_{3} \), 
        \[
            \lvert D^{j}\upPhi(x) \rvert \leq C\frac{\mu\eta}{\zeta^{j}(x)}
            \quad
            \text{for every \( j \in \N_{\ast} \),}
        \]
        and for every \( x \in \R^{m} \), 
        \[
            \lvert D^{j}\upPhi(x) \rvert \leq C\frac{(\mu\eta)^{1-j}}{\tau^{j}}
            \quad
            \text{for every \( j \in \N_{\ast} \),}
        \]  
        for some constant \( C > 0 \) depending on \( j \), \( m \), \( \mu/\olmu \) and \( \ulmu/\mu \);
    \item\label{item:shrinking_geometric_jacobian} for every \( x \in \R^{m} \),
        \[
            \jac\upPhi(x) \geq C'\frac{(\mu\eta)^{\beta}}{\zeta^{\beta}(x)}
            \quad
            \text{for every \( 0 < \beta < d \),}
        \]
        and for every \( x \in B_{1} \),
        \[
            \jac\upPhi(x) \geq C'\frac{1}{\tau^{d}},
        \]
        for some constant \( C' > 0 \) depending on \( \beta \), \( j \), \( m \), \( \mu/\olmu \) and \( \ulmu/\mu \).
\end{enumerate}
\end{prop}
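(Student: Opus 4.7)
The plan is to construct $\upPhi$ by adapting the homogeneous-dilation strategy used in the proof of Proposition~\ref{prop:block_thickening_geometric}, with two modifications reflecting the geometric rôle of shrinking rather than thickening: the radial factor $\lambda$ now dilates the small cylinder $B_{1}$ onto a set containing $Q_{2}$ (instead of pushing the dual face onto the boundary), and the desingularizing term $\varepsilon\tau^{2}(\mu\eta)^{2}$ in the definition~\eqref{eq:definition_zeta_shrinking} of $\zeta$ ensures $\zeta \geq \sqrt{\varepsilon}\,\tau\mu\eta$ everywhere, so that $\upPhi$ is genuinely smooth on $\R^{m}$ and not merely off the dual skeleton.

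Concretely, I would take $\lambda(x) = \varphi\bigl(\zeta(x)/(\mu\eta)\bigr)$ for a smooth $\varphi \colon (0,+\infty) \to [1,+\infty)$ satisfying: (a) $\varphi(r) = 1$ for $r \geq \mu$, so that $\Supp\upPhi \subset Q_{3}$ via the identity $\zeta(x) \geq \mu\eta$ outside $Q_{3}$; (b) $r \mapsto r\varphi(r)$ strictly increasing, which as in the thickening proof yields injectivity of $\upPhi$; and (c) a lower bound of the form $r\varphi(r) \geq c\,\ulmu/\mu$ for $r$ near $\sqrt{\varepsilon}\,\tau$, which combined with $\zeta(x) \leq C\tau\mu\eta$ on $B_{1}$ (once $\varepsilon$ has been fixed depending on $\ulmu/\mu$) produces the shrinking inclusion $\upPhi(B_{1}) \supset Q_{2}$. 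An explicit choice in the spirit of~\cite{BPVS_density_higher_order} -- for instance $r\varphi(r) = (\ulmu/\mu)\bigl(1 - b/\ln(1/r)\bigr)^{-1}$ on a neighborhood of the origin, extended smoothly and monotonically to reach the value $\mu$ at $r = \mu$ -- works, provided $b > 0$ is small enough and $\ulmu/\mu < \mu$; otherwise a mild rescaling is needed.

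The derivative estimates follow from the Faà di Bruno formula applied to $\lambda$: one checks by induction that $|D^{j}\zeta(x)| \leq C\,\zeta(x)^{1-j}$ (the extra $\varepsilon\tau^{2}$ term only helps here) and that $|\varphi^{(j)}(r)| \leq Cr^{-j}$, which combine to give $|D^{j}\lambda(x)| \leq C(\mu\eta)^{-j}$ scaled by $(\mu\eta/\zeta(x))^{j}$, and then Leibniz with the factor $x'$ produces the first bound $|D^{j}\upPhi(x)| \leq C\mu\eta/\zeta(x)^{j}$. The second, global bound $|D^{j}\upPhi(x)| \leq C(\mu\eta)^{1-j}/\tau^{j}$ is just a consequence of the pointwise inequality $\zeta(x) \geq \sqrt{\varepsilon}\,\tau\mu\eta$. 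For the Jacobian, the block structure $\upPhi(x) = (\lambda(x)x', x'')$ lets one compute $\jac\upPhi$ as the determinant of a rank-one perturbation of a diagonal matrix (exactly as in~\eqref{eq:Lp_bound_convolution}), yielding $\jac\upPhi = \lambda^{d-1}(\lambda + x' \cdot \partial_{x'}\lambda)$; the monotonicity of $r\varphi(r)$ ensures the second factor is $\geq$ some positive multiple of $r\varphi(r)/r = \varphi$, and then the two lower bounds in~\ref{item:shrinking_geometric_jacobian} follow from the pointwise estimate $\lambda \geq C\mu\eta/\zeta$ (for $0 < \beta < d$, after the standard interpolation $\lambda \geq \lambda^{\beta/d}$ type trick) and from $\lambda \geq c\ulmu/(\tau\mu)$ on $B_{1}$, respectively.

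The main obstacle I anticipate is the simultaneous calibration of the parameters: $\varepsilon$ must be chosen (depending on $\ulmu/\mu$) so that the lower bound $\zeta \geq \sqrt{\varepsilon}\,\tau\mu\eta$ is strong enough to make the second derivative/Jacobian bounds effective, yet small enough that the values of $\zeta$ attained on $B_{1}$ remain in the regime where $r\varphi(r)$ is bounded below by the desired constant $\ulmu/\mu$, guaranteeing $\upPhi(B_{1}) \supset Q_{2}$. Once this balancing is achieved, everything else follows from routine computations parallel to those in Section~\ref{sect:thickening}.
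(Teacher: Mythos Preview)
Your overall strategy is the right one and matches the paper's, but there are two concrete issues, one minor and one genuine.

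First, the threshold in your condition~(a) is off: outside \( Q_{3} \) one has \( \zeta(x) \geq \mu\eta \), hence \( \zeta(x)/(\mu\eta) \geq 1 \), so you need \( \varphi(r) = 1 \) for \( r \geq 1 \), not \( r \geq \mu \). Your spurious constraint \( \ulmu/\mu < \mu \) is precisely an artifact of this: with the wrong threshold, monotonicity of \( r\varphi(r) \) forces the value at \( r = \mu \) (namely \( \mu \)) to exceed the value near \( 0 \) (namely \( \ulmu/\mu \)), whereas with the correct threshold one only needs \( \ulmu/\mu < 1 \), which always holds. The paper uses \( \varphi(r) = \frac{\ulmu/\mu}{r}\sqrt{1+\varepsilon}\bigl(1 + \frac{b}{\ln(1/r)}\bigr) \) for \( r \leq \tau\sqrt{1+\varepsilon} \), chooses \( \varepsilon \) so that \( (\ulmu/\mu)\sqrt{1+\varepsilon} < 1 \), and then \( b \) small enough; no further constraint on the parameters arises.

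Second, and more substantively, the radial map \( \upPsi(x) = (\lambda(x)x',x'') \) alone does \emph{not} give \( \upPsi(B_{1}) \supset Q_{2} \). Since \( \lambda \) depends on the Euclidean norm \( \lvert x' \rvert \) through \( \zeta \), the image \( \upPsi(B_{1}) \) is radially symmetric in the first \( d \) variables: you obtain \( \upPsi(B_{1}) \supset B_{2} = B^{d}_{\ulmu\eta} \times Q^{m-d}_{(1-\olmu)\eta} \), but the cube \( Q^{d}_{\ulmu\eta} \) is not contained in the Euclidean ball \( B^{d}_{\ulmu\eta} \), so \( Q_{2} \not\subset B_{2} \). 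The paper handles this exactly as in Proposition~\ref{prop:block_thickening_geometric}: one first builds the intermediate map \( \upPsi \) satisfying the conclusion with cylinders \( B_{i} \) in place of the rectangles \( Q_{i} \), and then composes with a diffeomorphism \( \upTheta \) of the same form \( \upTheta(x) = (\tilde{\lambda}(x)x',x'') \), supported in \( Q_{3} \), with \( (\mu\eta)^{j-1}\lvert D^{j}\upTheta \rvert \leq C \) and Jacobian bounded above and below, which maps \( B_{2} \) onto a set containing \( Q_{2} \). The composition \( \upPhi = \upTheta \circ \upPsi \) is still of the form \( (\Lambda(x)x',x'') \), and the derivative and Jacobian estimates survive the composition via Fa\`a di Bruno and the chain rule for Jacobians. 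This step is omitted from your outline, and without it property~\ref{item:shrinking_geometric_geometry} fails.
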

\begin{proof}
    As we announced, we use the same construction as in~\cite{BPVS_density_higher_order}*{Proposition~8.3}.
    Similar to thickening, we start by constructing an intermediate map \( \upPsi \colon \R^{m} \to \R^{m} \) which satisfies 
    the conclusion of Proposition~\ref{prop:block_shrinking_geometric} with the rectangles \( Q_{i} \) replaced by the cylinders \( B_{i} \) defined as 
    \[
        B_{2} = B^{d}_{\ulmu\eta} \times Q^{m-d}_{(1-\olmu)\eta},
        \quad 
        B_{3} = B^{d}_{\mu\eta} \times Q^{m-d}_{(1-\mu)\eta}.
    \]
    It will then suffice to compose \( \upPsi \) with a suitable diffeomorphism \( \upTheta \colon \R^{m} \to \R^{m} \) 
    dilating \( B_{2} \) to a set containing \( Q_{2} \) in order to obtain the desired map \( \upPhi \).
    
    We let \( \varphi \colon (0,+\infty) \to [1,+\infty) \) be a smooth function such that 
    \begin{enumerate}[label=(\alph*)]
        \item\label{item:varphi_shrinking_rleqtau} for \( 0 < r \leq \tau\sqrt{1+\varepsilon} \),
            \[
                \varphi(r) = \frac{\ulmu/\mu}{r}\sqrt{1+\varepsilon}\Bigl(1+\frac{b}{\ln\frac{1}{r}}\Bigr);
            \]
        \item\label{item:varphi_shrinking_rgeq1} for \( r \geq 1 \), \( \varphi(r) = 1 \);
        \item\label{item:varphi_shrinking_increasing} the function \( r \in (0,+\infty) \mapsto r\varphi(r) \) is increasing.
    \end{enumerate}
    This is possible provided that we choose \( \varepsilon \) such that 
    \[
        (\ulmu/\mu)\sqrt{1+\varepsilon} < 1 
    \]
    and then \( b > 0 \) such that 
    \[
        (\ulmu/\mu)\sqrt{1+\varepsilon}\Bigl(1+\frac{b}{\ln\frac{1}{(\ulmu/\mu)\sqrt{1+\varepsilon}}}\Bigr)
        <
        1.
    \]
    Then, we define \( \lambda \colon \R^{m} \to [1,+\infty) \) by 
    \[
        \lambda(x) = \varphi\Bigl(\frac{\zeta(x)}{\mu\eta}\Bigr),
    \]
    and finally 
    \[
        \upPsi(x',x'') = (\lambda(x',x'')x',x'').
    \]

    The injectivity of \( \upPsi \) relies on assumption~\ref{item:varphi_shrinking_increasing} on \( \varphi \).
    The fact that \( \Supp\upPsi \subset B_{3} \) uses assumption~\ref{item:varphi_shrinking_rgeq1} on \( \varphi \), 
    observing that \( \zeta(x) \geq \mu\eta \) whenever \( x \in \R^{m} \setminus B_{3} \),
    and hence \( \lambda(x) = 1 \).
    To prove~\ref{item:shrinking_geometric_geometry}, note that if \( x = (x',x'') \in B_{1} \) and \( t \geq 0 \), we have 
    \[
        \upPsi(tx',x'')
        =
        \biggl(t\varphi\biggl(\sqrt{t^{2}\Bigl\lvert \frac{x'}{\mu\eta}\Bigr\rvert^{2}+\varepsilon\tau^{2}}\biggr)x',x''\biggr),
    \]
    where we used the fact that \( \theta \) vanishes inside of \( Q^{m-d}_{\frac{1-\olmu}{\mu}} \).
    For \( t = 0 \), the factor in front of \( x' \) vanishes, while for \( t = \tau \), it is larger than \( \frac{\ulmu\eta}{\lvert x' \rvert} \geq 1 \).
    We conclude by invoking the intermediate value theorem.
    The proof of~\ref{item:shrinking_geometric_singularity} amounts to estimate \( \lvert D^{j}\lambda \rvert \) using the Faà di Bruno formula, and then conclude using Leibniz's rule.
    We obtain the second estimate from the first one by noting that \( \zeta \geq (\mu\eta)\sqrt{\varepsilon}\tau \).
    The proof of~\ref{item:shrinking_geometric_jacobian} again involves explicitly computing \( \jac\upPsi \) as the determinant of a perturbation of a linear map,
    and then estimating the obtained expression.
    The second estimate relies on the fact that if \( x = (x',x'') \in B_{1} \), then \( \lvert x' \rvert \leq (\mu\eta)\tau \) and \( \theta\Bigl(\frac{x''}{\mu\eta}\Bigr) = 0 \),
    whence \( \zeta(x) \leq (\mu\eta)\sqrt{1+\varepsilon}\tau \).
    We refer the reader to~\cite{BPVS_density_higher_order}*{Lemma~8.5} for the details.

    We then let \( \upTheta \colon \R^{m} \to \R^{m} \) be a smooth diffeomorphism also of the form \( \upTheta(x) = (\tilde{\lambda}(x)x',x'') \), with \( \tilde{\lambda} \colon \R^{m} \to [1,+\infty) \), such that \( \upTheta \) is supported in \( Q_{3} \), maps \( B_{2} \) on a set containing \( Q_{2} \), and satisfies the estimates 
    \[
        (\mu\eta)^{j-1}\lvert D^{j}\upTheta \rvert \leq \C
        \quad 
        \text{and}
        \quad
        0 < \C \leq \jac\upTheta \leq \C 
        \quad
        \text{on \( \R^{m} \);}
    \]
    see~\cite{BPVS_density_higher_order}*{Lemma~8.4}.
    Using the composition formula for the Jacobian and the Faà di Bruno formula, we conclude, as for thickening, that \( \upPhi = \upTheta \circ \upPsi \) is the desired map.
\end{proof}

We now turn to the Sobolev estimates satisfied by \( u \circ \upPhi \).

\begin{prop}
\label{prop:block_shrinking_analytic}
    Let \( d > sp \).
    Let \( \upPhi \) be as in Proposition~\ref{prop:block_shrinking_geometric}.
    Let \( \omega \subset \R^{m} \) be such that \( Q_{2} \subset \omega \subset B^{m}_{c\mu\eta} \) for some \( c > 0 \), and assume that there exists \( c' > 0 \) such that 
    \begin{equation}
    \label{eq:geometric_assumptions_block_shrinking}
    	\lvert B^{m}_{\lambda}(z) \cap (\omega\setminus Q_{2}) \rvert \geq c'\lambda^{m}
    	\quad
    	\text{for every \( z \in \omega \setminus Q_{2} \) and \( 0 < \lambda \leq \frac{1}{2}\diam\omega \).}
    \end{equation}
    For every \( u \in W^{s,p}(\upPhi^{-1}(\omega);\R^{\nu}) \), we have \( u \circ \upPhi \in W^{s,p}(\upPhi^{-1}(\omega);\R^{\nu}) \), and moreover, the following estimates hold:
    \begin{enumerate}[label=(\alph*)]
    	\item\label{item:estimate_block_shrinking_sle1} if \( 0 < s < 1 \), then 
    		\[
	    		\lvert u \circ \upPhi \rvert_{W^{s,p}(\upPhi^{-1}(\omega))}
	    		\leq 
	    		C\lvert u \rvert_{W^{s,p}(\omega\setminus Q_{2})}
	    		+ C\tau^{\frac{d-sp}{p}}\lvert u \rvert_{W^{s,p}(\omega)};
    		\]
    	\item\label{item:estimate_block_shrinking_sgeq1_integer} if \( s \geq 1 \), then for every \( j \in \{1,\dots,k\} \),
    		\[
	    		(\mu\eta)^{j}\lVert D^{j}(u \circ \upPhi) \rVert_{L^{p}(\upPhi^{-1}(\omega))}
	    		\leq
	    		C\sum_{i=1}^{j}(\mu\eta)^{i}\lVert D^{i}u \rVert_{L^{p}(\omega\setminus Q_{2})}
	    		+ C\tau^{\frac{d-jp}{p}}\sum_{i=1}^{j}(\mu\eta)^{i}\lVert D^{i}u \rVert_{L^{p}(\omega)};
    		\]
    	\item\label{item:estimate_block_shrinking_sgeq1_frac} if \( s \geq 1 \) and \( \sigma \neq 0 \), then for every \( j \in \{1,\dots,k\} \),
    		\begin{align*}
	    		(\mu\eta)^{j+\sigma}\lvert D^{j}(u \circ \upPhi) \rvert_{W^{\sigma,p}(\upPhi^{-1}(\omega))}
	    		&\leq
	    		C\sum_{i=1}^{j}\Bigl((\mu\eta)^{i}\lVert D^{i}u \rVert_{L^{p}(\omega\setminus Q_{2})} 
	    		+ (\mu\eta)^{i+\sigma}\lvert D^{i}u \rvert_{W^{\sigma,p}(\omega\setminus Q_{2})}\Bigr) \\
	    		&\quad+ C\tau^{\frac{d-(j+\sigma)p}{p}}\sum_{i=1}^{j}\Bigl((\mu\eta)^{i}\lVert D^{i}u \rVert_{L^{p}(\omega))} 
	    		+ (\mu\eta)^{i+\sigma}\lvert D^{i}u \rvert_{W^{\sigma,p}(\omega)}\Bigr);
    		\end{align*}
    	\item\label{item:estimate_block_shrinking_all} for every \( 0 < s < +\infty \),
    		\[
	    		\lVert u \circ \upPhi \rVert_{L^{p}(\upPhi^{-1}(\omega))}
	    		\leq 
	    		C\lVert u \rVert_{L^{p}(\omega\setminus Q_{2})}
	    		+ C\tau^{\frac{d}{p}}\lVert u \rVert_{L^{p}(\omega)};
    		\]
    \end{enumerate}
	for some constant \( C > 0 \) depending on \( s \), \( m \), \( p \), \( c \), \( c' \), \( \ulmu/\mu \), and \( \mu/\olmu \).
\end{prop}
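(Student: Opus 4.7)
The plan is to mimic the proof of Proposition~\ref{prop:block_thickening_analytic}, the new feature being the strongly contracting region \( B_1 \). I would decompose \( \upPhi^{-1}(\omega) = A_1 \cup A_2 \), where \( A_1 = \upPhi^{-1}(Q_2) \subset B_1 \) (by the injectivity of \( \upPhi \) combined with \( \upPhi(B_1) \supset Q_2 \)) and \( A_2 = \upPhi^{-1}(\omega \setminus Q_2) \). On \( A_2 \), the estimates \( \lvert D^j \upPhi \rvert \leq C\mu\eta/\zeta^j \) and \( \jac\upPhi \geq C'(\mu\eta)^{\beta}/\zeta^{\beta} \) from Proposition~\ref{prop:block_shrinking_geometric} are of thickening type, so the proof of Proposition~\ref{prop:block_thickening_analytic} (based on Lemma~\ref{lemma:path_mean_value} and the averaging trick together with the volume bound~\eqref{eq:geometric_assumptions_block_shrinking} for \( \omega \setminus Q_2 \)) transports verbatim and yields the first summand of each estimate. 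On \( A_1 \), I would instead exploit the uniform bounds \( \lvert D^j \upPhi \rvert \leq C(\mu\eta)^{1-j}/\tau^j \) and \( \jac\upPhi \geq C'/\tau^d \) to generate the \( \tau^{(d - sp)/p} \) prefactor.

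For the integer order estimates (items~\ref{item:estimate_block_shrinking_sgeq1_integer} and~\ref{item:estimate_block_shrinking_all}) on \( A_1 \), the Faà di Bruno formula gives
\[
    \lvert D^j(u \circ \upPhi)(x) \rvert^p \leq C\sum_{i=1}^j \frac{(\mu\eta)^{(i-j)p}}{\tau^{jp}}\lvert D^i u(\upPhi(x)) \rvert^p,
\]
and the change of variable with \( \jac\upPhi \geq C'/\tau^d \) on \( B_1 \), together with \( \upPhi(A_1) \subset \omega \), converts the right-hand side into \( C\tau^{d - jp}\sum_i (\mu\eta)^{(i-j)p}\lVert D^i u \rVert_{L^p(\omega)}^p \); here the exponent \( d - jp \) is nonnegative since \( d > sp \geq jp \). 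The \( L^p \) estimate is the same calculation with \( j = 0 \).

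For the fractional estimate on \( A_1 \), my main tool is Lemma~\ref{lemma:Fuglede_fractional}, applied to \( D^i u \) (respectively to \( u \) when \( 0 < s < 1 \)) on the rectangle \( \Omega = Q_2 \), which, being a rectangle, satisfies the volume assumption of that lemma, with \( \gamma = \upPhi\vert_{A_1} \). The Lipschitz seminorm \( \lvert \gamma \rvert_{\Cc^{0,1}(A_1)} \) is bounded by \( C/\tau \), while the change of variable with \( \jac\upPhi \geq C'/\tau^d \) yields \( \int_{A_1} w \circ \upPhi \leq C\tau^d \lvert D^i u \rvert^p_{W^{\sigma,p}(Q_2)} \), whence
\[
    \lvert D^i u \circ \upPhi \rvert_{W^{\sigma,p}(A_1)} \leq C\tau^{(d - \sigma p)/p} \lvert D^i u \rvert_{W^{\sigma,p}(\omega)}.
\]
Combining this with the Faà di Bruno decomposition, and dealing with the mixed \( \lvert D^t\upPhi(x) - D^t\upPhi(y) \rvert \) terms by the same optimization argument as in the proofs of Propositions~\ref{prop:block_opening} and~\ref{prop:block_thickening_analytic} (using now the uniform bounds on \( D^t\upPhi \) on \( A_1 \)), one reaches the \( \tau^{(d - (j + \sigma)p)/p} \) contribution, with the exponent nonnegative since \( d > sp \geq (j + \sigma)p \).

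The hard part will be recombining the regional estimates into a bound on the Gagliardo seminorm over all of \( \upPhi^{-1}(\omega) \), since the cross terms in the double integral are not handled by either of the previous two steps. My plan is to invoke Lemma~\ref{lemma:finite_fractional_additivity} with \( Q \) a rectangle slightly larger than \( B_1 \) and still contained in \( \omega \), chosen so that \( \upPhi^{-1}(\omega) \cap Q \) is a neighborhood of \( A_1 \) on which the Fuglede-based estimate still applies (the Lipschitz-\( C/\tau \) and Jacobian-\( C'/\tau^d \) bounds persisting up to \( \partial B_1 \)) and so that \( \upPhi^{-1}(\omega) \setminus \lambda Q \) lies inside the region where the thickening-type argument from Proposition~\ref{prop:block_thickening_analytic} applies. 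The constant produced by Lemma~\ref{lemma:finite_fractional_additivity} depends only on the ratios \( \ulmu/\mu \) and \( \mu/\olmu \), which is within the dependencies allowed by the statement.
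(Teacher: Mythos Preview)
Your treatment of the integer order estimates~\ref{item:estimate_block_shrinking_sgeq1_integer} and~\ref{item:estimate_block_shrinking_all} is correct and matches the paper. Your separate estimates on \( A_{1} \) and \( A_{2} \) for the fractional seminorms are also essentially right: on \( A_{2} \) the thickening-type argument produces the \( \omega\setminus Q_{2} \) term, and on \( A_{1} \) the Fuglede lemma together with the uniform Lipschitz and Jacobian bounds produces the \( \tau^{(d-\sigma p)/p} \) term.

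The gap is in your recombination step. You propose to bridge the cross terms \emph{via} Lemma~\ref{lemma:finite_fractional_additivity} with a rectangle \( Q \) slightly larger than \( B_{1} \). But \( B_{1} = B^{d}_{\tau\mu\eta} \times Q^{m-d}_{(1-\olmu)\eta} \) has side of order \( \tau\mu\eta \) in the first \( d \) directions and of order \( (1-\olmu)\eta \) in the last \( m-d \) directions, so the aspect ratio of any such \( Q \) is of order \( (1-\olmu)/(\tau\mu) \). The constant in Lemma~\ref{lemma:finite_fractional_additivity} depends on this ratio, hence blows up as \( \tau\to 0 \), which defeats the purpose of the proposition. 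Taking instead a \( Q \) of fixed size (independent of \( \tau \)) does not help either: then \( \upPhi^{-1}(\omega)\cap Q \) contains a large portion of \( A_{2} \) on which the Jacobian bound \( \jac\upPhi \geq C'/\tau^{d} \) fails, so the Fuglede step no longer yields the \( \tau \)-power. And Lemma~\ref{lemma:fractional_additivity} is unavailable since \( A_{1} \) and \( A_{2} \) share a common boundary.

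The paper avoids this altogether by handling the cross terms directly inside the averaging argument rather than by a post-hoc additivity lemma. One splits the double integral over \( \upPhi^{-1}(\omega)\times\upPhi^{-1}(\omega) \) (restricted to \( \zeta(x)\leq\zeta(y) \)) into the four pieces \( I_{1},\dots,I_{4} \) according to whether each of \( x,y \) lies in \( \upPhi^{-1}(Q_{2}) \) or \( \upPhi^{-1}(\omega\setminus Q_{2}) \). For the mixed pieces \( I_{3},I_{4} \), the averaging ball \( \Bc_{x,y} \) is taken in \( \omega\setminus Q_{2} \) (where the volume hypothesis~\eqref{eq:geometric_assumptions_block_shrinking} applies), and after Tonelli and integration over \( \Yc_{x,z} \) one is left with two single integrals: one over \( \upPhi^{-1}(\omega\setminus Q_{2}) \), handled by the Jacobian bound with \( \beta = sp \), and one over \( \upPhi^{-1}(Q_{2}) \), handled by \( \jac\upPhi \geq C'/\tau^{d} \). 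This yields precisely one \( \omega\setminus Q_{2} \) term and one \( \tau^{(d-sp)/p} \) term, with constants independent of \( \tau \).
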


We encounter again the assumption that balls centered at a point of \( \omega \) significantly intersect \( \omega \).
We call the attention of the reader to the fact that, in the proof of Proposition~\ref{prop:main_shrinking}, Proposition~\ref{prop:block_shrinking_analytic} will be applied with \( \omega \) being a domain more complicated than just a rectangle.
This contrasts with the situation encountered in Sections~\ref{sect:opening} and~\ref{sect:thickening}.

In the proof of Proposition~\ref{prop:block_shrinking_analytic}, we need the counterpart of Lemma~\ref{lemma:path_mean_value} for the map \( \zeta \) used for shrinking.
The proof is the same as the proof of Lemma~\ref{lemma:path_mean_value}, since both constructions are identical up to an additive constant under the square root,
and is therefore omitted.

\begin{lemme}
\label{lemma:path_mean_value_shrinking}
    For every \( x \), \( y \in \R^{m} \), there exists a Lipschitz path \( \gamma \colon [0,1] \to \R^{m} \) from \( x \) to \( y \) such that 
    \[ 
        \lvert \gamma \rvert_{\Cc^{0,1}([0,1])} \leq C\lvert x-y \rvert
    \] 
    for some constant \( C > 0 \) depending only on \( m \), and such that \( \zeta \geq \min(\zeta(x),\zeta(y)) \) along \( \gamma \), where \( \zeta \) is the map defined in~\eqref{eq:definition_zeta_shrinking}.
\end{lemme}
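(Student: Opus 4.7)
The plan is to observe that the map $\zeta$ appearing in \eqref{eq:definition_zeta_shrinking} differs from the $\zeta$ of Section~\ref{sect:thickening} only by the additive constant $(\mu\eta)^{2}\varepsilon\tau^{2}$ inside the square root. Since $t \mapsto \sqrt{t+c}$ is strictly increasing, this shift does not affect strict monotonicity in either $\lvert x' \rvert$ (the Euclidean norm of the first $d$ components) or $\lvert x'' \rvert_{q}$ (the $q$-norm of the last $m-d$ components, which enters through the nondecreasing profile $\tilde{\theta}$). Consequently, the construction of Lemma~\ref{lemma:path_mean_value} transfers verbatim.

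I would therefore reproduce the two-stage construction. Writing $x = (x',x'')$, $y = (y',y'')$, stage one goes from $x$ to $(y',x'')$ inside $\R^{d}\times\{x''\}$ by concatenating a shortest arc on the Euclidean sphere of $\R^{d}$ of radius $\lvert x' \rvert$ joining $x'$ to $(\lvert x' \rvert/\lvert y' \rvert)y'$ with a radial segment adjusting the Euclidean norm from $\lvert x' \rvert$ to $\lvert y' \rvert$. Stage two goes from $(y',x'')$ to $y$ inside $\{y'\}\times\R^{m-d}$ by concatenating an arc on the $q$-sphere of $\R^{m-d}$ of radius $\lvert x'' \rvert_{q}$ (obtained from a Euclidean great-circle arc by transport through the Lipschitz changing-norm projection $z \mapsto (\lvert z \rvert_{q}/\lvert z \rvert_{2})z$) with a radial segment adjusting the $q$-norm from $\lvert x'' \rvert_{q}$ to $\lvert y'' \rvert_{q}$. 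Degenerate cases where $x'$, $y'$, $x''$, or $y''$ vanishes are handled by replacing the relevant spherical arc by a straight segment through the origin, which only improves the estimates. Concatenating these four pieces produces a Lipschitz path $\gamma \colon [0,1] \to \R^{m}$, and since the arc length on a Euclidean or $q$-sphere is bounded by a universal multiple of the chord length between the endpoints, the Lipschitz bound $\lvert\gamma\rvert_{\Cc^{0,1}([0,1])} \leq C\lvert x-y \rvert$ with $C = C(m)$ follows at once.

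The remaining point is the inequality $\zeta \geq \min(\zeta(x),\zeta(y))$ along $\gamma$. Writing $\zeta(z',z'') = \tilde{\zeta}(\lvert z' \rvert,\lvert z'' \rvert_{q})$ for the function $\tilde{\zeta}(r_{1},r_{2}) = \sqrt{r_{1}^{2}+(\mu\eta)^{2}\tilde{\theta}(r_{2}/(\mu\eta))+(\mu\eta)^{2}\varepsilon\tau^{2}}$, which is nondecreasing in both arguments, I would trace $\tilde{\zeta}$ along the broken path: on stage one, $r_{2}$ is frozen at $\lvert x'' \rvert_{q}$ while $r_{1}$ is monotone between $\lvert x' \rvert$ and $\lvert y' \rvert$; on stage two, $r_{1}$ is frozen at $\lvert y' \rvert$ while $r_{2}$ is monotone between $\lvert x'' \rvert_{q}$ and $\lvert y'' \rvert_{q}$. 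A quick case analysis on the signs of $\lvert y' \rvert - \lvert x' \rvert$ and $\lvert y'' \rvert_{q} - \lvert x'' \rvert_{q}$ shows that the minimum of $\tilde{\zeta}$ along the path is attained at one of the three corners $(\lvert x'\rvert,\lvert x''\rvert_{q})$, $(\lvert y'\rvert,\lvert x''\rvert_{q})$, $(\lvert y'\rvert,\lvert y''\rvert_{q})$, and in each case the monotonicity of $\tilde{\zeta}$ forces this minimum to be at least $\min(\zeta(x),\zeta(y))$. The only mild obstacle is the bookkeeping for this case analysis; there is no real analytic difficulty beyond what is already present in Lemma~\ref{lemma:path_mean_value}.
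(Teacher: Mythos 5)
Your reduction to the thickening case, the Lipschitz estimate, and the two-stage construction are exactly the paper's route (the paper indeed dismisses the proof as identical to Lemma~\ref{lemma:path_mean_value} because the extra term \( (\mu\eta)^{2}\varepsilon\tau^{2} \) is an additive constant under the square root). The gap is in your final claimed case analysis: it is \emph{not} true that the intermediate corner \( (y',x'') \) always satisfies \( \zeta(y',x'')\geq\min(\zeta(x),\zeta(y)) \). Writing \( \zeta(z)=\tilde{\zeta}(\lvert z'\rvert,\lvert z''\rvert_{q}) \) with \( \tilde{\zeta} \) nondecreasing in each argument, your path is monotone in the first argument on stage one and in the second argument on stage two, so the minimum along the path is \( \min\bigl(\zeta(x),\tilde{\zeta}(\lvert y'\rvert,\lvert x''\rvert_{q}),\zeta(y)\bigr) \); but in the mixed case where \( \lvert y'\rvert<\lvert x'\rvert \) while \( \tilde{\theta}(\lvert x''\rvert_{q}/\mu\eta)<\tilde{\theta}(\lvert y''\rvert_{q}/\mu\eta) \), the corner value drops below both endpoints. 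Concretely, for \( d=1 \), \( m=2 \), take \( x=(\mu\eta,0) \) (so the \( \theta \)-term vanishes at \( x \)) and \( y=(0,y'') \) with \( \lvert y''\rvert \) large (so the \( \theta \)-term equals \( 1 \) at \( y \)): then \( \zeta(x)=\zeta(y)=\mu\eta\sqrt{1+\varepsilon\tau^{2}} \), but your path passes through the corner \( (0,0) \), where \( \zeta=\mu\eta\sqrt{\varepsilon}\,\tau \), which is strictly smaller (and the analogous corner value is \( 0 \) in the thickening version). So the assertion ``in each case the monotonicity of \( \tilde{\zeta} \) forces this minimum to be at least \( \min(\zeta(x),\zeta(y)) \)'' fails in one of the four cases; this is not mere bookkeeping.

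The repair is cheap but must be made explicit: choose the intermediate corner according to \( x \) and \( y \). Of the two candidates \( (y',x'') \) and \( (x',y'') \), at least one satisfies \( \zeta\geq\min(\zeta(x),\zeta(y)) \): if both were strictly smaller, then \( \tilde{\zeta}(\lvert y'\rvert,\lvert x''\rvert_{q})<\zeta(y) \) would force \( \tilde{\theta}(\lvert x''\rvert_{q}/\mu\eta)<\tilde{\theta}(\lvert y''\rvert_{q}/\mu\eta) \), while \( \tilde{\zeta}(\lvert x'\rvert,\lvert y''\rvert_{q})<\zeta(x) \) would force the reverse strict inequality, a contradiction. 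Equivalently, in the bad mixed case run your construction from \( y \) to \( x \) and reverse it, i.e., move the last \( m-d \) variables first (which only increases \( \zeta \)) and the first \( d \) variables afterwards. With this case-dependent choice of order, the rest of your argument (Lipschitz bound, degenerate cases, monotonicity along each leg) goes through; the same adaptation is implicitly needed when reading the paper's terse proof of Lemma~\ref{lemma:path_mean_value}.
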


We are now ready to prove Proposition~\ref{prop:block_shrinking_analytic}.

\begin{proof}[Proof of Proposition~\ref{prop:block_shrinking_analytic}]
    As for thickening, the integer order estimate when \( s \geq 1 \) is proved exactly as~\cite{BPVS_density_higher_order}*{Corollary~8.2},
    but is presented here as a prelude for the more involved fractional order case.

    By the Faà di Bruno formula, we estimate
    \[	
        \lvert D^{j}(u \circ \upPhi)(x) \rvert^{p}
        \leq
        \C\sum_{i=1}^{j}\sum_{\substack{1\leq t_{1}\leq \cdots \leq t_{i} \\ t_{1} + \cdots + t_{i} = j}} \lvert D^{i}u(\upPhi(x)) \rvert^{p} \lvert D^{t_{1}}\upPhi(x) \rvert^{p} \cdots \lvert D^{t_{i}}\upPhi(x) \rvert^{p}
    \]
    for every \( j \in \{1,\dots,k\} \) and \( x \in \upPhi^{-1}(\omega) \).
    Let \( 0 < \beta < d \).
    Using the estimates on the derivatives and the Jacobian of \( \upPhi \), we find
    \( \lvert D^{t_{l}}\upPhi \rvert \leq \C\frac{(\jac\upPhi)^{\frac{t_{l}}{\beta}}}{(\mu\eta)^{t_{l}-1}} \), and therefore 
    \begin{multline*}
        \lvert D^{j}(u \circ \upPhi)(x) \rvert^{p}
        \leq
        \C\sum_{i=1}^{j}\sum_{\substack{1\leq t_{1}\leq \cdots \leq t_{i} \\ t_{1} + \cdots + t_{i} = j}}
        \lvert D^{i}u(\upPhi(x)) \rvert^{p}\frac{(\jac\upPhi(x))^{\frac{t_{1}p}{\beta}}}{(\mu\eta)^{(t_{1}-1)p}}\cdots\frac{(\jac\upPhi(x))^{\frac{t_{i}p}{\beta}}}{(\mu\eta)^{(t_{i}-1)p}} \\
        \leq \Cl{cst:fixed_block_shrinking}\sum_{i=1}^{j}\lvert D^{i}u(\upPhi(x)) \rvert^{p}\frac{(\jac\upPhi(x))^{\frac{jp}{\beta}}}{(\mu\eta)^{(j-i)p}}.
    \end{multline*}
    Since \( jp \leq sp < d \), we may choose \( \beta = jp \).
    Hence,
    \[
        \lvert D^{j}(u \circ \upPhi)(x) \rvert^{p}
        \leq
        \Cr{cst:fixed_block_shrinking}\sum_{i=1}^{j}\lvert D^{i}u(\upPhi(x)) \rvert^{p}\frac{\jac\upPhi(x)}{(\mu\eta)^{(j-i)p}}.
    \]
    Since \( \upPhi \) is injective, the change of variable theorem ensures that
    \begin{multline*}
        \int_{\upPhi^{-1}(\omega\setminus Q_{2})} (\mu\eta)^{jp}\lvert D^{j}(u \circ \upPhi)\rvert^{p}
        \leq
        \int_{\upPhi^{-1}(\omega\setminus Q_{2})}\Cr{cst:fixed_block_shrinking}\sum_{i=1}^{j}(\mu\eta)^{ip}\lvert D^{i}u(\upPhi(x)) \rvert^{p}\jac\upPhi(x)\,\d x \\
        \leq
        \Cr{cst:fixed_block_shrinking}\sum_{i=1}^{j}\int_{\omega\setminus Q_{2}} (\mu\eta)^{ip}\lvert D^{i}u \rvert^{p}.
    \end{multline*}

    Combining inclusion~\ref{item:shrinking_geometric_geometry} and estimates~\ref{item:shrinking_geometric_singularity} and~\ref{item:shrinking_geometric_jacobian} in Proposition~\ref{prop:block_shrinking_geometric}, we find 
    \[
        \lvert D^{j}(u \circ \upPhi)(x) \rvert^{p}
        \leq 
        \Cl{cst:fixed_block_shrinking2}\tau^{d-jp}\sum_{i=1}^{j}\lvert D^{i}u(\upPhi(x)) \rvert^{p}\frac{\jac\upPhi(x)}{(\mu\eta)^{(j-i)p}}
    \]
    for every \( x \in \upPhi^{-1}(Q_{2}) \subset B_{1} \).
    Using again the change of variable theorem, we deduce that 
    \begin{multline*}
        \int_{\upPhi^{-1}(Q_{2})} (\mu\eta)^{jp}\lvert D^{j}(u \circ \upPhi)\rvert^{p}
        \leq
        \int_{\upPhi^{-1}(Q_{2})}\Cr{cst:fixed_block_shrinking2}\tau^{d-jp}\sum_{i=1}^{j}(\mu\eta)^{ip}\lvert D^{i}u(\upPhi(x)) \rvert^{p}\jac\upPhi(x)\,\d x \\
        \leq
        \Cr{cst:fixed_block_shrinking2}\tau^{d-jp}\sum_{i=1}^{j}\int_{Q_{2}} (\mu\eta)^{ip}\lvert D^{i}u \rvert^{p}.
    \end{multline*}
    We conclude by additivity of the integral, combining the estimates on \( \upPhi^{-1}(\omega\setminus Q_{2}) \) and on \( \upPhi^{-1}(Q_{2}) \).

    The proof of the estimate at order \( 0 \) relies on the same decomposition and change of variable, noting that in particular \( \jac\upPhi \geq \C \) 
    to handle the region \( \upPhi^{-1}(\omega\setminus Q_{2}) \).

    We now move to the fractional estimate when \( 0 < s < 1 \).
    We observe that, as in~\eqref{eq:mean_value_thickening}, we have 
    \begin{equation}
    \label{eq:mean_value_shrinking}
        \frac{\lvert \upPhi(x)-\upPhi(y) \rvert}{\lvert x-y \rvert} 
        \leq 
        \C\frac{\mu\eta}{\zeta(y)}
        \quad
        \text{for every \( x \), \( y \in \omega \).}
    \end{equation}
    We start by splitting, in the spirit of the proof of Proposition~\ref{prop:block_thickening_analytic},
	\begin{equation}
	\label{eq:shrinking_splitted_in_4}
		\iint\limits_{\substack{\upPhi^{-1}(\omega) \times \upPhi^{-1}(\omega) \\ \zeta(x) \leq \zeta(y)}}
		\frac{\lvert u\circ\upPhi(x)-u\circ\upPhi(y) \rvert^{p}}{\lvert x-y \rvert^{m+sp}} \,\d x \d y
		= 
		I_{1} + I_{2} + I_{3} + I_{4},
	\end{equation}
	where we have set
	\[
		\begin{array}{ll}
		I_{1} = \iint\limits_{\substack{\upPhi^{-1}(\omega \setminus Q_{2}) \times \upPhi^{-1}(\omega \setminus Q_{2}) \\ \zeta(x) \leq \zeta(y)}}
		\frac{\lvert u\circ\upPhi(x)-u\circ\upPhi(y) \rvert^{p}}{\lvert x-y \rvert^{m+sp}} \,\d x \d y,
		&
		I_{2} = \iint\limits_{\substack{\upPhi^{-1}(Q_{2}) \times \upPhi^{-1}(Q_{2}) \\ \zeta(x) \leq \zeta(y)}}
		\frac{\lvert u\circ\upPhi(x)-u\circ\upPhi(y) \rvert^{p}}{\lvert x-y \rvert^{m+sp}} \,\d x \d y, \\
		I_{3} = \iint\limits_{\substack{\upPhi^{-1}(\omega\setminus Q_{2}) \times \upPhi^{-1}(Q_{2}) \\ \zeta(x) \leq \zeta(y)}}
		\frac{\lvert u\circ\upPhi(x)-u\circ\upPhi(y) \rvert^{p}}{\lvert x-y \rvert^{m+sp}} \,\d x \d y, 
		&
		I_{4} = \iint\limits_{\substack{\upPhi^{-1}(Q_{2}) \times \upPhi^{-1}(\omega\setminus Q_{2}) \\ \zeta(x) \leq \zeta(y)}}
		\frac{\lvert u\circ\upPhi(x)-u\circ\upPhi(y) \rvert^{p}}{\lvert x-y \rvert^{m+sp}} \,\d x \d y.
		\end{array}
	\]
    Estimating the right-hand side of~\eqref{eq:shrinking_splitted_in_4} is similar to Step~2 in the case \( 0 < s < 1 \) of Proposition~\ref{prop:block_thickening_analytic}.
    The novelty here is that we need to be more careful with the domains on which the estimates are performed.
    Indeed, in order to obtain~\ref{item:estimate_block_shrinking_sle1}, we need to estimate the right-hand side of~\eqref{eq:shrinking_splitted_in_4} by a sum of terms that are either preceded by a suitable power of \( \tau \), or involve only the energy of \( u \) on \( \omega \setminus Q_{2} \).
    
    We begin with \( I_{2} \).
    We define 
    \[
        \Bc_{x,y} = B^{m}_{\lvert \upPhi(x)-\upPhi(y) \rvert}\biggl(\frac{\upPhi(x)+\upPhi(y)}{2}\biggr) \cap Q_{2},
    \]
    so that
    \begin{equation*}
		I_{2}
        \leq 
        \C\int_{\upPhi^{-1}(Q_{2})}\int_{\upPhi^{-1}(Q_{2})}\fint_{\Bc_{x,y}}
            \frac{\lvert u\circ\upPhi(x)-u(z) \rvert^{p}}{\lvert x-y \rvert^{m+sp}}\,\d z \d y \d x.
    \end{equation*}
    We observe that \( \lvert \Bc_{x,y} \rvert \geq \C\lvert \upPhi(x)-\upPhi(y) \rvert^{m} \) due to the fact that \( Q_{2} \) is a rectangle with comparable sidelengths.
    Moreover, \( \lvert \upPhi(x)-z \rvert \leq \frac{3}{2}\lvert \upPhi(x)-\upPhi(y) \rvert \).
    Hence, using Tonelli's theorem, we find
    \begin{multline*}
        \int_{\upPhi^{-1}(Q_{2})}\int_{\upPhi^{-1}(Q_{2})}\fint_{\Bc_{x,y}}
            \frac{\lvert u\circ\upPhi(x)-u(z) \rvert^{p}}{\lvert x-y \rvert^{m+sp}}\,\d z \d y \d x \\
        \leq 
        \C\int_{\upPhi^{-1}(Q_{2})}\int_{Q_{2}}\int_{\Yc_{x,z}}
            \frac{\lvert u\circ\upPhi(x)-u(z) \rvert^{p}}{\lvert x-y \rvert^{m+sp}\lvert \upPhi(x)-z \rvert^{m}}\,\d y \d z \d x,
    \end{multline*}
    where 
    \[
        \Yc_{x,z} = \{y \in \upPhi^{-1}(Q_{2}) \mathpunct{:} z \in \Bc_{x,y}\}
        \subset
        \{y \in \R^{m} \mathpunct{:} \lvert \upPhi(x)-z \rvert < \C\frac{\mu\eta}{\zeta(x)}\lvert x-y \rvert\}.
    \]
    Therefore,
    \begin{multline}
    \label{eq:intermediate_step_block_shrinking}
        \int_{\upPhi^{-1}(Q_{2})}\int_{Q_{2}}\int_{\Yc_{x,z}}
            \frac{\lvert u\circ\upPhi(x)-u(z) \rvert^{p}}{\lvert x-y \rvert^{m+sp}\lvert \upPhi(x)-z \rvert^{m}}\,\d y \d z \d x \\
        \leq 
        \C\int_{\upPhi^{-1}(Q_{2})}\int_{Q_{2}}
            \frac{\lvert u\circ\upPhi(x)-u(z) \rvert^{p}}{\lvert \upPhi(x)-z \rvert^{m+sp}}\frac{(\mu\eta)^{sp}}{\zeta(x)^{sp}}\,\d z \d x.
    \end{multline}
    Now we use: (i) the fact that \( \zeta(x) \geq \C\mu\eta\tau \), (ii) the second estimate on \( \jac\upPhi \) -- valid on \( \upPhi^{-1}(Q_{2}) \subset B_{1} \) -- 
    and (iii) the change of variable theorem to get
    \[
        \int_{\upPhi^{-1}(Q_{2})}\int_{Q_{2}}
            \frac{\lvert u\circ\upPhi(x)-u(z) \rvert^{p}}{\lvert \upPhi(x)-z \rvert^{m+sp}}\frac{(\mu\eta)^{sp}}{\zeta(x)^{sp}}\,\d z \d x 
        \leq 
        \C\tau^{d-sp}\int_{Q_{2}}\int_{Q_{2}}
            \frac{\lvert u(x)-u(y) \rvert^{p}}{\lvert x-y \rvert^{m+sp}}\,\d x \d y.
    \]
    
    The three other terms are handled similarly, so we only point out the required changes.
    We define instead 
    \[
        \Bc_{x,y} = B^{m}_{\lvert \upPhi(x)-\upPhi(y) \rvert}\biggl(\frac{\upPhi(x)+\upPhi(y)}{2}\biggr) \cap (\omega \setminus Q_{2}).
    \]
    For \( I_{3} \), we split
    \begin{multline*}
		I_{3}
    	\leq 
    	\C\biggl(\int_{\upPhi^{-1}(\omega \setminus Q_{2})}\int_{\upPhi^{-1}(Q_{2})}\fint_{\Bc_{x,y}}
    	\frac{\lvert u\circ\upPhi(x)-u(z) \rvert^{p}}{\lvert x-y \rvert^{m+sp}}\,\d z \d y \d x \\
    	+
    	\int_{\upPhi^{-1}(\omega \setminus Q_{2})}\int_{\upPhi^{-1}(Q_{2})}\fint_{\Bc_{x,y}}
    	\frac{\lvert u\circ\upPhi(y)-u(z) \rvert^{p}}{\lvert x-y \rvert^{m+sp}}\,\d z \d y \d x
    	\biggr).
    \end{multline*}
    We note that we still have \( \lvert \Bc_{x,y} \rvert \geq \C\lvert \upPhi(x)-\upPhi(y) \rvert^{m} \), using this time the assumption on the volume of balls 
    centered in \( \omega \setminus Q_{2} \).
    We then pursue as for the second term in the right-hand side of~\eqref{eq:shrinking_splitted_in_4}: we use Tonelli's theorem, and after that, we integrate with respect to \( y \).
    Similar to~\eqref{eq:intermediate_step_block_shrinking}, we deduce that 
    \begin{multline*}
    	I_{3} 
    	\leq 
    	\C\biggl(\int_{\upPhi^{-1}(\omega \setminus Q_{2})}\int_{\omega \setminus Q_{2}}
    	\frac{\lvert u\circ\upPhi(x)-u(z) \rvert^{p}}{\lvert \upPhi(x)-z \rvert^{m+sp}}\frac{(\mu\eta)^{sp}}{\zeta(x)^{sp}}\,\d z \d x \\
    	+
    	\int_{\upPhi^{-1}(Q_{2})}\int_{\omega \setminus Q_{2}}
    	\frac{\lvert u\circ\upPhi(x)-u(z) \rvert^{p}}{\lvert \upPhi(x)-z \rvert^{m+sp}}\frac{(\mu\eta)^{sp}}{\zeta(x)^{sp}}\,\d z \d x\biggr).
    \end{multline*}
    Invoking the change of variable theorem, using the first estimate on \( \jac\upPhi \) with \( \beta = sp \) for the first term and the second estimate on \( \jac\upPhi \) for the second term, we conclude that
    \begin{equation*}
    	I_{3}
    	\leq 
    	\biggl(\int_{\omega \setminus Q_{2}}\int_{\omega \setminus Q_{2}}
    	\frac{\lvert u(x)-u(y) \rvert^{p}}{\lvert x-y \rvert^{m+sp}}\,\d x \d y
    	+
    	\tau^{d-sp}\int_{\omega}\int_{\omega}
    	\frac{\lvert u(x)-u(y) \rvert^{p}}{\lvert x-y \rvert^{m+sp}}\,\d x \d y\biggr).
    \end{equation*}
    By the exact same reasoning,
    \[
    	I_{4}
    	\leq 
    	\biggl(\int_{\omega \setminus Q_{2}}\int_{\omega \setminus Q_{2}}
    	\frac{\lvert u(x)-u(y) \rvert^{p}}{\lvert x-y \rvert^{m+sp}}\,\d x \d y
    	+
    	\tau^{d-sp}\int_{\omega}\int_{\omega}
    	\frac{\lvert u(x)-u(y) \rvert^{p}}{\lvert x-y \rvert^{m+sp}}\,\d x \d y\biggr),
    \] 
    while
    \[
    	I_{1}
    	\leq 
    	\C\int_{\omega \setminus Q_{2}}\int_{\omega \setminus Q_{2}}
    	\frac{\lvert u(x)-u(y) \rvert^{p}}{\lvert x-y \rvert^{m+sp}}\,\d x \d y.
    \]
    Collecting the estimates for the right-hand side of~\eqref{eq:shrinking_splitted_in_4}, we arrive at estimate~\ref{item:estimate_block_shrinking_sle1} of Proposition~\ref{prop:block_shrinking_analytic}.

    We finish with the estimate for the Gagliardo seminorm in the case \( s \geq 1 \).
    We consider \( x \), \( y \in \upPhi^{-1}(\omega) \) such that, without loss of generality, 
    \( \zeta(x) \leq \zeta(y) \).
    As usual, using the Faà di Bruno formula, the multilinearity of the differential and the estimates on the derivatives of \( \upPhi \), we write
    \begin{multline}
    \label{eq:Faa_di_bruno_block_shrinking_sgeq1}
        \lvert D^{j}(u \circ \upPhi)(x) - D^{j}(u \circ \upPhi)(y) \rvert \\
        \leq
        \C\sum_{i=1}^{j}\Bigl(\lvert D^{i}u\circ\upPhi(x) - D^{i}u\circ\upPhi(y) \rvert \frac{(\mu\eta)^{i}}{\zeta(y)^{j}} \\
            + \sum_{t=1}^{j} \lvert D^{i}u\circ\upPhi(x) \rvert \lvert D^{t}\upPhi(x) - D^{t}\upPhi(y) \rvert \frac{(\mu\eta)^{i-1}}{\zeta(x)^{j-t}}\Bigr).
    \end{multline}
    For the second term in~\eqref{eq:Faa_di_bruno_block_shrinking_sgeq1}, we proceed once again by splitting the integral over \( B^{m}_{r}(x) \) and \( \R^{m}\setminus B^{m}_{r}(x) \) with \( r = \zeta(x) \) to arrive at
    \[
        \int\limits_{\substack{\upPhi^{-1}(\omega) \\ \zeta(x) \leq \zeta(y)}} 
            \frac{\lvert D^{t}\upPhi(x) - D^{t}\upPhi(y) \rvert^{p}}{\lvert x-y \rvert^{m+\sigma p}}\,\d y
        \leq
        \C\frac{(\mu\eta)^{p}}{\zeta(x)^{(t+\sigma)p}}.
    \]
    Hence,
    \begin{multline}
    \label{eq:block_shrinking_second_term_fractional_higher_order}
        \iint\limits_{\substack{\upPhi^{-1}(\omega) \times \upPhi^{-1}(\omega) \\ \zeta(x) \leq \zeta(y)}} 
        \frac{\lvert D^{i}u\circ\upPhi(x) \rvert^{p} \lvert D^{t}\upPhi(x) - D^{t}\upPhi(y) \rvert^{p} }{\lvert x-y \rvert^{m+\sigma p}}\frac{(\mu\eta)^{(i-1)p}}{\zeta(x)^{(j-t)p}}\,\d x\d y \\
        \leq
        \C\int_{\upPhi^{-1}(\omega)}\lvert D^{i}u\circ\upPhi(x) \rvert^{p}\frac{(\mu\eta)^{ip}}{\zeta(x)^{(j+\sigma)p}}\,\d x.
    \end{multline}
    We then argue as for the integer order term. 
    We split the integral in the right-hand side of~\eqref{eq:block_shrinking_second_term_fractional_higher_order} over the regions \( \omega \setminus Q_{2} \) and \( Q_{2} \). 
    Owing to the change of variable theorem, using the first estimate on \( \jac\upPhi\) with \( \beta = (j+\sigma)p \) over \( \omega \setminus Q_{2} \) and the second estimate on \( \jac\upPhi \) over \( Q_{2} \), we obtain
    \begin{multline*}
        \int_{\upPhi^{-1}(\omega)} \lvert D^{i}u \circ \upPhi(x) \rvert^{p}\frac{1}{\zeta(x)^{(j+\sigma)p}}\,\d x \\
        \leq 
        \C(\mu\eta)^{ip-(j+\sigma)p}\int_{\omega \setminus Q_{2}} \lvert D^{i}u \rvert^{p}
            + \C\tau^{d-(j+\sigma)p}(\mu\eta)^{ip-(j+\sigma)p}\int_{Q_{2}} \lvert D^{i}u \rvert^{p}.
    \end{multline*}

    As for thickening, the first term in~\eqref{eq:Faa_di_bruno_block_shrinking_sgeq1} is handled exactly as in the case \( 0 < s < 1 \), taking into account the presence of the factor \( \frac{(\mu\eta)^{ip}}{\zeta(y)^{jp}} \): we use the same splitting as in~\eqref{eq:shrinking_splitted_in_4}, and then the usual averaging argument.
    Doing so, we deduce that the first term in~\eqref{eq:Faa_di_bruno_block_shrinking_sgeq1} is bounded from above by a constant multiple of
    \begin{multline}
    \label{eq:block_shrinking_first_term_fractional_higher_order}
    	\int_{\upPhi^{-1}(\omega \setminus Q_{2})}\int_{\omega \setminus Q_{2}}
    	\frac{\lvert D^{i}u\circ\upPhi(x)-D^{i}u(z) \rvert^{p}}{\lvert \upPhi(x)-z \rvert^{m+\sigma p}}\frac{(\mu\eta)^{(i+\sigma)p}}{\zeta(x)^{(j+\sigma)p}}\,\d z \d x \\
    	+
    	\int_{\upPhi^{-1}(Q_{2})}\int_{\omega}
    	\frac{\lvert D^{i}u\circ\upPhi(x)-D^{i}u(z) \rvert^{p}}{\lvert \upPhi(x)-z \rvert^{m+\sigma p}}\frac{(\mu\eta)^{(i+\sigma)p}}{\zeta(x)^{(j+\sigma)p}}\,\d z \d x.
    \end{multline}
    An additional use of the change of variable theorem shows that~\eqref{eq:block_shrinking_first_term_fractional_higher_order} is estimated, up to a constant factor, by
    \begin{multline*}
    (\mu\eta)^{(i-j)p}\int_{\omega \setminus Q_{2}}\int_{\omega \setminus Q_{2}}
    \frac{\lvert D^{i}u(x)-D^{i}u(y) \rvert^{p}}{\lvert x-y \rvert^{m+\sigma p}}\,\d x \d y \\
    +
    \tau^{d-(j+\sigma)p}(\mu\eta)^{(i-j)p}\int_{\omega}\int_{\omega}
    \frac{\lvert D^{i}u(x)-D^{i}u(y) \rvert^{p}}{\lvert x-y \rvert^{m+\sigma p}}\,\d z \d x.
    \end{multline*}
    
    Gathering the estimates for both terms in~\eqref{eq:Faa_di_bruno_block_shrinking_sgeq1}, we obtain the desired conclusion, hence finishing the proof of Proposition~\ref{prop:block_shrinking_analytic}.
    \resetconstant
\end{proof}

Now that we have at our disposal the building blocks for the shrinking procedure, we are ready to prove Proposition~\ref{prop:main_shrinking}.
As usual, for the convenience of the reader, we start with an informal presentation of the construction.

We first apply shrinking around the vertices of the dual skeleton \( \Tc^{\ell^{\ast}} \),
with parameters \( 0 < \mu_{m-1} < \nu_{m} < \mu_{m} \) and \( \frac{\tau\mu}{\nu_{m}} \), where \( \mu_{m-1} \geq \mu \) and \( \mu_{m} \leq 2\mu \).
This shrinks a neighborhood of size \( \mu_{m-1}\eta \) of these vertices into a neighborhood of size \( \tau\mu\eta \).
We then apply shrinking around the edges of \( \Tc^{\ell^{\ast}} \) with parameters \( 0 < \mu_{m-2} < \nu_{m-1} < \mu_{m-1} \) and \( \frac{\tau\mu}{\nu_{m-1}} \),
where \( \mu_{m-2} \geq \mu \).
This shrinks the part of a neighborhood of size \( \mu_{m-2}\eta \) of the edges of \( \Tc^{\ell^{\ast}} \) 
lying at distance at most \( \mu_{m-1}\eta \) of the \( (m-1) \)-faces of \( \Kc^{m} \) into a neighborhood of size \( \tau\mu\eta \) of those edges.
But since the part of the neighborhood of size \( \mu_{m-2}\eta \) lying at distance more than \( \mu_{m-1}\eta \) of the \( (m-1) \)-faces of \( \Kc^{m} \) 
has already been shrinked during the previous step, we conclude that the whole neighborhood of size \( \mu_{m-2}\eta \) of \( T^{1} \), the union of the vertices in \( \Tc^{1} \), is shrinked into a neighborhood of size \( \tau\mu\eta \).
We continue this procedure by downward induction until we reach the dimension \( \ell^{\ast} \), which produces the desired map \( \upPhi \).

We illustrate this induction procedure in Figures~\ref{fig:shrinking_around_vertices},~\ref{fig:shrinking_around_edges}, and~\ref{fig:final_shrinking}.
Here, we take \( m = 2 \) and \( \ell = 0 \).
In Figure~\ref{fig:shrinking_around_vertices}, which corresponds to the first step of the induction, the values in the gray region around the center of the cube in the left part of the figure are shrinked into the much smaller gray region on the right.
During the next step, depicted in Figure~\ref{fig:shrinking_around_edges}, the values in gray around the edges of the cube on the left are shrinked into the much smaller gray region around the dual skeleton on the right.
The combination of both steps is shown in Figure~\ref{fig:final_shrinking}.
The values in the region in gray on the left are shrinked into the small neighborhood of the dual skeleton in gray on the right.

\begin{figure}[ht]
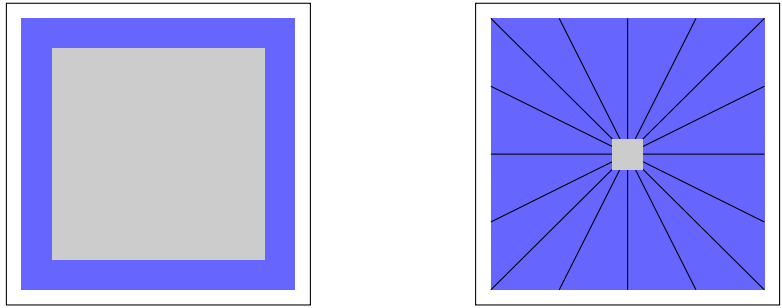

	~
	\hfill
	\includegraphics[page=10]{figures_strong_density.pdf}
	\hfill
	\includegraphics[page=11]{figures_strong_density.pdf}
	\hfill
	~
	\caption{Shrinking around vertices}
	\label{fig:shrinking_around_vertices}
\end{figure}

\begin{figure}[ht]
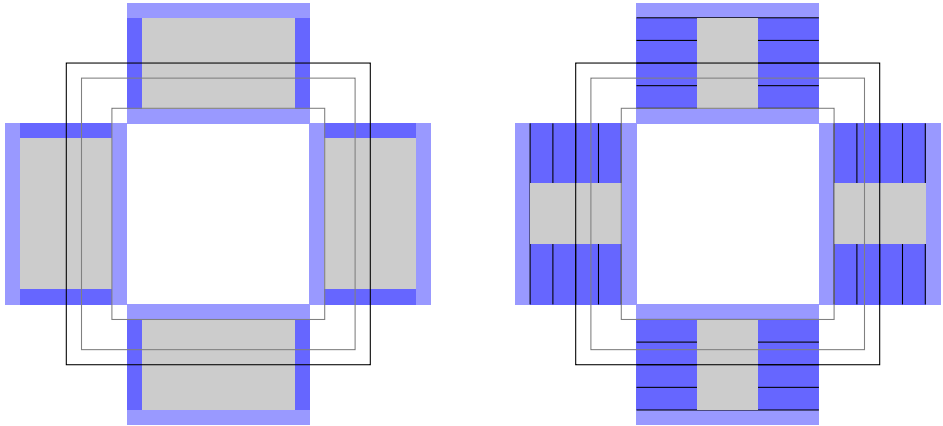

	~
	\hfill
	\includegraphics[page=12]{figures_strong_density.pdf}
	\hfill
	\includegraphics[page=13]{figures_strong_density.pdf}
	\hfill
	~
	\caption{Shrinking around edges}
	\label{fig:shrinking_around_edges}
\end{figure}

\begin{figure}[ht]
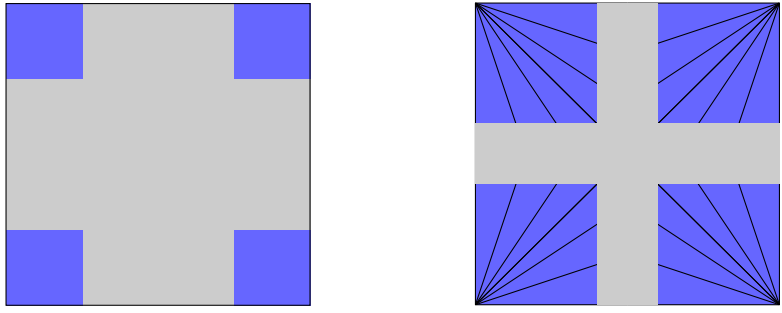

	~
	\hfill
	\includegraphics[page=14]{figures_strong_density.pdf}
	\hfill
	\includegraphics[page=15]{figures_strong_density.pdf}
	\hfill
	~
	\caption{Final shrinking at order \( 1 \)}
	\label{fig:final_shrinking}
\end{figure}

As we will see, the induction procedure is more involved than in the case of thickening, and relies on Proposition~\ref{prop:block_shrinking_analytic} applied with domains more general than rectangles.

\begin{proof}[Proof of Proposition~\ref{prop:main_shrinking}]
    The map \( \upPhi \) is constructed by downward induction.
	We consider finite sequences \( (\mu_{i})_{\ell \leq i \leq m} \) and \( (\nu_{i})_{\ell \leq i \leq m} \) such that 
	\[
		0 < \mu = \mu_{\ell} < \nu_{\ell+1} < \mu_{\ell+1} < \cdots < \mu_{m-1} < \nu_{m} < \mu_{m} \leq 2\mu.
	\]
	We first define \( \upPhi^{m} = \id \).
	Then, assuming that \( \upPhi^{d} \) has been defined for some \( d \in \{\ell+1,\dots,m\} \), 
	we identify any \( \sigma^{d} \in \Kc^{d} \) with \( Q^{d}_{\eta} \times \{0\}^{m-d} \), and we let \( \upPhi_{\sigma^{d}} \) be the map given by Proposition~\ref{prop:block_shrinking_geometric} applied around \( \sigma^{d} \) 
	with parameters \( \ulmu = \mu_{d-1} \), \( \mu = \nu_{d} \), \( \olmu = \mu_{d} \), and \( \frac{\tau\mu}{\nu_{d}} \).
	We define \( \upPsi^{d} \colon \R^{m} \to \R^{m} \) by 
	\[
		\upPsi^{d}(x)
		=
		\begin{cases}
			\upPhi_{\sigma^{d}}(x) & \text{if \( x \in T_{\sigma^{d}}(Q_{3}) \) for some \( \sigma^{d} \in \Kc^{d} \),} \\
			x & \text{otherwise,}
		\end{cases}
	\]
    where \( T_{\sigma^{d}} \) is an isometry of \( \R^{m} \) mapping \( Q^{d}_{\eta} \times \{0\}^{m-d} \) to \( \sigma^{d} \).
	We then let \( \upPhi^{d-1} = \upPsi^{d} \circ \upPhi^{d} \).
	The required map is given by \( \upPhi = \upPhi^{\ell} \).

	Properties~\ref{item:main_shrinking_injective} to~\ref{item:main_shrinking_support} are already contained in~\cite{BPVS_density_higher_order}*{Proposition~8.1}, so it only remains to prove the Sobolev estimates.
	The argument is similar to the one used in the proof of Proposition~\ref{prop:main_thickening}. 
	We proceed by induction.
    One of the issues is how to remove inductively neighborhoods of dual skeletons.
    We let \( Q_{4} = Q^{d}_{2\mu\eta} \times Q^{m-d}_{(1-\mu)\eta} \), so that \( Q_{3} \subset Q_{4} \) for every \( d \in \{\ell+1,\dots,m\} \).
    First, note that invoking Proposition~\ref{prop:block_shrinking_analytic} with \( \omega = Q_{4} \setminus T_{\sigma^{d}}^{-1}(T^{m-d-1}+Q^{m}_{\mu_{d}\eta}) \) ensures that 
    \begin{enumerate}[label=(\alph*)]
    	\item if \( 0 < s < 1 \), then
    		\[
	    		\lvert u \circ \upPhi_{\sigma^{d}} \rvert_{W^{s,p}(T_{\sigma^{d}}(Q_{4}) \setminus (T^{m-d-1}+Q^{m}_{\mu_{d}\eta}))}
	    		\leq 
	    		\C\lvert u \rvert_{W^{s,p}(T_{\sigma^{d}}(Q_{4})\setminus (T^{m-d}+Q^{m}_{\mu_{d-1}\eta}))}
	    		+ \C\tau^{\frac{d-sp}{p}}\lvert u \rvert_{W^{s,p}(T_{\sigma^{d}}(Q_{4}))};
    		\]
    	\item if \( s \geq 1 \), then for every \( j \in \{1,\dots,k\} \),
    		\begin{multline*}
	    		(\mu\eta)^{j}\lVert D^{j}(u \circ \upPhi_{\sigma^{d}}) \rVert_{L^{p}(T_{\sigma^{d}}(Q_{4}) \setminus (T^{m-d-1}+Q^{m}_{\mu_{d}\eta}))}
	    		\leq
	    		\C\sum_{i=1}^{j}(\mu\eta)^{i}\lVert D^{i}u \rVert_{L^{p}(T_{\sigma^{d}}(Q_{4})\setminus (T^{m-d}+Q^{m}_{\mu_{d-1}\eta}))} \\
	    		+ \C\tau^{\frac{d-jp}{p}}\sum_{i=1}^{j}(\mu\eta)^{i}\lVert D^{i}u \rVert_{L^{p}(T_{\sigma^{d}}(Q_{4}))};
    		\end{multline*}
    	\item if \( s \geq 1 \) and \( \sigma \neq 0 \), then for every \( j \in \{1,\dots,k\} \),
    		\begin{multline*}
	    		(\mu\eta)^{j+\sigma}\lvert D^{j}(u \circ \upPhi_{\sigma^{d}}) \rvert_{W^{\sigma,p}(T_{\sigma^{d}}(Q_{4}) \setminus (T^{m-d-1}+Q^{m}_{\mu_{d}\eta}))} \\
	    		\leq
	    		\C\sum_{i=1}^{j}\Bigl((\mu\eta)^{i}\lVert D^{i}u \rVert_{L^{p}(T_{\sigma^{d}}(Q_{4})\setminus (T^{m-d}+Q^{m}_{\mu_{d-1}\eta}))} 
	    		+ (\mu\eta)^{i+\sigma}\lvert D^{i}u \rvert_{W^{\sigma,p}(T_{\sigma^{d}}(Q_{4})\setminus (T^{m-d}+Q^{m}_{\mu_{d-1}\eta}))}\Bigr) \\
	    		+ \C\tau^{\frac{d-(j+\sigma)p}{p}}\sum_{i=1}^{j}\Bigl((\mu\eta)^{i}\lVert D^{i}u \rVert_{L^{p}(T_{\sigma^{d}}(Q_{4})))} 
	    		+ (\mu\eta)^{i+\sigma}\lvert D^{i}u \rvert_{W^{\sigma,p}(T_{\sigma^{d}}(Q_{4}))}\Bigr);
    		\end{multline*}
    	\item for every \( 0 < s < +\infty \),
    		\[
	    		\lVert u \circ \upPhi_{\sigma^{d}} \rVert_{L^{p}(T_{\sigma^{d}}(Q_{4}) \setminus (T^{m-d-1}+Q^{m}_{\mu_{d}\eta}))}
	    		\leq 
	    		\C\lVert u \rVert_{L^{p}(T_{\sigma^{d}}(Q_{4})\setminus (T^{m-d}+Q^{m}_{\mu_{d-1}\eta}))}
	    		+ \C\tau^{d}\lVert u \rVert_{L^{p}(T_{\sigma^{d}}(Q_{4}))}.
    		\]
    \end{enumerate}
    Indeed, we have: (i) \( (T_{\sigma^{d}}(Q_{4}) \setminus (T^{m-d-1}+Q^{m}_{\mu_{d}\eta})) \setminus T_{\sigma^{d}}(Q_{2}) \subset T_{\sigma^{d}}(Q_{4}) \setminus (T^{m-d}+Q^{m}_{\mu_{d-1}\eta}) \), (ii) \( Q_{4} \setminus T_{\sigma^{d}}^{-1}(T^{m-d-1}+Q^{m}_{\mu_{d}\eta}) \subset \upPhi^{-1}(\omega) \), and (iii) \( \omega \) satisfies the condition on the volume of balls required to apply Proposition~\ref{prop:block_shrinking_analytic}.
    Affirmation~(ii) is a consequence of the fact that \( \upPhi \) has the specific form \( \upPhi(x) = (\lambda(x)x',x'') \) with \( \lambda \colon \R^{m} \to [1,+\infty) \).
    Affirmation~(iii) follows from the fact that \( \omega \setminus Q_{2} \) is actually a rectangle to which other rectangles have been removed.
    Note that, for convenience of notation, we let \( T^{-1} = \varnothing \).
    
    Using the additivity of the integral or Lemma~\ref{lemma:fractional_additivity} combined with the usual finite number of overlaps argument, we deduce that 
    \begin{enumerate}[label=(\alph*)]
    	\item if \( 0 < s < 1 \), then 
    		\begin{align*}
	    		&(\mu\eta)^{s}\lvert u\circ\upPsi^{d} \rvert_{W^{s,p}(K^{m}\cap (T^{\ell^{\ast}}+Q^{m}_{2\mu\eta})\setminus (T^{m-d-1}+Q^{m}_{\mu_{d}\eta}))} \\
	    		&\quad\leq
	    		\C\Bigl((\mu\eta)^{s} \lvert u \rvert_{W^{s,p}(K^{m}\cap (T^{\ell^{\ast}}+Q^{m}_{2\mu\eta})\setminus (T^{m-d}+Q^{m}_{\mu_{d-1}\eta}))} + \lVert u \rVert_{L^{p}(K^{m}\cap (T^{\ell^{\ast}}+Q^{m}_{2\mu\eta}) \setminus (T^{m-d}+Q^{m}_{\mu_{d-1}\eta}))}\Bigr) \\
	    		&\quad\quad+ \C\tau^{\frac{\ell+1-sp}{p}}\Bigr((\mu\eta)^{s}\lvert u \rvert_{W^{s,p}(K^{m}\cap (T^{\ell^{\ast}}+Q^{m}_{2\mu\eta}))} + \lVert u \rVert_{L^{p}(K^{m}\cap (T^{\ell^{\ast}}+Q^{m}_{2\mu\eta}))} \Bigr); 
    		\end{align*}
    	\item if \( s \geq 1 \), then for every \( j \in \{1,\dots,k\} \),
    		\begin{align*}
	    		&(\mu\eta)^{j}\lVert D^{j}(u\circ\upPsi^{d}) \rVert_{L^{p}(K^{m}\cap (T^{\ell^{\ast}}+Q^{m}_{2\mu\eta})\setminus (T^{m-d-1}+Q^{m}_{\mu_{d}\eta}))} \\
	    		&\quad\leq
	    		\C\sum_{i=1}^{j}(\mu\eta)^{i} \lVert D^{i}u \rVert_{L^{p}(K^{m}\cap (T^{\ell^{\ast}}+Q^{m}_{2\mu\eta}) \setminus (T^{m-d}+Q^{m}_{\mu_{d-1}\eta}))} \\
	    		&\quad\quad+ \C\tau^{\frac{\ell+1-sp}{p}}\sum_{i=1}^{j} (\mu\eta)^{i}\lVert D^{i}u \rVert_{L^{p}(K^{m}\cap (T^{\ell^{\ast}}+Q^{m}_{2\mu\eta}))};
    		\end{align*}
    	\item if \( s \geq 1 \) and \( \sigma \neq 0 \), then for every \( j \in \{1,\dots,k\} \),
    		\[
    		\begin{split}
	    		(\mu\eta)^{j+\sigma}&\lvert D^{j}(u\circ\upPsi^{d}) \rvert_{W^{\sigma,p}(K^{m}\cap (T^{\ell^{\ast}}+Q^{m}_{2\mu\eta})\setminus (T^{m-d-1}+Q^{m}_{\mu_{d}\eta}))} \\
	    		\leq&
	    		\C\sum_{i=1}^{j}\Bigl((\mu\eta)^{i} \lVert D^{i}u \rVert_{L^{p}(K^{m}\cap (T^{\ell^{\ast}}+Q^{m}_{2\mu\eta})\setminus (T^{m-d}+Q^{m}_{\mu_{d-1}\eta}))} \\ 
	    		&+ (\mu\eta)^{i+\sigma} \lvert D^{i}u \rvert_{W^{\sigma,p}(K^{m}\cap (T^{\ell^{\ast}}+Q^{m}_{2\mu\eta})\setminus (T^{m-d}+Q^{m}_{\mu_{d-1}\eta}))}\Bigr) \\
	    		&+ \C\tau^{\frac{\ell+1-sp}{p}}\sum_{i=1}^{j}\Bigr((\mu\eta)^{i}\lVert D^{i}u \rVert_{L^{p}(K^{m}\cap (T^{\ell^{\ast}}+Q^{m}_{2\mu\eta}))} + (\mu\eta)^{i+\sigma}\lvert D^{i}u \rvert_{W^{\sigma,p}(K^{m}\cap (T^{\ell^{\ast}}+Q^{m}_{2\mu\eta}))} \Bigr); 
	    	\end{split}
    		\]
    	\item for every \( 0 < s < +\infty \), 
    		\begin{multline*}
    		\lVert u\circ\upPsi^{d} \rVert_{L^{p}(K^{m}\cap (T^{\ell^{\ast}}+Q^{m}_{2\mu\eta})\setminus (T^{m-d-1}+Q^{m}_{\mu_{d}\eta}))} \\
    		\leq 
    		\C\lVert u \rVert_{L^{p}(K^{m}\cap (T^{\ell^{\ast}}+Q^{m}_{2\mu\eta}) \setminus (T^{m-d}+Q^{m}_{\mu_{d-1}\eta}))} 
    		+ \C\tau^{\frac{\ell+1-sp}{p}}\lVert u \rVert_{L^{p}(K^{m}\cap (T^{\ell^{\ast}}+Q^{m}_{2\mu\eta}))}
    		\end{multline*}
    \end{enumerate}

    In particular, since \( \tau < 1 \), another application of Proposition~\ref{prop:block_shrinking_analytic} yields the following simpler estimates:
    \begin{enumerate}[label=(\alph*)]
    	\item if \( 0 < s < 1 \), then 
    		\begin{multline*}
	    		(\mu\eta)^{s}\lvert u\circ\upPsi^{d} \rvert_{W^{s,p}(K^{m}\cap (T^{\ell^{\ast}}+Q^{m}_{2\mu\eta}))} \\
	    		\leq
	    		\C\Bigl((\mu\eta)^{s} \lvert u \rvert_{W^{s,p}(K^{m}\cap (T^{\ell^{\ast}}+Q^{m}_{2\mu\eta}))} + \lVert u \rVert_{L^{p}(K^{m}\cap (T^{\ell^{\ast}}+Q^{m}_{2\mu\eta}))}\Bigr); 
    		\end{multline*}
    	\item if \( s \geq 1 \), then for every \( j \in \{1,\dots,k\} \),
    		\[
	    		(\mu\eta)^{j}\lVert D^{j}(u\circ\upPsi^{d}) \rVert_{L^{p}(K^{m}\cap (T^{\ell^{\ast}}+Q^{m}_{2\mu\eta}))}
	    		\leq
	    		\C\sum_{i=1}^{j}(\mu\eta)^{i} \lVert D^{i}u \rVert_{L^{p}(K^{m}\cap (T^{\ell^{\ast}}+Q^{m}_{2\mu\eta}))};
    		\]
    	\item if \( s \geq 1 \) and \( \sigma \neq 0 \), then for every \( j \in \{1,\dots,k\} \),
    		\begin{multline*}
	    		(\mu\eta)^{j+\sigma}\lvert D^{j}(u\circ\upPsi^{d}) \rvert_{W^{\sigma,p}(K^{m}\cap (T^{\ell^{\ast}}+Q^{m}_{2\mu\eta}))} \\
	    		\leq
	    		\C\sum_{i=1}^{j}\Bigl((\mu\eta)^{i} \lVert D^{i}u \rVert_{L^{p}(K^{m}\cap (T^{\ell^{\ast}}+Q^{m}_{2\mu\eta}))} + (\mu\eta)^{i+\sigma} \lvert D^{i}u \rvert_{W^{\sigma,p}(K^{m}\cap (T^{\ell^{\ast}}+Q^{m}_{2\mu\eta}))}\Bigr); 
    		\end{multline*}
    	\item for every \( 0 < s < +\infty \),
    		\[
	    		\lVert u\circ\upPsi^{d} \rVert_{L^{p}(K^{m}\cap (T^{\ell^{\ast}}+Q^{m}_{2\mu\eta}))}
	    		\leq 
	    		\C\lVert u \rVert_{L^{p}(K^{m}\cap (T^{\ell^{\ast}}+Q^{m}_{2\mu\eta}))}.
    		\]
    \end{enumerate} 

    Combining both these sets of estimates through a downward induction procedure on \( d \), we arrive at the following estimates:
    \begin{enumerate}[label=(\alph*)]
    	\item if \( 0 < s < 1 \), then 
    		\begin{align*}
	    		&(\mu\eta)^{s}\lvert u\circ\upPhi \rvert_{W^{s,p}(K^{m}\cap (T^{\ell^{\ast}}+Q^{m}_{2\mu\eta}))} \\
	    		&\quad\leq
	    		\C\Bigl((\mu\eta)^{s} \lvert u \rvert_{W^{s,p}(K^{m} \cap (T^{\ell^{\ast}}+Q^{m}_{2\mu\eta}) \setminus (T^{\ell^{\ast}}+Q^{m}_{\mu\eta}))} + \lVert u \rVert_{L^{p}(K^{m} \cap (T^{\ell^{\ast}}+Q^{m}_{2\mu\eta}) \setminus (T^{\ell^{\ast}}+Q^{m}_{\mu\eta}))}\Bigr) \\
	    		&\quad\quad+ \C\tau^{\frac{\ell+1-sp}{p}}\Bigr((\mu\eta)^{s}\lvert u \rvert_{W^{s,p}(K^{m} \cap (T^{\ell^{\ast}}+Q^{m}_{2\mu\eta}))} + \lVert u \rVert_{L^{p}(K^{m} \cap (T^{\ell^{\ast}}+Q^{m}_{2\mu\eta}))} \Bigr);
    		\end{align*}
    	\item if \( s \geq 1 \), then for every \( j \in \{1,\dots,k\} \),
    		\begin{align*}
	    		(\mu\eta)^{j}\lVert D^{j}(u\circ\upPhi) \rVert_{L^{p}(K^{m}\cap (T^{\ell^{\ast}}+Q^{m}_{2\mu\eta}))}
	    		&\leq
	    		\C\sum_{i=1}^{j}(\mu\eta)^{i} \lVert D^{i}u \rVert_{L^{p}(K^{m} \cap (T^{\ell^{\ast}}+Q^{m}_{2\mu\eta}) \setminus (T^{\ell^{\ast}}+Q^{m}_{\mu\eta}))} \\
	    		&\quad+ \C\tau^{\frac{\ell+1-sp}{p}}\sum_{i=1}^{j} (\mu\eta)^{i}\lVert D^{i}u \rVert_{L^{p}(K^{m} \cap (T^{\ell^{\ast}}+Q^{m}_{2\mu\eta}))};
    		\end{align*}
    	\item if \( s \geq 1 \) and \( \sigma \neq 0 \), then for every \( j \in \{1,\dots,k\} \),
    		\begin{align*}
	    		&(\mu\eta)^{j+\sigma}\lvert D^{j}(u\circ\upPhi) \rvert_{W^{\sigma,p}(K^{m}\cap (T^{\ell^{\ast}}+Q^{m}_{2\mu\eta}))} \\
	    		&\quad\leq
	    		\C\sum_{i=1}^{j}\Bigl((\mu\eta)^{i} \lVert D^{i}u \rVert_{L^{p}(K^{m} \cap (T^{\ell^{\ast}}+Q^{m}_{2\mu\eta}) \setminus (T^{\ell^{\ast}}+Q^{m}_{\mu\eta}))} \\ 
	    		&\quad\quad\quad+ (\mu\eta)^{i+\sigma} \lvert D^{i}u \rvert_{W^{\sigma,p}(K^{m} \cap (T^{\ell^{\ast}}+Q^{m}_{2\mu\eta}) \setminus (T^{\ell^{\ast}}+Q^{m}_{\mu\eta}))}\Bigr) \\
	    		&\quad\quad+ \C\tau^{\frac{\ell+1-sp}{p}}\sum_{i=1}^{j}\Bigr((\mu\eta)^{i}\lVert D^{i}u \rVert_{L^{p}(K^{m} \cap (T^{\ell^{\ast}}+Q^{m}_{2\mu\eta}))} \\
	    		&\quad\quad\quad+ (\mu\eta)^{i+\sigma}\lvert D^{i}u \rvert_{W^{\sigma,p}(K^{m} \cap (T^{\ell^{\ast}}+Q^{m}_{2\mu\eta}))} \Bigr);
    		\end{align*}
    	\item for every \( 0 < s < +\infty \),
    		\begin{multline*}
    		\lVert u\circ\upPhi \rVert_{L^{p}(K^{m}\cap (T^{\ell^{\ast}}+Q^{m}_{2\mu\eta}))} \\
    		\leq 
    		\C\lVert u \rVert_{L^{p}(K^{m} \cap (T^{\ell^{\ast}}+Q^{m}_{2\mu\eta}) \setminus (T^{\ell^{\ast}}+Q^{m}_{\mu\eta}))}
    		+\C\tau^{\frac{\ell+1-sp}{p}}\lVert u \rVert_{L^{p}(K^{m} \cap (T^{\ell^{\ast}}+Q^{m}_{2\mu\eta}))}.
    		\end{multline*}
    \end{enumerate} 

    The conclusion follows from the fact that \( u = v \) outside of \( T^{\ell^{\ast}}+Q^{m}_{\mu\eta} \), by noting that actually \( \Supp\upPhi \subset T^{\ell^{\ast}}+Q^{m}_{\nu_{m}\eta} \), and using once again the additivity of the integral or Lemma~\ref{lemma:fractional_additivity}.
    \resetconstant
\end{proof}

\section{Density of smooth maps}
\label{sect:density_smooth_maps}

In view of Theorem~\ref{thm:density_class_R}, in order to prove Theorem~\ref{thm:density_smooth_functions}, it suffices to show that maps of the class \( \Rc \) may be approximated by smooth maps with values into \( \Nc \).
As we already announced, the basic idea to do so is to remove the singularities of maps in the class \( \Rc \) by filling them with a smooth map.
The key tool in this direction is the following lemma, which relies on the fact that \( K^{\ell} \) is a homotopy retract of the complement \( K^{m} \setminus T^{\ell^{\ast}} \) of the dual skeleton \( T^{\ell^{\ast}} \).
The statement we present is from~\cite{BPVS_density_higher_order}*{Proposition~7.1}, but similar ideas were already used, e.g., in~\cite{white_infima_energy_functionals}*{Section~1},~\cite{hajlasz}*{Section~2}, or~\cite{HL_topology_of_sobolev_mappings_II}*{Section~6}.

\begin{lemme}
\label{lemma:extension_continuous_to_smooth}
    Let \( \Kc^{m} \) be a cubication in \( \R^{m} \) of radius \( \eta > 0 \), \( \ell \in \{0,\dots,m-1\} \), \( \Tc^{\ell^{\ast}} \) the dual skeleton of \( \Kc^{\ell} \), and \( u \in \Cc^{\infty}(K^{m}\setminus T^{\ell^{\ast}};\Nc) \).
    If there exists \( f \in \Cc^{0}(K^{m};\Nc) \) such that \( f_{\vert K^{\ell}} = u_{\vert K^{\ell}} \), then for every \( 0 < \mu < 1 \), there exists \( v \in \Cc^{\infty}(K^{m};\Nc) \) 
    such that \( v = u \) on \( K^{m} \setminus (T^{\ell^{\ast}}+Q^{m}_{\mu\eta}) \).
\end{lemme}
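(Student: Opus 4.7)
The plan is to build $v$ in two stages: first produce a continuous $\Nc$-valued map $w$ on $K^m$ that equals $u$ outside $T^{\ell^\ast} + Q^m_{\mu\eta}$, using the hypothesis together with the homotopy structure of the pair $(K^m\setminus T^{\ell^\ast}, K^\ell)$; then smooth $w$ locally near $T^{\ell^\ast}$ by mollification followed by the nearest-point projection $\upPi$ onto $\Nc$.

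For the first stage, I will use the fact recalled in Section~\ref{sect:sketch_proof} that $K^\ell$ is a homotopy retract of $K^m\setminus T^{\ell^\ast}$. Thus there exist a continuous retraction $r\colon K^m\setminus T^{\ell^\ast}\to K^\ell$ and a continuous homotopy $H\colon [0,1]\times (K^m\setminus T^{\ell^\ast})\to K^m\setminus T^{\ell^\ast}$ with $H(0,\cdot)=\id$ and $H(1,\cdot)=r$. Because $u_{\vert K^\ell}=f_{\vert K^\ell}$, the concatenation
\[
    G(t,x) = \begin{cases} u(H(2t,x)) & \text{if } t\in[0,1/2], \\ f(H(2-2t,x)) & \text{if } t\in[1/2,1], \end{cases}
\]
is a well-defined continuous homotopy $[0,1]\times(K^m\setminus T^{\ell^\ast})\to\Nc$ from $G(0,\cdot)=u$ to $G(1,\cdot)=f$; the two branches agree at $t=1/2$ since both equal $f\circ r=u\circ r$.

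For the gluing step, I will pick $0<\mu_1<\mu_2<\mu_3<\mu$ and a continuous cutoff $\chi\colon K^m\to[0,1]$ with $\chi=1$ on $T^{\ell^\ast}+Q^m_{\mu_1\eta}$ and $\chi=0$ outside $T^{\ell^\ast}+Q^m_{\mu_2\eta}$. Setting $w(x)=G(\chi(x),x)$ for $x\in K^m\setminus T^{\ell^\ast}$ and $w(x)=f(x)$ for $x\in T^{\ell^\ast}$, one has $w=u$ outside $T^{\ell^\ast}+Q^m_{\mu_2\eta}$ (where $\chi=0$) and $w=f$ on $T^{\ell^\ast}+Q^m_{\mu_1\eta}$ (where $\chi=1$). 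The latter identity is precisely what makes $w$ continuous across $T^{\ell^\ast}$: the potentially wild behavior of $u$ near $T^{\ell^\ast}$ has been replaced by $f$, which is continuous on $K^m$. Hence $w\in\Cc^0(K^m;\Nc)$.

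To conclude, I smooth $w$ in the neighborhood of $T^{\ell^\ast}$ using the adaptive convolution of Section~\ref{sect:adaptative_smoothing}: choose $\psi\in\Cc^\infty(K^m;[0,+\infty))$ with $\psi>0$ on $T^{\ell^\ast}+Q^m_{\mu_2\eta}$, $\psi=0$ outside $T^{\ell^\ast}+Q^m_{\mu_3\eta}$, and with $\psi$ small enough (relative to the modulus of continuity of $w$ and the radius of a tubular neighborhood of $\Nc$) so that $\varphi_\psi\ast w$ takes values in $\Nc+B^\nu_\iota$. Then $v=\upPi\circ(\varphi_\psi\ast w)$ is smooth and $\Nc$-valued; and since $\varphi_\psi\ast w=w=u$ on $K^m\setminus(T^{\ell^\ast}+Q^m_{\mu_3\eta})$, one gets $v=u$ there, a fortiori on $K^m\setminus(T^{\ell^\ast}+Q^m_{\mu\eta})$. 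The main obstacle is Step~2: the hypothesis does not directly provide a continuous extension of $u$ to $T^{\ell^\ast}$, and the role of the homotopy $G$ is precisely to deform $u$ to $f$ along an $\Nc$-valued path so that, after cutoff, the replacement of $u$ by $f$ near $T^{\ell^\ast}$ is continuous.
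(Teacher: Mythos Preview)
The paper does not give its own proof of this lemma; it attributes the statement to \cite{BPVS_density_higher_order}*{Proposition~7.1} and only indicates, in the sentence preceding the lemma, that the argument ``relies on the fact that $K^{\ell}$ is a homotopy retract of the complement $K^{m}\setminus T^{\ell^{\ast}}$''. Your proof supplies exactly this argument and is correct: the deformation retraction lets you build a continuous homotopy in $\Nc$ from $u$ to $f$ on $K^{m}\setminus T^{\ell^{\ast}}$; the cutoff $\chi$ splices these into a continuous $\Nc$-valued $w$ equal to $u$ outside a small neighborhood of $T^{\ell^{\ast}}$; and mollification followed by nearest-point projection yields the smooth $v$. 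This is the standard route and matches the approach of the cited reference.

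One point worth making explicit: the adaptive convolution in Section~\ref{sect:adaptative_smoothing} is formulated for strictly positive $\psi$, so smoothness of $\varphi_{\psi}\ast w$ at points where $\psi$ vanishes requires a separate remark. There, however, $w=u$ is already $\Cc^{\infty}$ on a full neighborhood of $\{\psi=0\}$ (since $\{\psi=0\}$ lies outside the closed $\mu_{2}$-neighborhood of $T^{\ell^{\ast}}$, where $\chi=0$ and hence $w=u$), and the integral formula $\int_{B^{m}_{1}}\varphi(z)\,w(x+\psi(x)z)\,\d z$ is then smooth in $x$ by differentiation under the integral sign. With this observation, your argument is complete.
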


In order to apply Lemma~\ref{lemma:extension_continuous_to_smooth}, it is useful to know when a continuous map from \( K^{\ell} \) to \( \Nc \) may be extended to a continuous map from \( K^{m} \) to \( \Nc \).
Following Hang and Lin~\cite{HL_topology_of_sobolev_mappings_II}, we introduce the notion of \emph{extension property}.

\begin{definition}
	\label{def:extension_property}
	Let \( K^{m} \) be a cubication in \( \R^{m} \) and \( \ell \in \{0,\dots,m-1\} \).
	We say that \( K^{m} \) has the \emph{\( \ell \)-extension property with respect to \( \Nc \)} whenever every continuous map \( f \colon K^{\ell} \to \Nc \) has an extension \( g \in \Cc^{0}(K^{m};\Nc) \).
\end{definition}

This definition is slightly different from the one given by Hang and Lin~\cite{HL_topology_of_sobolev_mappings_II}, and more in the spirit of the recent work by Bousquet, Ponce, and Van Schaftingen~\cite{BPVS_screening}.
In~\cite{HL_topology_of_sobolev_mappings_II}, Hang and Lin required that
\begin{equation}
	\label{eq:41e969e6b54715e5}
	\begin{split}
		&\text{for every continuous map \( f \colon K^{\ell+1} \to \Nc \),} \\
		&\text{\( f_{\vert K^{\ell}} \) has an extension \( g \in \Cc^{0}(K^{m};\Nc) \).}
	\end{split}
\end{equation}
To draw the parallel between both settings, let us mention that the \( \ell \)-extension property as stated in Definition~\ref{def:extension_property} is equivalent to the assumption that~\eqref{eq:41e969e6b54715e5} holds along with \( \pi_{\ell}(\Nc) = \{0\} \).

The identification of the key role played by the extension property in the strong density problem was one of the major contributions of~\cite{HL_topology_of_sobolev_mappings_II}.
In this respect, we start with the following proposition, which provides an approximation result for maps in the class \( \Rc \) as the ones used in the proof of Theorem~\ref{thm:density_class_R}.
All the other results in this section will be deduced from this proposition.

\begin{prop}
\label{prop:density_smooth_maps_cubication}	
	Let \( \Kc^{m} \) be a cubication in \( \R^{m} \).
	Let \( \ell \in \{0,\dots,m-1\} \) be such that \( \ell = [sp] \), and \( \Tc^{\ell^{\ast}} \) the dual skeleton of \( \Kc^{\ell} \).
	If \( K^{m} \) has the \( \ell \)-extension property with respect to \( \Nc \), then \( \Cc^{\infty}(K^{m};\Nc) \) is dense in \( \Cc^{\infty}(K^{m}\setminus T^{\ell^{\ast}};\Nc) \cap W^{s,p}(K^{m};\Nc) \) with respect to the \( W^{s,p} \) distance.
\end{prop}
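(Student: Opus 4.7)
The plan is to follow the scheme outlined in Section~\ref{sect:sketch_proof}: remove the singularities of $u$ on $T^{\ell^{\ast}}$ by a topological extension, then use shrinking to obtain an extension whose energy cost is controlled by that of $u$ on a shrinking neighborhood of $T^{\ell^{\ast}}$.

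\textbf{Step~1 (Topological extension).} Given $u \in \Cc^{\infty}(K^{m}\setminus T^{\ell^{\ast}};\Nc)\cap W^{s,p}(K^{m};\Nc)$, the trace $u_{\vert K^{\ell}}$ is continuous, since $K^{\ell}\cap T^{\ell^{\ast}}=\varnothing$. First I would extend $u_{\vert K^{\ell}}$ face by face to a continuous map $\tilde u\colon K^{\ell+1}\to\Nc$: on each $(\ell+1)$-face $\sigma^{\ell+1}$ of $\Kc^{m}$, the boundary $\partial\sigma^{\ell+1}$ is a topological $\Sp^{\ell}$, and the obstruction to extending the continuous map $u_{\vert\partial\sigma^{\ell+1}}$ to $\sigma^{\ell+1}$ lies in $\pi_{\ell}(\Nc)=\{0\}$, so such an extension exists. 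Applying the $\ell$-extension property of $K^{m}$ to $\tilde u$, I would then obtain a continuous $f\colon K^{m}\to\Nc$ with $f_{\vert K^{\ell}}=u_{\vert K^{\ell}}$. For each $0<\mu<1$, Lemma~\ref{lemma:extension_continuous_to_smooth} now yields a map $u^{\ext}_{\mu}\in\Cc^{\infty}(K^{m};\Nc)$ which coincides with $u$ outside $T^{\ell^{\ast}}+Q^{m}_{\mu\eta}$.

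\textbf{Step~2 (Shrinking).} The map $u^{\ext}_{\mu}$ is smooth and takes values in $\Nc$, but its energy on the patched region $T^{\ell^{\ast}}+Q^{m}_{\mu\eta}$ is not controlled \emph{a priori}. To remedy this, I would apply Proposition~\ref{prop:main_shrinking} with first map $u^{\ext}_{\mu}$ and second map $u$ (which agree outside $T^{\ell^{\ast}}+Q^{m}_{\mu\eta}$), the shrinking parameter $\tau_{\mu}$ being chosen below. Since $\ell=[sp]$ implies $\ell+1>sp$, the proposition produces a smooth injective map $\upPhi_{\mu,\tau_{\mu}}\colon\R^{m}\to\R^{m}$ supported in $T^{\ell^{\ast}}+Q^{m}_{2\mu\eta}$ and mapping each $m$-cube of $\Kc^{m}$ into itself. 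The composition $u^{\sh}_{\mu}\Def u^{\ext}_{\mu}\circ\upPhi_{\mu,\tau_{\mu}}$ is then smooth, takes values in $\Nc$, and coincides with $u$ on $K^{m}\setminus(T^{\ell^{\ast}}+Q^{m}_{2\mu\eta})$. I would then choose $\tau_{\mu}$ small enough (depending on $u^{\ext}_{\mu}$, and hence on $\mu$) so as to absorb every term preceded by the factor $\tau^{(\ell+1-sp)/p}$ in estimates~\ref{item:estimate_main_shrinking_sle1}--\ref{item:estimate_main_shrinking_all} of Proposition~\ref{prop:main_shrinking}. This reduces the control of $\lVert u^{\sh}_{\mu}-u\rVert_{W^{s,p}(K^{m})}$ to a sum of terms involving only the norms of $u$ on $T^{\ell^{\ast}}+Q^{m}_{2\mu\eta}$, exactly as in the informal list~\ref{item:below_main_shrinking_sle1}--\ref{item:below_main_shrinking_all} following Proposition~\ref{prop:main_shrinking}.

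\textbf{Step~3 (Passage to the limit).} It remains to prove that $u^{\sh}_{\mu}\to u$ in $W^{s,p}(K^{m})$ as $\mu\to 0$. Since $T^{\ell^{\ast}}$ has dimension $m-\ell-1$, one has $\lvert T^{\ell^{\ast}}+Q^{m}_{2\mu\eta}\rvert=\Oc((\mu\eta)^{\ell+1})$, and since $\ell+1>sp$, the decay of this measure combined with the Gagliardo--Nirenberg interpolation inequality, Hölder's inequality, and Lemma~\ref{lemma:interpolation_estimate} allows one to convert the measure decay into a negative power of $\mu\eta$ large enough to overcome the factors $(\mu\eta)^{i-j}$ appearing in the intermediate-order terms $(\mu\eta)^{i-j}\lVert D^{i}u\rVert_{L^{p}(T^{\ell^{\ast}}+Q^{m}_{2\mu\eta})}$ and their Gagliardo analogues, exactly as in the proof of Theorem~\ref{thm:density_class_R}. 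The top-order terms and the $L^{p}$ term tend to zero by absolute continuity of the Lebesgue integral and, respectively, of the Gagliardo double integral on the shrinking sets $T^{\ell^{\ast}}+Q^{m}_{2\mu\eta}$. This yields the desired approximating sequence $(u^{\sh}_{\mu})\subset\Cc^{\infty}(K^{m};\Nc)$.

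The main obstacle is the bookkeeping in Step~2, since $\tau_{\mu}$ has to be chosen small enough to kill every term involving $u^{\ext}_{\mu}$, whose Sobolev norms are \emph{a priori} uncontrolled and may well blow up as $\mu\to 0$, while the final bound must depend only on $u$. The shrinking construction of Proposition~\ref{prop:main_shrinking} is designed precisely so that the parameter $\tau$ appears to a positive power multiplying every $u^{\ext}_{\mu}$-term in the estimates, which is what makes this absorption possible; this is the essential reason why shrinking, rather than a naive scaling between $u^{\ext}_{\mu}$ and $u$, is the right tool to handle the high-order fractional case.
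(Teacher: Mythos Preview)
Your proposal is correct and follows essentially the same route as the paper: topological extension via $\pi_{\ell}(\Nc)=\{0\}$ and the $\ell$-extension property, Lemma~\ref{lemma:extension_continuous_to_smooth} to get $u^{\ext}_{\mu}$, then shrinking with $\tau_{\mu}$ chosen small enough to absorb the $u^{\ext}_{\mu}$-terms, and finally convergence using the measure bound $\lvert T^{\ell^{\ast}}+Q^{m}_{2\mu\eta}\rvert\leq C(\mu\eta)^{\ell+1}$ together with Gagliardo--Nirenberg, H\"older, and Lemma~\ref{lemma:interpolation_estimate}. The paper's proof is organized exactly this way.
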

\begin{proof}
	Let \( u \in \Cc^{\infty}(K^{m}\setminus T^{\ell^{\ast}};\Nc) \cap W^{s,p}(K^{m};\Nc) \).
	We denote by \( \eta \) the radius of the cubication \( \Kc^{m} \).
	The \( \ell \)-extension property of \( \Kc^{m} \) with respect to \( \Nc \) ensures that \( u_{\vert K^{\ell}} \) extends to a continuous map from \( K^{m} \) to \( \Nc \).
	Therefore, Lemma~\ref{lemma:extension_continuous_to_smooth} implies that, for every \( 0 < \mu < 1 \), there exists a map \( u^{\ext}_{\mu} \in \Cc^{\infty}(K^{m};\Nc) \) 
	such that \( u^{\ext}_{\mu} = u \) on \( K^{m} \setminus (T^{\ell^{\ast}}+Q^{m}_{\mu\eta}) \).
	
	We now apply shrinking to this map \( u^{\ext}_{\mu} \).
	More precisely, we assume that \( \mu < \frac{1}{2} \), we take \( 0 < \tau < \frac{1}{2} \) and we define 
	\( u^{\sh}_{\tau,\mu} = u^{\ext}_{\mu} \circ \upPhi^{\sh}_{\tau,\mu} \), 
	where \( \upPhi^{\sh}_{\tau,\mu} \) is provided by Proposition~\ref{prop:main_shrinking}.
	By Proposition~\ref{prop:main_shrinking} and the remark that follows the statement, choosing \( \tau = \tau_{\mu} \) sufficiently small and \( \varepsilon = \mu \), we deduce that, with \( A_{\mu} = K^{m} \cap (T^{\ell^{\ast}}+Q^{m}_{2\mu\eta}) \),
	\begin{enumerate}[label=(\alph*)]
		\item if \( 0 < s < 1 \), then 
			\[
				(\mu\eta)^{s}\lvert u^{\sh}_{\tau_{\mu},\mu} - u \rvert_{W^{s,p}(K^{m})} 
				\leq
				\C\Bigl((\mu\eta)^{s} \lvert u \rvert_{W^{s,p}(A_{\mu})} + \lVert u \rVert_{L^{p}(A_{\mu})}\Bigr) + \mu; 
			\]
		\item if \( s \geq 1 \), then for every \( j \in \{1,\dots,k\} \),
			\[
				(\mu\eta)^{j}\lVert D^{j}u^{\sh}_{\tau_{\mu},\mu} - D^{j}u \rVert_{L^{p}(K^{m})}
				\leq
				\C\sum_{i=1}^{j}(\mu\eta)^{i} \lVert D^{i}u \rVert_{L^{p}(A_{\mu})} + \mu;
			\]
		\item if \( s \geq 1 \) and \( \sigma \neq 0 \), then for every \( j \in \{1,\dots,k\} \),
			\begin{multline*}
				(\mu\eta)^{j+\sigma}\lvert D^{j}u^{\sh}_{\tau_{\mu},\mu} - D^{j}u \rvert_{W^{\sigma,p}(K^{m})} \\
				\leq
				\C\sum_{i=1}^{j}\Bigl((\mu\eta)^{i} \lVert D^{i}u \rVert_{L^{p}(A_{\mu})} 
				+ (\mu\eta)^{i+\sigma} \lvert D^{i}u \rvert_{W^{\sigma,p}(A_{\mu})}\Bigr) + \mu; 
			\end{multline*}
		\item for every \( 0 < s < +\infty \),
			\[
				\lVert u^{\sh}_{\tau_{\mu},\mu}-u \rVert_{L^{p}(K^{m})}
				\leq
				\C\lVert u \rVert_{L^{p}(A_{\mu})} + \mu.
			\]
	\end{enumerate}
	
	Using the compactness of \( \Nc \) and the fact that \( u \in W^{s,p}(K^{m}) \), we deduce from the Gagliardo--Nirenberg inequality that \( D^{i}u \in L^{sp/i}(K^{m}) \).
	Therefore, by Hölder's inequality,
	\[
	\lVert D^{i}u \rVert_{L^{p}(A_{\mu})}
	\leq 
	\lvert A_{\mu} \rvert^{\frac{s-i}{sp}}\lVert D^{i}u \rVert_{L^{sp/i}(A_{\mu})}.
	\]
	Similarly, using Lemma~\ref{lemma:interpolation_estimate}, for every \( i \in \{1,\dots,k-1\} \), we find that 
	\begin{equation*}
	\lvert D^{i}u \rvert_{W^{\sigma,p}(A_{\mu})}
	\leq 
	\C\lvert A_{\mu} \rvert^{\frac{s-i-\sigma}{sp}} \lVert D^{i}u \rVert_{L^{sp/i}(A_{\mu})}^{1-\sigma}\lVert D^{i+1}u \rVert_{L^{sp/(i+1)}(A_{\mu})}^{\sigma}.
	\end{equation*}
	(Strictly speaking, Lemma~\ref{lemma:interpolation_estimate} requires to work with two open sets \( \omega \Subset \Omega \), with \( \Omega \) being convex.
	However, we already saw that this assumption may be easily bypassed in the situation we are facing here.
	Indeed, this can be done, for instance, relying on the existence of a continuous extension operator from \( W^{s,p}(K^{m};\R^{\nu}) \) to \( W^{s,p}(\R^{m};\R^{\nu}) \).)
	Moreover, using the fact that \( u \in L^{\infty}(K^{m}) \) since \( \Nc \) is compact, we have 
	\[
		\lVert u \rVert_{L^{p}(A_{\mu})}
		\leq 
		\C\lvert A_{\mu} \rvert^{\frac{1}{p}}.
	\]
	On the other hand, we observe that \( \lvert A_{\mu} \rvert \leq \C(\mu\eta)^{\ell+1} \).
	Therefore, 
		\begin{enumerate}[label=(\alph*)]
		\item if \( 0 < s < 1 \), then 
		\[
			\lvert u^{\sh}_{\tau_{\mu},\mu} - u \rvert_{W^{s,p}(K^{m})} 
			\leq
			\C \lvert u \rvert_{W^{s,p}(A_{\mu})} + \C(\mu\eta)^{\frac{\ell+1-sp}{p}} + \mu; 
		\]
		\item if \( s \geq 1 \), then for every \( j \in \{1,\dots,k\} \),
		\[
			\lVert D^{j}u^{\sh}_{\tau_{\mu},\mu} - D^{j}u \rVert_{L^{p}(K^{m})}
			\leq
			\C\sum_{i=1}^{j}(\mu\eta)^{i-j+\frac{s-i}{sp}(\ell+1)} \lVert D^{i}u \rVert_{L^{\frac{sp}{i}}(A_{\mu})} + \mu;
		\]
		\item if \( s \geq 1 \) and \( \sigma \neq 0 \), then for every \( j \in \{1,\dots,k\} \),
		\begin{multline*}
			\lvert D^{j}u^{\sh}_{\tau_{\mu},\mu} - D^{j}u \rvert_{W^{\sigma,p}(K^{m})} 
			\leq
			\C\sum_{i=1}^{j}(\mu\eta)^{i-j-\sigma+\frac{s-i}{sp}(\ell+1)} \lVert D^{i}u \rVert_{L^{\frac{sp}{i}}(A_{\mu})} \\
			+ \C\sum_{i=1}^{k-1}(\mu\eta)^{i-j+\frac{s-i-\sigma}{sp}(\ell+1)}\lVert D^{i}u \rVert_{L^{\frac{sp}{i}}(A_{\mu})}^{1-\sigma}\lVert D^{i+1}u \rVert_{L^{\frac{sp}{i+1}}(A_{\mu})}^{\sigma} 
			+
			\C\lvert D^{j}u \rvert_{W^{\sigma,p}(A_{\mu})} + \mu; 
		\end{multline*}
		\item for every \( 0 < s < +\infty \),
		\[
		\lVert u^{\sh}_{\tau_{\mu},\mu}-u \rVert_{L^{p}(K^{m})}
		\leq
		\C(\mu\eta)^{\frac{\ell+1}{p}} + \mu.
		\]
	\end{enumerate}

	Since \( sp < \ell+1 \), we observe that all the powers on \( \mu\eta \) above are positive.
	Moreover, since \( \lvert A_{\mu} \rvert \to 0 \) as \( \mu \to 0 \), we deduce from Lebesgue's lemma that all Lebesgue norms and Gagliardo seminorms above tend to \( 0 \) when \( \mu \to 0 \).
	
	This shows that \( u^{\sh}_{\tau_{\mu},\mu} \to u \) in \( W^{s,p}(K^{m}) \), 
	and since \( u^{\sh}_{\tau_{\mu},\mu} \in \Cc^{\infty}(K^{m};\Nc) \), the proof is complete.
\end{proof}


%

Theorem~\ref{thm:density_smooth_functions} follows from Proposition~\ref{prop:density_smooth_maps_cubication} by using the fact that a cube has the extension property with respect to any manifold.
This was already present in~\cite{HL_topology_of_sobolev_mappings_II}.
For a proof, the reader may also consult~\cite{BPVS_density_higher_order}*{Proposition~7.3}.

\begin{proof}[Proof of Theorem~\ref{thm:density_smooth_functions}]
	From the proof of Theorem~\ref{thm:density_class_R}, for every map \( u \in W^{s,p}(Q^{m};\Nc) \) and every number \( \varepsilon > 0 \) there exists \( v \in \Cc^{\infty}(K^{m}\setminus T^{\ell^{\ast}};\Nc) \cap W^{s,p}(K^{m};\Nc) \) such that \( \lVert u-v \rVert_{W^{s,p}(Q^{m})} \leq \varepsilon \), where \( \ell = [sp] \) and \( \Kc^{m} \) is a cubication (depending on \( v \)) in \( \R^{m} \) slightly larger than \( Q^{m} \).
	Removing cubes that do not intersect \( Q^{m} \) if necessary, we may assume that \( K^{m} \) is also a cube.
	Doing so, \( K^{m} \) has the \( \ell \)-extension property with respect to \( \Nc \).
	Hence, \( \Cc^{\infty}(K^{m};\Nc) \) is dense in \( \Cc^{\infty}(K^{m}\setminus T^{\ell^{\ast}};\Nc) \cap W^{s,p}(K^{m};\Nc) \) with respect to the \( W^{s,p} \) distance.
	This implies that \( \Cc^{\infty}(\overline{Q}^{m};\Nc) \) is dense in \( W^{s,p}(Q^{m};\Nc) \), and finishes the proof of Theorem~\ref{thm:density_smooth_functions}. 
\end{proof}

We now turn to the case of more general domains.
Replacing Theorem~\ref{thm:density_class_R} by Theorem~\ref{thm:density_class_R_segment_condition} in the above proof, we obtain the following counterpart of Theorem~\ref{thm:density_smooth_functions}:
If \( \Omega \) satisfies the segment condition and if we may find a sequence \( (\eta_{n})_{n \in \N} \) of positive real numbers such that \( \eta_{n} \to 0 \) 
and such that for every \( n \in \N \), the cubication \( \Kc^{m}_{\eta_{n}} \) used in the proof of Theorem~\ref{thm:density_class_R_segment_condition} satisfies the \( [sp] \)-extension property with respect to \( \Nc \),
then \( \Cc^{\infty}(\overline{\Omega};\Nc) \) is dense in \( W^{s,p}(\Omega;\Nc) \).


%

Under an assumption as weak as the segment condition, it is not clear how to link the topology of the cubications \( K^{m}_{\eta_{n}} \) containing \( \Omega \) to the topology of \( \Omega \) itself.
However, in the case where \( \Omega \) is a smooth domain, the topological assumption above can be clarified.
Indeed, in this case, using a retraction along the normal vector to \( \partial\Omega \), one may show that, if \( \Kc^{m}_{\eta} \) is a cubication of radius \( \eta > 0 \) in \( \R^{m} \) 
for \( \eta > 0 \) sufficiently small such that \( \Kc^{m}_{\eta} \) is made only of cubes that intersect \( \overline{\Omega} \), then \( K^{m}_{\eta} \) is homotopic to \( \Omega \).
This implies that, if we endow \( \Omega \) with a structure of CW-complex, then the \( \ell \)-extension property of \( \Kc^{m}_{\ell} \) 
is equivalent to the \( \ell \)-extension property of \( \Omega \), and this does not depends on the choice of CW-complex structure on \( \Omega \); see e.g.~\cite{HL_topology_of_sobolev_mappings_II}*{Section~2}.
Here, analogously to the definition on a cubication, we say that \( \Omega \) has the \( \ell \)-extension property with respect to \( \Nc \) whenever any map \( f \in \Cc^{0}(\Omega^{\ell};\Nc) \) has an extension \( f \in \Cc^{0}(\Omega;\Nc) \), where \( \Omega^{\ell} \) denotes the \( \ell \)-skeleton of the CW-complex structure on \( \Omega \).

This leads to the following theorem.

\begin{thm}
\label{thm:density_smooth_maps_smooth_domain}
    Let \( \Omega \subset \R^{m} \) be a smooth bounded open domain.
    If \( sp < m \) and if \( \Omega \) has the \( [sp] \)-extension property with respect to \( \Nc \), 
    then \( \Cc^{\infty}(\overline{\Omega};\Nc) \) is dense in \( W^{s,p}(\Omega;\Nc) \).
\end{thm}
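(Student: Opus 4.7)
The plan is to combine Theorem~\ref{thm:density_class_R_segment_condition} with Proposition~\ref{prop:density_smooth_maps_cubication}, exactly as Theorem~\ref{thm:density_smooth_functions} was deduced from Theorem~\ref{thm:density_class_R}. Starting from \( u \in W^{s,p}(\Omega;\Nc) \), I would first apply Theorem~\ref{thm:density_class_R_segment_condition} to produce, for each \( \varepsilon > 0 \), an approximation \( v \) of \( u \) with \( \lVert u - v \rVert_{W^{s,p}(\Omega)} \leq \varepsilon \) that lies in \( \Rc_{m-[sp]-1}(\Omega;\Nc) \). Tracking the construction in Section~\ref{sect:density_class_R}, \( v \) is actually produced as the restriction of a map \( v \in \Rc_{m-[sp]-1}(K^{m}_{\eta};\Nc) \cap W^{s,p}(K^{m}_{\eta};\Nc) \), where \( \Kc^{m}_{\eta} \) is a cubication of radius \( \eta > 0 \) sufficiently small sandwiched between \( \overline{\Omega} \) and the dilated domain \( \Omega_{\gamma} \) provided by Lemma~\ref{lemma:dilation_segment_condition}. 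Since in that construction we are free to retain only the cubes of \( \Kc^{m}_{\eta} \) that actually intersect \( \overline{\Omega} \) (any other cube lies in \( \Omega_{\gamma} \setminus \overline{\Omega} \) and plays no role), we may assume this additional property on the cubication.

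Next, I would apply Proposition~\ref{prop:density_smooth_maps_cubication} on \( K^{m}_{\eta} \) to approximate \( v \) in \( W^{s,p}(K^{m}_{\eta}) \) by maps in \( \Cc^{\infty}(K^{m}_{\eta};\Nc) \), whose restriction to \( \overline{\Omega} \) will then yield the desired smooth approximation of \( u \). This is licit provided that \( \Kc^{m}_{\eta} \) has the \( [sp] \)-extension property with respect to \( \Nc \), and this is precisely the point where the smoothness of \( \Omega \) and the topological hypothesis must be invoked.

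The main obstacle is therefore the verification that, for \( \eta > 0 \) small enough, the cubication \( \Kc^{m}_{\eta} \) made of cubes intersecting \( \overline{\Omega} \) inherits the \( [sp] \)-extension property from \( \Omega \). The key observation, mentioned just before the statement of the theorem, is that for such \( \eta \) the set \( K^{m}_{\eta} \) is homotopy equivalent to \( \overline{\Omega} \). Since \( \Omega \) is a smooth bounded domain, one may construct such a homotopy equivalence explicitly by retracting each point of \( K^{m}_{\eta} \setminus \Omega \) along the inward normal to \( \partial \Omega \); this is well-defined for \( \eta \) small enough compared to the injectivity radius of the normal exponential map to \( \partial \Omega \). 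I would fix a CW-complex structure on \( \overline{\Omega} \) and transport the standard cubical CW-structure of \( K^{m}_{\eta} \) through this homotopy equivalence; since the \( \ell \)-extension property is a homotopy invariant (and independent of the chosen CW-structure, by~\cite{HL_topology_of_sobolev_mappings_II}*{Section~2}), the assumed \( [sp] \)-extension property of \( \Omega \) translates into the \( [sp] \)-extension property of \( \Kc^{m}_{\eta} \).

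With these pieces in place, the conclusion is immediate: combining the \( W^{s,p}(\Omega) \) approximation of \( u \) by \( v \in \Rc_{m-[sp]-1} \) with the \( W^{s,p}(K^{m}_{\eta}) \) approximation of \( v \) by smooth \( \Nc \)-valued maps, a diagonal argument (or simply the triangle inequality applied with \( \varepsilon/2 \) at each step) produces a sequence in \( \Cc^{\infty}(\overline{\Omega};\Nc) \) converging to \( u \) in \( W^{s,p}(\Omega) \). Aside from the topological verification above, the argument is a verbatim transcription of the proof of Theorem~\ref{thm:density_smooth_functions}, with \( Q^{m} \) replaced by \( \Omega \) and the trivial extension property of cubes replaced by the assumed extension property of \( \Omega \).
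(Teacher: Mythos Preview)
Your proposal is correct and follows essentially the same approach as the paper: the paper's argument (given in the discussion immediately preceding the theorem statement rather than in a formal proof environment) is precisely to replace Theorem~\ref{thm:density_class_R} by Theorem~\ref{thm:density_class_R_segment_condition} in the proof of Theorem~\ref{thm:density_smooth_functions}, and then to observe that for \( \eta \) small the cubication made of cubes intersecting \( \overline{\Omega} \) is homotopy equivalent to \( \Omega \) via retraction along the normal to \( \partial\Omega \), so that it inherits the \( [sp] \)-extension property. Your identification of the main obstacle and its resolution matches the paper exactly.
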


As for Theorem~\ref{thm:density_class_R}, a last perspective of generalisation for Theorem~\ref{thm:density_smooth_functions} consists in allowing the domain to be a smooth compact, connected Riemannian manifold \( \Mc \) of dimension \( m \),
and isometrically embedded in \( \R^{\tilde{\nu}} \).
As we did for Theorem~\ref{thm:density_class_R_manifold}, we may restrict to the case where \( \Mc \) has empty boundary, since the general case reduces to this special case by embedding into a larger manifold without boundary.

In this setting, a tubular neighborhood of \( \Mc \) is homotopic to \( \Mc \) through the nearest point projection, and therefore has the \( \ell \)-extension property if and only if \( \Mc \) has the \( \ell \)-extension property.
We may thus proceed as for Theorem~\ref{thm:density_class_R_manifold} to deduce the following result.

\begin{thm}
\label{thm:density_smooth_maps_manifold}
    If \( sp < m \) and if \( \Mc \) has the \( [sp] \)-extension property with respect to \( \Nc \), 
    then \( \Cc^{\infty}(\Mc;\Nc) \) is dense in \( W^{s,p}(\Mc;\Nc) \).
\end{thm}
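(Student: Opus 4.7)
The plan is to mimic the strategy used for Theorem~\ref{thm:density_class_R_manifold}, namely reduce to the case where $\Mc$ is a domain in Euclidean space by embedding $\Mc$ in a tubular neighborhood, apply Theorem~\ref{thm:density_smooth_maps_smooth_domain} there, and then recover approximations on $\Mc$ through a slicing and translation argument. First I would reduce to the case where $\Mc$ has empty boundary. The argument goes through verbatim as in the proof of Theorem~\ref{thm:density_class_R_manifold} for manifolds with boundary: using~\cite{BPVS_density_complete_manifolds}*{Lemma~3.4}, given any compact subset $K$ of the relative interior of $\Mc$, there exists a smooth compact boundaryless submanifold $\tilde{\Mc}$ of $\R^{\tilde{\nu}} \times \R$ containing $K \times \{0\}$ and projecting into $\Mc$; combining a dilation of $\Mc$ toward its interior with approximation on $\tilde{\Mc}$ and a diagonal extraction reduces the problem to the boundaryless case. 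Note that the $[sp]$-extension property is inherited by $\tilde{\Mc}$: indeed, $\tilde{\Mc}$ projects onto $\Mc$ via $\pi$, and this map is a homotopy equivalence on a neighborhood of $K \times \{0\}$, so the extension property of $\Mc$ transfers to (the relevant part of) $\tilde{\Mc}$.

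Assuming now that $\Mc$ has no boundary, let $\iota > 0$ be the radius of a tubular neighborhood, let $\upPi$ be the nearest point projection onto $\Mc$, and set $\Omega = \Mc + B^{\tilde{\nu}}_{\iota/2}$. For any $u \in W^{s,p}(\Mc;\Nc)$, the map $v = u \circ \upPi$ lies in $W^{s,p}(\Omega;\Nc)$ by the coarea estimate already employed in the proof of Theorem~\ref{thm:density_class_R_manifold}. The domain $\Omega$ is a smooth bounded open set of $\R^{\tilde{\nu}}$ of dimension $\tilde{\nu}$, and the condition $sp < m$ combined with $m \leq \tilde{\nu}$ yields $sp < \tilde{\nu}$, so the dimensional hypothesis of Theorem~\ref{thm:density_smooth_maps_smooth_domain} is satisfied. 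Moreover, since $\upPi$ is a smooth deformation retraction of $\Omega$ onto $\Mc$, both spaces have the same homotopy type, and therefore $\Omega$ inherits the $[sp]$-extension property with respect to $\Nc$ from $\Mc$. Applying Theorem~\ref{thm:density_smooth_maps_smooth_domain}, I obtain a sequence $(v_n)_{n \in \N}$ in $\Cc^{\infty}(\overline{\Omega};\Nc)$ such that $v_n \to v$ in $W^{s,p}(\Omega)$.

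To come back to $\Mc$, I would use the exact same slicing by translation as in the proof of Theorem~\ref{thm:density_class_R_manifold}. For almost every $a \in B^{\tilde{\nu}}_{\iota/2}$, Fuglede's lemma (applied to a suitable detector $w$ constructed from $v$, $D^j v$ and the sequence $v_n$) ensures that the translated restrictions $u_{n,a} = \tau_a(v_n)_{\vert \Mc}$ belong to $\Cc^{\infty}(\Mc;\Nc)$ and satisfy $u_{n,a} \to \tau_a(v)_{\vert \Mc}$ in $W^{s,p}(\Mc)$ along a subsequence independent of $a$. Combined with the continuity of translations in $W^{s,p}$, which gives $\tau_a(v)_{\vert \Mc} \to v_{\vert \Mc} = u$ in $W^{s,p}(\Mc)$ as $a \to 0$, a diagonal argument produces a sequence $(a_n)_{n \in \N}$ with $a_n \to 0$ such that $u_n = u_{n,a_n} \in \Cc^{\infty}(\Mc;\Nc)$ converges to $u$ in $W^{s,p}(\Mc)$.

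The main technical point is thus the verification that the $[sp]$-extension property transfers from $\Mc$ to $\Omega$, which is an easy homotopy-invariance statement, and the slicing by translations, which was already settled in the proof of Theorem~\ref{thm:density_class_R_manifold}. All the analytic work is contained in Theorem~\ref{thm:density_smooth_maps_smooth_domain}; the present result is a clean consequence of that theorem combined with a transversality-free embedding-and-slicing scheme, so I do not anticipate any genuinely new obstacle beyond carefully checking that the continuity of translations holds along an $\Nc$-valued slice, which can be handled exactly as in Theorem~\ref{thm:density_class_R_manifold} using a Fuglede-type detector for $v$.
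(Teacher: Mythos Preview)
Your proposal is correct and follows essentially the same approach as the paper: reduce to the boundaryless case via the collar-neighborhood embedding of~\cite{BPVS_density_complete_manifolds}*{Lemma~3.4}, extend \( u \) to \( v = u \circ \upPi \) on the tubular neighborhood \( \Omega = \Mc + B^{\tilde{\nu}}_{\iota/2} \), transfer the \([sp]\)-extension property to \( \Omega \) by homotopy invariance, apply Theorem~\ref{thm:density_smooth_maps_smooth_domain}, and then slice back to \( \Mc \) by translation and a diagonal argument. Your observation that the slicing step is transversality-free here (since the \( v_n \) are already globally smooth) is a nice clarification that the paper leaves implicit.
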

\begin{proof}
	First assume that \( \Mc \) has empty boundary.
    Let \( \iota > 0 \) be the radius of a tubular neighborhood of \( \Mc \), let \( \upPi \) denote the nearest point projection onto \( \Mc \), and let \( \Omega = \Mc+B^{\tilde{\nu}}_{\iota/2} \).
    Given \( u \in W^{s,p}(\Mc;\Nc) \), as explained before the proof of Theorem~\ref{thm:density_class_R_manifold}, the map \( v = u \circ \upPi \) belongs to \( W^{s,p}(\Omega;\Nc) \).
    By the observation above, \( \Omega \) has the \( [sp] \)-extension property.
    Therefore, by Theorem~\ref{thm:density_smooth_maps_smooth_domain}, there exists a sequence \( (v_{n})_{n \in \N} \) in \( \Cc^{\infty}(\overline{\Omega};\Nc) \) which converges to \( v \) in \( W^{s,p}(\Omega) \).
    We conclude as in the proof of Theorem~\ref{thm:density_class_R_manifold}, using a slicing argument to find a sequence \( (a_{n})_{n \in \N} \) in \( B^{\tilde{\nu}}_{\iota/2} \) such that \( a_{n} \to 0 \) as \( n \to +\infty \) 
    satisfying \( u_{n} = \tau_{a_{n}}(v_{n})_{\vert \Mc} \to u \) in \( W^{s,p}(\Mc) \). 
    
    The case where \( \Mc \) is allowed to have non-empty boundary is deduced from the empty boundary case exactly as for the proof of Theorem~\ref{thm:density_class_R_manifold}, and we therefore omit the proof.
\end{proof}

\begin{bibdiv}

    \begin{biblist}

    \bib{adams_sobolev_spaces}{book}{
        author={Adams, Robert A.},
        title={Sobolev spaces},
        series={Pure and Applied Mathematics, Vol. 65},
        publisher={Academic Press [Harcourt Brace Jovanovich, Publishers], New
        York-London},
        date={1975},
    }
    
    
    \bib{bethuel_approximation}{article}{
       author={Bethuel, Fabrice},
       title={The approximation problem for Sobolev maps between two manifolds},
       journal={Acta Math.},
       volume={167},
       date={1991},
       number={3-4},
       pages={153--206},
    }

	\bib{bethuel_approximation_trace}{article}{
		author={Bethuel, Fabrice},
		title={Approximations in trace spaces defined between manifolds},
		journal={Nonlinear Anal.},
		volume={24},
		date={1995},
		number={1},
		pages={121--130},
	}
    
    \bib{BZ_density}{article}{
       author={Bethuel, Fabrice},
       author={Zheng, Xiao Min},
       title={Density of smooth functions between two manifolds in Sobolev
       spaces},
       journal={J. Funct. Anal.},
       volume={80},
       date={1988},
       number={1},
       pages={60--75},
    }

	\bib{bourdaud93}{article}{
		author={Bourdaud, G\'{e}rard},
		title={Fonctions qui op\`erent sur les espaces de Besov et de Triebel},
		journal={Ann. Inst. H. Poincar\'{e} C Anal. Non Lin\'{e}aire},
		volume={10},
		date={1993},
		number={4},
		pages={413--422},
	}

	\bib{bousquet_topological_singularities}{article}{
		author={Bousquet, Pierre},
		title={Topological singularities in $W^{s,p}(S^N,S^1)$},
		journal={J. Anal. Math.},
		volume={102},
		date={2007},
		pages={311--346},
	}
	
	\bib{BPVS_density_simply_connected}{article}{
		author={Bousquet, Pierre},
		author={Ponce, Augusto C.},
		author={Van Schaftingen, Jean},
		title={Density of smooth maps for fractional Sobolev spaces $W^{s,p}$
			into $\ell$ simply connected manifolds when $s\geqslant1$},
		journal={Confluentes Math.},
		volume={5},
		date={2013},
		number={2},
		pages={3--22},
	}
	
	\bib{BPVS_fractional_density_simply_connected}{article}{
		author={Bousquet, Pierre},
		author={Ponce, Augusto C.},
		author={Van Schaftingen, Jean},
		title={Strong approximation of fractional Sobolev maps},
		journal={J. Fixed Point Theory Appl.},
		volume={15},
		date={2014},
		number={1},
		pages={133--153},
	}
    
    \bib{BPVS_density_higher_order}{article}{
       author={Bousquet, Pierre},
       author={Ponce, Augusto C.},
       author={Van Schaftingen, Jean},
       title={Strong density for higher order Sobolev spaces into compact
       manifolds},
       journal={J. Eur. Math. Soc. (JEMS)},
       volume={17},
       date={2015},
       number={4},
       pages={763--817},
    }

	\bib{BPVS_density_complete_manifolds}{article}{
		author={Bousquet, Pierre},
		author={Ponce, Augusto C.},
		author={Van Schaftingen, Jean},
		title={Density of bounded maps in Sobolev spaces into complete manifolds},
		journal={Ann. Mat. Pura Appl. (4)},
		volume={196},
		date={2017},
		number={6},
		pages={2261--2301},
	}
    
    \bib{BPVS_screening}{article}{
       author={Bousquet, Pierre},
       author={Ponce, Augusto C.},
       author={Van Schaftingen, Jean},
       title={Generic topological screening and approximation of Sobolev maps},
       status={In preparation},
    }

    \bib{brezis_functional_analysis}{book}{
        author={Brezis, Haim},
        title={Functional analysis, Sobolev spaces and partial differential
        equations},
        series={Universitext},
        publisher={Springer, New York},
        date={2011},
    }
    
    \bib{BL_topology_sobolev_spaces}{article}{
       author={Brezis, Haim},
       author={Li, Yanyan},
       title={Topology and Sobolev spaces},
       journal={J. Funct. Anal.},
       volume={183},
       date={2001},
       number={2},
       pages={321--369},
    }
    
    \bib{BM_fractional_GN}{article}{
       author={Brezis, Ha\"{\i}m},
       author={Mironescu, Petru},
       title={Gagliardo-Nirenberg, composition and products in fractional
       Sobolev spaces},
       journal={J. Evol. Equ.},
       volume={1},
       date={2001},
       number={4},
       pages={387--404},
    }
    
    \bib{BM_density_in_Wsp}{article}{
       author={Brezis, Ha\"{\i}m},
       author={Mironescu, Petru},
       title={Density in $W^{s,p}(\Omega;N)$},
       journal={J. Funct. Anal.},
       volume={269},
       date={2015},
       number={7},
       pages={2045--2109},
    }
    
    \bib{BM_GN_full_story}{article}{
       author={Brezis, Ha\"{\i}m},
       author={Mironescu, Petru},
       title={Gagliardo-Nirenberg inequalities and non-inequalities: the full
       story},
       journal={Ann. Inst. H. Poincar\'{e} C Anal. Non Lin\'{e}aire},
       volume={35},
       date={2018},
       number={5},
       pages={1355--1376},
    }
    
    \bib{BM_sobolev_maps_to_the_circle}{book}{
       author={Brezis, Ha\"{\i}m},
       author={Mironescu, Petru},
       title={Sobolev maps to the circle},
       series={Progress in Nonlinear Differential Equations and their
       Applications},
       volume={96},
       publisher={Birkh\"{a}user/Springer, New York},
       date={2021},
    }
    
    \bib{BN_degree_BMO_I}{article}{
       author={Brezis, H.},
       author={Nirenberg, L.},
       title={Degree theory and BMO. I. Compact manifolds without boundaries},
       journal={Selecta Math. (N.S.)},
       volume={1},
       date={1995},
       number={2},
       pages={197--263},
    }
    
    \bib{escobedo_some_remarks}{article}{
       author={Escobedo, Miguel},
       title={Some remarks on the density of regular mappings in Sobolev classes
       of $S^M$-valued functions},
       journal={Rev. Mat. Univ. Complut. Madrid},
       volume={1},
       date={1988},
       number={1-3},
       pages={127--144},
    }
    
    \bib{FF_currents}{article}{
       author={Federer, Herbert},
       author={Fleming, Wendell H.},
       title={Normal and integral currents},
       journal={Ann. of Math. (2)},
       volume={72},
       date={1960},
       pages={458--520},
    }

	\bib{hajlasz}{article}{
		author={Haj\l asz, Piotr},
		title={Approximation of Sobolev mappings},
		journal={Nonlinear Anal.},
		volume={22},
		date={1994},
		number={12},
		pages={1579--1591},
	}
    
    \bib{HL_topology_of_sobolev_mappings_II}{article}{
       author={Hang, Feng-Bo},
       author={Lin, Fang-Hua},
       title={Topology of Sobolev mappings. II},
       journal={Acta Math.},
       volume={191},
       date={2003},
       number={1},
       pages={55--107},
    }

	\bib{H=W}{article}{
		author={Meyers, Norman G.},
		author={Serrin, James},
		title={$H=W$},
		journal={Proc. Nat. Acad. Sci. U.S.A.},
		volume={51},
		date={1964},
		pages={1055--1056},
	}

    \bib{MVS_uniform_boundedness_principles}{article}{
       author={Monteil, Antonin},
       author={Van Schaftingen, Jean},
       title={Uniform boundedness principles for Sobolev maps into manifolds},
       journal={Ann. Inst. H. Poincar\'{e} C Anal. Non Lin\'{e}aire},
       volume={36},
       date={2019},
       number={2},
       pages={417--449},
    }

	\bib{mucci_strong_density_results}{article}{
		author={Mucci, Domenico},
		title={Strong density results in trace spaces of maps between manifolds},
		journal={Manuscripta Math.},
		volume={128},
		date={2009},
		number={4},
		pages={421--441},
	}
    
    \bib{Nash54}{article}{
       author={Nash, John},
       title={$C^1$ isometric imbeddings},
       journal={Ann. of Math. (2)},
       volume={60},
       date={1954},
       pages={383--396},
    }
    
    \bib{Nash56}{article}{
       author={Nash, John},
       title={The imbedding problem for Riemannian manifolds},
       journal={Ann. of Math. (2)},
       volume={63},
       date={1956},
       pages={20--63},
    }

	\bib{Riviere_dense_subsets}{article}{
		author={Rivi\`ere, Tristan},
		title={Dense subsets of $H^{1/2}(S^2,S^1)$},
		journal={Ann. Global Anal. Geom.},
		volume={18},
		date={2000},
		number={5},
		pages={517--528},
	}

	\bib{serre_matrices}{book}{
		author={Serre, Denis},
		title={Matrices},
		series={Graduate Texts in Mathematics},
		volume={216},
		edition={2},
		note={Theory and applications},
		publisher={Springer, New York},
		date={2010},
	}
    
    \bib{SU_regularity_theory_harmonic_maps}{article}{
	    author={Schoen, Richard},
	    author={Uhlenbeck, Karen},
	    title={A regularity theory for harmonic maps},
	    journal={J. Differential Geom.},
	    volume={17},
	    date={1982},
	    pages={307--335},
    }
    
    \bib{SU_boundary_regularity}{article}{
	    author={Schoen, Richard},
	    author={Uhlenbeck, Karen},
	    title={Boundary regularity and the Dirichlet problem for harmonic maps},
	    journal={J. Differential Geom.},
	    volume={18},
	    date={1983},
	    number={2},
	    pages={253--268},
    }
    
    \bib{warner}{book}{
	    author={Warner, Frank W.},
	    title={Foundations of differentiable manifolds and Lie groups},
	    series={Graduate Texts in Mathematics},
	    volume={94},
	    publisher={Springer-Verlag, New York-Berlin},
	    date={1983},
    }
    
    \bib{white_infima_energy_functionals}{article}{
	    author={White, Brian},
	    title={Infima of energy functionals in homotopy classes of mappings},
	    journal={J. Differential Geom.},
	    volume={23},
	    date={1986},
	    number={2},
	    pages={127--142},
    }

    \bib{willem}{book}{
        author={Willem, Michel},
        title={Functional analysis},
        series={Cornerstones},
        publisher={Birkh\"{a}user/Springer, New York},
        date={2013},
    }
    
    \end{biblist}
    
\end{bibdiv}

\end{document}